\documentclass{mcom-l}
\usepackage[utf8]{inputenc}
\usepackage[english]{babel}
\usepackage{amsmath,amssymb,amsthm}
\usepackage{mathrsfs}
\usepackage{mathtools}
\usepackage{enumitem}
\usepackage{bm}
\usepackage{float}
\usepackage{xcolor}
\usepackage{subcaption}
\usepackage{hyperref}
\usepackage[nameinlink,capitalise,noabbrev]{cleveref}
\usepackage{todonotes}
\usepackage{siunitx}
\usepackage[export]{adjustbox}
\usepackage{pgfplots} \pgfplotsset{compat=1.18}
\usepgfplotslibrary{groupplots}
\definecolor{seaborngreen}{rgb}{0.3333333333333333, 0.6588235294117647, 0.40784313725490196}  
\definecolor{seaborncyan}{rgb}{0.39215686274509803, 0.7098039215686275, 0.803921568627451}  
\definecolor{seabornblue}{rgb}{0.2980392156862745, 0.4470588235294118, 0.6901960784313725}  
\definecolor{seabornpurple}{rgb}{0.5058823529411764, 0.4470588235294118, 0.6980392156862745}  
\definecolor{seabornred}{rgb}{0.7686274509803922, 0.3058823529411765, 0.3215686274509804}  
\definecolor{seabornorange}{rgb}{0.958, 0.476, 0.206}  
\definecolor{seabornsand}{rgb}{0.8, 0.7254901960784313, 0.4549019607843137}  
\pgfplotsset{colormap={paraview}{rgb255=(71,71,219) rgb255=(55,55,191) rgb255=(40,40,163) rgb255=(24,24,135) rgb255=(9,9,108) rgb255=(0,25,108) rgb255=(0,81,144) rgb255=(0,138,180) rgb255=(0,194,216) rgb255=(0,250,252) rgb255=(0,230,204) rgb255=(0,202,149) rgb255=(0,174,93) rgb255=(0,147,37) rgb255=(18,137,0) rgb255=(75,165,0) rgb255=(124,190,0) rgb255=(180,218,0) rgb255=(237,246,0) rgb255=(255,232,0) rgb255=(255,197,0) rgb255=(255,162,0) rgb255=(255,128,0) rgb255=(251,95,0) rgb255=(219,73,0) rgb255=(186,52,0) rgb255=(154,31,0) rgb255=(121,9,0) rgb255=(121,9,9) rgb255=(147,26,26) rgb255=(173,43,43) rgb255=(198,60,60) rgb255=(224,77,77)}}
\newcommand{\cbar}[4]{%
    \begin{tikzpicture}[baseline=(current bounding box.center)]
        \begin{axis}[hide axis, scale only axis, width=1pt, height=#3, xmin=0, xmax=1, ymin=0, ymax=1,
            colormap name=paraview, colorbar, point meta min=#1, point meta max=#2,
            colorbar style={at={(1,0)}, anchor=south west, xshift=0pt, height=#3, width=4pt, ytick={#4}, tick align=outside, major tick length=2pt,
                tick label style={font=\scriptsize}, yticklabel style={/pgf/number format/fixed, /pgf/number format/precision=2}}]
            \addplot[draw=none] coordinates {(0,0)};
        \end{axis}
    \end{tikzpicture}}
\usepackage{pgfplotstable}

\allowdisplaybreaks
\hypersetup{
    colorlinks,
    linkcolor={red!50!black},
    citecolor={blue!50!black},
    urlcolor={blue!80!black}
}
\def\NN{\mathbb{N}}
\def\RR{\mathbb{R}}
\def\LL{\bm{L}}
\newcommand{\pvec}[1]{\vec{#1}\mkern3mu\vphantom{#1}}

\def\M{\bm{M}}

\def\I{\bm{I}}
\def\u{\bm{u}}
\def\w{\bm{w}}
\def\Du{\mathrm{D}\u}
\def\vtheta{\vartheta}
\def\Th{\mathcal{T}_h}
\def\Vh{\mathcal{V}_h}
\def\Qh{\mathcal{Q}_h}
\def\Xh{\mathcal{X}_h}
\def\Uh{\mathcal{U}_h}
\def\Itau{\mathcal{I}_\tau}
\def\bsig{\bm{\sigma}}

\def\pa{\partial}
\def\na{\nabla}

\def\dt{\partial_t}
\def\dd{\mathrm{d}}

\def\Du{\mathrm{D}\u}
\def\divM{\operatorname{div}}
\def\ddt{\frac{\mathrm{d}}{\mathrm{d}t}}
\def\dttau{\partial_t^\tau}
\def\cv{c_v}

\def\la{\langle}
\def\ra{\rangle}

\DeclarePairedDelimiter{\norm}{\|}{\|}

\usepackage{letltxmacro}

\def\softd{{\leavevmode\setbox1=\hbox{d}%
		\hbox to 1.05\wd1{d\kern-0.4ex{\char039}\hss}}}

\DeclareTextAccent{\vaccent}{OT1}{20}
\def\v{\bm{v}}

\newtheorem{theorem}{Theorem}[section]
\newtheorem{lemma}[theorem]{Lemma}

\newtheorem{corollary}[theorem]{Corollary}

\theoremstyle{definition}

\newtheorem{example}[theorem]{Example}

\theoremstyle{remark}
\newtheorem{remark}[theorem]{Remark}

\numberwithin{equation}{section}

\usepackage{etoolbox}
\AtBeginEnvironment{lemma}{\crefalias{theorem}{lemma}}
\AtBeginEnvironment{proposition}{\crefalias{theorem}{proposition}}
\AtBeginEnvironment{corollary}{\crefalias{theorem}{corollary}}
\AtBeginEnvironment{definition}{\crefalias{theorem}{definition}}
\AtBeginEnvironment{example}{\crefalias{theorem}{example}}
\AtBeginEnvironment{remark}{\crefalias{theorem}{remark}}

\begin{document}

\title[Quasi-incompressible, thermal Maxwell--Stefan equations]{Structure-preserving discretisations for the quasi-incompressible, thermal Maxwell--Stefan equations}

\author[B.~D.~Andrews]{Boris D.~Andrews}
\address{Mathematical Institute, University of Oxford, Oxford OX1 3LB, United Kingdom}
\email{boris.andrews@maths.ox.ac.uk}
\thanks{B.D.A.\ was supported by the European Union [ERC, GeoFEM, 101164551].}

\author[A.~Baier-Reinio]{Aaron Baier-Reinio}
\address{Department of Engineering Science, University of Oxford, Oxford OX1 3PJ, United Kingdom}
\email{aaron.baier-reinio@eng.ox.ac.uk}
\thanks{A.B.R.\ was supported by a Clarendon scholarship from the University of Oxford.}

\author[A.~Brunk]{Aaron Brunk}
\address{Institute of Mathematics, Johannes Gutenberg University, 55099 Mainz, Germany}
\email{abrunk@uni-mainz.de}
\thanks{A.B.\ was supported by the German Research Foundation (DFG) via TRR~146, subproject C3, project number 233630050 and via SPP~2256 within the project ``Variational quantitative phase-field modeling and simulation of powder bed fusion additive manufacturing'', project number 441153493.}

\author[P.~E.~Farrell]{Patrick E.~Farrell}
\address{Mathematical Institute, University of Oxford, Oxford OX1 3LB, United Kingdom}
\email{patrick.farrell@maths.ox.ac.uk}
\thanks{P.E.F.\ was supported by the Engineering and Physical Sciences Research Council [grant no.~EP/W026163/1], the Science and Technology Facilities Council [grant no.~UKRI/ST/B000495/1], the Donatio Universitatis Carolinae Chair ``Mathematical modelling of multicomponent systems'', the UKRI Digital Research Infrastructure Programme through the Science and Technology Facilities Council's Computational Science Centre for Research Communities (CoSeC), the National Science Foundation under grant no.~DMS-1929284 while in residence at the Institute for Computational and Experimental Research in Mathematics in Providence, USA, and the Erwin Schr\"odinger International Institute for Mathematics and Physics, Vienna. For the purpose of open access, the authors have applied a CC~BY public copyright licence to any author accepted manuscript arising from this submission.}

\author[M.~Luk\'a\vaccent{c}ov\'a-Medvi{\softd}ov\'a]{M\'aria Luk\'a\vaccent{c}ov\'a-Medvi\softd ov\'a}
\address{Institute of Mathematics, Johannes Gutenberg University, 55099 Mainz, Germany}
\email{lukacova@uni-mainz.de}
\thanks{M.L.-M.\ gratefully acknowledges the support of DFG Project 5258 3336 funded within Focused Programme SPP 2410 ``Hyperbolic Balance Laws: Complexity, Scales and Randomness'' and of the Mainz Institute of Multiscale Modeling. She is also grateful to the Gutenberg Research College for supporting her research.}

\subjclass[2020]{Primary 65M60, 76T30; Secondary 35Q35, 65M12, 76D05, 80A19}

\keywords{Maxwell--Stefan equations, quasi-incompressible fluids, thermal fluids, structure-preserving finite element methods, entropy generation, semi-implicit Euler method, continuous Petrov--Galerkin method}

\date{\today}

\begin{abstract}
We study a thermal Maxwell--Stefan--Navier--Stokes system modelling miscible mixtures under a quasi-incompressibility constraint. The system consists of the conservation laws for mass and momentum, and the balance law for entropy, which is preferred over the balance law for energy because it facilitates the development of two structure-preserving discretisation methods.
The first method applies a semi-implicit Euler scheme in time, together with suitable finite elements in space, which guarantees generation of entropy and enforces the quasi-incompressibility constraint pointwise, but numerically dissipates total energy. The second method employs a continuous Petrov--Galerkin discretisation in time, which also generates entropy and satisfies the constraint pointwise, and in addition conserves the total energy and allows high-order time approximations. The structure-preserving properties and behaviour of both numerical methods are demonstrated by a series of numerical experiments.
\end{abstract}

\makeatletter
\let\MakeUppercase\relax
\makeatother
\maketitle

\section{Introduction}

The model we consider describes the transport of a mixture of $N > 1$ distinct chemical species in a single phase (liquid or gas) via convection and diffusion, represented by partial mass densities $\vec\rho = (\rho_1,\ldots,\rho_N)$.
It incorporates a momentum balance for the barycentric velocity $\u$, and the evolution of the specific internal energy $e$.
Over a bounded Lipschitz domain $\Omega \subset \RR^d$, $d \in \{2, 3\}$,
\begin{subequations}\label{eq:asys}
\begin{align}
    \dt\rho_i + \divM(\rho_i\u) &= - \divM\bm{J}_i, \qquad i=1,\ldots,N, \label{eq:asys1}
    \\
    \dt (\varrho\u) + \divM(\varrho\u\otimes\u) &= \divM\bsig, \label{eq:asys2} 
    \\
    \dt (\varrho e) + \divM(\varrho e\u) &= -\divM\bm{J}_e + \bsig:\Du. \label{eq:asys3}
\end{align}
\end{subequations}
The total mass density is $\varrho\coloneqq\sum_{i=1}^N\rho_i$.
The system is closed by specifying constitutive relations for the diffusion fluxes $\bm{J}_1,\ldots,\bm{J}_N$, energy flux $\bm{J}_e$, and Cauchy stress tensor $\bsig\in \RR_{\mathrm{sym}}^{d\times d}$. 
In the analysis in this work we consider $\Omega = (0, 1)^d$ with periodic boundary conditions (topologically equivalent to a torus) for the sake of exposition and clarity;
the numerical results in \Cref{Sec:Simul} use non-periodic domains and inhomogeneous boundary conditions.

\subsection{Quasi-incompressibility}

Taking the specific volume $V_i$ of each component $i$ to be constant, we assume the mixture develops no void spaces, i.e.~the following constraint holds:
\begin{equation}\label{1.incom}
    \sum_{i=1}^N V_i\rho_i = 1.
\end{equation}
To observe how \eqref{1.incom} imposes a restriction on the velocity field, we multiply \eqref{eq:asys1} by $V_i$ and sum over $i=1,\ldots, N$:
\begin{subequations}
\begin{align}
    \dt\bigg(\sum_{i=1}^N V_i\rho_i\bigg) 
        + \divM\bigg(\u\sum_{i=1}^N V_i\rho_i\bigg) 
        + \divM\bigg(\sum_{i=1}^N V_i\bm{J}_i\bigg)
        &=  0,  \\
    \implies \divM\u 
        &= -\sum_{i=1}^N V_i\divM\bm{J}_i.  \label{1.VJ}
\end{align}
\end{subequations}
This characterises $\divM\u$ entirely by the diffusion fluxes $\bm{J}_i$: a quasi-incompressibility constraint. This is enforced by a Lagrange multiplier $p$, the fluid pressure.

\subsection{State equations}

To model the remaining thermodynamics, we specify an internal energy density $\varrho e$, typically a function of $\vec\rho$ and the entropy density $\varrho s$, and a Helmholtz free energy density $\varrho F$, typically a function of $\vec\rho$ and the temperature $\vtheta$.
These are related by the classical Legendre relation
\begin{subequations}\label{1.thermo}
\begin{equation}\label{1.thermo_a}
    \varrho e = \varrho F + \vtheta \varrho s.
\end{equation}
The entropy density $\varrho s$ and temperature $\vtheta$ can be determined from $\varrho e$ and $\varrho F$ as
\begin{equation}\label{1.thermo_b}
    \varrho s = - \, \partial_\vtheta (\varrho F)(\vec\rho, \vtheta),  \qquad
    \vtheta = \partial_{\varrho s} (\varrho e)(\vec\rho, \varrho s).
\end{equation}
\end{subequations}
Taking the partial derivative of $\varrho e = \varrho F + \vtheta \varrho s$ \eqref{1.thermo_a} with respect to $\rho_i$ for a fixed $\varrho s$, we find the following thermodynamic relation:
\begin{subequations}\label{eq:rhof_rhoe}
\begin{align}
    \partial_{\rho_i}(\varrho e)(\vec\rho, \varrho s)
        &=  \big(\partial_{\rho_i}(\varrho F)(\vec\rho, \vtheta) + \partial_{\vtheta}(\varrho F)(\vec\rho, \vtheta)\partial_{\rho_i}\vtheta(\vec\rho, \varrho s)\big)  \\
        &\qquad\qquad\qquad\qquad\qquad\qquad\qquad\qquad+ \partial_{\rho_i}\vtheta(\vec\rho, \varrho s) \varrho s,  \notag \\
        &=  \partial_{\rho_i}(\varrho F)(\vec\rho, \vtheta),
\end{align}
\end{subequations}
where in the second equality we use the definition of the entropy density $\varrho s = - \, \partial_\vtheta (\varrho F)(\vec\rho, \vtheta)$ \eqref{1.thermo_b}.
The chemical potentials $\mu_1,\ldots,\mu_N$, typically defined via $\mu_i = \pa_{\rho_i}(\varrho F)(\vec\rho, \vtheta)$, incorporate the Lagrange multiplier $p$, i.e.~the fluid pressure,
\begin{equation}\label{1.chem}
    \mu_i
        = \partial_{\rho_i}(\varrho F)(\vec\rho, \vtheta) + V_i p
        = \partial_{\rho_i}(\varrho e)(\vec\rho, \varrho s) + V_i p,  \qquad
        i = 1,\ldots,N,
\end{equation}
where we see these definitions are equivalent from \eqref{eq:rhof_rhoe}. 

\begin{example}[Georgiadis--J\"{u}ngel mixing energy] \label{ex:georgiadis_I}
    As a concrete example of such a free energy, we may consider the mixing energy from \cite{GeJu24},
    \begin{equation}\label{eq:georgiadis}
      \varrho F = \sum_{i=1}^N \vtheta\rho_i\log(\rho_i/\varrho) - \cv\varrho\vtheta(\log\vtheta-1),
    \end{equation}
    for a constant $\cv>0$. This results in an entropy $\varrho s = \cv\varrho\log(\vtheta)-\sum_{i=1}^N \rho_i\log(\rho_i/\varrho)$, an internal energy $\varrho e = \cv\varrho\vtheta$, and chemical potentials $\mu_i = \vtheta\log(\rho_i/\varrho) - \cv\vtheta(\log\vtheta-1) + V_i p$.
\end{example}

Following Onsager theory \cite{onsager1931a,onsager1931b}, each flux is modelled as a linear combination of the thermodynamic driving forces\footnote{
    The cross-coupling coefficients $c_i$ are not to be confused with the heat capacity $c_v$ in the Georgiadis--J\"{u}ngel mixing energy \eqref{eq:georgiadis}.
}: the gradients of the (i)~scaled chemical potentials $\tfrac{\mu_i}{\vtheta}$, and (ii)~inverse temperature $\tfrac{1}{\vtheta}$,
\begin{subequations}\label{eq:1.flux}
\begin{align}
    \bm{J}_i &= -\sum_{j=1}^N M_{ij}(\vec\rho,\vtheta)\na\frac{\mu_j}{\vtheta} + c_i(\vec\rho,\vtheta)\nabla\frac{1}{\vtheta}, \qquad
    i=1,\ldots,N, \\
    \bm{J}_e &= -\sum_{j=1}^N c_j(\vec\rho,\vtheta)\na\frac{\mu_j}{\vtheta} +K(\vec\rho,\vtheta)\nabla\frac{1}{\vtheta}.
\end{align}
\end{subequations}
To ensure the second law of thermodynamics is satisfied, the Maxwell--Stefan matrix $\bm{M}=(M_{ij})_{i,j=1}^N$ is assumed symmetric and positive semi-definite, and the heat conductivity $K$ is assumed positive.
The total mass conservation equation $\dt\varrho + \divM(\varrho\u) = 0$ requires the total flux $\sum_{i=1}^N \bm{J}_i = \bm{0}$, meaning $\bm{M}$ and $\bm{c}=(c_i)_{i=1}^N$ must satisfy $\sum_{i=1}^N M_{ij} = \sum_{i=1}^N c_i = 0$.
We denote by $\LL$ the full, symmetric, positive semi-definite Onsager matrix,
\begin{equation}
    \LL(\vec\rho, \vtheta) \coloneqq \begin{pmatrix}
    \bm{M}(\vec\rho, \vtheta) & - \, \bm{c}(\vec\rho, \vtheta) \\
    - \, \bm{c}(\vec\rho, \vtheta)^\top & K(\vec\rho, \vtheta)
    \end{pmatrix}.
\end{equation}
The negative off-diagonal blocks reflect the opposite signs with which the mass and energy fluxes enter the entropy production.

The Cauchy stress $\bsig$ decomposes into an isotropic part and a dissipative (viscous) part $\mathbb{S}$. Using the Gibbs--Duhem relation \cite{Bothe2014,Giovangigli1999}, we write
\begin{subequations}\label{eq:stress}
\begin{align}
    \bsig  &=  \bigg( \varrho F - \sum_{i=1}^N \rho_i \mu_i \bigg)\I + \mathbb{S}(\vec\rho,\vtheta;\Du),  \\
    \mathbb{S}(\vec\rho,\vtheta;\Du)  &=  \eta(\vec\rho,\vtheta)\Du + \lambda(\vec\rho,\vtheta)\underset{= \, \mathrm{tr}(\Du)}{\underbrace{\divM\u}}\I,
\end{align}
\end{subequations}
where $\Du \coloneqq \tfrac{1}{2}(\nabla\u + \nabla\u^\top)$ is the symmetric gradient of $\u$. 
The semicolon in $\mathbb{S}$ indicates linearity.

\begin{remark}[Onsager matrix]
    The general formulation for the fluxes is only given implicitly by, for each $i$,
    \begin{equation}
        - \, \rho_i\Big(\nabla\frac{\mu_i}{\vtheta} - \sum_{j=1}^N \frac{\rho_j}{\varrho}\nabla\frac{\mu_j}{\vtheta}\Big)
            =  \sum_{j=1}^N f_{ij}\Big(\frac{\rho_j}{\varrho}\bm{J}_i - \frac{\rho_i}{\varrho}\bm{J}_j + \frac{d_i\rho_j - d_j\rho_i}{\varrho}\vtheta \nabla\frac{1}{\vtheta} \Big),
    \end{equation}
    where $f_{ij}$ denote the Maxwell--Stefan frictions, and $d_i$ denote the thermal diffusivities accounting for cross-effects related to Soret and Dufour effects~\cite{Bothe2014,Bothe2023}.
    In general, inverting for the fluxes requires the specification of a reference velocity and a pseudo-inverse~\cite{vanbrunt2021a}.
    However, in the case of equal Maxwell--Stefan frictions, i.e.~$f_{ij}\equiv 1$, it follows that
    \begin{equation}
        \bm{J}_i
            = - \, \rho_i \Big(\nabla\frac{\mu_i}{\vtheta}
            - \sum_{j=1}^N \frac{\rho_j}{\varrho} \nabla\frac{\mu_j}{\vtheta}\Big)
            - \Big(\sum_{j=1}^N \frac{d_i\rho_j - d_j\rho_i}{\varrho}\Big) \vtheta\nabla\frac{1}{\vtheta}.
    \end{equation}
    The majority of the entries of the Onsager matrix $\LL$ can be read directly from this identity as
    \begin{subequations}
    \begin{equation}
        \LL_{\rho_i\rho_j}
            = M_{ij}
            = \delta_{ij}\rho_i - \frac{\rho_i\rho_j}{\varrho},  \quad
        \LL_{\rho_i\vtheta}
            = \LL_{\vtheta\rho_i}
            = - \, c_i
            = \vtheta \sum_{j=1}^N \frac{d_i\rho_j - d_j\rho_i}{\varrho}.
    \end{equation}
    The final entry can be given as
    \begin{equation}
        \LL_{\vtheta\vtheta} = K = K_{\textrm{ht}} + \alpha \sum_{i=1}^N c_i^2
    \end{equation}
    \end{subequations}
    for an explicitly known $\alpha \geq 0$ that depends on temperature and densities, cf.~\cite{vanbrunt2021a,Bothe2014,Bothe2023}, where $K_{\textrm{ht}}>0$ denotes the pure heat conductivity in the absence of thermodiffusive cross effects ($\bm{c} = \bm{0}$).
\end{remark}

\begin{remark}[Pressure]
    The Lagrange multiplier $p$ solved for does not necessarily coincide with either the mechanical pressure or the thermodynamic pressure (see Rajagopal \cite{rajagopal2015} for a discussion of these distinctions).
    For the present free-energy model, the thermodynamic pressure is
    \begin{equation}
        p_{\mathrm{thermo}} \coloneqq \sum_{i=1}^N\rho_i\mu_i-\varrho F,
    \end{equation}
    so the stress relation \eqref{eq:stress} reads $\bsig=-p_{\mathrm{thermo}}\I+\mathbb{S}$.
    The mechanical pressure is the mean normal stress,
    \begin{equation}
        p_{\mathrm{mech}}
        = -\frac{1}{d}\operatorname{tr}\bsig
        = p_{\mathrm{thermo}}-\frac{1}{d}\operatorname{tr}\mathbb{S}.
    \end{equation}
    Thus, the thermodynamic and mechanical pressures coincide precisely when the viscous stress has zero trace.
    In particular, this need not hold, since $\divM\u$ is generally non-zero (cf.~\eqref{1.VJ}).
    On the other hand, the chemical-potential relation \eqref{1.chem} and quasi-incompressibility constraint \eqref{1.incom} give
    \begin{multline}
        p_{\mathrm{thermo}}
        = \sum_{i=1}^N\rho_i \mu_i - \varrho F
        = \sum_{i=1}^N\rho_i \partial_{\rho_i}(\varrho F) + \sum_{i=1}^N \rho_i V_i p - \varrho F  \\
        = p + \bigg(\sum_{i=1}^N \rho_i \partial_{\rho_i}(\varrho F) - \varrho F\bigg).
    \end{multline}
    Hence, the thermodynamic pressure $p_{\mathrm{thermo}}$ coincides with the Lagrange multiplier $p$ only if $\varrho F(\vec\rho,\vtheta)$ is homogeneous of degree one in $\vec\rho$, i.e.~$\sum_{i=1}^N \rho_i \partial_{\rho_i}(\varrho F) - \varrho F = 0$.
    In particular, this is true for the free energy \eqref{eq:georgiadis} proposed by Georgiadis \& J\"ungel \cite{GeJu24}.
\end{remark}

Combining the balance laws \eqref{eq:asys} with the chemical-potential relation \eqref{1.chem} and the constraint \eqref{1.incom} gives the following closed partial differential--algebraic system:
\begin{subequations}\label{eq:fullsys}
\begin{align}
    \dt\rho_i + \divM(\rho_i\u) &= \divM\!\left(\sum_{j=1}^N M_{ij}\nabla\frac{\mu_j}{\vtheta} - c_i\nabla\frac{1}{\vtheta}\right)\!, \label{eq:1mass}
    \\
    \mu_i &= \partial_{\rho_i}(\varrho F)(\vec\rho, \vtheta) + V_ip, \label{eq:2chem}
    \\
    \dt (\varrho\u) + \divM(\varrho\u\otimes\u) &= \divM(\mathbb{S}(\vec\rho,\vtheta;\Du)) + \nabla\!\left(\varrho F - \sum_{i=1}^N\rho_i\mu_i\right)\!,  \label{eq:3mom}
    \\
    \divM\u &= \divM\!\left(\sum_{i,j=1}^N V_i M_{ij}\nabla\frac{\mu_j}{\vtheta} - \sum_{i=1}^N V_i c_i\nabla\frac{1}{\vtheta}\right)\!, \label{eq:4divu}
    \\
    \dt (\varrho e) + \divM(\varrho e\u) &= \divM\!\left( \sum_{j=1}^N c_j\nabla\frac{\mu_j}{\vtheta} - K\nabla\frac{1}{\vtheta}\right)\! + \mathbb{S}(\vec\rho,\vtheta;\Du):\Du  \label{eq:5temp}  \\
    &\qquad\qquad\qquad\qquad\qquad\quad  + \!\left(\varrho F - \sum_{i=1}^N\rho_i\mu_i\right)\!\divM\u,  \notag
\end{align}
\end{subequations}
where we use the flux \eqref{eq:1.flux} and stress \eqref{eq:stress} identities.

\subsection{Main results}

The objective is to design a structure-preserving finite element method for system \eqref{eq:fullsys}. Specifically, two numerical schemes are developed. Both schemes satisfy the quasi-incompressibility constraint pointwise \eqref{1.incom}, conserve the partial mass of each species (and with it the total mass), and preserve the generation of entropy. The first, a semi-implicit Euler scheme (i.e.~a mixed explicit-implicit Euler method), dissipates total energy, while the second, a continuous Petrov--Galerkin (CPG) scheme, ensures total energy conservation and extends to arbitrary order. The first scheme is simpler and cheaper than the more complicated and expensive CPG scheme.

\subsection{State of the art}

The above system represents an extension of the quasi-incompressible isothermal mixture model presented by Druet \cite{Dru21}. For the isothermal case, existence results (i.e.\ for general $V_i>0$) were presented in \cite{BoDr21,Dru21}. We are not aware of any analytical treatment of the thermal system presented here. However, there are already some results for the fully compressible system. For such a system, existence in the stationary case is shown in \cite{Bulicek22}, while the non-stationary problem is considered in \cite{Mucha15,Bulicek17}.

The literature is much richer for the Maxwell--Stefan equations with vanishing barycentric velocity.
This helps to simplify the analysis, since it implies that the governing equations do not need to be
coupled to a momentum conservation law such as the Navier--Stokes equations.
Existence of generalised solutions was proven in \cite{Bot11,GeJu24,JuSt13} for the isothermal setting,
while thermal flows were analysed in \cite{HeJu21,HuSa18}.

In the numerical analysis literature on Maxwell--Stefan or related cross-diffusion flows, it is also frequently assumed that
the barycentric velocity vanishes.
Structure-preserving numerical methods for cross-diffusion equations with vanishing barycentric velocity are considered in
\cite{BPS22,CEM24,HLTW21,JuZu23}.
For numerical analysis literature on Maxwell--Stefan equations coupled to Navier--Stokes (or Stokes) flow,
we are only aware of the finite element methods in
\cite{AFMV25,Baier25,baier2025finite,Brunk2026,burman2003bunsen,longo2012finite},
and a series of finite volume schemes from Bell and collaborators
(see e.g.~\cite{donev2015low,srivastava2023staggered}).
However, apart from \cite{Brunk2026}, none of these works considers the preservation of energetic or entropic structures.
Hence, this paper can be viewed as a substantial extension of \cite{Brunk2026}, where we now consider the possibility of
exact energy preservation in the setting of thermal, quasi-incompressible flow.

\section{Notation and preliminaries}

Vectors in $\RR^N$ collecting variables for each species are denoted by an arrow, as in $\vec{v}$.
Vectors in $\RR^d$, e.g.~velocity vectors and multiples thereof, are represented with boldface $\bm{v}$.
The $L^2(\Omega)$ inner product is written as $\langle\cdot,\cdot\rangle$. We note that
\begin{equation}
    \mathbb{S}(\vec\rho, \vtheta; \Du) : \Du
        =  \eta(\vec\rho, \vtheta) \|\Du\|^2 + \lambda(\vec\rho, \vtheta)(\divM\u)^2,
\end{equation}
where $\|a\|$ denotes the standard Euclidean norm for vectors and the Frobenius norm for matrices.
Assuming $\eta(\vec\rho, \vtheta)$ is uniformly bounded below and $d\lambda(\vec\rho, \vtheta) +\eta(\vec\rho, \vtheta)\geq 0$, the viscous dissipation is coercive in $\bm{H}^1(\Omega)$, modulo the kernel of the symmetric gradient, by Korn's inequality.
It is assumed that integrable, finite-energy initial data at time $t = 0$ are given for the partial densities $\vec\rho$, momentum $\varrho\u$, and temperature $\vtheta$, satisfying the quasi-incompressibility constraint $\sum_{i=1}^N V_i\rho_i^0 = 1$ \eqref{1.incom}.

The following lemma bounds the change in internal energy density between two thermodynamic states and will be used in the analysis of our proposed semi-implicit Euler scheme \eqref{eq:schemeA}.

\begin{lemma}[Bound on change in energy density]\label{lem:rhoe_bound}
    Suppose the Helmholtz free energy $\varrho F$ is (i)~convex in $\vec\rho$ for fixed $\vtheta$, and (ii)~concave in $\vtheta$ for fixed $\vec\rho$.
    Then, for two distinct thermodynamic states indexed by $*_\pm$, we may bound
    \begin{equation}\label{eq:rhoe_bound}
        (\varrho e)_+ - (\varrho e)_-
            \le  \sum_{i=1}^N \partial_{\rho_i}(\varrho F)(\vec\rho_+, \vtheta_+) \big[\rho_{i+} - \rho_{i-}\big] + \vtheta_+ \big[(\varrho s)_+ - (\varrho s)_-\big].
    \end{equation}
\end{lemma}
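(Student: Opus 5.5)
Our approach is to rewrite everything in terms of $\varrho F$ through the Legendre relation \eqref{1.thermo_a}, and then apply one convexity inequality and one concavity inequality. By \eqref{1.thermo_a},
\begin{equation*}
    (\varrho e)_\pm = \varrho F(\vec\rho_\pm,\vtheta_\pm) + \vtheta_\pm (\varrho s)_\pm ,
\end{equation*}
so the left-hand side of \eqref{eq:rhoe_bound} is
\begin{equation*}
    \varrho F(\vec\rho_+,\vtheta_+) - \varrho F(\vec\rho_-,\vtheta_-) + \vtheta_+(\varrho s)_+ - \vtheta_-(\varrho s)_- .
\end{equation*}
Subtracting the $\vtheta_+[(\varrho s)_+ - (\varrho s)_-]$ term on the right-hand side of \eqref{eq:rhoe_bound}, it suffices to show
\begin{equation}\label{eq:plan_target}
    \varrho F(\vec\rho_+,\vtheta_+) - \varrho F(\vec\rho_-,\vtheta_-) + (\vtheta_+ - \vtheta_-)(\varrho s)_-
    \le \sum_{i=1}^N \partial_{\rho_i}(\varrho F)(\vec\rho_+,\vtheta_+)\big[\rho_{i+}-\rho_{i-}\big].
\end{equation}

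We split the difference in $\varrho F$ through the intermediate state $(\vec\rho_-,\vtheta_+)$:
\begin{equation*}
    \big[\varrho F(\vec\rho_+,\vtheta_+) - \varrho F(\vec\rho_-,\vtheta_+)\big] + \big[\varrho F(\vec\rho_-,\vtheta_+) - \varrho F(\vec\rho_-,\vtheta_-)\big].
\end{equation*}

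\emph{First bracket.} Convexity of $\varrho F(\cdot,\vtheta_+)$, written as the tangent-plane inequality at $\vec\rho_+$, gives
\begin{equation*}
    \varrho F(\vec\rho_-,\vtheta_+) \ge \varrho F(\vec\rho_+,\vtheta_+) + \sum_{i=1}^N \partial_{\rho_i}(\varrho F)(\vec\rho_+,\vtheta_+)\big[\rho_{i-}-\rho_{i+}\big].
\end{equation*}
Hence the first bracket is at most $\sum_{i=1}^N \partial_{\rho_i}(\varrho F)(\vec\rho_+,\vtheta_+)[\rho_{i+}-\rho_{i-}]$, which is exactly the right-hand side of \eqref{eq:plan_target}.

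\emph{Second bracket.} Concavity of $\varrho F(\vec\rho_-,\cdot)$, written as the tangent-line inequality at $\vtheta_-$, gives
\begin{equation*}
    \varrho F(\vec\rho_-,\vtheta_+) \le \varrho F(\vec\rho_-,\vtheta_-) + \partial_\vtheta(\varrho F)(\vec\rho_-,\vtheta_-)(\vtheta_+-\vtheta_-).
\end{equation*}
By \eqref{1.thermo_b}, $\partial_\vtheta(\varrho F)(\vec\rho_-,\vtheta_-) = -(\varrho s)_-$, so the second bracket is at most $-(\varrho s)_-(\vtheta_+-\vtheta_-)$. This exactly cancels the term $(\vtheta_+-\vtheta_-)(\varrho s)_-$ in \eqref{eq:plan_target}.

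Adding the two bounds proves \eqref{eq:plan_target}, and hence \eqref{eq:rhoe_bound}.

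The main point of care is choosing the intermediate state. The $\rho$-inequality needs its gradient evaluated at $(\vec\rho_+,\vtheta_+)$, and the $\vtheta$-inequality needs its derivative at $(\vec\rho_-,\vtheta_-)$ so that $(\varrho s)_-$ appears. The state $(\vec\rho_-,\vtheta_+)$ is the one that makes both tangent inequalities be taken at the required points. We also assume differentiability of $\varrho F$, as already implicit in \eqref{1.thermo_b} and \eqref{1.chem}; otherwise subgradients would be used in its place.
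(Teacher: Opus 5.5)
Your proof is correct and is essentially the paper's argument: you use the Legendre relation, split $(\varrho F)_+ - (\varrho F)_-$ through the intermediate state $(\vec\rho_-,\vtheta_+)$, apply the tangent-plane inequality in $\vec\rho$ at $(\vec\rho_+,\vtheta_+)$ and the tangent-line inequality in $\vtheta$ at $(\vec\rho_-,\vtheta_-)$, and substitute $\partial_\vtheta(\varrho F) = -\varrho s$. The only difference is cosmetic: you subtract the entropy term first to get an equivalent target inequality, whereas the paper collapses like terms at the end.
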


\begin{proof}
    From the Legendre relation for $\varrho e$ and $\varrho F$ \eqref{1.thermo_a},
    \begin{equation}\label{eq:rhoe_bound_intermediate_a}
        (\varrho e)_+ - (\varrho e)_-
            = \big[(\varrho F)_+ - (\varrho F)_-\big] + \big[\vtheta_+ (\varrho s)_+ - \vtheta_- (\varrho s)_-\big].
    \end{equation}
    We bound $(\varrho F)_+ - (\varrho F)_-$ by passing through an intermediate state $(\vec\rho_-, \vtheta_+)$:
    \begin{subequations}\label{eq:rhoe_bound_intermediate_b}
    \begin{align}
        (\varrho F)_+ - (\varrho F)_-
            &= \varrho F(\vec\rho_+, \vtheta_+) - \varrho F(\vec\rho_-, \vtheta_-)  \\
            &= \big[\varrho F(\vec\rho_+, \vtheta_+) - \varrho F(\vec\rho_-, \vtheta_+)\big]  \\
            &\qquad\qquad\qquad\qquad\qquad+ \big[\varrho F(\vec\rho_-, \vtheta_+) - \varrho F(\vec\rho_-, \vtheta_-)\big].  \notag
    \end{align}
    \end{subequations}
    For the first bracket, we use the convexity of $\varrho F$ in $\vec\rho$:
    \begin{subequations}
    \begin{equation}
        \varrho F(\vec\rho_+, \vtheta_+) - \varrho F(\vec\rho_-, \vtheta_+)
            \le  \sum_{i=1}^N \partial_{\rho_i} (\varrho F)(\vec\rho_+, \vtheta_+)\big[\rho_{i+} - \rho_{i-}\big].
    \end{equation}
    For the second, we use the concavity of $\varrho F$ in $\vtheta$:
    \begin{multline}
        \varrho F(\vec\rho_-, \vtheta_+) - \varrho F(\vec\rho_-, \vtheta_-)
            \le  \partial_\vtheta (\varrho F)(\vec\rho_-, \vtheta_-)\big[\vtheta_+ - \vtheta_-\big]  \\
            =  - \, (\varrho s)_- \big[\vtheta_+ - \vtheta_-\big].
    \end{multline}
    \end{subequations}
    Substituting these bounds into \eqref{eq:rhoe_bound_intermediate_a} and \eqref{eq:rhoe_bound_intermediate_b},
    \begin{multline}
        (\varrho e)_+ - (\varrho e)_-
            \le  \sum_{i=1}^N \partial_{\rho_i}(\varrho F)(\vec\rho_+, \vtheta_+)\big[\rho_{i+} - \rho_{i-}\big]  \\
            - (\varrho s)_- \big[\vtheta_+ - \vtheta_-\big]
            + \big[\vtheta_+ (\varrho s)_+ - \vtheta_- (\varrho s)_-\big].
    \end{multline}
    Collapsing like terms gives the desired result.
\end{proof}

\section{Entropy balance and global structures}

For smooth solutions, the equation for internal energy can equivalently be replaced by the equation for total energy or entropy. The system is reformulated in terms of the entropy equation to facilitate numerical approximation; cf.~\cite{Hoehn26} for a similar approach to the thermal Cahn--Hilliard equation.

\begin{lemma}[Entropy balance equation]
    For smooth solutions of \eqref{eq:fullsys},
    \begin{equation}\label{eq:6entropy}
        \dt(\varrho s) + \divM(\varrho s\u)
            = - \divM\bm{J}_s + \mathcal{P}(\vec\rho, \vtheta, \vec\mu, \Du),
    \end{equation}
    where the entropy flux $\bm{J}_s$ is defined as
    \begin{equation}
        \bm{J}_s  \coloneqq  -\sum_{i=1}^N \frac{\mu_i}{\vtheta} \bm{J}_i + \frac{1}{\vtheta}\bm{J}_e,
        \label{eq:entropy_flux_def}
    \end{equation}
    and $\mathcal{P}$ denotes the entropy production rate
    \begin{equation}\label{eq:calP}
        \mathcal{P}(\vec\rho, \vtheta, \vec\mu, \Du)
            \coloneqq
            \frac{1}{\vtheta}\mathbb{S}(\vec\rho,\vtheta;\Du):\Du
            + \begin{pmatrix}
                \nabla\frac{\vec{\mu}}{\vtheta} \\ \\ \nabla\frac{1}{\vtheta}
            \end{pmatrix}^\top \!\!\!\LL(\vec\rho, \vtheta) \begin{pmatrix}
            \nabla\frac{\vec{\mu}}{\vtheta} \\  \\\nabla\frac{1}{\vtheta}
            \end{pmatrix}
            \ge  0.
    \end{equation}
\end{lemma}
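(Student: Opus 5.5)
The plan is to derive \eqref{eq:6entropy} from the internal energy equation \eqref{eq:5temp} via the Gibbs relation. For smooth solutions, differentiate $\varrho e = \varrho e(\vec\rho,\varrho s)$ along the material flow. By \eqref{1.thermo_b} and \eqref{eq:rhof_rhoe},
\begin{equation*}
    \dd(\varrho e) = \sum_{i=1}^N \partial_{\rho_i}(\varrho F)\,\dd\rho_i + \vtheta\,\dd(\varrho s) = \sum_{i=1}^N (\mu_i - V_i p)\,\dd\rho_i + \vtheta\,\dd(\varrho s),
\end{equation*}
where the second equality uses \eqref{1.chem}. Applying this to $\dt + \u\cdot\nabla$, and writing each conservative derivative as $\dt a + \divM(a\u) = (\dt + \u\cdot\nabla)a + a\divM\u$, I get
\begin{multline*}
    \dt(\varrho e) + \divM(\varrho e\u) = \sum_{i=1}^N (\mu_i - V_i p)\big[\dt\rho_i + \divM(\rho_i\u)\big] + \vtheta\big[\dt(\varrho s)+\divM(\varrho s\u)\big] \\
    + \Big(\varrho e - \sum_{i=1}^N \rho_i(\mu_i - V_i p) - \vtheta\varrho s\Big)\divM\u .
\end{multline*}
Using $\varrho e - \vtheta\varrho s = \varrho F$ from \eqref{1.thermo_a} and $\sum_i V_i\rho_i = 1$ from \eqref{1.incom}, the last bracket equals $\varrho F - \sum_i\rho_i\mu_i + p$. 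This $+p\,\divM\u$ term combines with $-\sum_i V_i p\,[\dt\rho_i+\divM(\rho_i\u)] = p\sum_i V_i\divM\bm{J}_i = -p\,\divM\u$, by \eqref{eq:asys1} and \eqref{1.VJ}. The two pressure terms cancel exactly; checking this cancellation carefully is the step I expect to need the most care.

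Next, I substitute \eqref{eq:asys1} for the mass terms and \eqref{eq:5temp} (that is, \eqref{eq:asys3} with \eqref{eq:stress}) for the left side. The term $(\varrho F - \sum_i\rho_i\mu_i)\divM\u$ appears on both sides and cancels. Dividing by $\vtheta>0$ gives
\begin{equation*}
    \dt(\varrho s) + \divM(\varrho s\u) = \frac{1}{\vtheta}\Big(-\divM\bm{J}_e + \mathbb{S}:\Du + \sum_{i=1}^N \mu_i\divM\bm{J}_i\Big).
\end{equation*}
I then use the product rule, $\frac{1}{\vtheta}\divM\bm{J}_e = \divM(\bm{J}_e/\vtheta) - \bm{J}_e\cdot\nabla\frac1\vtheta$ and $\frac{\mu_i}{\vtheta}\divM\bm{J}_i = \divM(\frac{\mu_i}{\vtheta}\bm{J}_i) - \bm{J}_i\cdot\nabla\frac{\mu_i}{\vtheta}$. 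This produces $-\divM\bm{J}_s$ with $\bm{J}_s$ as in \eqref{eq:entropy_flux_def}, plus the remainder
\begin{equation*}
    \frac1\vtheta\mathbb{S}:\Du - \sum_{i=1}^N \bm{J}_i\cdot\nabla\frac{\mu_i}{\vtheta} + \bm{J}_e\cdot\nabla\frac1\vtheta .
\end{equation*}

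Finally, inserting the Onsager fluxes \eqref{eq:1.flux} turns the flux part of the remainder into
\begin{equation*}
    \sum_{i,j}M_{ij}\nabla\tfrac{\mu_i}{\vtheta}\cdot\nabla\tfrac{\mu_j}{\vtheta} - 2\sum_i c_i\nabla\tfrac{\mu_i}{\vtheta}\cdot\nabla\tfrac1\vtheta + K|\nabla\tfrac1\vtheta|^2,
\end{equation*}
using the symmetry of $\bm M$. This is precisely the quadratic form with $\LL$ in \eqref{eq:calP}, so the remainder equals $\mathcal{P}$.

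For nonnegativity, the quadratic-form term is nonnegative because $\LL$ is symmetric positive semi-definite, applied componentwise in each spatial direction. The viscous term satisfies $\mathbb{S}:\Du = \eta\|\Du\|^2 + \lambda(\divM\u)^2 \ge 0$. To see this, split $\Du$ into its deviatoric and trace parts, which gives $\eta\|\Du^{\mathrm{dev}}\|^2 + \frac{1}{d}(\eta+d\lambda)(\divM\u)^2$; this is nonnegative under $\eta\ge0$ and $d\lambda+\eta\ge0$. Since $\vtheta>0$, it follows that $\mathcal{P}\ge0$.
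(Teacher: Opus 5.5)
Your proposal is correct and follows essentially the same route as the paper. Both arguments apply the Gibbs relation along the material derivative, convert to conservative form, cancel the pressure terms via $\sum_i V_i\rho_i = 1$ and \eqref{1.VJ}, substitute the balance laws, divide by $\vtheta$, and use the product rule to extract $-\divM\bm{J}_s$. You additionally check explicitly that the remainder is the $\LL$-quadratic form and that $\mathcal{P}\ge 0$ (via the deviatoric split of $\Du$), which the paper leaves implicit.
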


\begin{proof}
    Denote $\tfrac{D}{Dt}\phi = \dt \phi + \u\cdot\nabla\phi$, for a general scalar field $\phi$.
    Recalling that $\varrho e = \varrho F + \vtheta \varrho s$ \eqref{1.thermo}, applying the convective derivative $\tfrac{D}{Dt}$ gives
    \begin{equation}
        \frac{D}{Dt}(\varrho e)
            =  \frac{D}{Dt}(\varrho F) +  \varrho s \frac{D}{Dt}\vtheta + \vtheta \frac{D}{Dt}(\varrho s).
    \end{equation}
    The differential $\tfrac{D}{Dt}(\varrho F)$ reduces to
    \begin{equation}
        \frac{D}{Dt}(\varrho F)
            =  \sum_{i=1}^N \partial_{\rho_i}(\varrho F)\frac{D}{Dt}\rho_i + \partial_\vtheta(\varrho F) \frac{D}{Dt}\vtheta
            =  \sum_{i=1}^N (\mu_i - V_i p)\frac{D}{Dt}\rho_i - \varrho s \frac{D}{Dt}\vtheta,
    \end{equation}
    using \eqref{1.chem} and \eqref{1.thermo} respectively, thus implying
    \begin{subequations}
    \begin{align}
        \frac{D}{Dt}(\varrho e)
            &= \sum_{i=1}^N (\mu_i - V_i p)\frac{D}{Dt}\rho_i + \vtheta\frac{D}{Dt}(\varrho s),
    \end{align}
    and expanding material derivatives we have
    \begin{align}
        \dt (\varrho e) \, + \, &\divM(\varrho e \u)  \\
            &= \sum_{i=1}^N (\mu_i - V_i p)(\dt \rho_i + \divM(\rho_i\u)) + \vtheta(\dt (\varrho s) + \divM(\varrho s \u))  \notag  \\
            &\qquad\qquad\qquad\qquad\qquad+ \!\left(\varrho e - \sum_{i=1}^N (\mu_i - V_i p)\rho_i - \vtheta \varrho s\right)\!\divM\u.  \notag
    \end{align}
    \end{subequations}
    Using \eqref{1.incom} and \eqref{1.thermo}, the final term simplifies to $(\varrho F - \sum_{i=1}^N \rho_i \mu_i + p)\divM\u$.
    Substituting in the evolution equations for $\rho_i$ \eqref{eq:asys1} and $\varrho e$ \eqref{eq:asys3},
    \begin{multline}
        - \divM\bm{J}_e + \bsig : \Du
            = -\sum_{i=1}^N (\mu_i - V_i p)\divM\bm{J}_i + \vtheta(\dt (\varrho s)  \\
            + \divM(\varrho s \u)) + \!\left(\varrho F - \sum_{i=1}^N \rho_i \mu_i + p\right)\!\divM\u.
            \label{eq:entropy_derivation_eq}
    \end{multline}
    From the definition of the Cauchy stress $\bsig$ \eqref{eq:stress} and the quasi-incompressibility condition \eqref{1.VJ},
    \begin{equation}
        -\divM\bm{J}_e + \mathbb{S}(\vec\rho, \vtheta; \Du) : \Du
            =  -\sum_{i=1}^N \mu_i \divM\bm{J}_i + \vtheta(\dt (\varrho s) + \divM(\varrho s \u)).
    \end{equation}
    Rearranging,
    \begin{equation}
        \sum_{i=1}^N \frac{\mu_i}{\vtheta} \divM\bm{J}_i - \frac{1}{\vtheta}\divM\bm{J}_e + \frac{1}{\vtheta}\mathbb{S}(\vec\rho, \vtheta; \Du) : \Du
            = \dt (\varrho s) + \divM(\varrho s \u).
    \end{equation}
    Writing $\phi\divM\bm{v} = \divM(\phi\bm{v}) - \nabla\phi \cdot \bm{v}$ for each term on the left-hand side and using the definitions for the mass and energy fluxes \eqref{eq:1.flux}, the claimed identity \eqref{eq:6entropy} holds.
\end{proof}

Given the above reformulation, we are able to show conservation of partial masses and total energy, and non-negativity of entropy production.

\begin{lemma}[Global structures]\label{lem:struc}
    For smooth solutions to \eqref{eq:fullsys},
    \begin{subequations}\label{eq:struc}
    \begin{align}
        \frac{\dd}{\dd t}\int_\Omega \rho_i \dd x  &= 0, \qquad i = 1, \ldots, N,  \label{eq:struc_mass}  \\
        \frac{\dd}{\dd t}\int_\Omega \left(\varrho e(\vec\rho,\vtheta) + \tfrac{\varrho}{2}\|\bm{u}\|^2\right)\dd x  &= 0,  \label{eq:struc_energy}  \\
        \frac{\dd}{\dd t}\int_\Omega \varrho s(\vec\rho,\vtheta)\dd x  &=  \int_\Omega \mathcal{P}(\vec\rho, \vtheta, \vec\mu, \Du)\,\dd x \ge 0. \label{eq:struc_entropy}
    \end{align}
    \end{subequations}
\end{lemma}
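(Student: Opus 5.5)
Each of the three identities comes from integrating the corresponding balance law over the periodic domain $\Omega$, where the integral of every divergence vanishes.

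For \eqref{eq:struc_mass}, I integrate \eqref{eq:1mass} over $\Omega$. The convective term $\divM(\rho_i\u)$ and the diffusive flux term are both divergences, so by periodicity their integrals vanish and $\frac{\dd}{\dd t}\int_\Omega\rho_i\,\dd x=0$.

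For \eqref{eq:struc_entropy}, I integrate the entropy balance \eqref{eq:6entropy}. The terms $\divM(\varrho s\u)$ and $\divM\bm{J}_s$ vanish, leaving $\int_\Omega\mathcal{P}\,\dd x$. This is non-negative pointwise for three reasons:
\begin{itemize}
\item $\vtheta>0$;
\item $\mathbb{S}:\Du=\eta\|\Du\|^2+\lambda(\divM\u)^2\ge0$, since $d\lambda+\eta\ge0$ and $(\divM\u)^2=(\mathrm{tr}\,\Du)^2\le d\|\Du\|^2$ (if $\lambda<0$, bound $\lambda(\mathrm{tr}\,\Du)^2\ge \lambda d\|\Du\|^2\ge-\eta\|\Du\|^2$);
\item $\LL$ is symmetric positive semi-definite, so the quadratic form in $\mathcal{P}$ is non-negative. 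Here $\LL$ acts as $\LL\otimes\I_d$ on the stacked gradients, which is still positive semi-definite.
\end{itemize}

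The main work is \eqref{eq:struc_energy}. I add the internal energy equation \eqref{eq:asys3}, equivalently \eqref{eq:5temp}, to a kinetic energy balance obtained by testing the momentum equation with $\u$.
\begin{enumerate}
\item From \eqref{eq:asys2}, using the continuity equation $\dt\varrho+\divM(\varrho\u)=0$, I derive
\begin{equation}
\dt\big(\tfrac{\varrho}{2}\|\u\|^2\big)+\divM\big(\tfrac{\varrho}{2}\|\u\|^2\u\big)=\u\cdot\divM\bsig.
\end{equation}
The continuity equation follows by summing \eqref{eq:asys1} and using $\sum_i\bm{J}_i=\bm{0}$.
\item I write $\u\cdot\divM\bsig=\divM(\bsig\u)-\bsig:\Du$, using the symmetry of $\bsig$.
\item Adding this to \eqref{eq:asys3}, the $\bsig:\Du$ terms cancel. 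What remains is
\begin{equation}
\dt\big(\varrho e+\tfrac{\varrho}{2}\|\u\|^2\big)=-\divM\big((\varrho e+\tfrac{\varrho}{2}\|\u\|^2)\u+\bm{J}_e-\bsig\u\big),
\end{equation}
which is a pure divergence.
\item Integrating over the periodic domain gives \eqref{eq:struc_energy}.
\end{enumerate}

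The only delicate point is consistency. The system \eqref{eq:fullsys} is the form actually solved, with the pressure $p$ entering through $\mu_i$ in the stress. I must check that its momentum equation \eqref{eq:3mom} and energy equation \eqref{eq:5temp} are exactly \eqref{eq:asys2} and \eqref{eq:asys3} with the stress \eqref{eq:stress}. This holds because $\divM\big((\varrho F-\sum_i\rho_i\mu_i)\I\big)=\nabla(\varrho F-\sum_i\rho_i\mu_i)$ and $\big((\varrho F-\sum_i\rho_i\mu_i)\I\big):\Du=(\varrho F-\sum_i\rho_i\mu_i)\divM\u$. With these identifications, the cancellation in step 3 is exact, and no further use of the quasi-incompressibility constraint is needed for the energy identity.
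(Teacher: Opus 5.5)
Your proof is correct and follows essentially the paper's argument: mass and entropy come from integrating \eqref{eq:1mass} and \eqref{eq:6entropy} over the periodic domain, and energy comes from combining the momentum equation \eqref{eq:3mom} tested with $\u$, the summed total-mass equation tested with $\tfrac12\|\u\|^2$, and the integrated internal-energy equation \eqref{eq:5temp}. The differences are presentational: you carry out the energy cancellation pointwise in conservative form before integrating, and you spell out why $\mathcal{P}\ge 0$, which the paper builds into its standing assumptions on $\LL$ and the viscosities.
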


\begin{proof}
    The first and last identities hold by integration by parts on \eqref{eq:1mass} and \eqref{eq:6entropy} respectively.
    The remaining identity follows by summing \eqref{eq:1mass}, testing the resulting total-mass equation with $\tfrac{1}{2}\|\u\|^2$, testing \eqref{eq:3mom} with $\u$, and integrating \eqref{eq:5temp} over the domain.

\end{proof}

\section{Variational formulations}\label{sec.2}

To construct our variational formulations of \eqref{eq:fullsys}, we replace the energy balance equation \eqref{eq:5temp} with that of entropy balance \eqref{eq:6entropy}.
The energy equation could alternatively be used;
for discretisation, employing the inverse temperature may be advantageous \cite{Brunk25Enu,Brunk2025_noniso}.

For simplicity, we henceforth consider the case $c_i = 0$.
The extension to non-zero $c_i$ is straightforward.

\subsection{Formulation I}

Testing the momentum convection term $\divM(\varrho\u\otimes\u)$ against a vector $\bm{v}$ and applying integration by parts,
\begin{subequations}
\begin{align}
    \la\divM(\varrho\u\otimes\u), \bm{v}\ra
    &= \frac12\la\divM(\varrho\u\otimes\u), \bm{v}\ra
    - \frac12\la\varrho\u\otimes\u, \na\bm{v}\ra  \\
    &= \frac12\la\divM(\varrho\u)\u, \bm{v}\ra + \frac12\la(\varrho\u\cdot\na)\u, \bm{v}\ra
    - \frac12\la(\varrho\u\cdot\na)\bm{v}, \u\ra.
\end{align}
\end{subequations}
Accordingly, we introduce the convective trilinear form
\begin{equation}
    \mathcal{B}(\varrho\bm{u};\bm{v},\bm{w})
    \coloneqq \frac12\la(\varrho\bm{u}\cdot\na)\bm{v},\bm{w}\ra
    - \frac12\la(\varrho\bm{u}\cdot\na)\bm{w},\bm{v}\ra.
\end{equation}
This is skew-symmetric in $\bm{v}$, $\bm{w}$, i.e.~$\mathcal{B}(\varrho\bm{u};\bm{v},\bm{w}) = - \, \mathcal{B}(\varrho\bm{u};\bm{w},\bm{v})$.
Total mass conservation lets us write $\divM(\varrho\u) = - \, \dt \varrho$, implying
\begin{subequations}\label{eq:weak3_a}
\begin{align}
    \la\divM(\varrho\u\otimes\u), \bm{v}\ra
    &= - \, \frac12\la\dt \varrho\u, \bm{v}\ra + \mathcal{B}(\varrho\u; \u, \bm{v}),  \\
    \la\dt (\varrho\u), \bm{v}\ra + \la\divM(\varrho\u\otimes\u), \bm{v} \ra
    &= \!\left\la\! \varrho\dt \u + \frac12\dt \varrho\u, \bm{v} \!\right\ra\! + \mathcal{B}(\varrho\u; \u, \bm{v}).
\end{align}
\end{subequations}

Considering $\nabla(\varrho F - \sum_{i=1}^N\rho_i\mu_i)$, the momentum forcing term in \eqref{eq:3mom} from the conservative component of the stress $\bsig$, we substitute $\nabla(\varrho F)$ through the thermodynamic relations (\ref{1.thermo},~\ref{1.chem}),
\begin{subequations}
\begin{align}
    \nabla\!\left(\varrho F - \sum_{i=1}^N\rho_i\mu_i\right)\!
        &=  - \, \varrho s \nabla \vtheta + \sum_{i=1}^N (\mu_i - V_i p) \nabla \rho_i - \sum_{i=1}^N \nabla(\rho_i\mu_i)  \\
        &= - \, \varrho s \nabla \vtheta - \sum_{i=1}^N \rho_i\nabla\mu_i + \sum_{i=1}^N V_i \rho_i \nabla p - \nabla \!\left( \sum_{i=1}^N V_i \rho_i p \right)\!  \\
        &=  - \, \varrho s \nabla \vtheta - \sum_{i=1}^N \rho_i\nabla\mu_i + \sum_{i=1}^N V_i \rho_i \nabla p - \nabla p, \label{eq:consstressform}
\end{align}
\end{subequations}
where in the last identity we apply quasi-incompressibility \eqref{1.incom}\footnote{The final identity \eqref{eq:consstressform} can be cancelled further---the final two terms may both be eliminated---however this form is taken with hindsight to ensure the desired stability results hold.}.
Testing against $\bm{v}$ yields
\begin{equation}\label{eq:weak3_b}
    \!\left\la\! \nabla\!\left(\varrho F - \sum_{i=1}^N\rho_i\mu_i\right)\!, \bm{v} \!\right\ra
        =  \left\la \sum_{i=1}^N \rho_i (V_i \nabla p - \nabla\mu_i) - \varrho s \nabla \vtheta, \bm{v} \!\right\ra\! + \la p, \divM\bm{v} \ra
\end{equation}
by integration by parts.
Combining \eqref{eq:weak3_a} and \eqref{eq:weak3_b}, we arrive at our first variational formulation.

\begin{lemma}[First variational formulation]\label{lem:Eulerform}
    Smooth solutions $(\vec\rho, \vec\mu, \bm{u}, p, \vtheta)$ to \eqref{eq:fullsys} satisfy
    \begin{subequations}\label{eq:weakA}
    \begin{align}
        \la \dt\rho_i,\psi_i \ra - \la \rho_i\u,\nabla\psi_i \ra  &=  - \sum_{j=1}^N \!\left\la\! M_{ij}\nabla \frac{\mu_j}{\vtheta}, \nabla\psi_i \!\right\ra,  \label{eq:weak1}
        \\
        \la \mu_i,\zeta_i \ra  &=  \la \partial_{\rho_i} (\varrho F)(\vec\rho, \vtheta) + V_i p, \zeta_i \ra,  \label{eq:weak2}
        \\
        \left\la \varrho\dt\u + \frac12\dt\varrho\u, \v \!\right\ra + \mathcal{B}(\varrho&\u; \u, \v)  \label{eq:weak3}  \\
        &= - \, \la \mathbb{S}(\vec\rho,\vtheta;\mathrm{D}\u),\mathrm{D}\v \ra + \la p,\divM\v \ra    \notag
        \\  
        &\qquad\qquad - \, \!\left\la \sum_{i=1}^N \rho_i(\nabla\mu_i - V_i \na p)+ \varrho s\nabla\vtheta, \v \!\right\ra\!,  \notag
        \\
        \la \divM\u,q \ra  &=  - \, \!\left\la \sum_{i,j=1}^N V_i M_{ij}\nabla\frac{\mu_j}{\vtheta}, \nabla q \!\right\ra\!,  \label{eq:weak4}
        \\
        \la \dt (\varrho s),\omega \ra - \la \varrho s\u,\nabla\omega \ra  &=  \left\la \mathbb{S}(\vec\rho,\vtheta;\Du):\mathrm{D}\u,\frac{\omega}{\vtheta} \right\ra + \left\la\! K\nabla\frac{1}{\vtheta},\nabla\frac{\omega}{\vtheta} \right\ra\!  \label{eq:weak5}  \\
        &\qquad\qquad + \sum_{i,j=1}^N \!\left\la M_{ij}\nabla\frac{\mu_j}{\vtheta},\nabla\frac{\mu_i\omega}{\vtheta} \right\ra\!,  \notag
    \end{align}
    \end{subequations}
    for all smooth test functions $(\psi_i, \zeta_i, \v, q, \omega)$, $i=1,\ldots,N$, where the entropy density $\varrho s(\vec\rho, \vtheta)$ is determined as an explicit function of $\vec\rho$, $\vtheta$.
\end{lemma}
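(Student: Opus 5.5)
The plan is to derive each equation of \eqref{eq:weakA} by multiplying the corresponding strong equation of \eqref{eq:fullsys}, or the entropy balance \eqref{eq:6entropy}, by a smooth periodic test function. We then integrate over $\Omega$ and integrate by parts. Periodicity means no boundary terms arise. Throughout we take $c_i=0$, as stated in \cref{sec.2}.

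\emph{Mass, chemical potential and divergence equations.} For \eqref{eq:weak1}, test \eqref{eq:1mass} with $\psi_i$. Moving the divergence onto $\psi_i$ in both the convective term $\divM(\rho_i\u)$ and the flux term gives the stated identity, including the sign. Equation \eqref{eq:weak2} is simply \eqref{eq:2chem} tested with $\zeta_i$. For \eqref{eq:weak4}, test \eqref{eq:4divu} with $q$ and integrate the right-hand side by parts once.

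\emph{Momentum equation.} For \eqref{eq:weak3}, test \eqref{eq:3mom} with $\v$ and treat each side separately.
\begin{itemize}
\item On the left-hand side, use \eqref{eq:weak3_a}. This relies on total mass conservation $\dt\varrho+\divM(\varrho\u)=0$, which follows by summing \eqref{eq:1mass} over $i$ and using $\sum_i M_{ij}=0$, so that $\sum_i\bm{J}_i=\bm{0}$.
\item For the viscous term, integrate by parts and use the symmetry of $\mathbb{S}$ to write $\la\divM\mathbb{S},\v\ra=-\la\mathbb{S},\nabla\v\ra=-\la\mathbb{S},\mathrm{D}\v\ra$.
\item For the conservative stress term, use \eqref{eq:weak3_b}. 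It rests on the pointwise identity \eqref{eq:consstressform}, which combines $\partial_\vtheta(\varrho F)=-\varrho s$, the relation \eqref{1.chem} and the constraint \eqref{1.incom}. The pressure gradient is then integrated by parts: $-\la\nabla p,\v\ra=\la p,\divM\v\ra$.
\end{itemize}
Collecting these terms gives \eqref{eq:weak3}.

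\emph{Entropy equation.} This step, \eqref{eq:weak5}, is the only one needing care. Test \eqref{eq:6entropy} with $\omega$. The left-hand side integrates by parts to $\la\dt(\varrho s),\omega\ra-\la\varrho s\u,\nabla\omega\ra$. On the right-hand side, expand $\mathcal{P}$ with $c_i=0$. The flux term becomes $-\la\divM\bm{J}_s,\omega\ra=\la\bm{J}_s,\nabla\omega\ra$, with
\[
\bm{J}_s=\sum_{i,j} \frac{\mu_i}{\vtheta}M_{ij}\nabla\frac{\mu_j}{\vtheta} - \frac{K}{\vtheta}\nabla\frac{1}{\vtheta}.
\]
The key observation is the product rule
\[
\nabla\frac{\mu_i\omega}{\vtheta}=\frac{\mu_i}{\vtheta}\nabla\omega+\omega\nabla\frac{\mu_i}{\vtheta},
\]
together with its analogue for $\nabla\tfrac{\omega}{\vtheta}=\tfrac{1}{\vtheta}\nabla\omega+\omega\nabla\tfrac{1}{\vtheta}$. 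With these, the flux contribution plus the quadratic dissipation terms $\omega\,\nabla\tfrac{\mu}{\vtheta}\cdot\bm{M}\nabla\tfrac{\mu}{\vtheta}$ and $\omega K|\nabla\tfrac1\vtheta|^2$ combine exactly into $\sum_{i,j}\la M_{ij}\nabla\tfrac{\mu_j}{\vtheta},\nabla\tfrac{\mu_i\omega}{\vtheta}\ra+\la K\nabla\tfrac1\vtheta,\nabla\tfrac{\omega}{\vtheta}\ra$. The symmetry of $\bm{M}$ is not even needed for this, since the index pairing matches directly.

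The main obstacle is sign bookkeeping in this last combination. The energy flux enters $\bm{J}_s$ as $-\tfrac{K}{\vtheta}\nabla\tfrac1\vtheta$, and this must produce $+\la K\nabla\tfrac1\vtheta,\nabla\tfrac\omega\vtheta\ra$. We will verify this by checking that setting $\omega\equiv1$ recovers $\int_\Omega\mathcal{P}\,\dd x$, consistent with \eqref{eq:struc_entropy}. The viscous term $\tfrac1\vtheta\mathbb{S}:\Du$ tested with $\omega$ gives $\la\mathbb{S}:\Du,\omega/\vtheta\ra$ directly, which completes \eqref{eq:weak5}.
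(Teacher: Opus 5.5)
Your route is the same as the paper's: test each strong equation (using \eqref{eq:6entropy} in place of the energy equation), integrate by parts on the periodic domain, and use \eqref{eq:weak3_a} and \eqref{eq:weak3_b} for the momentum equation. Your mass, chemical-potential, divergence and momentum steps are correct. There is, however, a sign error in the entropy step, which is the step you yourself single out as the delicate one.

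With $c_j=0$, \eqref{eq:1.flux} gives $\bm{J}_e=+K\nabla\frac{1}{\vtheta}$, i.e.\ Fourier's law $\bm{J}_e=-\frac{K}{\vtheta^2}\nabla\vtheta$. The term $-K\nabla\frac{1}{\vtheta}$ that appears in \eqref{eq:5temp} is $-\bm{J}_e$, not $\bm{J}_e$. The entropy flux is therefore
\[
\bm{J}_s=\sum_{i,j=1}^N\frac{\mu_i}{\vtheta}M_{ij}\nabla\frac{\mu_j}{\vtheta}+\frac{K}{\vtheta}\nabla\frac{1}{\vtheta},
\]
with a plus sign on the $K$-term. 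With your sign, the two $K$-contributions sum to $\la K\nabla\frac{1}{\vtheta},\,\omega\nabla\frac{1}{\vtheta}-\frac{1}{\vtheta}\nabla\omega\ra$. This is not $\la K\nabla\frac{1}{\vtheta},\nabla\frac{\omega}{\vtheta}\ra$, so the ``exact combination'' you assert does not follow from the flux you wrote down. The product rule $\nabla\frac{\omega}{\vtheta}=\frac{1}{\vtheta}\nabla\omega+\omega\nabla\frac{1}{\vtheta}$ only closes if the flux term and the quadratic dissipation term enter with the same sign, exactly as they do in your $\bm{M}$-part.

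Your proposed safeguard also cannot catch this. For $\omega\equiv1$ you have $\nabla\omega=0$, so $\la\bm{J}_s,\nabla\omega\ra$ vanishes identically. That test only checks the $\mathcal{P}$-part and is blind to whatever sign the flux carries. A check that does detect the sign is to read $\bm{J}_e$ directly off \eqref{eq:1.flux}. Alternatively, redo the step from the derivation of \eqref{eq:6entropy}, where $-\frac{1}{\vtheta}\divM\bm{J}_e=-\divM\bigl(\frac{1}{\vtheta}\bm{J}_e\bigr)+\nabla\frac{1}{\vtheta}\cdot\bm{J}_e$ fixes how $\bm{J}_e$ enters $\bm{J}_s$. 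With the sign corrected, your argument goes through and coincides with the paper's proof.
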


\begin{proof}
    Equations (\ref{eq:weak1},~\ref{eq:weak2},~\ref{eq:weak4},~\ref{eq:weak5}) follow directly from (\ref{eq:1mass},~\ref{eq:2chem},~\ref{eq:4divu},~\ref{eq:6entropy}) respectively, after multiplication with a test function and integration by parts.
    The momentum equation \eqref{eq:weak3} follows from \eqref{eq:3mom} after application of the identities (\ref{eq:weak3_a},~\ref{eq:weak3_b}) shown above.
\end{proof}

The following lemma shows that this variational formulation encodes the structural identities using only simple test functions.

\begin{lemma}[Test functions in first formulation]\label{lem:weak_test}
    Solutions of \eqref{eq:weakA} can be seen to fulfil the identities of \Cref{lem:struc} by considering the following test functions:
    \begin{itemize}
        \item Conservation of partial mass $\int_\Omega \rho_i$ \eqref{eq:struc_mass}: $\psi_i = 1$
        \item Conservation of energy $\int_\Omega (\frac12 \varrho \|\u\|^2 + \varrho e)$ \eqref{eq:struc_energy}: $\psi_i = \mu_i-V_i p$, $\zeta_i = \dt\rho_i$, $\v = \u$, $q = p$, $\omega = \vtheta$
        \item Generation of entropy $\int_\Omega \varrho s$ \eqref{eq:struc_entropy}: $\omega = 1$
    \end{itemize}
\end{lemma}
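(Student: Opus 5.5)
The mass and entropy items are direct substitutions; the energy identity takes real work, and I will do it last.

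\emph{Mass.} Setting $\psi_i=1$ in \eqref{eq:weak1} gives $\nabla\psi_i=0$. Both the convective and the diffusive terms vanish, so $\frac{\dd}{\dd t}\int_\Omega\rho_i=0$, which is \eqref{eq:struc_mass}.

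\emph{Entropy.} Setting $\omega=1$ in \eqref{eq:weak5} removes the convective term. The remaining right-hand side is $\langle \mathbb{S}:\Du,1/\vtheta\rangle+\langle K\nabla\frac1\vtheta,\nabla\frac1\vtheta\rangle+\sum_{i,j}\langle M_{ij}\nabla\frac{\mu_j}{\vtheta},\nabla\frac{\mu_i}{\vtheta}\rangle$. With $c_i=0$ this is exactly $\int_\Omega\mathcal{P}$ from \eqref{eq:calP}, which is non-negative. This gives \eqref{eq:struc_entropy}.

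\emph{Energy, time derivatives.} I substitute all the listed test functions and add the equations, and I aim to show three things. First, the time derivatives combine to $\frac{\dd}{\dd t}\int(\frac{\varrho}{2}\|\u\|^2+\varrho e)$. Second, the convective terms cancel. Third, the dissipative terms cancel.

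For the kinetic energy, $\v=\u$ in \eqref{eq:weak3} gives $\langle\varrho\dt\u+\frac12\dt\varrho\,\u,\u\rangle=\frac{\dd}{\dd t}\int\frac{\varrho}{2}\|\u\|^2$. The term $\mathcal{B}(\varrho\u;\u,\u)$ vanishes by skew-symmetry.

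For the internal energy, I differentiate $\varrho e(\vec\rho,\varrho s)$ in time. By \eqref{1.thermo_b} and \eqref{1.chem} this gives $\dt(\varrho e)=\sum_i(\mu_i-V_ip)\dt\rho_i+\vtheta\,\dt(\varrho s)$. Choosing $\zeta_i=\dt\rho_i$ in \eqref{eq:weak2} supplies $\langle\partial_{\rho_i}(\varrho F),\dt\rho_i\rangle=\langle\mu_i-V_ip,\dt\rho_i\rangle$, which makes this identity available at the weak level. The time-derivative parts of \eqref{eq:weak1} with $\psi_i=\mu_i-V_ip$ and of \eqref{eq:weak5} with $\omega=\vtheta$ then sum to $\frac{\dd}{\dd t}\int\varrho e$.

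\emph{Energy, dissipative terms.} With $\omega=\vtheta$, the entropy equation \eqref{eq:weak5} produces $\langle\mathbb{S}:\Du,1\rangle$. This cancels $-\langle\mathbb{S},\Du\rangle$ from the momentum equation. The heat term is zero because $\nabla(\vtheta/\vtheta)=0$. The entropy equation also produces the diffusion term $\sum_{i,j}\langle M_{ij}\nabla\frac{\mu_j}{\vtheta},\nabla\mu_i\rangle$. The mass equation with $\psi_i=\mu_i-V_ip$ produces $-\sum_{i,j}\langle M_{ij}\nabla\frac{\mu_j}{\vtheta},\nabla\mu_i-V_i\nabla p\rangle$. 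These two cancel up to $\sum_{i,j}\langle V_iM_{ij}\nabla\frac{\mu_j}{\vtheta},\nabla p\rangle$, and that remainder is removed by \eqref{eq:weak4} with $q=p$. That same equation also supplies $\langle\divM\u,p\rangle$, which cancels the $\langle p,\divM\u\rangle$ term in the momentum equation.

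\emph{Energy, convective terms (main obstacle).} The leftovers are $\sum_i\langle\rho_i\u,\nabla(\mu_i-V_ip)\rangle$ from the mass equations, $\langle\varrho s\u,\nabla\vtheta\rangle$ from the entropy equation, and $-\langle\sum_i\rho_i(\nabla\mu_i-V_i\nabla p)+\varrho s\nabla\vtheta,\u\rangle$ from the momentum equation. These cancel exactly. This is precisely why the form \eqref{eq:consstressform} was chosen.

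The delicate point is tracking signs. Each equation must enter the sum with the right sign: the entropy equation with a plus sign and the divergence equation with the appropriate sign. In particular, the $p$-dependent pieces $V_ip$ in $\psi_i$ must be matched against \eqref{eq:weak4} and the $\nabla p$ term in the momentum equation. I expect this bookkeeping, rather than any estimate, to be the only real difficulty.
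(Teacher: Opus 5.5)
Your proposal is correct and follows essentially the same route as the paper. You use the same test functions and the chain-rule expansion of $\frac{\dd}{\dd t}\int_\Omega\varrho e$, made available weakly through $\zeta_i=\dt\rho_i$, and the same cancellations; in particular, \eqref{eq:weak4} with $q=p$ absorbs both $\langle p,\divM\u\rangle$ and the $V_iM_{ij}\nabla p$ remainder, so the sign bookkeeping you flag does close to zero.
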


\begin{proof}
    The mass and entropy structures are immediate.
    For energy, we begin with the kinetic component,
    \begin{subequations}\label{eq:weak_test_proof_1}
    \begin{align}
        \frac{\dd}{\dd t} \int_\Omega \frac12 \varrho \|\u\|^2
            &=  \int_\Omega \varrho \u \cdot \partial_t\u + \frac12 \partial_t \varrho \|\u\|^2  \\
            &=  \int_\Omega - \, \mathbb{S}(\vec\rho,\vtheta;\mathrm{D}\u) : \mathrm{D}\u + p \divM\u  \\
            &\qquad\qquad\qquad\qquad  - \sum_{i=1}^N \rho_i (\nabla\mu_i - V_i \nabla p) \cdot \u - \varrho s \nabla \vtheta\cdot\u,  \notag
    \end{align}
    \end{subequations}
    where in the second equality we consider $\v = \u$ in \eqref{eq:weak3}, eliminating $\mathcal{B}(\varrho\u; \u, \u)$ through skew-symmetry.
    For the divergence term $p \divM\u$, consider $q = p$ in \eqref{eq:weak4},
    \begin{equation}\label{eq:weak_test_proof_2}
        \int_\Omega p \divM\u
            =  - \, \int_\Omega \sum_{i,j=1}^N V_i M_{ij}\nabla\frac{\mu_j}{\vtheta} \cdot \nabla p.
    \end{equation}

    For the internal component,
    \begin{subequations}
    \begin{align}
        \frac{\dd}{\dd t} \int_\Omega \varrho e(\vec\rho, \varrho s)
            &= \int_\Omega \sum_{i=1}^N \partial_{\rho_i}(\varrho e)(\vec\rho, \varrho s) \partial_t \rho_i + \partial_{\varrho s}(\varrho e)(\vec\rho, \varrho s) \partial_t (\varrho s)  \label{eq:weak_test_proof_2.5a}  \\
            &= \int_\Omega \sum_{i=1}^N \partial_{\rho_i}(\varrho F)(\vec\rho, \vtheta) \partial_t \rho_i + \vtheta \partial_t (\varrho s)  \label{eq:weak_test_proof_2.5b},
    \end{align}
    \end{subequations}
    where we substitute $\partial_{\rho_i}(\varrho F)(\vec\rho, \vtheta) = \partial_{\rho_i}(\varrho e)(\vec\rho, \varrho s)$ and $\vtheta = \partial_{\varrho s}(\varrho e)(\vec\rho, \varrho s)$ by the thermodynamic relations (\ref{eq:rhof_rhoe},~\ref{1.thermo_b}).
    For the former term,
    \begin{subequations}\label{eq:weak_test_proof_3}
    \begin{align}
        \int_\Omega \partial_{\rho_i}(\varrho e)(\vec\rho, \varrho s) \partial_t \rho_i
            &=  \int_\Omega (\mu_i - V_i p)\partial_t \rho_i  \\
            &=  \int_\Omega \rho_i \u \cdot (\nabla \mu_i - V_i \nabla p)  \\
            &\qquad\qquad\qquad\qquad - \sum_{j=1}^N M_{ij} \nabla\frac{\mu_j}{\vtheta}\cdot (\nabla \mu_i - V_i \nabla p),  \notag
    \end{align}
    \end{subequations}
    where the first equality considers $\zeta_i = \partial_t \rho_i$ in \eqref{eq:weak2}, and the second considers $\psi_i = \mu_i - V_i p$ in \eqref{eq:weak1}.
    For the latter term, consider $\omega = \vtheta$ in \eqref{eq:weak5},
    \begin{equation}\label{eq:weak_test_proof_4}
        \int_\Omega \vtheta \partial_t (\varrho s)
            =  \int_\Omega \varrho s \u \cdot \nabla \vtheta + \mathbb{S}(\vec\rho,\vtheta;\mathrm{D}\u) : \mathrm{D}\u + \sum_{i,j=1}^N M_{ij} \nabla\frac{\mu_j}{\vtheta} \cdot \nabla\mu_i.
    \end{equation}
    Summing (\ref{eq:weak_test_proof_1},~\ref{eq:weak_test_proof_2},~\ref{eq:weak_test_proof_3},~\ref{eq:weak_test_proof_4}), all right-hand side terms cancel, giving the desired result:
    \begin{equation}
        \frac{\dd}{\dd t} \int_\Omega \varrho e_{\mathrm{tot}}(\vec\rho, \varrho s, \u)
            =  \frac{\dd}{\dd t} \int_\Omega \left(\frac12\varrho\|\u\|^2 + \varrho e(\vec\rho, \varrho s)\right)
            =  0.
    \end{equation}
\end{proof}

\begin{example}[Georgiadis--J\"{u}ngel mixing energy]\label{ex:georgiadis_II}
    In the construction of the above variational formulation \eqref{eq:weakA}, we require the entropy density $\varrho s(\vec\rho, \vtheta)$ used in \eqref{eq:weak5} to be written as an explicit function of $\vec\rho$ and $\vtheta$.
    As an example of how this might proceed in practice, we continue with \Cref{ex:georgiadis_I}:
    \begin{equation}\label{eq:georgiadis_F}
        \varrho F
            =  \sum_{i=1}^N \vtheta\rho_i\log(\rho_i/\varrho) - \cv\varrho\vtheta(\log\vtheta-1).
    \end{equation}
    The thermodynamic relation $\varrho s = - \, \partial_\vtheta (\varrho F)(\vec\rho, \vtheta)$ \eqref{1.thermo_b} determines $\varrho s$ directly as
    \begin{equation}\label{eq:georgiadis_entropy}
        \varrho s
            =  \cv\varrho\log(\vtheta) - \sum_{i=1}^N \rho_i\log(\rho_i/\varrho).
    \end{equation}
    The second equation \eqref{eq:weak2} also uses the partial derivative
    \begin{equation}\label{eq:georgiadis_drhoF}
        \partial_{\rho_i}(\varrho F)(\vec\rho, \vtheta)
            =  \vtheta\log(\rho_i/\varrho) - \cv\vtheta(\log\vtheta - 1),
    \end{equation}
    which is equivalently $\mu_i - V_i p$, with the chemical potential $\mu_i$ as evaluated in \Cref{ex:georgiadis_I}.
\end{example}

The first variational formulation \eqref{eq:weakA} lends itself to a semi-implicit Euler discretisation in time that guarantees entropy generation and enforcement of the quasi-incompressibility constraint.
However, it will also dissipate total energy, which is conserved at the continuous level.

\begin{remark}[Scale of the thermodynamic forcing]\label{rem:Me}
    In nearly incompressible settings, the thermodynamic forcing introduced in \eqref{eq:weak3},
    \begin{equation}\label{eq:thermo_forcing}
        \dots - \left\la \sum_{i=1}^N \rho_i(\nabla\mu_i - V_i \na p)+ \varrho s\nabla\vtheta, \v \!\right\ra\!,
    \end{equation}
    can dominate the momentum balance.
    After non-dimensionalisation, it is multiplied by a dimensionless parameter $\textrm{La}$, sometimes called
    a Lagrange number, which quantifies the ratio between the
    pressure forces and viscous forces.
    For certain liquid mixtures, this factor can be very large, e.g.~for the benzene--cyclohexane mixture of \Cref{sec:app_ex}, $\textrm{La} \approx 10^{15}$ \eqref{eq:pressure_diffusion_number}.
    When $\varrho F$ is positively homogeneous of degree one in $\vec\rho$ (e.g.~with the above Georgiadis--J\"{u}ngel mixing energy), the term \eqref{eq:thermo_forcing} evaluates to zero on the continuous level, and can thus be removed, leaving a consistent discretisation of the Maxwell--Stefan system \eqref{eq:fullsys}.
    However, the energy stability result of \Cref{lem:weak_test} no longer holds under such a modification\footnote{
        The proof relies on \eqref{eq:thermo_forcing}, with $\v = \u$, cancelling with the advective terms in the mass and entropy equations.
        Conservation of mass, generation of entropy and quasi-incompressibility are unaffected.
    }, a failure which carries through to the energy stability of the fully discrete scheme below (see \Cref{thm:schemeA_global}).
    The same modification implies a similar energy stability failure of our second proposed scheme \eqref{eq:weakB} below (see \Cref{lem:weak_test2,thm:schemeB_global}).
\end{remark}

\subsection{Formulation II}

Recently, a subset of the authors proposed a general strategy for enforcing conservation laws and dissipation inequalities on discretisation \cite{andrews2025a,andrews2025b}.
Conservation and dissipation laws can be derived at the continuous level by testing the equations with suitable test functions;
the strategy introduces auxiliary variables to serve as discrete projections of these test functions.
These auxiliary variables are then valid choices of test functions at the discrete level, recovering the conservation and dissipation laws.
The resulting continuous formulation, additionally introducing a weighted velocity $\bm{m} = \sqrt{\varrho}\u$ and internal entropy density $\varrho s$, is given below.
When suitably discretised with a mixed continuous Petrov--Galerkin method, it enables a discretisation that generates entropy, conserves the total energy, and allows for high-order time approximations.

\begin{lemma}[Second variational formulation]\label{lem:PGform}
    Smooth solutions $(\vec\rho,\vec\mu,\bm{u}, \bm{m}, p, \varrho s, \vtheta)$ to \eqref{eq:fullsys} satisfy the following variational equations:
    \begin{subequations}\label{eq:weakB}
    \begin{align}
        \la \dt\rho_i,\psi_i \ra - \la \rho_i\u,\nabla\psi_i \ra  &=  - \sum_{j=1}^N \!\left\la\! M_{ij}\nabla \frac{\mu_j}{\vtheta}, \nabla\psi_i \!\right\ra, \label{eq:weakB1}
        \\
        \la \mu_i,\zeta_i \ra  &=  \la \partial_{\rho_i} (\varrho e)(\vec\rho,\varrho s) + V_i p,\zeta_i \ra,   \label{eq:weakB2}
        \\
        \la \sqrt{\varrho}\dt\bm{m}, \v \ra + \mathcal{B}(\varrho\u; \u, \v) &= - \, \la \mathbb{S}(\vec\rho,\vtheta;\mathrm{D}\u),\mathrm{D}\v \ra + \la p,\divM\v \ra  \label{eq:weakB3}  \\
        &\qquad\qquad - \, \!\left\la \sum_{i=1}^N \rho_i(\nabla\mu_i - V_i \na p)+ \varrho s\nabla\vtheta, \v \!\right\ra\!,  \notag
        \\
        \la \bm{m},\w \ra  &=  \la \sqrt{\varrho}\u,\w \ra, \label{eq:weakB4} 
        \\
        \la \divM\u,q \ra  &=  - \, \!\left\la \sum_{i,j=1}^N V_i M_{ij}\nabla\frac{\mu_j}{\vtheta}, \nabla q \!\right\ra\!,   \label{eq:weakB5}
        \\
        \la \dt(\varrho s),\omega \ra - \la \varrho s\u,\nabla\omega \ra  &=  \left\la \mathbb{S}(\vec\rho,\vtheta;\Du):\mathrm{D}\u,\frac{\omega}{\vtheta} \right\ra + \left\la\! K\nabla\frac{1}{\vtheta},\nabla\frac{\omega}{\vtheta} \right\ra\!  \label{eq:weakB6}  \\
        &\qquad\qquad + \sum_{i,j=1}^N \!\left\la M_{ij}\nabla\frac{\mu_j}{\vtheta},\nabla\frac{\mu_i\omega}{\vtheta} \right\ra\!,  \notag
        \\
        \la \vtheta, \gamma \ra  &=  \la \partial_{\varrho s} (\varrho e)(\vec\rho, \varrho s), \gamma\ra,  \label{eq:weakB7}
    \end{align}
    \end{subequations}
    for all smooth test functions $(\psi_i,\zeta_i,\v,\w,q,\omega,\gamma)$, $i=1,\ldots,N$.
\end{lemma}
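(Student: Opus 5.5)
The plan is to reduce \eqref{eq:weakB} to the first variational formulation, \Cref{lem:Eulerform}, which already covers smooth solutions of \eqref{eq:fullsys}. We first fix the auxiliary variables by their pointwise definitions: $\bm{m} \coloneqq \sqrt{\varrho}\,\u$, $\varrho s \coloneqq \varrho s(\vec\rho,\vtheta) = -\partial_\vtheta(\varrho F)(\vec\rho,\vtheta)$, with $\vtheta$ the temperature of the smooth solution. With these choices, \eqref{eq:weakB1}, \eqref{eq:weakB5} and \eqref{eq:weakB6} are literally \eqref{eq:weak1}, \eqref{eq:weak4} and \eqref{eq:weak5}, so they hold by \Cref{lem:Eulerform}. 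Equation \eqref{eq:weakB4} holds trivially, since $\bm{m} = \sqrt{\varrho}\u$ pointwise and we may test with any $\w$.

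Two further equations follow from the thermodynamic relations. For \eqref{eq:weakB7}, the identity $\vtheta = \partial_{\varrho s}(\varrho e)(\vec\rho,\varrho s)$ from \eqref{1.thermo_b} holds pointwise, so testing with $\gamma$ gives the claim. For \eqref{eq:weakB2}, we invoke \eqref{eq:rhof_rhoe}, namely $\partial_{\rho_i}(\varrho e)(\vec\rho,\varrho s) = \partial_{\rho_i}(\varrho F)(\vec\rho,\vtheta)$. This turns \eqref{eq:weakB2} into \eqref{eq:weak2}, which is just \eqref{1.chem} (equivalently \eqref{eq:2chem}) tested against $\zeta_i$. Throughout, we use that $\vtheta$ and $\varrho s$ are related by the invertible Legendre map, which is implicit in \eqref{1.thermo}.

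The only step needing computation is the momentum equation \eqref{eq:weakB3}. Its right-hand side coincides with that of \eqref{eq:weak3}, so it suffices to show, pointwise,
\begin{equation*}
    \sqrt{\varrho}\,\dt\bm{m} = \sqrt{\varrho}\,\dt(\sqrt{\varrho}\,\u) = \varrho\,\dt\u + \sqrt{\varrho}\,\u\,\dt\sqrt{\varrho} = \varrho\,\dt\u + \tfrac12\,\dt\varrho\,\u.
\end{equation*}
The last equality uses $\dt\sqrt{\varrho} = \dt\varrho/(2\sqrt{\varrho})$, which is valid because $\varrho > 0$. Positivity of $\varrho$ follows from \eqref{1.incom} with positive partial densities, so that $\sqrt{\varrho}$ is smooth. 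Testing with $\v$ and adding $\mathcal{B}(\varrho\u;\u,\v)$ then turns the left-hand side of \eqref{eq:weakB3} into that of \eqref{eq:weak3}, and \eqref{eq:weakB3} follows from \Cref{lem:Eulerform}.

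I expect no real obstacle. The only points requiring care are this chain-rule identity, together with the positivity of $\varrho$ that justifies it, and the consistency of the two parametrisations of the thermodynamics, $(\vec\rho,\vtheta)$ versus $(\vec\rho,\varrho s)$, which \eqref{eq:rhof_rhoe} supplies.
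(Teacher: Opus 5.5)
Your proposal is correct and follows essentially the same route as the paper: you reduce to \Cref{lem:Eulerform}, obtain \eqref{eq:weakB2} via \eqref{eq:rhof_rhoe} and \eqref{eq:weakB7} via \eqref{1.thermo_b}, and handle the momentum equation through the identity $\sqrt{\varrho}\,\dt(\sqrt{\varrho}\,\u) = \varrho\,\dt\u + \tfrac12\dt\varrho\,\u$. Your remark that $\varrho>0$ is needed for $\sqrt{\varrho}$ to be differentiable is a point the paper leaves implicit, and positivity of each $\rho_i$ is more than you need: non-negative partial densities together with \eqref{1.incom} already give $\varrho \ge 1/\max_i V_i > 0$.
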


\begin{proof}
    The mass, quasi-incompressibility, and entropy equations \eqref{eq:weakB1}, \eqref{eq:weakB5}, and \eqref{eq:weakB6} are identical to \eqref{eq:weak1}, \eqref{eq:weak4}, and \eqref{eq:weak5}, respectively.
    In the second equation \eqref{eq:weakB2} defining the chemical potential, we use \eqref{eq:rhof_rhoe} to replace $\partial_{\rho_i} (\varrho F)(\vec\rho,\vtheta)$ in \eqref{eq:weak2} with $\partial_{\rho_i} (\varrho e)(\vec\rho,\varrho s)$.
    The fourth equation \eqref{eq:weakB4} represents a variational definition for the weighted velocity $\bm{m} = \sqrt{\varrho}\u$;
    this is substituted into the momentum equation \eqref{eq:weakB3} as
    \begin{equation}
        \varrho\dt\u + \frac12\dt\varrho\u = \sqrt{\varrho}\dt(\sqrt{\varrho}\u) = \sqrt{\varrho}\dt\bm{m}.
    \end{equation}
    The final equation defining the temperature $\vtheta$ \eqref{eq:weakB7} uses the definition of $\vtheta$ given by the fundamental thermodynamic relation \eqref{1.thermo_b}.

\end{proof}

This second variational formulation now encodes the structures under consideration via simple test functions.
This differs from the above result (\Cref{lem:weak_test}) in that each of the test functions considered in this second formulation will remain a valid choice after CPG discretisation in time.

\begin{lemma}[Test functions in second formulation]\label{lem:weak_test2}
    Solutions of \eqref{eq:weakB} can be seen to fulfil the identities of \Cref{lem:struc} by considering the following test functions:
    \begin{itemize}
        \item Conservation of partial mass $\int_\Omega \rho_i$ \eqref{eq:struc_mass}: $\psi_i = 1$
        \item Conservation of energy $\int_\Omega (\frac12 \|\bm{m}\|^2 + \varrho e)$ \eqref{eq:struc_energy}: $\psi_i = \mu_i-V_i p$, $\zeta_i = \dt\rho_i$, $\v = \u$, $\bm{w} = \partial_t \bm{m}$, $q = p$, $\omega = \vtheta$, $\gamma = \partial_t(\varrho s)$
        \item Generation of entropy $\int_\Omega \varrho s$ \eqref{eq:struc_entropy}: $\omega = 1$
    \end{itemize}
\end{lemma}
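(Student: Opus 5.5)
The plan is to mirror the proof of \Cref{lem:weak_test}, replacing each continuous-level identity by the corresponding auxiliary variational equation so that only the listed test functions are used. Mass conservation follows from taking $\psi_i = 1$ in \eqref{eq:weakB1}: both the convective term and the diffusive term contain $\nabla\psi_i = 0$. Entropy generation follows from $\omega = 1$ in \eqref{eq:weakB6}. The right-hand side then becomes $\int_\Omega \mathcal{P}$, since $\nabla(\mu_i/\vtheta)$ and $\nabla(1/\vtheta)$ appear paired through $\M$ and $K$ exactly as in \eqref{eq:calP} with $\bm{c}=\bm{0}$. Nonnegativity follows from the positive semi-definiteness of $\LL$, the positivity of $\vtheta$, and the coercivity of $\mathbb{S}:\Du$.

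For energy, the kinetic part is handled first. Write $\tfrac{\dd}{\dd t}\int_\Omega \tfrac12\|\bm m\|^2 = \la \bm m, \dt\bm m\ra$. Taking $\w = \dt\bm m$ in \eqref{eq:weakB4} converts this to $\la \sqrt{\varrho}\u, \dt\bm m\ra = \la \sqrt{\varrho}\dt\bm m, \u\ra$. Testing \eqref{eq:weakB3} with $\v = \u$ then kills $\mathcal{B}(\varrho\u;\u,\u)$ by skew-symmetry and reproduces the right-hand side of \eqref{eq:weak_test_proof_1}. The term $\la p, \divM\u\ra$ is replaced using $q = p$ in \eqref{eq:weakB5}, exactly as in \eqref{eq:weak_test_proof_2}.

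The internal part is the step that differs from \Cref{lem:weak_test}: we must avoid pointwise thermodynamic identities and use only the projected equations. Starting from the chain rule
\[
\tfrac{\dd}{\dd t}\int_\Omega \varrho e(\vec\rho,\varrho s)=\sum_i\la \partial_{\rho_i}(\varrho e),\dt\rho_i\ra + \la \partial_{\varrho s}(\varrho e), \dt(\varrho s)\ra,
\]
choose $\zeta_i = \dt\rho_i$ in \eqref{eq:weakB2} to rewrite the first sum as $\sum_i\la \mu_i - V_i p, \dt\rho_i\ra$. Then choose $\psi_i = \mu_i - V_i p$ in \eqref{eq:weakB1}, as in \eqref{eq:weak_test_proof_3}. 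For the second term, $\gamma = \dt(\varrho s)$ in \eqref{eq:weakB7} gives $\la\vtheta,\dt(\varrho s)\ra$. Then $\omega=\vtheta$ in \eqref{eq:weakB6} gives \eqref{eq:weak_test_proof_4}, using $\nabla(\omega/\vtheta)=0$ and $\nabla(\mu_i\omega/\vtheta)=\nabla\mu_i$.

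Finally, sum the four contributions and check cancellation term by term:
\begin{itemize}
\item the viscous dissipation cancels between momentum and entropy;
\item the advective terms $\rho_i\u\cdot(\nabla\mu_i - V_i\nabla p)$ and $\varrho s\u\cdot\nabla\vtheta$ cancel against the thermodynamic forcing in \eqref{eq:weakB3};
\item $M_{ij}\nabla(\mu_j/\vtheta)\cdot\nabla\mu_i$ cancels between the mass and entropy equations;
\item the remaining $V_i M_{ij}\nabla(\mu_j/\vtheta)\cdot\nabla p$ from the mass equation cancels the pressure-divergence term.
\end{itemize}
The only delicate point is bookkeeping of signs in this last cancellation. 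I would check it by summing $\sum_i V_i(\cdot)$ explicitly in \eqref{eq:weakB5}.
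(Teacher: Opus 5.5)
Your proposal is correct and follows essentially the same route as the paper: it uses $\w=\dt\bm m$ in \eqref{eq:weakB4} for the kinetic part, and $\zeta_i=\dt\rho_i$ in \eqref{eq:weakB2} together with $\gamma=\dt(\varrho s)$ in \eqref{eq:weakB7} in place of the pointwise thermodynamic identities, after which the cancellations of \Cref{lem:weak_test} carry over unchanged. The pressure--divergence cancellation you flagged does close with the correct sign: the $+\sum_{i,j}\la M_{ij}\nabla(\mu_j/\vtheta), V_i\nabla p\ra$ term from the mass equations cancels exactly against $\la p,\divM\u\ra$ as given by \eqref{eq:weakB5} with $q=p$.
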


\begin{proof}
    As in \Cref{lem:weak_test}, mass conservation and entropy generation are immediate.
    The proof of energy conservation differs in two ways.
    First, note that the kinetic component is instead defined in terms of $\bm{m}$.
    Evaluating its time derivative,
    \begin{equation}
        \frac{\dd}{\dd t} \int_\Omega \frac12 \|\bm{m}\|^2
            =  \int_\Omega \bm{m} \cdot \partial_t\bm{m}
            =  \int_\Omega \sqrt{\varrho}\u \cdot \partial_t\bm{m},
    \end{equation}
    where in the second equality we consider $\bm{w} = \partial_t\bm{m}$ in \eqref{eq:weakB4}.
    The proof then proceeds similarly.

    Second, for the internal energy component, note that $\varrho s$ is no longer interpreted as being defined explicitly in terms of $\vec\rho$ and $\vtheta$;
    instead $\vtheta$ and $\varrho s$ are implicitly related through \eqref{eq:weakB7}.
    We may no longer therefore make the substitution $\vtheta = \partial_{\varrho s}(\varrho e)(\vec\rho, \varrho s)$ going from \eqref{eq:weak_test_proof_2.5a} to \eqref{eq:weak_test_proof_2.5b}.
    Instead, we make the substitution by considering $\gamma = \partial_t(\varrho s)$ in \eqref{eq:weakB7}:
    \begin{equation}
        \int_\Omega \vtheta \partial_t(\varrho s)
            =  \int_\Omega \partial_{\varrho s}(\varrho e)(\vec\rho, \varrho s) \partial_t(\varrho s).
    \end{equation}
    With this, the remainder of the proof proceeds identically to \Cref{lem:weak_test}.
\end{proof}

\begin{example}[Georgiadis--J\"{u}ngel mixing energy]\label{ex:georgiadis_III}
    Consider again the running example of \Cref{ex:georgiadis_I,ex:georgiadis_II}.
    In this formulation, the second equation \eqref{eq:weakB2} uses the partial derivative $\partial_{\rho_i}(\varrho e)(\vec\rho, \varrho s)$.
    By the thermodynamic relation \eqref{eq:rhof_rhoe} and \eqref{eq:georgiadis_drhoF},
    \begin{equation}\label{eq:georgiadis_drhoe}
        \partial_{\rho_i}(\varrho e)(\vec\rho, \varrho s)
            =  \partial_{\rho_i}(\varrho F)(\vec\rho, \vtheta)
            =  \vtheta\log(\rho_i/\varrho) - \cv\vtheta(\log\vtheta - 1).
    \end{equation}
    To write the temperature $\vtheta$ then in terms of $\vec\rho$ and $\varrho s$, we may solve \eqref{eq:georgiadis_entropy} to find
    \begin{equation}\label{eq:georgiadis_theta}
        \vtheta
            =  \exp\left(\frac{\varrho s + \sum_{j=1}^N \rho_j\log(\rho_j/\varrho)}{\cv\varrho}\right)\!.
    \end{equation}
    The final equation determining $\vtheta$ uses the partial derivative $\partial_{\varrho s} (\varrho e)(\vec\rho, \varrho s)$.
    By \eqref{1.thermo_b}, finding this is equivalent to writing $\vtheta$ in terms of $\vec\rho$ and $\varrho s$, as above:
    \begin{equation}\label{eq:georgiadis_dsrhoe}
        \partial_{\varrho s}(\varrho e)(\vec\rho, \varrho s)
            =  \vtheta
            =  \exp\left(\frac{\varrho s + \sum_{j=1}^N \rho_j\log(\rho_j/\varrho)}{\cv\varrho}\right)\!.
    \end{equation}
\end{example}

\begin{remark}[Boundary conditions]\label{rem:bcs}
    Until now, boundary conditions have been neglected under the assumption of periodicity.
    A natural choice of homogeneous boundary conditions on $\partial\Omega$ might include Dirichlet boundary conditions on the velocity $\u$, and Neumann boundary conditions for the chemical potentials $\mu_i$ and temperature $\vtheta$:
    \begin{equation}
        \u = \bm{0},  \qquad
        \bm{n}\cdot\sum_{j=1}^N M_{ij}\nabla\frac{\mu_j}{\vtheta} = \bm{n}\cdot K\nabla\frac{1}{\vtheta} = 0,  \qquad
        \text{on } \partial\Omega.
    \end{equation}
    The former condition can be imposed strongly on the vector solution and test spaces in \eqref{eq:weakA} and \eqref{eq:weakB}.
    The latter condition is naturally imposed weakly when no boundary terms are introduced or imposed on the scalar solution (and test) spaces.
    As this leaves the weak formulation unchanged, the results in \Cref{lem:weak_test,lem:weak_test2}, alongside the results in \Cref{thm:schemeA_global,thm:schemeB_global} below after discretisation, also hold under such conditions.
\end{remark}

\section{Discretisation and analysis}\label{sec.num}

This section constructs discretisations of the proposed variational formulations (\ref{eq:weakA},~\ref{eq:weakB}).
For the spatial discretisation, we require that the mesh $\Th$ partitions the domain $\Omega$ into simplices $K \in \Th$ that can be extended periodically to periodic extensions of $\Omega$.
The mesh size parameter $h > 0$ denotes the maximal diameter of the simplices, while the boundary of the triangulation $\Th$ must coincide with that of the domain $\Omega$.

We define the following finite element spaces over $\Th$:
\begin{subequations}\label{eq:fe_spaces}
\begin{align}
    \Vh  &\coloneqq  \big\{v\in H^1(\Omega)\cap C^0(\overline\Omega):
    v|_K\in \mathbb{P}_\ell(K), \ \forall K\in\Th\big\},
    \\
    \Xh  &\coloneqq  \big\{\bm{v}\in [H^1(\Omega)\cap C^0(\overline\Omega)]^d:
    \bm{v}|_K\in [\mathbb{P}_{\ell+1}(K)]^d, \ \forall K\in\Th\big\},
    \\
    \Qh  &\coloneqq  \big\{v\in\Vh:\langle v,1\rangle=0\big\},
\end{align}
\end{subequations}
where, for $k\in\NN_0$, $\mathbb{P}_k(K)$ denotes the space of polynomials on $K$ of total degree at most $k$.

\begin{remark}[Choice of spaces]
    The key requirements on the spaces $\Xh$ and $\Vh$ \eqref{eq:fe_spaces} are (i)~inf-sup stability, and (ii)~$H^1$-conformity on $\Vh$.
    The latter condition is necessary, since gradients of both $p \in \Qh \subset \Vh$ and the partial densities $\rho_i \in \Vh$ appear in the formulation.
    We consider the Taylor--Hood pair as it is almost universally implemented, and satisfies both properties \cite[Thm.~8.8.1]{Boffi2013}.
    Other compatible spaces however are available, e.g.~over an Alfeld-split mesh we may take $\Xh$ to be the degree-$(\ell+1)$ Alfeld--Sorokina space \cite{Alfeld2016}, with $\Vh$ (and $\Qh$) unchanged as the degree-$\ell$ Lagrange space.  
\end{remark}

\begin{remark}[Non-positivity for  higher-order methods]\label{remark:positivity_a_cutoff}
    In both our proposed schemes, the discrete temperature $\vtheta_h$ and partial mass densities $\rho_{ih}$ will be taken as elements of $\Vh$.
    For $\ell \ge 2$, we are unable to guarantee positivity in $\Vh$, even if the property holds at all nodal values.
    To prevent solver divergence at high order, therefore, it may be necessary to cut off or regularise.
    Similar problems might occur when considering Scheme~II with $k \geq 2$ stages.
\end{remark}

To construct the time discretisation, the time interval $[0,T)$ is divided into $n_T$ subintervals $I_n \coloneqq [t^n,t^{n+1}) \in \Itau$, $n=0,\ldots,n_T-1$.
For notational convenience, we assume the partition is uniform, so $t^n=n\tau$, $n=0,\ldots,n_T$, and $\tau = T/n_T$ is the time step size, but this is not essential.

\subsection{Scheme~I}

The first scheme we propose is a semi-implicit Euler discretisation of the first weak formulation \eqref{eq:weakA}.
It generates entropy, dissipates total energy, and enforces the quasi-incompressibility constraint \eqref{1.incom} pointwise.

We denote $g^n \coloneqq g|_{t^n}$ and $g^{n+1} \coloneqq g|_{t^{n+1}}$ for a general $g$.
The notation $g^*$ serves as a placeholder for either $g^{n+1}$ or $g^n$, when either choice is admissible, while we denote the discrete time derivative $\dttau g^n \coloneqq (g^{n+1} - g^n)/\tau$.
For the discrete entropy density, set
\begin{equation}
    (\varrho s)_h^n \coloneqq \varrho s(\pvec{\rho}_h^n,\vtheta_h^n),
    \qquad
    (\varrho s)_h^{n+1} \coloneqq \varrho s(\pvec{\rho}_h^{n+1},\vtheta_h^{n+1}).
\end{equation}
We introduce also the following four abbreviations:
\begin{subequations}
\begin{align}
    \nabla_h \frac{\mu_{jh}^{n+1}}{\vtheta_h^{n+1}} &\coloneqq \frac{1}{\vtheta_h^{n+1}}\nabla \mu_{jh}^{n+1} - \frac{\mu_{jh}^{n+1}}{\vtheta_h^{*}\vtheta_h^{n+1}}\nabla \vtheta_h^{n+1},
    \\
    \nabla_h \frac{\omega_h\mu_{ih}^{n+1}}{\vtheta_h^{n+1}} &\coloneqq \frac{\omega_h}{\vtheta_h^{n+1}}\nabla\mu_{ih}^{n+1} + \frac{\mu_{ih}^{n+1}}{\vtheta_h^{*}}\nabla\omega_h - \frac{\mu_{ih}^{n+1}\omega_h}{\vtheta_h^*\vtheta_h^{n+1}}\nabla\vtheta_h^{n+1}, 
    \\
    \nabla_h \frac{1}{\vtheta_h^{n+1}} &\coloneqq - \, \frac{1}{\vtheta_h^{*}\vtheta_h^{n+1}}\nabla \vtheta_h^{n+1},
    \\
    \nabla_h \frac{\omega_h}{\vtheta_h^{n+1}} &\coloneqq \frac{1}{\vtheta_h^{*}}\nabla \omega_h - \frac{\omega_h}{\vtheta_h^{*}\vtheta_h^{n+1}}\nabla \vtheta_h^{n+1}.
\end{align}
\end{subequations}

Our semi-implicit Euler discretisation is then as follows:
given $(\pvec{\rho}_h^n, \u_h^n, \vtheta_h^n)\in \Vh^N\times\Xh\times\Vh$, find $(\pvec{\rho}_h^{n+1},\pvec{\mu}_h^{n+1},\u_h^{n+1},p_h^{n+1},\vtheta_h^{n+1})\in \Vh^N\times\Vh^N\times\Xh\times\Qh\times\Vh$ such that
\begin{subequations}\label{eq:schemeA}
   \begin{align}
    \la \dttau\rho_{ih}^n,\psi_{ih} \ra \, - \, &\la \rho_{ih}^*\u_h^{n+1},\nabla\psi_{ih} \ra  \label{eq:schemeA1}  \\
        &= - \sum_{j=1}^N \left\la \! M_{ij}^* \nabla_h\frac{\mu_{jh}^{n+1}}{\vtheta_h^{n+1}},\nabla\psi_{ih} \right\ra,  \notag
    \\
    \la \mu_{ih}^{n+1},\zeta_{ih} \ra
        &=  \la \partial_{\rho_i}(\varrho F)(\pvec{\rho}_h^{n+1}, \vtheta_h^{n+1}) + V_i p_h^{n+1},\zeta_{ih} \ra,  \label{eq:schemeA2}
    \\
    \la \varrho_h^n\dttau\u_h^n, \v_h\ra \, + \, &\frac{1}{2}\la \u_h^{n+1}\dttau\varrho_h^n,\v_h \ra + \mathcal{B}(\varrho_h^*\u_h^*;\u_h^{n+1},\v_h)  \label{eq:schemeA3}  \\
        &= - \, \la \mathbb{S}(\pvec{\rho}_h^*,\vtheta_h^*;\mathrm{D}\u_h^{n+1}),\mathrm{D}\v_h \ra + \la p_h^{n+1},\divM\v_h \ra \notag  \\
        &\qquad - \left\la \sum_{i=1}^N\rho_{ih}^*\nabla(\mu_{ih}^{n+1}-V_i p_h^{n+1})+(\varrho s)_h^*\nabla\vtheta_h^{n+1},\v_h \!\right\ra,  \notag
    \\
    \la \divM\u_h^{n+1},q_h \ra
        &=  - \left\la \sum_{i,j=1}^N V_i M_{ij}^*\nabla_h\frac{\mu_{jh}^{n+1}}{\vtheta_h^{n+1}}, \nabla q_h \right\ra,  \label{eq:schemeA4}
    \\
    \la \dttau(\varrho s)_h^n,\omega_h \ra \, - \, &\la (\varrho s)_h^*\u_h^{n+1},\nabla\omega_h \ra  \label{eq:schemeA5}  \\
        &=  \left\la \mathbb{S}(\pvec{\rho}_h^*,\vtheta_h^*;\mathrm{D}\u_h^{n+1}):\mathrm{D}\u_h^{n+1},\frac{\omega_h}{\vtheta_h^{n+1}} \right\ra  \notag  \\
        &\qquad\qquad+ \left\la K^*\nabla_h \frac{1}{\vtheta_h^{n+1}},\nabla_h \frac{\omega_h}{\vtheta_h^{n+1}}\right\ra  \notag  \\
        &\qquad\qquad\qquad\qquad + \sum_{i,j=1}^N \left\la M_{ij}^*\nabla_h \frac{\mu_{jh}^{n+1}}{\vtheta_h^{n+1}},\nabla_h \frac{\omega_h\mu_{ih}^{n+1}}{\vtheta_h^{n+1}} \right\ra,  \notag
\end{align}
\end{subequations}
for all $(\pvec{\psi}_h,\pvec{\zeta}_h,\v_h,q_h,\omega_h)\in\Vh^N\times\Vh^N\times\Xh\times\Qh\times\Vh$.

We now prove that this semi-implicit Euler discretisation preserves the mass and entropy structures of the continuous system (\Cref{lem:struc}) through the conservation of mass \eqref{eq:struc_mass} and the generation of entropy \eqref{eq:struc_entropy} at an appropriate rate mimicking \eqref{eq:calP},
and also preserves the conservation of energy \eqref{eq:struc_energy} up to some artificial dissipation.
The proof follows a similar structure to that of \Cref{lem:weak_test};
for the energy dissipation, we must appeal to certain convexity conditions on the free energy density $\varrho F$ so as to apply the bound of \Cref{lem:rhoe_bound}.

\begin{theorem}[Global structures for Scheme~I]\label{thm:schemeA_global}
    Supposing the Helmholtz free energy $\varrho F$ is (i)~convex in $\vec\rho$ for fixed $\vtheta$, and (ii)~concave in $\vtheta$ for fixed $\vec\rho$, discrete solutions to \eqref{eq:schemeA} satisfy:
    \begin{subequations}\label{eq:schemeA_global}
    \begin{align}
        \int_\Omega \rho_{ih}^{n+1}  &=  \int_\Omega \rho_{ih}^n, \qquad i = 1, \ldots, N,  \label{eq:schemeA_mass}
        \\
        \int_\Omega \frac{1}{2}\varrho_h^{n+1}\|\u_h^{n+1}\|^2 + \varrho e(\pvec\rho_h^{n+1}, \vtheta_h^{n+1})  \\
        &\le  \int_\Omega \frac{1}{2}\varrho_h^n\|\u_h^n\|^2 + \varrho e(\pvec\rho_h^n, \vtheta_h^n),  \notag
        \\
        \int_\Omega (\varrho s)_h^{n+1}  &=  \int_\Omega (\varrho s)_h^n + \tau\int_\Omega\mathcal{P}_h^n(\pvec\rho_h, \vtheta_h, \pvec\mu_h, \Du_h),
    \end{align}
    \end{subequations}
    where $\mathcal{P}_h^n$ denotes a discrete entropy production rate over $I_n$, analogous to \eqref{eq:calP},
    \begin{multline}
        \mathcal{P}_h^n(\pvec\rho_h, \vtheta_h, \pvec\mu_h, \Du_h)
            \coloneqq
            \frac{1}{\vtheta_h^{n+1}}\mathbb{S}(\pvec\rho_h^*,\vtheta_h^*;\Du_h^{n+1}):\Du_h^{n+1}  \\
            + \begin{pmatrix}
                \nabla_h\frac{\vec{\mu}_h^{n+1}}{\vtheta_h^{n+1}} \\ \\ \nabla_h\frac{1}{\vtheta_h^{n+1}}
            \end{pmatrix}^\top \!\!\!\LL^* \begin{pmatrix}
            \nabla_h\frac{\vec{\mu}_h^{n+1}}{\vtheta_h^{n+1}} \\  \\\nabla_h\frac{1}{\vtheta_h^{n+1}}
            \end{pmatrix}
            \ge  0.
    \end{multline}
    The energy inequality requires $\varrho_h^n \ge 0$ pointwise (see \Cref{remark:positivity_a_cutoff}).
\end{theorem}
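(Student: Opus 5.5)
The plan is to mirror the proof of \Cref{lem:weak_test}, replacing time derivatives by difference quotients and equalities by inequalities where convexity enters. Mass conservation \eqref{eq:schemeA_mass} follows by taking $\psi_{ih}=1\in\Vh$ in \eqref{eq:schemeA1}; both gradient terms vanish. For the entropy identity we take $\omega_h=1\in\Vh$ in \eqref{eq:schemeA5}. The convective term drops, and the discrete gradients simplify: $\nabla_h\frac{\omega_h}{\vtheta_h^{n+1}}=\nabla_h\frac{1}{\vtheta_h^{n+1}}$ and $\nabla_h\frac{\omega_h\mu_{ih}^{n+1}}{\vtheta_h^{n+1}}=\nabla_h\frac{\mu_{ih}^{n+1}}{\vtheta_h^{n+1}}$, directly from the definitions with $\nabla\omega_h=0$. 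The right-hand side is therefore exactly $\int_\Omega\mathcal{P}_h^n$, written as a quadratic form in $\LL^*$ with $\bm c=0$. Nonnegativity follows from positive semidefiniteness of $\bm{M}^*$, positivity of $K^*$ and $\vtheta_h^{n+1}>0$, together with coercivity of $\mathbb{S}:\Du$.

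For the energy inequality we mimic the discrete test choices: $\v_h=\u_h^{n+1}$, $q_h=p_h^{n+1}$, $\psi_{ih}=\mu_{ih}^{n+1}-V_ip_h^{n+1}$, $\omega_h=\vtheta_h^{n+1}$, and $\zeta_{ih}=\rho_{ih}^{n+1}-\rho_{ih}^n$. All of these lie in the discrete spaces; in particular $\Qh\subset\Vh$, so $\psi_{ih}\in\Vh$.

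\emph{Kinetic part.} With $\v_h=\u_h^{n+1}$, $\mathcal{B}$ vanishes by skew-symmetry. We use the algebraic identity
\[
\varrho^n(\u^{n+1}-\u^n)\cdot\u^{n+1}+\tfrac12(\varrho^{n+1}-\varrho^n)\|\u^{n+1}\|^2=\tfrac12\varrho^{n+1}\|\u^{n+1}\|^2-\tfrac12\varrho^n\|\u^n\|^2+\tfrac12\varrho^n\|\u^{n+1}-\u^n\|^2 .
\]
Here $\varrho_h^n\ge0$ makes the last term nonnegative numerical dissipation. This is the source of the hypothesis $\varrho_h^n\ge 0$.

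\emph{Internal part.} We apply \Cref{lem:rhoe_bound} pointwise with $+=n+1$ and $-=n$:
\[
(\varrho e)^{n+1}-(\varrho e)^n\le\sum_i\partial_{\rho_i}(\varrho F)(\pvec\rho_h^{n+1},\vtheta_h^{n+1})(\rho_{ih}^{n+1}-\rho_{ih}^n)+\vtheta_h^{n+1}\big((\varrho s)_h^{n+1}-(\varrho s)_h^n\big).
\]
After integration, the first sum becomes $\int(\mu_{ih}^{n+1}-V_ip_h^{n+1})(\rho_{ih}^{n+1}-\rho_{ih}^n)$ via \eqref{eq:schemeA2} with $\zeta_{ih}=\rho_{ih}^{n+1}-\rho_{ih}^n$. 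This is then rewritten with \eqref{eq:schemeA1} and $\psi_{ih}=\mu_{ih}^{n+1}-V_ip_h^{n+1}$. The second term is handled by \eqref{eq:schemeA5} with $\omega_h=\vtheta_h^{n+1}$.

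\emph{Cancellation.} We add the kinetic and internal contributions and use \eqref{eq:schemeA4} with $q_h=p_h^{n+1}$ to eliminate $\la p_h^{n+1},\divM\u_h^{n+1}\ra$. We then check that every remaining term cancels, as in \Cref{lem:weak_test}.
\begin{itemize}
\item The advective terms $\la\rho_{ih}^*\u_h^{n+1},\nabla(\mu_{ih}^{n+1}-V_ip_h^{n+1})\ra$ and $\la(\varrho s)_h^*\u_h^{n+1},\nabla\vtheta_h^{n+1}\ra$ cancel against the thermodynamic forcing in \eqref{eq:schemeA3}. This works because the same starred coefficients were used in both places.
\item The viscous term cancels because $\omega_h/\vtheta_h^{n+1}=1$.
\item The heat term needs $\nabla_h\frac{\omega_h}{\vtheta_h^{n+1}}$ at $\omega_h=\vtheta_h^{n+1}$, which equals $\frac{1}{\vtheta_h^*}\nabla\vtheta_h^{n+1}-\frac{1}{\vtheta_h^*}\nabla\vtheta_h^{n+1}=0$, so it drops.
\item For the diffusion terms, $\nabla_h\frac{\vtheta_h^{n+1}\mu_{ih}^{n+1}}{\vtheta_h^{n+1}}=\nabla\mu_{ih}^{n+1}+\frac{\mu_{ih}^{n+1}}{\vtheta_h^*}\nabla\vtheta_h^{n+1}-\frac{\mu_{ih}^{n+1}}{\vtheta_h^*}\nabla\vtheta_h^{n+1}=\nabla\mu_{ih}^{n+1}$. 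This cancels the $-\sum_j\la M_{ij}^*\nabla_h\frac{\mu_{jh}}{\vtheta_h},\nabla\mu_{ih}\ra$ coming from the mass equation.
\item The $V_i\nabla p$ parts of that mass-equation term cancel with \eqref{eq:schemeA4}.
\end{itemize}
What remains is the energy difference plus nonnegative dissipation, which is at most $0$.

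The main obstacle is verifying these exact algebraic cancellations of the discrete gradient abbreviations $\nabla_h$ at $\omega_h=\vtheta_h^{n+1}$, in particular that the $\vtheta_h^*$ placements are consistent. A secondary issue is justifying the pointwise use of \Cref{lem:rhoe_bound}, which requires positive temperatures.
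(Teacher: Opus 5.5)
Your proposal is correct and follows essentially the same route as the paper's proof. It uses the same test functions (your $\zeta_{ih}=\rho_{ih}^{n+1}-\rho_{ih}^n$ is just $\tau\,\dttau\rho_{ih}^n$), the same kinetic identity whose remainder $\tfrac12\varrho_h^n\|\u_h^{n+1}-\u_h^n\|^2=\tfrac{\tau^2}{2}\varrho_h^n\|\dttau\u_h^n\|^2$ is where $\varrho_h^n\ge0$ enters, and \Cref{lem:rhoe_bound} applied pointwise. Your check that $\nabla_h\frac{\omega_h}{\vtheta_h^{n+1}}$ vanishes and that $\nabla_h\frac{\omega_h\mu_{ih}^{n+1}}{\vtheta_h^{n+1}}$ reduces to $\nabla\mu_{ih}^{n+1}$ at $\omega_h=\vtheta_h^{n+1}$ spells out cancellations that the paper only refers to via the proof of \Cref{lem:weak_test}.
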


\begin{proof}
    Conservation of partial mass follows directly by using $\psi_{ih} = 1$ in \eqref{eq:schemeA1}, while entropy production is obtained by inserting $\omega_h = 1$ into \eqref{eq:schemeA5}.
    Mirroring the test functions in \Cref{lem:weak_test}, we prove energy dissipation by considering $\psi_{ih} = \mu_{ih}^{n+1}-V_i p_h^{n+1}$, $\zeta_{ih} = \dttau\rho_{ih}^n$, $\v_h = \u_h^{n+1}$, $q_h = p_h^{n+1}$, $\omega_h = \vtheta_h^{n+1}$.
    Following the same cancellations as in the proof of \Cref{lem:weak_test},
    \begin{multline}\label{eq:schemeA_combined}
        \int_\Omega \varrho_h^n \dttau\u_h^n \cdot \u_h^{n+1}
            + \frac12 \dttau\varrho_h^n\|\u_h^{n+1}\|^2  \\
            + \sum_{i=1}^N \partial_{\rho_i}(\varrho F)(\pvec{\rho}_h^{n+1}, \vtheta_h^{n+1})\,\dttau\rho_{ih}^n
            + \vtheta_h^{n+1}\,\dttau(\varrho s)_h^n
            = 0.
    \end{multline}
    For the kinetic contribution, assuming $\varrho_h^n \ge 0$ pointwise,
    \begin{subequations}\label{eq:schemeA_bound_a}
    \begin{align}
        \frac12\varrho_h^{n+1}\|\u_h^{n+1}\|^2 &- \frac12\varrho_h^n\|\u_h^n\|^2  \\
            &=  \tau\left(\varrho_h^n\dttau\u_h^n\cdot\u_h^{n+1} + \frac12\dttau\varrho_h^n\|\u_h^{n+1}\|^2\right) - \frac{\tau^2}{2}\varrho_h^n\|\dttau\u_h^n\|^2  \notag  \\
            &\le  \tau\left(\varrho_h^n\dttau\u_h^n\cdot\u_h^{n+1} + \frac12\dttau\varrho_h^n\|\u_h^{n+1}\|^2\right).
    \end{align}
    \end{subequations}
    For the internal energy, we apply \Cref{lem:rhoe_bound} \eqref{eq:rhoe_bound} pointwise to the states $*_- = (\pvec{\rho}_h^n, (\varrho s)_h^n)$ and $*_+ = (\pvec{\rho}_h^{n+1}, (\varrho s)_h^{n+1})$:
    \begin{multline}\label{eq:schemeA_bound_b}
        \varrho e(\pvec{\rho}_h^{n+1}, (\varrho s)_h^{n+1}) - \varrho e(\pvec{\rho}_h^n,(\varrho s)_h^n)  \\
            \le  \tau\left(\sum_{i=1}^N \partial_{\rho_i}(\varrho F)(\pvec{\rho}_h^{n+1}, \vtheta_h^{n+1})\,\dttau\rho_{ih}^n + \vtheta_h^{n+1}\,\dttau(\varrho s)_h^n\right).
    \end{multline}
    Integrating over the domain $\Omega$, summing the two bounds (\ref{eq:schemeA_bound_a},~\ref{eq:schemeA_bound_b}), and applying \eqref{eq:schemeA_combined} gives the energy dissipation result.
\end{proof}

We next show that Scheme~I preserves the quasi-incompressibility constraint \eqref{1.incom}.

\begin{theorem}[Quasi-incompressibility for Scheme~I]\label{thm:schemeA_incom}
    Discrete solutions to \eqref{eq:schemeA} satisfy the quasi-incompressibility constraint \eqref{1.incom} pointwise for all $n$,
    \begin{align}\label{eq:schemeA_incom}
        \sum_{i=1}^N V_i\rho_{ih}^n(\mathbf{x})  =  1, \qquad
        \text{for all } \mathbf{x}\in\Omega,
    \end{align}
    under either of the following conditions on the choice of $\rho_{ih}^*$:

    \vspace{1mm}
    \textit{Case~(i): $\rho_{ih}^* = \rho_{ih}^n$.}
    Quasi-incompressibility holds unconditionally.

    \vspace{1mm}
    \textit{Case~(ii): $\rho_{ih}^* = \rho_{ih}^{n+1}$.}
    Quasi-incompressibility holds under the time step condition for all $n$,
    \begin{equation}\label{eq:schemeA_CFL}
        \tau\,\|\divM\u_h^{n+1}\|_{L^\infty(\Omega)} < 2.
    \end{equation}
\end{theorem}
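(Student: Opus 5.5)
The plan is to track the defect $\Phi_h^n \coloneqq \sum_{i=1}^N V_i\rho_{ih}^n - 1$ and show by induction on $n$ that it vanishes identically. Since the $V_i$ are constants and constants lie in $\Vh$, we have $\Phi_h^n\in\Vh$. For the base case I assume the discrete initial data satisfy $\Phi_h^0=0$; this holds, for example, if the $\rho_{ih}^0$ are nodal interpolants of data satisfying \eqref{1.incom}.

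\emph{Step 1: an evolution equation for the defect.} Take any $q_h\in\Vh$, test \eqref{eq:schemeA1} with $\psi_{ih}=V_i q_h$ and sum over $i$. Writing $\Phi_h^*\coloneqq\sum_i V_i\rho_{ih}^*-1$, this gives
\begin{equation*}
\la \dttau\Phi_h^n, q_h\ra - \la (\Phi_h^*+1)\u_h^{n+1},\nabla q_h\ra = -\Big\la \sum_{i,j=1}^N V_iM_{ij}^*\nabla_h\frac{\mu_{jh}^{n+1}}{\vtheta_h^{n+1}},\nabla q_h\Big\ra .
\end{equation*}
By \eqref{eq:schemeA4}, the right-hand side equals $\la\divM\u_h^{n+1},q_h\ra$ whenever $q_h\in\Qh$. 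For $q_h=1$ both sides vanish: the right-hand side because $\nabla q_h=0$, and $\la\divM\u_h^{n+1},1\rangle$ by periodicity. Since $\Vh=\Qh\oplus\mathrm{span}\{1\}$, the identity therefore extends to all $q_h\in\Vh$. Integration by parts (again using periodicity) gives $-\la\u_h^{n+1},\nabla q_h\ra=\la\divM\u_h^{n+1},q_h\ra$, so these terms cancel and we are left with
\begin{equation*}
\la \Phi_h^{n+1}-\Phi_h^n,q_h\ra = \tau\la \Phi_h^*\u_h^{n+1},\nabla q_h\ra \qquad\forall q_h\in\Vh .
\end{equation*}
I regard this step as the main point of care. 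One has to check that the mean-zero restriction of the pressure test space $\Qh$ does not spoil the argument, and that $\Phi_h$ is an admissible test function. Both are handled by the splitting off of constants described above.

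\emph{Step 2: the two cases.} Assume $\Phi_h^n=0$. In Case~(i), $\Phi_h^*=\Phi_h^n=0$, so $\la\Phi_h^{n+1},q_h\ra=0$ for all $q_h\in\Vh$. Choosing $q_h=\Phi_h^{n+1}$ gives $\Phi_h^{n+1}=0$ without any condition on $\tau$.

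In Case~(ii), $\Phi_h^*=\Phi_h^{n+1}$, and we test with $q_h=\Phi_h^{n+1}$. This gives
\begin{equation*}
\|\Phi_h^{n+1}\|_{L^2}^2=\tau\la\Phi_h^{n+1}\u_h^{n+1},\nabla\Phi_h^{n+1}\ra=\frac{\tau}{2}\la \u_h^{n+1},\nabla|\Phi_h^{n+1}|^2\ra=-\frac{\tau}{2}\la\divM\u_h^{n+1},|\Phi_h^{n+1}|^2\ra .
\end{equation*}
The right-hand side is bounded by $\tfrac{\tau}{2}\|\divM\u_h^{n+1}\|_{L^\infty}\|\Phi_h^{n+1}\|_{L^2}^2$. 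Hence
\begin{equation*}
\big(1-\tfrac{\tau}{2}\|\divM\u_h^{n+1}\|_{L^\infty}\big)\|\Phi_h^{n+1}\|_{L^2}^2\le0 ,
\end{equation*}
and under \eqref{eq:schemeA_CFL} the prefactor is positive, so $\Phi_h^{n+1}=0$.

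Finally, since $\Phi_h^{n+1}$ is a continuous finite element function, vanishing in $L^2$ means it vanishes everywhere. This gives \eqref{eq:schemeA_incom} for every $\mathbf{x}\in\Omega$, completing the induction.
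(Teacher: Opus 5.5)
Your proof is correct and follows the paper's argument: your $\Phi_h^n$ is the paper's defect $Z_h^n$, and the paper likewise combines \eqref{eq:schemeA1} tested with $V_iq_h$ with \eqref{eq:schemeA4}, then tests with $q_h=Z_h^{n+1}$ and uses integration by parts to bound Case~(ii) through $-\tfrac12\la\divM\u_h^{n+1},(Z_h^{n+1})^2\ra$. The one difference is cosmetic: the paper handles the mean-zero restriction of $\Qh$ by using partial-mass conservation and the inductive hypothesis to place $Z_h^{n+1}\in\Qh$, whereas you extend the identity to all of $\Vh$ by checking constants separately, which amounts to the same fact.
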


\begin{proof}
    We proceed by induction.
    Define $Z_h^n \coloneqq \sum_i V_i\rho_{ih}^n - 1$; by assumption on the initial data, $Z_h^0 = 0$ everywhere.
    Suppose $Z_h^n = 0$ for some $n \ge 0$.
    Taking $\psi_{ih} = V_i q_h$ for $q_h \in \Qh$ in \eqref{eq:schemeA1}, summing over $i$, and using $Z_h^n = 0$ to simplify the time-difference term,
    \begin{subequations}
    \begin{equation}
        \frac{1}{\tau} \la Z_h^{n+1}, q_h \ra
            - \left\la \sum_{i=1}^N V_i\rho_{ih}^*\,\u_h^{n+1}, \nabla q_h \right\ra
            = - \left\la \sum_{i,j=1}^N V_i M_{ij}^*\nabla_h\frac{\mu_{jh}^{n+1}}{\vtheta_h^{n+1}}, \nabla q_h \right\ra.
    \end{equation}
    Substituting the right-hand side for $\la\divM\u_h^{n+1}, q_h\ra = - \, \la\u_h^{n+1}, \nabla q_h\ra$ by \eqref{eq:schemeA4},
    \begin{align}
        \frac{1}{\tau} \la Z_h^{n+1}, q_h \ra
            &= \left\la\!\left(\sum_{i=1}^N V_i\rho_{ih}^* - 1\right)\u_h^{n+1}, \nabla q_h \right\ra.
    \intertext{
    Conservation of partial masses \eqref{eq:schemeA_mass} ensures $\int_\Omega Z_h^{n+1} = 0$, implying $Z_h^{n+1} \in \Qh$.
    Thus taking $q_h = Z_h^{n+1}$,
    }
        \frac{1}{\tau} \| Z_h^{n+1} \|^2
            &= \left\la\!\left(\sum_{i=1}^N V_i\rho_{ih}^* - 1\right)\u_h^{n+1}, \nabla Z_h^{n+1} \right\ra.  \label{eq:schemeA_Z}
    \end{align}
    \end{subequations}

    \vspace{1mm}
    \textit{Case~(i): $\rho_{ih}^* = \rho_{ih}^n$.}
    The inductive hypothesis gives $\sum_i V_i\rho_{ih}^n - 1 = Z_h^n = 0$, so the right-hand side of \eqref{eq:schemeA_Z} vanishes, implying $Z_h^{n+1} = 0$.

    \vspace{1mm}
    \textit{Case~(ii): $\rho_{ih}^* = \rho_{ih}^{n+1}$.}
    Here instead $\sum_i V_i\rho_{ih}^{n+1} - 1 = Z_h^{n+1}$.
    Applying integration by parts on the right-hand side,
    \begin{multline}
        \frac{1}{\tau}\|Z_h^{n+1}\|^2
            = \la Z_h^{n+1}\u_h^{n+1}, \nabla Z_h^{n+1}\ra  \\
            = -\frac{1}{2}\la\divM\u_h^{n+1}, (Z_h^{n+1})^2\ra
            \le \frac{1}{2}\|\divM\u_h^{n+1}\|_{L^\infty(\Omega)}\|Z_h^{n+1}\|^2.
    \end{multline}
    Under the time step condition \eqref{eq:schemeA_CFL}, rearranging gives $\|Z_h^{n+1}\|^2 \le 0$, hence $Z_h^{n+1} = 0$.
\end{proof}

\begin{corollary}[Density bounds for Scheme~I]\label{cor:densitybounds}
    Suppose the quasi-incompressibility condition \eqref{eq:schemeA_incom} holds and each partial density $\rho_{ih}^n$ is non-negative everywhere.
    Then for all $n$, the total mass density $\varrho_h^n$ is bounded by
    \begin{subequations}\label{eq:schemeA_density_bounds}
    \begin{equation}\label{eq:schemeA_density_total}
        \frac{1}{\max_i V_i} \le \varrho_h^n(\mathbf{x}) \le \frac{1}{\min_i V_i},  \qquad
        \text{for all } \mathbf{x}\in\Omega.
    \end{equation}
    Moreover,
    \begin{equation}\label{eq:schemeA_density_partial}
        \rho_{ih}^n(\mathbf{x}) \le \frac{1}{V_i},  \qquad
        \text{for all } \mathbf{x}\in\Omega.
    \end{equation}
    \end{subequations}
\end{corollary}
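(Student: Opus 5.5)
The argument is purely pointwise: fix $n$ and $\mathbf{x}\in\Omega$. Positivity of the specific volumes $V_i$ and non-negativity of the $\rho_{ih}^n(\mathbf{x})$ turn the linear identity \eqref{eq:schemeA_incom} into two-sided bounds. No further property of the scheme \eqref{eq:schemeA} is needed beyond \Cref{thm:schemeA_incom}, which supplies \eqref{eq:schemeA_incom}. I assume throughout that $V_i>0$ for every $i$, as is implicit in their interpretation as specific volumes.

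For the total density \eqref{eq:schemeA_density_total}, write $V_{\min}=\min_i V_i$ and $V_{\max}=\max_i V_i$. Since $\rho_{ih}^n(\mathbf{x})\ge 0$, each term satisfies
\begin{equation*}
    V_{\min}\,\rho_{ih}^n(\mathbf{x}) \le V_i\,\rho_{ih}^n(\mathbf{x}) \le V_{\max}\,\rho_{ih}^n(\mathbf{x}).
\end{equation*}
Summing over $i$ and using \eqref{eq:schemeA_incom} gives
\begin{equation*}
    V_{\min}\,\varrho_h^n(\mathbf{x}) \le 1 \le V_{\max}\,\varrho_h^n(\mathbf{x}),
\end{equation*}
where $\varrho_h^n=\sum_i\rho_{ih}^n$. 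Dividing by the positive constants $V_{\min}$ and $V_{\max}$ yields both inequalities in \eqref{eq:schemeA_density_total}.

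For the partial bound \eqref{eq:schemeA_density_partial}, fix $i$. Every term $V_j\rho_{jh}^n(\mathbf{x})$ with $j\neq i$ is non-negative, so dropping them from \eqref{eq:schemeA_incom} gives $V_i\rho_{ih}^n(\mathbf{x})\le 1$. Dividing by $V_i>0$ gives the claim.

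There is no real obstacle here. The only points requiring care are the standing positivity of the $V_i$ and the fact that the non-negativity of $\rho_{ih}^n$ is a hypothesis of the corollary rather than a consequence of the scheme. As \Cref{remark:positivity_a_cutoff} notes, that non-negativity cannot be guaranteed for $\ell\ge2$, so the corollary should be read as conditional on it.
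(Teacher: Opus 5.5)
Your proof is correct and follows the paper's own argument: you bound $\sum_i V_i\rho_{ih}^n$ between $\min_i V_i\,\varrho_h^n$ and $\max_i V_i\,\varrho_h^n$ using non-negativity, and you get $V_i\rho_{ih}^n\le 1$ by dropping the non-negative terms with $j\neq i$. Your explicit note that $V_i>0$ is required is a fair clarification of an assumption the paper leaves implicit.
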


\begin{proof}
    Both these bounds are immediate from \eqref{eq:schemeA_incom}.
    For the total mass density bounds,
    \begin{subequations}
    \begin{equation}
        \min_i V_i \sum_{i=1}^N \rho_{ih}^n  \le  1  \le  \max_i V_i \sum_{i=1}^N \rho_{ih}^n
        \;\;\;\implies\;\;\;
        \frac{1}{\max_i V_i} \le \varrho_h^n(\mathbf{x}) \le \frac{1}{\min_i V_i}.
    \end{equation}
    For the partial density bound, from the non-negativity of $\rho_{ih}^n$,
    \begin{equation}
        V_i \rho_{ih}^n \le 1
        \;\;\;\implies\;\;\;
        \rho_{ih}^n \le \frac{1}{V_i}.
    \end{equation}
    \end{subequations}
\end{proof}

\subsection{Scheme~II}

Our second proposed scheme is a continuous Petrov--Galerkin (CPG) discretisation of the second weak formulation \eqref{eq:weakB}.
Like the semi-implicit Euler discretisation, it generates entropy and enforces the quasi-incompressibility constraint \eqref{1.incom} pointwise. It is more expensive than Scheme~I, but in exchange conserves total energy exactly and is defined for arbitrary polynomial degrees.

For a general discrete space $\Uh \in \{\Vh, \Xh, \Qh\}$, define
\begin{subequations}\label{eq:scheme_ii_pg}
\begin{align}
    \Pi_{\mathrm{disc}}^k(\Itau; \Uh)
        &\coloneqq  \big\{v \in L^1([0,T]; \Uh):
        v|_{I_n} \in \mathbb{P}_k(I_n;\Uh), \ \forall I_n\in\Itau \big\},
    \\
    \Pi_{\mathrm{cts}}^k(\Itau; \Uh)
        &\coloneqq  \Pi_{\mathrm{disc}}^k(\Itau;\Uh) \cap C^0([0,T]; \Uh).
\end{align}
\end{subequations}
The former $\Pi_{\mathrm{disc}}^k(\Itau; \Uh)$ is the space of discontinuous degree-$k$ polynomials from $\Itau$ to $\Uh$;
the latter $\Pi_{\mathrm{cts}}^{k}(\Itau; \Uh)$ is the analogous space of continuous polynomials.
We write $\sigma \coloneqq (h, \tau)$ for the combined discretisation index, so that $g_\sigma$ denotes a discrete field depending jointly on the spatial mesh parameter $h$ and the time step $\tau$.

Our CPG discretisation is then as follows:
find $(\vec{\rho}_{\sigma},\bm{m}_{\sigma},(\varrho s)_{\sigma})\in \Pi^k_{\mathrm{cts}}(\Itau;\Vh^N\times\Xh\times\Vh)$ satisfying known discrete initial data at $t = 0$,
\begin{equation}
    \vec{\rho}_{\sigma}(0) = \pvec{\rho}_h^0,  \qquad
    \bm{m}_{\sigma}(0) = \bm{m}_h^0,  \qquad
    (\varrho s)_{\sigma}(0) = (\varrho s)_h^0,
\end{equation}
and $(\vec{\mu}_{\sigma},\u_{\sigma},p_{\sigma},\vtheta_{\sigma})\in \Pi^{k-1}_{\mathrm{disc}}(\Itau;\Vh^N\times\Xh\times\Qh\times\Vh)$, such that on each time step $n = 0, \ldots, n_T-1$,
\begin{subequations}\label{eq:schemeB}
\begin{align}
    &\int_{I_n}\!\left[\la \dt\rho_{i\sigma},\psi_{i\sigma} \ra \, - \, \la \rho_{i\sigma}\u_{\sigma},\nabla\psi_{i\sigma} \ra\right] \dd t  \label{eq:schemeB1}  \\
        &=  - \int_{I_n} \sum_{j=1}^N \left\la M_{ij}\nabla \frac{\mu_{j\sigma}}{\vtheta_{\sigma}}, \nabla\psi_{i\sigma} \!\right\ra \dd t,  \notag
    \\
    \int_{I_n}\la \mu_{i\sigma},\zeta_{i\sigma} \ra\,\dd t
        &=  \int_{I_n} \la \partial_{\rho_i} (\varrho e)(\vec\rho_{\sigma},(\varrho s)_{\sigma}) + V_i p_{\sigma},\zeta_{i\sigma} \ra\,\dd t,   \label{eq:schemeB2}
    \\
    &\int_{I_n}\!\left[\la \sqrt{\varrho_{\sigma}}\dt\bm{m}_{\sigma}, \v_{\sigma} \ra + \mathcal{B}(\varrho_{\sigma}\u_{\sigma}; \u_{\sigma}, \v_{\sigma})\right] \dd t  \label{eq:schemeB3}  \\
        &=  \int_{I_n}\!\left[- \, \la \mathbb{S}(\vec\rho_{\sigma},\vtheta_{\sigma};\mathrm{D}\u_{\sigma}),\mathrm{D}\v_{\sigma} \ra + \la p_{\sigma},\divM\v_{\sigma} \ra\right.  \notag  \\
        &\qquad\quad \left.- \left\la \sum_{i=1}^N \rho_{i\sigma}(\nabla\mu_{i\sigma} - V_i \na p_{\sigma}) + (\varrho s)_{\sigma}\nabla\vtheta_{\sigma}, \v_{\sigma} \!\right\ra\right] \dd t,  \notag
    \\
    \int_{I_n}\la \bm{m}_{\sigma}, \w_{\sigma} \ra\,\dd t
        &=  \int_{I_n} \la \sqrt{\varrho_{\sigma}}\u_{\sigma}, \w_{\sigma} \ra\,\dd t, \label{eq:schemeB4}
    \\
    \int_{I_n}\la \divM\u_{\sigma}, q_{\sigma} \ra\,\dd t
        &=  - \int_{I_n} \sum_{i,j=1}^N \left\la V_i M_{ij}\nabla\frac{\mu_{j\sigma}}{\vtheta_{\sigma}}, \nabla q_{\sigma} \!\right\ra\,\dd t,   \label{eq:schemeB5}
    \\
    &\int_{I_n}\!\left[\la \dt(\varrho s)_{\sigma}, \omega_{\sigma} \ra \, - \, \la (\varrho s)_{\sigma}\u_{\sigma}, \nabla\omega_{\sigma} \ra\right] \dd t  \label{eq:schemeB6}  \\
        &=  \int_{I_n}\!\left[\left\la \mathbb{S}(\vec\rho_{\sigma},\vtheta_{\sigma};\mathrm{D}\u_{\sigma}):\mathrm{D}\u_{\sigma},\frac{\omega_{\sigma}}{\vtheta_{\sigma}} \right\ra + \left\la K\nabla\frac{1}{\vtheta_{\sigma}},\nabla\frac{\omega_{\sigma}}{\vtheta_{\sigma}} \right\ra\right.  \notag  \\
        &\qquad\qquad \left.+ \sum_{i,j=1}^N \!\left\la M_{ij}\nabla\frac{\mu_{j\sigma}}{\vtheta_{\sigma}},\nabla\frac{\mu_{i\sigma}\omega_{\sigma}}{\vtheta_{\sigma}} \right\ra\right] \dd t,  \notag
    \\
    \int_{I_n}\la \vtheta_{\sigma}, \gamma_{\sigma} \ra\,\dd t
        &= \int_{I_n} \la \partial_{\varrho s} (\varrho e)(\vec\rho_{\sigma}, (\varrho s)_{\sigma}), \gamma_{\sigma}\ra\,\dd t,  \label{eq:schemeB7}
\end{align}
\end{subequations}
for all $(\vec\psi_\sigma,\vec\zeta_\sigma,\v_\sigma,\w_\sigma,q_\sigma,\omega_\sigma,\gamma_\sigma)\in \Pi^{k-1}_{\mathrm{disc}}(\Itau;\Vh^N\times\Vh^N\times\Xh\times\Xh\times\Qh\times\Vh\times\Vh)$.

\begin{theorem}[Global structures for Scheme~II]\label{thm:schemeB_global}
    Discrete solutions to \eqref{eq:schemeB} satisfy:
    \begin{subequations}\label{eq:schemeB_global}
    \begin{align}
        \int_\Omega \rho_{i\sigma}(t^{n+1})  &=  \int_\Omega \rho_{i\sigma}(t^n), \qquad i = 1, \ldots, N,  \label{eq:schemeB_mass}
        \\
        \mathcal{E}_\sigma(t^{n+1}) &= \mathcal{E}_\sigma(t^n),
        \\
        \int_\Omega (\varrho s)_\sigma(t^{n+1})  &=  \int_\Omega (\varrho s)_\sigma(t^n) \notag \\
        &\qquad + \int_{I_n}\int_\Omega\mathcal{P}(\vec\rho_\sigma, \vtheta_\sigma, \vec\mu_\sigma, \Du_\sigma)\,\dd x\,\dd t.
    \end{align}
    \end{subequations}
    Here the discrete total energy is
    \begin{equation}
        \mathcal{E}_\sigma(t)
        \coloneqq \int_\Omega \left[\frac{1}{2}\|\bm{m}_\sigma(t)\|^2
        + \varrho e(\vec\rho_\sigma(t), (\varrho s)_\sigma(t))\right]\dd x.
    \end{equation}
\end{theorem}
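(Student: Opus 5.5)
The plan mirrors \Cref{lem:weak_test2}, replacing each continuous test function by an admissible element of $\Pi^{k-1}_{\mathrm{disc}}(\Itau;\cdot)$. The key observation is that the trial spaces for $\vec\rho_\sigma,\bm{m}_\sigma,(\varrho s)_\sigma$ are continuous piecewise degree-$k$ in time. Their time derivatives $\dt\rho_{i\sigma}$, $\dt\bm{m}_\sigma$, $\dt(\varrho s)_\sigma$ therefore lie in $\Pi^{k-1}_{\mathrm{disc}}$ with values in the correct spatial spaces. Likewise $\vec\mu_\sigma,\u_\sigma,p_\sigma,\vtheta_\sigma$ already lie in $\Pi^{k-1}_{\mathrm{disc}}$. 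For mass conservation, I will take $\psi_{i\sigma}=\mathbb{1}_{I_n}$ (constant in space and time, allowed since $k-1\ge 0$ and constants lie in $\Vh$) in \eqref{eq:schemeB1}. The convective and diffusive terms vanish, and the fundamental theorem of calculus gives \eqref{eq:schemeB_mass}. For entropy, I will take $\omega_\sigma=\mathbb{1}_{I_n}$ in \eqref{eq:schemeB6}. The advective term vanishes, and the right-hand side becomes exactly $\int_{I_n}\int_\Omega\mathcal{P}$ (with $c_i=0$, the $\LL$-quadratic form reduces to the $\bm M$ and $K$ terms), which gives the third identity.

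For energy, on $I_n$ I will choose $\psi_{i\sigma}=\mu_{i\sigma}-V_ip_\sigma$, $\zeta_{i\sigma}=\dt\rho_{i\sigma}$, $\v_\sigma=\u_\sigma$, $\w_\sigma=\dt\bm{m}_\sigma$, $q_\sigma=p_\sigma$, $\omega_\sigma=\vtheta_\sigma$ and $\gamma_\sigma=\dt(\varrho s)_\sigma$, each extended by zero outside $I_n$. One point needs care: $q_\sigma=p_\sigma\in\Qh$, and $\psi_{i\sigma}$ lies in $\Vh$ since $\Qh\subset\Vh$. The exact chain rule then gives
\begin{equation*}
\mathcal{E}_\sigma(t^{n+1})-\mathcal{E}_\sigma(t^n)=\int_{I_n}\int_\Omega \Big[\bm{m}_\sigma\cdot\dt\bm{m}_\sigma+\sum_{i=1}^N\partial_{\rho_i}(\varrho e)\,\dt\rho_{i\sigma}+\partial_{\varrho s}(\varrho e)\,\dt(\varrho s)_\sigma\Big]\dd x\,\dd t .
\end{equation*}
This is legitimate because $t\mapsto(\vec\rho_\sigma,(\varrho s)_\sigma,\bm{m}_\sigma)$ is polynomial, hence absolutely continuous, on $I_n$. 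No quadrature or convexity is needed, in contrast to \Cref{thm:schemeA_global}.

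Each term is then replaced using the tested equations. From \eqref{eq:schemeB4} with $\w_\sigma=\dt\bm{m}_\sigma$, $\bm{m}_\sigma\cdot\dt\bm{m}_\sigma$ becomes $\sqrt{\varrho_\sigma}\u_\sigma\cdot\dt\bm{m}_\sigma$. This equals the left side of \eqref{eq:schemeB3} with $\v_\sigma=\u_\sigma$, since $\mathcal{B}(\varrho_\sigma\u_\sigma;\u_\sigma,\u_\sigma)=0$ by skew-symmetry. From \eqref{eq:schemeB2} with $\zeta_{i\sigma}=\dt\rho_{i\sigma}$, the $\rho$-term becomes $\int\la\mu_{i\sigma}-V_ip_\sigma,\dt\rho_{i\sigma}\ra$, which \eqref{eq:schemeB1} then evaluates. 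From \eqref{eq:schemeB7} with $\gamma_\sigma=\dt(\varrho s)_\sigma$, the entropy term becomes $\int\la\vtheta_\sigma,\dt(\varrho s)_\sigma\ra$, evaluated by \eqref{eq:schemeB6} with $\omega_\sigma=\vtheta_\sigma$. The term $\la p_\sigma,\divM\u_\sigma\ra$ is rewritten by \eqref{eq:schemeB5}.

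I will then verify the pointwise-in-time cancellation exactly as in \Cref{lem:weak_test}. The dissipation terms cancel as follows. In the $\omega_\sigma=\vtheta_\sigma$ test, $\nabla(\omega_\sigma/\vtheta_\sigma)=0$, so the $K$ term drops; the viscous term $\mathbb{S}:\Du_\sigma$ cancels against $-\la\mathbb{S},\Du_\sigma\ra$; and $M_{ij}\nabla\tfrac{\mu_j}{\vtheta}\cdot\nabla\mu_i$ cancels against the $\nabla\mu_i$ part from \eqref{eq:schemeB1}. The $V_i\nabla p$ part cancels with the pressure-divergence term via \eqref{eq:schemeB5}. The advective terms $\rho_{i\sigma}\u_\sigma\cdot\nabla(\mu_i-V_ip)$ and $(\varrho s)_\sigma\u_\sigma\cdot\nabla\vtheta_\sigma$ cancel with the thermodynamic forcing in \eqref{eq:schemeB3}. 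The main obstacle is bookkeeping: every identity must hold at the level of the space-time integrals and use only admissible test functions. This is why $\bm m$, $\varrho s$ and $\vtheta$ were introduced as auxiliary variables. Once admissibility is checked, summing yields $\mathcal{E}_\sigma(t^{n+1})=\mathcal{E}_\sigma(t^n)$.
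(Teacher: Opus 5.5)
Your proposal is correct and follows the paper's proof. For mass and entropy you use constant test functions. For energy you use the fundamental theorem of calculus and the chain rule, then substitute via \eqref{eq:schemeB4}, \eqref{eq:schemeB2} and \eqref{eq:schemeB7}, using the admissible tests $\dt\bm{m}_\sigma$, $\dt\rho_{i\sigma}$, $\dt(\varrho s)_\sigma \in \Pi^{k-1}_{\mathrm{disc}}$, followed by $\v_\sigma=\u_\sigma$, $\psi_{i\sigma}=\mu_{i\sigma}-V_ip_\sigma$, $q_\sigma=p_\sigma$, $\omega_\sigma=\vtheta_\sigma$; your term-by-term cancellation check is simply more explicit than the paper, which defers to \Cref{lem:weak_test}.
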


\begin{proof}
    Conservation of partial mass follows directly by considering $\psi_{i\sigma} = 1$ in \eqref{eq:schemeB1}, while entropy production is obtained with $\omega_\sigma = 1$ in \eqref{eq:schemeB6}.
    Considering the change in total energy,
    \begin{subequations}\label{eq:schemeB_combined}
    \begin{align}
        \int_\Omega &\left[\frac{1}{2}\|\bm{m}_\sigma(t^{n+1})\|^2 + \varrho e(\vec\rho_\sigma(t^{n+1}), (\varrho s)_\sigma(t^{n+1}))\right]  \\
                &\qquad\qquad- \int_\Omega \left[\frac{1}{2}\|\bm{m}_\sigma(t^n)\|^2 + \varrho e(\vec\rho_\sigma(t^n), (\varrho s)_\sigma(t^n))\right]  \notag  \\
            &= \int_{I_n}\ddt\left[\int_\Omega \left(\frac{1}{2}\|\bm{m}_\sigma\|^2 + \varrho e(\vec\rho_\sigma, (\varrho s)_\sigma)\right)\right]\dd t  \\
            &=  \int_{I_n}\!\bigl[\la\bm{m}_\sigma, \dt\bm{m}_\sigma\ra
                    + \sum_{i=1}^N \la\partial_{\rho_i}(\varrho e)(\vec\rho_\sigma, (\varrho s)_\sigma), \dt\rho_{i\sigma}\ra  \\
                    &\qquad\qquad\qquad\qquad+ \la\partial_{\varrho s}(\varrho e)(\vec\rho_\sigma, (\varrho s)_\sigma), \dt(\varrho s)_\sigma\ra\bigr]\dd t,  \notag  \\
    \intertext{
    where the first equality holds by the fundamental theorem of calculus, and the second by differentiation of the energy under the integral.
    The continuous unknowns $\bm{m}_\sigma$, $\rho_{i\sigma}$, $(\varrho s)_\sigma \in \Pi^k_{\mathrm{cts}}$ have time derivatives in $\Pi^{k-1}_{\mathrm{disc}}$, implying $\dt\bm{m}_\sigma$, $\dt\rho_{i\sigma}$, $\dt(\varrho s)_\sigma$ represent valid test functions in \eqref{eq:schemeB4}, \eqref{eq:schemeB2} and \eqref{eq:schemeB7} respectively, allowing us to simplify further:
    }
            &=  \int_{I_n}\!\left[\la\sqrt{\varrho_\sigma}\u_\sigma, \dt\bm{m}_\sigma\ra
                    + \sum_{i=1}^N \la\mu_{i\sigma} - V_i p_\sigma, \dt\rho_{i\sigma}\ra
                    + \la\vtheta_\sigma, \dt(\varrho s)_\sigma\ra\right]\dd t.  \label{eq:schemeB_combined_4}
    \end{align}
    \end{subequations}
    Considering $\v_\sigma = \u_\sigma$, $\psi_{i\sigma} = \mu_{i\sigma} - V_i p_\sigma$, $q_\sigma = p_\sigma$ and $\omega_\sigma = \vtheta_\sigma$ in \eqref{eq:schemeB3}, \eqref{eq:schemeB1}, \eqref{eq:schemeB5} and \eqref{eq:schemeB6} respectively, the right-hand side of \eqref{eq:schemeB_combined} vanishes, implying energy conservation.
    Similar to the proof of \Cref{thm:schemeA_global}, each of these test functions mirrors those in \Cref{lem:weak_test2}.
\end{proof}

\begin{remark}[Inexact quadrature in the CPG discretisation]\label{rem:schemeB_quadrature}
    Our proposed CPG discretisation \eqref{eq:schemeB} posits the use of exact time integrals $\int_{I_n}$, which are in general computationally inaccessible.
    However, the majority of integrals in the scheme can safely be approximated by quadrature (of at least $k$ stages) without affecting the above thermodynamic stability properties \eqref{eq:schemeB_global}.
    Provided the same quadrature approximation is used throughout, this is true of all terms except for the following:
    \begin{subequations}
    \begin{align}
        \int_{I_n} \la \partial_{\rho_i} (\varrho e)(\vec\rho_{\sigma},(\varrho s)_{\sigma}),\zeta_{i\sigma} \ra\,\dd t
            &\text{ in \eqref{eq:schemeB2}},  \label{eq:exact_integrals_1}  \\
        \int_{I_n}\la \bm{m}_{\sigma}, \w_{\sigma} \ra\,\dd t
            &\text{ in \eqref{eq:schemeB4}},  \label{eq:exact_integrals_2}  \\
        \int_{I_n} \la \partial_{\varrho s} (\varrho e)(\vec\rho_{\sigma}, (\varrho s)_{\sigma}), \gamma_{\sigma}\ra\,\dd t
            &\text{ in \eqref{eq:schemeB7}}.  \label{eq:exact_integrals_3}
    \end{align}
    \end{subequations}
    These integrals are required to be evaluated exactly for \eqref{eq:schemeB_combined_4} to hold in the proof of the global structures (\Cref{thm:schemeB_global}), as these integrals over $I_n$ arise naturally after application of the fundamental theorem of calculus.
    The second term \eqref{eq:exact_integrals_2} can be evaluated exactly by any chosen quadrature rule of degree $2k-1$ or higher.
    However this is not true in general for the first \eqref{eq:exact_integrals_1} and final \eqref{eq:exact_integrals_3} terms.
    In practice we employ a quadrature rule of sufficiently high degree so that the resulting energy error remains below that induced by inexact nonlinear solves.
\end{remark}

We now prove that the lowest-order CPG discretisation preserves the quasi-incompressibility constraint \eqref{1.incom} under a time step condition similar to \eqref{eq:schemeA_CFL}.

\begin{theorem}[Quasi-incompressibility for Scheme~II]\label{thm:schemeB_incom}
    For $k = 1$, discrete solutions to \eqref{eq:schemeB} satisfy the quasi-incompressibility constraint \eqref{1.incom} pointwise for all $n$,
    \begin{align}\label{eq:schemeB_incom}
        \sum_{i=1}^N V_i\rho_{i\sigma}(t, \mathbf{x})  =  1, \qquad
        \text{for all } t \in [0, T], \mathbf{x}\in \Omega,
    \end{align}
    under the time step condition
    \begin{equation}\label{eq:schemeB_CFL}
        \tau\,\|\divM\u_\sigma\|_{L^\infty([0, T]; L^\infty(\Omega))} < 4.
    \end{equation}
\end{theorem}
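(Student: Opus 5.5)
We argue by induction over time steps, mirroring the proof of \Cref{thm:schemeA_incom}. For $k=1$, the unknowns $\rho_{i\sigma}$ are affine in time on each $I_n$, and $\u_\sigma, p_\sigma, \vec\mu_\sigma, \vtheta_\sigma$ are constant in time on $I_n$. Set $Z_\sigma(t) \coloneqq \sum_i V_i\rho_{i\sigma}(t) - 1$, which is also affine on $I_n$. Assume $Z_\sigma(t^n) = 0$; this holds for $n=0$ by the assumption on the initial data. Since $Z_\sigma$ is affine, it suffices to show $Z_\sigma(t^{n+1}) = 0$, because then $Z_\sigma \equiv 0$ on all of $I_n$.

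First take $\psi_{i\sigma} = V_i q_h$ in \eqref{eq:schemeB1}, with $q_h\in\Qh$ constant in time (admissible for $k-1=0$), and sum over $i$. Then take the same $q_h$ in \eqref{eq:schemeB5}. Subtracting the two eliminates the Maxwell--Stefan flux terms. Using $\int_{I_n}\dt Z_\sigma\,\dd t = Z_\sigma(t^{n+1})$ and $\la\divM\u_\sigma,q_h\ra = -\la\u_\sigma,\nabla q_h\ra$, we obtain
\begin{equation*}
    \la Z_\sigma(t^{n+1}), q_h\ra = \int_{I_n}\la Z_\sigma(t)\,\u_\sigma, \nabla q_h\ra\,\dd t .
\end{equation*}
Since $Z_\sigma$ is affine with $Z_\sigma(t^n)=0$, we have $Z_\sigma(t) = \tfrac{t-t^n}{\tau} Z_\sigma(t^{n+1})$. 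The velocity $\u_\sigma$ is constant on $I_n$, so the time integral evaluates to $\tfrac{\tau}{2}\la Z_\sigma(t^{n+1})\u_\sigma,\nabla q_h\ra$.

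Next, mass conservation \eqref{eq:schemeB_mass} gives $\int_\Omega Z_\sigma(t^{n+1}) = \int_\Omega Z_\sigma(t^n) = 0$, so $Z_\sigma(t^{n+1})\in\Qh$ and we may choose $q_h = Z_\sigma(t^{n+1}) \eqqcolon Z$. Integrating by parts on the periodic domain, $\la Z\u_\sigma,\nabla Z\ra = -\tfrac12\la\divM\u_\sigma, Z^2\ra$. This yields
\begin{equation*}
    \|Z\|^2 = -\tfrac{\tau}{4}\la\divM\u_\sigma|_{I_n}, Z^2\ra \le \tfrac{\tau}{4}\|\divM\u_\sigma\|_{L^\infty}\|Z\|^2 .
\end{equation*}
Under \eqref{eq:schemeB_CFL} the prefactor is strictly less than one, so $Z=0$. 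The factor $\tfrac14$, rather than the $\tfrac12$ in Scheme~I, is exactly what produces the constant $4$ in \eqref{eq:schemeB_CFL}.

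The main care point is the exact time integration of $\la\rho_{i\sigma}\u_\sigma,\nabla\psi_{i\sigma}\ra$. The argument needs this term integrated exactly (or with a quadrature exact for linear functions, such as the midpoint rule), so that $\int_{I_n} Z_\sigma\,\dd t = \tfrac{\tau}{2}Z_\sigma(t^{n+1})$. We also need the test functions to be piecewise constant in time, so that $V_i q_h$ is admissible. With these two points in place, the induction closes and the pointwise identity holds for all $t\in[0,T]$.
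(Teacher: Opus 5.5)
Your proposal is correct and follows essentially the same argument as the paper. Both run an induction on $n$, test the summed mass equation with $V_i q$ and the divergence equation with $q$, and use $Z_\sigma = \tfrac{t-t^n}{\tau}Z_\sigma(t^{n+1})$ with integration by parts to obtain the factor $\tfrac14$. The only cosmetic difference is that you test with $q_h = Z_\sigma(t^{n+1})$, whereas the paper tests with $q_\sigma = \dt Z_\sigma = \tfrac{1}{\tau}Z_\sigma(t^{n+1})$; these differ only by the factor $\tau$.
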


\begin{proof}
    The proof is largely similar to that of \Cref{thm:schemeA_incom}, proceeding by induction.
    Define again $Z_\sigma \coloneqq \sum_i V_i\rho_{i\sigma} - 1 \in \Pi^1_{\mathrm{cts}}(\Itau; \Vh)$, with $Z_\sigma(t^0) = 0$ everywhere by assumption on the initial data.
    Suppose then $Z_\sigma(t^n) = 0$.
    Taking $\psi_{i\sigma} = V_i q_\sigma$ for some general $q_\sigma \in \Pi^0_{\mathrm{disc}}(\Itau; \Qh)$ in \eqref{eq:schemeB1}, summing over $i$, and substituting the right-hand side for $\int_{I_n}\la \divM\u_{\sigma}, q_{\sigma} \ra = - \int_{I_n}\la \u_{\sigma}, \nabla q_{\sigma} \ra$ via \eqref{eq:schemeB5},
    \begin{subequations}
    \begin{align}
        \int_{I_n}\la \dt Z_\sigma, q_\sigma \ra
            &=  \int_{I_n}\la Z_\sigma\u_\sigma, \nabla q_\sigma \ra.  \\
    \intertext{
    Partial-mass conservation and the inductive hypothesis give $\int_\Omega Z_\sigma(t^{n+1}) = 0$, implying $Z_\sigma(t^{n+1}) \in \Qh$.
    Thus $\dt Z_\sigma \in \Pi^0_{\mathrm{disc}}(\Itau; \Qh)$, such that we may consider $q_\sigma = \dt Z_\sigma$:
    }
        \int_{I_n} \| \dt Z_\sigma \|^2
            &=  \int_{I_n}\la Z_\sigma\u_\sigma, \nabla \dt Z_\sigma \ra.
    \end{align}
    With $Z_\sigma(t^n) = 0$, $Z_\sigma = \frac{t - t^n}{\tau}Z_\sigma(t^{n+1})$ and $\dt Z_\sigma = \frac{1}{\tau}Z_\sigma(t^{n+1})$ on $I_n$;
    similarly, since $\u_\sigma$ is constant on $I_n$, we may write $\u_\sigma = \u_\sigma(t^{n+1})$.
    We may thus simplify,
    \begin{align}
        \frac{1}{\tau} \| Z_\sigma(t^{n+1}) \|^2
            &=  \frac{1}{2} \la Z_\sigma(t^{n+1})\,\u_\sigma(t^{n+1}), \nabla Z_\sigma(t^{n+1}) \ra  \\
            &=  - \frac{1}{4} \la \divM\u_\sigma(t^{n+1}), Z_\sigma(t^{n+1})^2 \ra  \\
            &\le  \frac{1}{4} \| \divM\u_\sigma(t^{n+1}) \|_{L^\infty(\Omega)} \| Z_\sigma(t^{n+1}) \|^2,
    \end{align}
    \end{subequations}
    where in the second equality we apply integration by parts.
    Under the time step condition \eqref{eq:schemeB_CFL}, rearranging gives $\| Z_\sigma(t^{n+1}) \|^2 \le 0$, hence $Z_\sigma(t^{n+1}) = 0$. By induction, the quasi-incompressibility condition $Z_\sigma = 0$ holds for all time.
\end{proof}

\begin{corollary}[Density bounds for Scheme~II]\label{cor:densitybounds2}
    For $k = 1$, suppose the quasi-incompressibility condition \eqref{eq:schemeB_incom} holds and each partial density $\rho_{i\sigma}$ is non-negative everywhere.
    Then the total mass density $\varrho_\sigma$ is bounded by
    \begin{subequations}\label{eq:schemeB_density_bounds}
    \begin{equation}\label{eq:schemeB_density_total}
        \frac{1}{\max_i V_i} \le \varrho_\sigma(t, \mathbf{x}) \le \frac{1}{\min_i V_i},  \qquad
        \text{for all } t \in [0, T], \mathbf{x}\in\Omega.
    \end{equation}
    Moreover,
    \begin{equation}\label{eq:schemeB_density_partial}
        \rho_{i\sigma}(t, \mathbf{x}) \le \frac{1}{V_i},  \qquad
        \text{for all } t \in [0, T], \mathbf{x}\in\Omega.
    \end{equation}
    \end{subequations}
\end{corollary}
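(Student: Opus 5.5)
The plan is to follow the proof of \Cref{cor:densitybounds} verbatim, but pointwise in space--time rather than pointwise in space at each discrete time level. The only input is the identity \eqref{eq:schemeB_incom}, which holds for every $(t,\mathbf{x}) \in [0,T]\times\Omega$ by \Cref{thm:schemeB_incom} under its time step condition. This is why the statement can be made for all $t$ and not only at the nodes $t^n$. For $k=1$ each $\rho_{i\sigma}$ is continuous and piecewise linear in time. The hypothesis of non-negativity is taken to hold everywhere in $[0,T]\times\Omega$, so it need not be derived from nodal values.

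Fix an arbitrary $(t,\mathbf{x})$ and abbreviate $\rho_i = \rho_{i\sigma}(t,\mathbf{x}) \ge 0$. Since each $\rho_i$ is non-negative, bounding each $V_i$ from below by $\min_j V_j$ and from above by $\max_j V_j$ gives
\begin{equation*}
    \min_j V_j \sum_{i=1}^N \rho_i \;\le\; \sum_{i=1}^N V_i\rho_i \;=\; 1 \;\le\; \max_j V_j \sum_{i=1}^N \rho_i .
\end{equation*}
Dividing by the positive constants $\min_j V_j$ and $\max_j V_j$ yields \eqref{eq:schemeB_density_total}. For \eqref{eq:schemeB_density_partial}, I drop all terms except the $i$th from $\sum_j V_j\rho_j = 1$; each dropped term is non-negative, so $V_i\rho_i \le 1$, that is, $\rho_i \le 1/V_i$.

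There is essentially no obstacle here. The only point needing care is the remark that the constraint is available for all $t \in [0,T]$ rather than only at the time nodes. This is exactly what \Cref{thm:schemeB_incom} provides for $k=1$: $Z_\sigma$ is linear on each $I_n$ and vanishes at both endpoints, hence vanishes identically on $I_n$. The positivity of the $V_i$ is also used, and it is implicit in their role as specific volumes.
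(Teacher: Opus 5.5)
Your proof is correct. It uses the same two elementary inequalities as the paper, but applies them at a different place.

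You work directly at every $(t,\mathbf{x})\in[0,T]\times\Omega$, using that both \eqref{eq:schemeB_incom} and non-negativity are assumed for all times. The paper instead applies the argument of \Cref{cor:densitybounds} only at the nodes $t^n$. It then extends the bounds to all $t$ by noting that, for $k=1$, each $\rho_{i\sigma}$ is affine in time on $I_n$, and the bounds are linear inequalities preserved under convex combination.

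The paper's route needs the hypotheses only at the time nodes, which matches the node-wise statement of \Cref{cor:densitybounds}; this interpolation step is where $k=1$ is used. Your route shows that, once the constraint is granted for all $t$, the bounds are pure pointwise algebra and hold for any $k$. There, $k=1$ is needed only to obtain the constraint from \Cref{thm:schemeB_incom}.
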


\begin{proof}
    The density bounds follow from the same arguments as in \Cref{cor:densitybounds} at each time node $t^n$.
    For $k = 1$, each $\rho_{i\sigma}$ is linear in time on each interval $I_n$, and the bounds are linear inequalities in the partial densities, so they extend to all $t \in [0, T]$ through linear interpolation.
\end{proof}

To make the CPG family concrete, we state the lowest-order-in-time discretisation ($k = 1$) explicitly.
On each interval $I_n$, the continuous unknowns $(\vec\rho_\sigma, \bm{m}_\sigma, (\varrho s)_\sigma)$ are determined by their values $g_h^n \coloneqq g_\sigma(t^n)$ and $g_h^{n+1} \coloneqq g_\sigma(t^{n+1})$ (for general $g$) at the interval endpoints:
\begin{equation}
    g_\sigma  =  \frac{t^{n+1} - t}{\tau} g_h^n + \frac{t - t^n}{\tau} g_h^{n+1}.
\end{equation}
We recall the discrete time derivative $\dttau g_h^n \coloneqq (g_h^{n+1} - g_h^n)/\tau$ from the semi-implicit Euler discretisation \eqref{eq:schemeA}, such that $\dt g_\sigma = \dttau g_h^n$ on $I_n$.
The auxiliary fields $(\vec\mu_\sigma, \u_\sigma, p_\sigma, \vtheta_\sigma)$ are constant on $I_n$;
we denote their single value on $I_n$ as $g^{n+1}_h$.

Following on from \Cref{rem:schemeB_quadrature}, we shall approximate all integrals except \eqref{eq:exact_integrals_1} and \eqref{eq:exact_integrals_3} by a midpoint rule, noting that this is exact for \eqref{eq:exact_integrals_2}.
For the continuous unknowns, we denote the midpoint value by $g_h^{n+1/2} \coloneqq \tfrac12(g_h^n + g_h^{n+1})$.
We characterise integrals in time via $\overline{(\cdot)} \coloneqq \tfrac{1}{\tau}\int_{I_n}(\cdot)$, e.g.
\begin{subequations}
\begin{align}
    \overline{\partial_{\varrho s} (\varrho e)(\vec\rho_{\sigma}, (\varrho s)_{\sigma})}
        &\coloneqq  \frac{1}{\tau} \int_{I_n} \partial_{\varrho s} (\varrho e)(\vec\rho_{\sigma}, (\varrho s)_{\sigma})\,\dd t  \\
        &=  \frac{1}{\tau} \int_{I_n} \partial_{\varrho s} (\varrho e)\!\left(\frac{t^{n+1} - t}{\tau} \pvec\rho_h^n + \frac{t - t^n}{\tau} \pvec\rho_h^{n+1},\right.  \\
        &\qquad\qquad\qquad\qquad\left.\frac{t^{n+1} - t}{\tau} (\varrho s)_h^n + \frac{t - t^n}{\tau} (\varrho s)_h^{n+1}\right)\!\dd t.  \notag
\end{align}
\end{subequations}

Dividing each equation of \eqref{eq:schemeB} by $\tau$ and carrying out the corresponding quadrature and integrals, the scheme then reads:
given $(\pvec{\rho}_h^n, \bm{m}_h^n, (\varrho s)_h^n)\in \Vh^N\times\Xh\times\Vh$, find $(\pvec{\rho}_h^{n+1},\bm{m}_h^{n+1},(\varrho s)_h^{n+1},\pvec\mu_h^{n+1},\u_h^{n+1},p_h^{n+1},\vtheta_h^{n+1})\in \Vh^N\times\Xh\times\Vh\times\Vh^N\times\Xh\times\Qh\times\Vh$ such that
\begin{subequations}\label{eq:schemeBlow}
\begin{align}
    \la \dttau\rho_{ih}^n,\psi_{ih} \ra \, - \, &\la \rho_{ih}^{n+1/2}\u_h^{n+1},\nabla\psi_{ih} \ra  \label{eq:schemeBlow1}  \\
        &=  - \sum_{j=1}^N \left\la M_{ij}^{n+1/2}\,\nabla \frac{\mu_{jh}^{n+1}}{\vtheta_h^{n+1}}, \nabla\psi_{ih} \!\right\ra,  \notag
    \\
    \la \mu_{ih}^{n+1},\zeta_{ih} \ra
        &=  \!\left\la\, \overline{\partial_{\rho_i} (\varrho e)(\vec\rho_\sigma,(\varrho s)_\sigma)} + V_i p_h^{n+1},\zeta_{ih} \right\ra\!,   \label{eq:schemeBlow2}
    \\
    \left\la\! \sqrt{\varrho_h^{n+1/2}}\,\dttau\bm{m}_h^n,\right. &\left.\phantom{\sqrt{\varrho_h^{n+1/2}}}\hspace{-15mm}\v_h \!\right\ra\! + \mathcal{B}(\varrho_h^{n+1/2}\u_h^{n+1}; \u_h^{n+1}, \v_h)  \label{eq:schemeBlow3}  \\
        &=  - \, \la \mathbb{S}(\pvec\rho_h^{n+1/2},\vtheta_h^{n+1};\mathrm{D}\u_h^{n+1}),\mathrm{D}\v_h \ra + \la p_h^{n+1},\divM\v_h \ra  \notag  \\
        &\qquad\qquad\quad - \left\la \sum_{i=1}^N \rho_{ih}^{n+1/2}(\nabla\mu_{ih}^{n+1} - V_i \na p_h^{n+1})\right.  \notag  \\
        &\qquad\qquad\qquad\qquad\qquad\left.\phantom{\sum_{i=1}^N}+ (\varrho s)_h^{n+1/2}\,\nabla\vtheta_h^{n+1}, \v_h \!\right\ra,  \notag
    \\
    \la \bm{m}_h^{n+1/2}, \w_h \ra
        &=  \!\left\la\! \sqrt{\varrho_h^{n+1/2}}\,\u_h^{n+1}, \w_h \!\right\ra\!,  \label{eq:schemeBlow4}
    \\
    \la \divM\u_h^{n+1}, q_h \ra
        &=  - \sum_{i,j=1}^N \left\la V_i M_{ij}^{n+1/2}\,\nabla\frac{\mu_{jh}^{n+1}}{\vtheta_h^{n+1}}, \nabla q_h \!\right\ra,   \label{eq:schemeBlow5}
    \\
    \la \dttau(\varrho s)_h^n, \omega_h \ra \, - \, &\la (\varrho s)_h^{n+1/2}\u_h^{n+1}, \nabla\omega_h \ra  \label{eq:schemeBlow6}  \\
        &=  \left\la \mathbb{S}(\pvec\rho_h^{n+1/2},\vtheta_h^{n+1};\mathrm{D}\u_h^{n+1}):\mathrm{D}\u_h^{n+1},\frac{\omega_h}{\vtheta_h^{n+1}} \right\ra  \notag  \\
        &\qquad\quad+ \left\la K^{n+1/2}\,\nabla\frac{1}{\vtheta_h^{n+1}},\nabla\frac{\omega_h}{\vtheta_h^{n+1}} \right\ra\!  \notag  \\
        &\qquad\qquad\qquad + \sum_{i,j=1}^N \!\left\la M_{ij}^{n+1/2}\,\nabla\frac{\mu_{jh}^{n+1}}{\vtheta_h^{n+1}},\nabla\frac{\mu_{ih}^{n+1}\omega_h}{\vtheta_h^{n+1}} \right\ra,  \notag
    \\
    \la \vtheta_h^{n+1}, \gamma_h \ra
        &=  \!\left\la\, \overline{\partial_{\varrho s} (\varrho e)(\vec\rho_{\sigma}, (\varrho s)_{\sigma})}, \gamma_h \right\ra\!,  \label{eq:schemeBlow7}
\end{align}
\end{subequations}
for all $(\vec\psi_h,\vec\zeta_h,\v_h,\w_h,q_h,\omega_h,\gamma_h)\in \Vh^N\times\Vh^N\times\Xh\times\Xh\times\Qh\times\Vh\times\Vh$.

\begin{remark}[Unconditional quasi-incompressibility in the CPG discretisation]
    Analogous to the $g^*$ terms in the semi-implicit Euler discretisation \eqref{eq:schemeA}, like terms in \eqref{eq:schemeBlow} may be replaced by suitable approximations, again without affecting the mass, energy and entropy structures \eqref{eq:schemeB_global} and their proofs.
    In particular, if an approximation for the partial density $\rho_{ih}^{n+1/2}$ in the advective term $\la \rho_{ih}^{n+1/2}\u_h^{n+1},\nabla\psi_{ih} \ra$ of \eqref{eq:schemeBlow1} is chosen independently of $\rho_{ih}^{n+1}$, then the quasi-incompressibility constraint \eqref{eq:schemeB_incom} will hold independently of the time step $\tau$ (i.e.~independently of the condition \eqref{eq:schemeB_CFL}).
    This is analogous to Case~(i) of the semi-implicit Euler discretisation (\Cref{thm:schemeA_incom}).
    By way of contrast, however, the one-stage CPG scheme is second-order in time, whereas the semi-implicit Euler scheme is only first-order;
    approximating $\rho_{ih}^{n+1/2} \approx \rho_{ih}^n$ therefore would affect the convergence rate of the CPG discretisation.
    Beyond the first time step, a natural alternative to guarantee unconditional quasi-incompressibility while preserving the second-order convergence rate would be an approximation $\rho_{ih}^{n+1/2} \approx \frac{3}{2}\rho_{ih}^n - \frac{1}{2}\rho_{ih}^{n-1}$.
\end{remark}

\section{Numerical simulations} \label{Sec:Simul}
The schemes of \Cref{sec.num} (\ref{eq:schemeA},~\ref{eq:schemeB}) are implemented in NGSolve \cite{schoberl2014c++} for the unit square tests in \Cref{sec:unit_square}, and Firedrake \cite{Ham23} for the application example in \Cref{sec:app_ex}, with the continuous Petrov--Galerkin time discretisation of Scheme~II generated by Irksome \cite{FKM21,ABFKM26}.
Nonlinear systems arising at each time step are solved by Newton's method, with an absolute tolerance of $10^{-11}$ on the $L^2$ norm of the update
for the NGSolve simulations, and an absolute tolerance of $10^{-11}$ on the Euclidean norm of the residual for the Firedrake simulations. The source code used to generate the numerical results presented in this manuscript is publicly available at \url{https://github.com/AaronBrunk1/ThermalQuasi-incompressibleMSNS}.

\subsection{Tests on the unit square}\label{sec:unit_square}
We first consider two tests on $\Omega = (0,1)^2$, with points denoted by $(x,y)\in\Omega$:
a convergence test (\Cref{sec:conv}), and a test of structure preservation (\Cref{sec:struc}).
Both use the Georgiadis--J\"{u}ngel mixing energy \eqref{eq:georgiadis} with $\cv = 1$, the mobility
\begin{equation}\label{eq:num_constitutive}
    M_{ij}(\vec\rho,\vtheta) = \bar{M}\!\left(\delta_{ij}\rho_i - \frac{\rho_i\rho_j}{\varrho}\right)\!,
\end{equation}
the constant heat conductivity $K = 10^{-2}$, and the viscosities $\eta = \bar\eta$ and $\lambda = 0$, for constants $\bar M, \bar\eta > 0$ specified below.

\subsubsection{Spatial convergence test}\label{sec:conv}
We first consider a mixture of $N=2$ components up to the final time $T=0.1$, with $\bar M = 10^{-1}$, $\bar\eta = 10^{-3}$, $V_1=0.8$ and $V_2=0.2$.
The initial data are
\begin{subequations}\label{eq:conv_ic}
\begin{align}
    \rho_1^0(x,y) &= 1 + 0.2\sin(4\pi x)\cos(2\pi y), \label{eq:conv_ic_rho1}  \\
    \rho_2^0(x,y) &= V_2^{-1}\bigl(1 - V_1\rho_1^0(x,y)\bigr), \label{eq:conv_ic_rho2}  \\
    \u^0(x,y) &= \bigl(-\sin(\pi x)^2\sin(2\pi y),\ \sin(2\pi x)\sin(\pi y)^2\bigr)^\top, \label{eq:conv_ic_u}  \\
    \vtheta^0(x,y) &= 1 + 0.1\sin(2\pi x)\cos(2\pi y), \label{eq:conv_ic_theta}
\end{align}
\end{subequations}
where the second partial density is chosen so that the initial data satisfy the quasi-incompressibility constraint \eqref{1.incom}.
For Scheme~II, the initial entropy density $(\varrho s)^0$ is computed from $\pvec{\rho}^0$ and $\vtheta^0$ via its constitutive relation.
Snapshots of the solution at the initial and final times are shown in \Cref{fig:conv_snapshots}.

\begin{figure}[!htbp]
\centering
\begin{tabular}{c@{\hspace{0.6em}}c@{\hspace{2pt}}c@{\hspace{3pt}}l}
    & \small $t = 0$ & \small $t = T$ \\[3pt]
    $\rho_1$ & \includegraphics[valign=c,trim={471bp 221bp 471bp 221bp},clip,width=96bp]{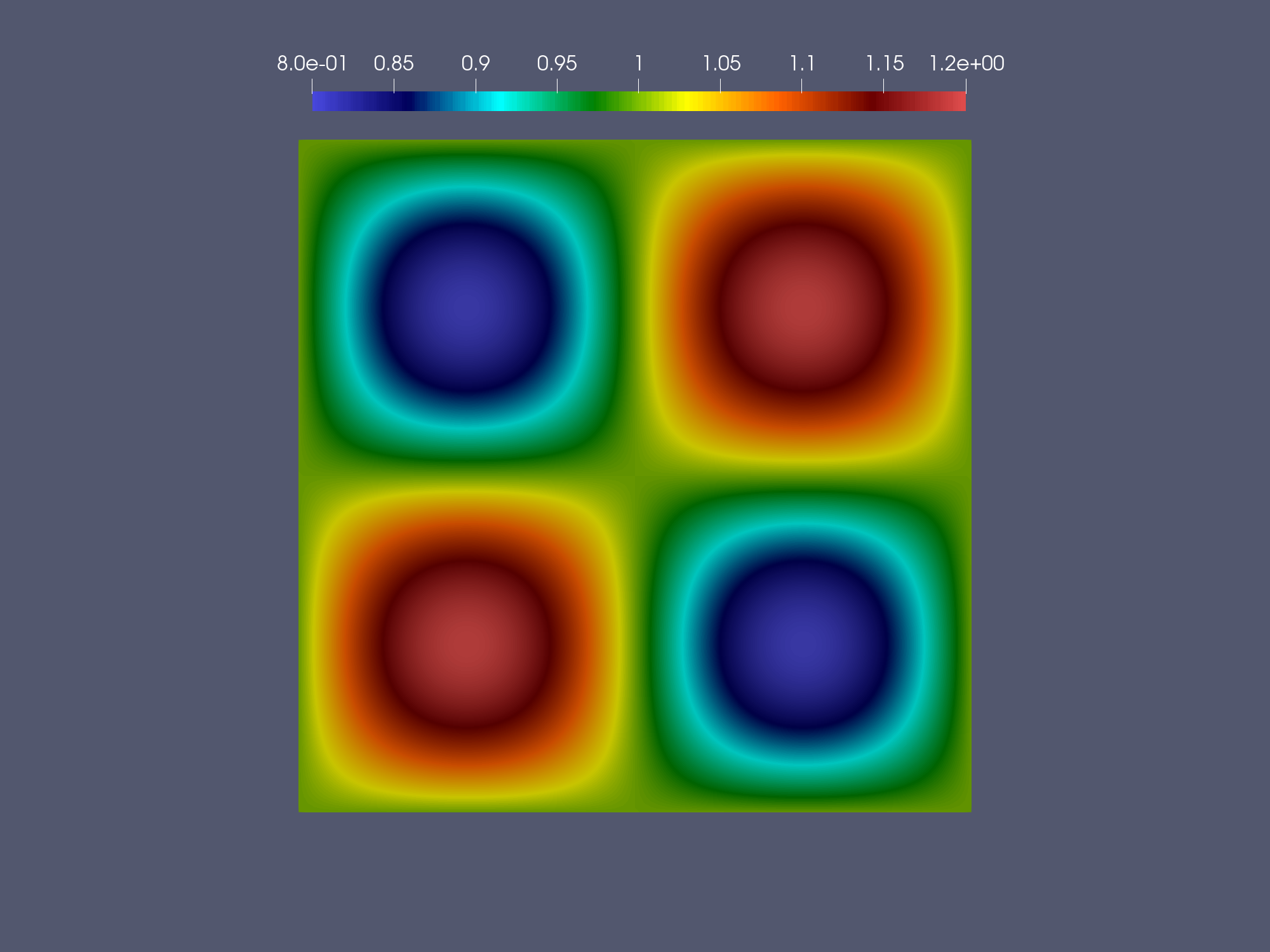} & \includegraphics[valign=c,trim={471bp 221bp 471bp 221bp},clip,width=96bp]{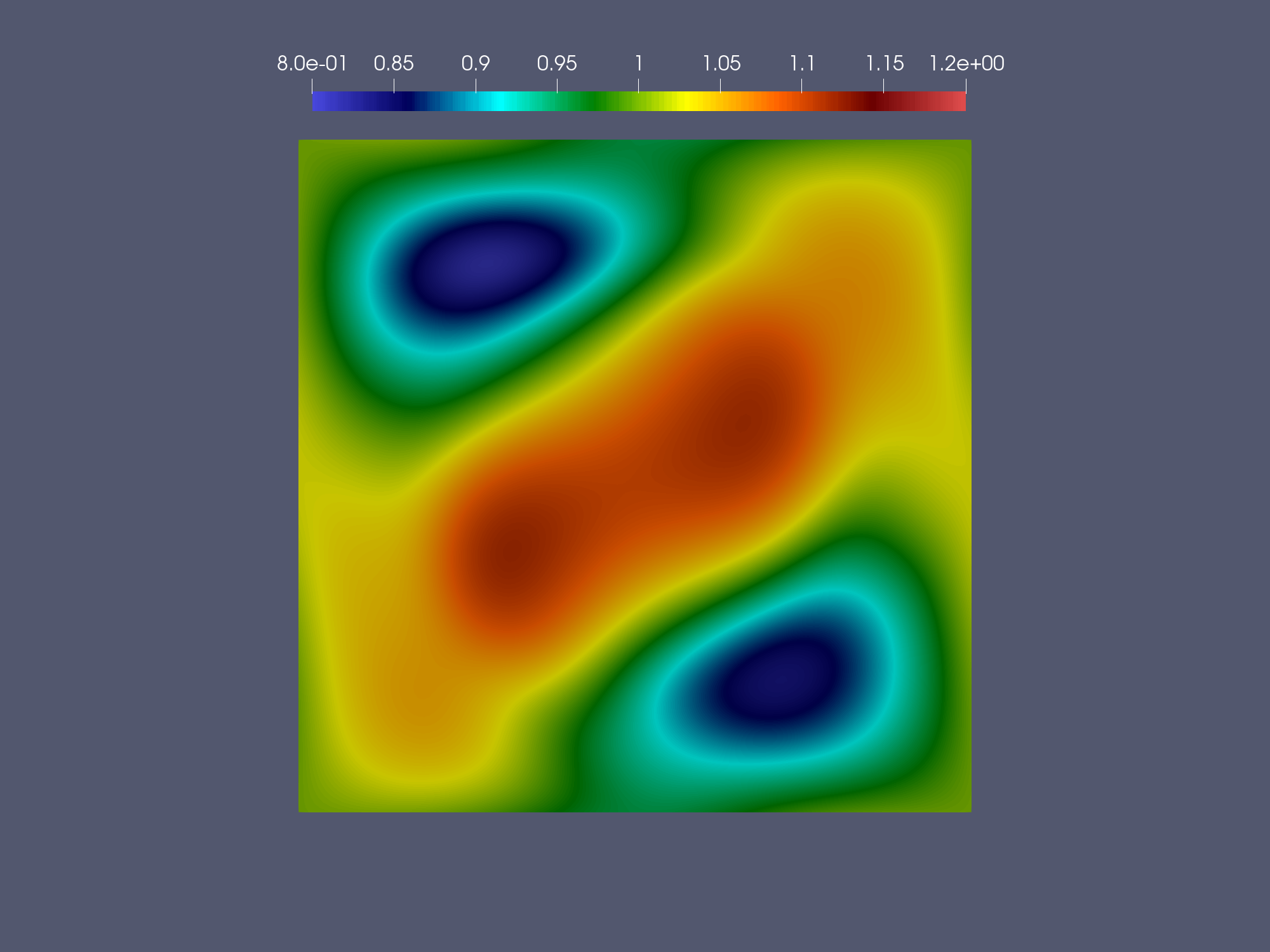} & \cbar{0.8}{1.2}{90bp}{0.8,0.9,1,1.1,1.2} \\ \noalign{\vskip 2pt}
    $\rho_2$ & \includegraphics[valign=c,trim={471bp 221bp 471bp 221bp},clip,width=96bp]{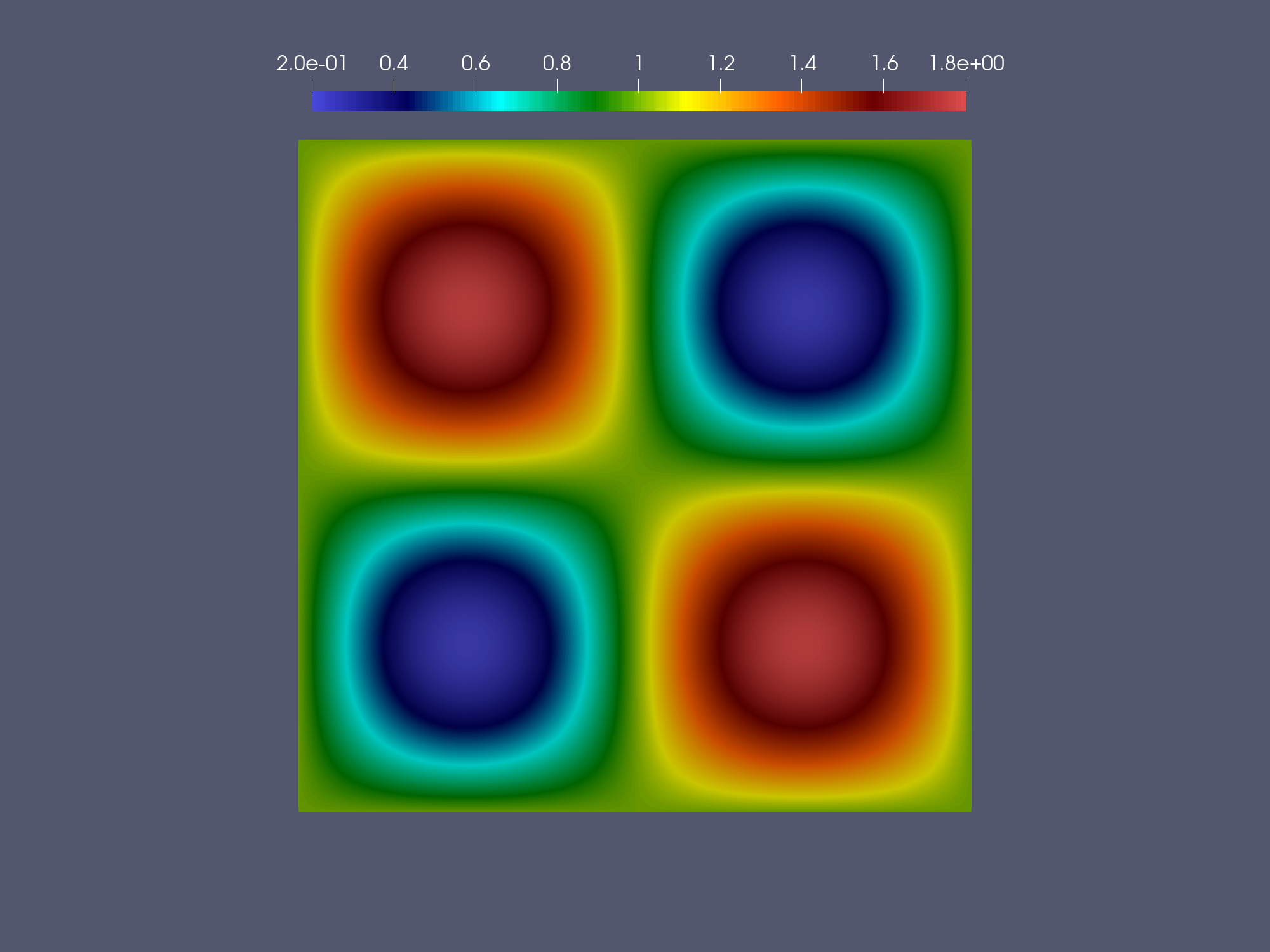} & \includegraphics[valign=c,trim={471bp 221bp 471bp 221bp},clip,width=96bp]{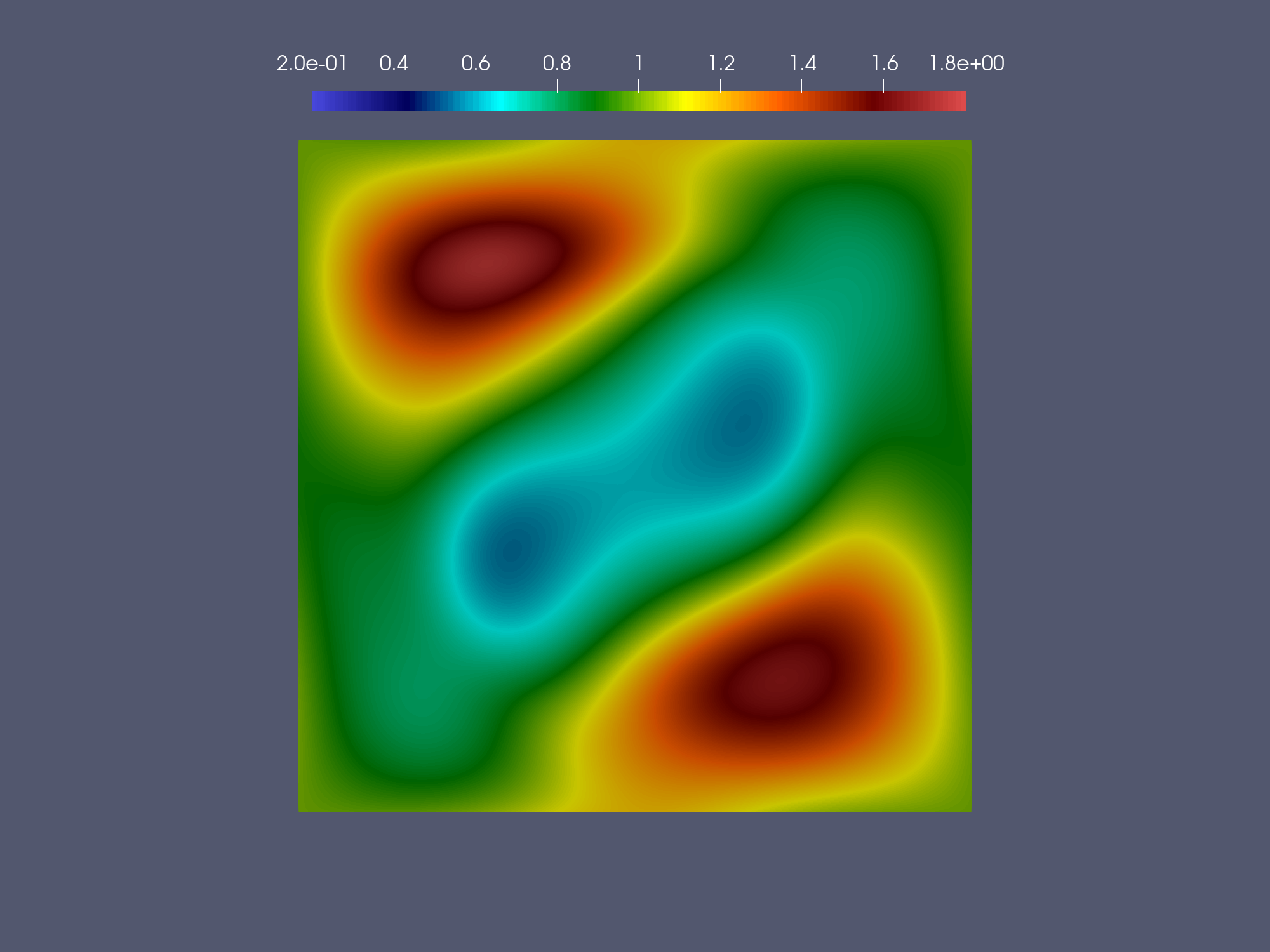} & \cbar{0.2}{1.8}{90bp}{0.2,0.6,1,1.4,1.8} \\ \noalign{\vskip 2pt}
    $\vtheta$ & \includegraphics[valign=c,trim={471bp 221bp 471bp 221bp},clip,width=96bp]{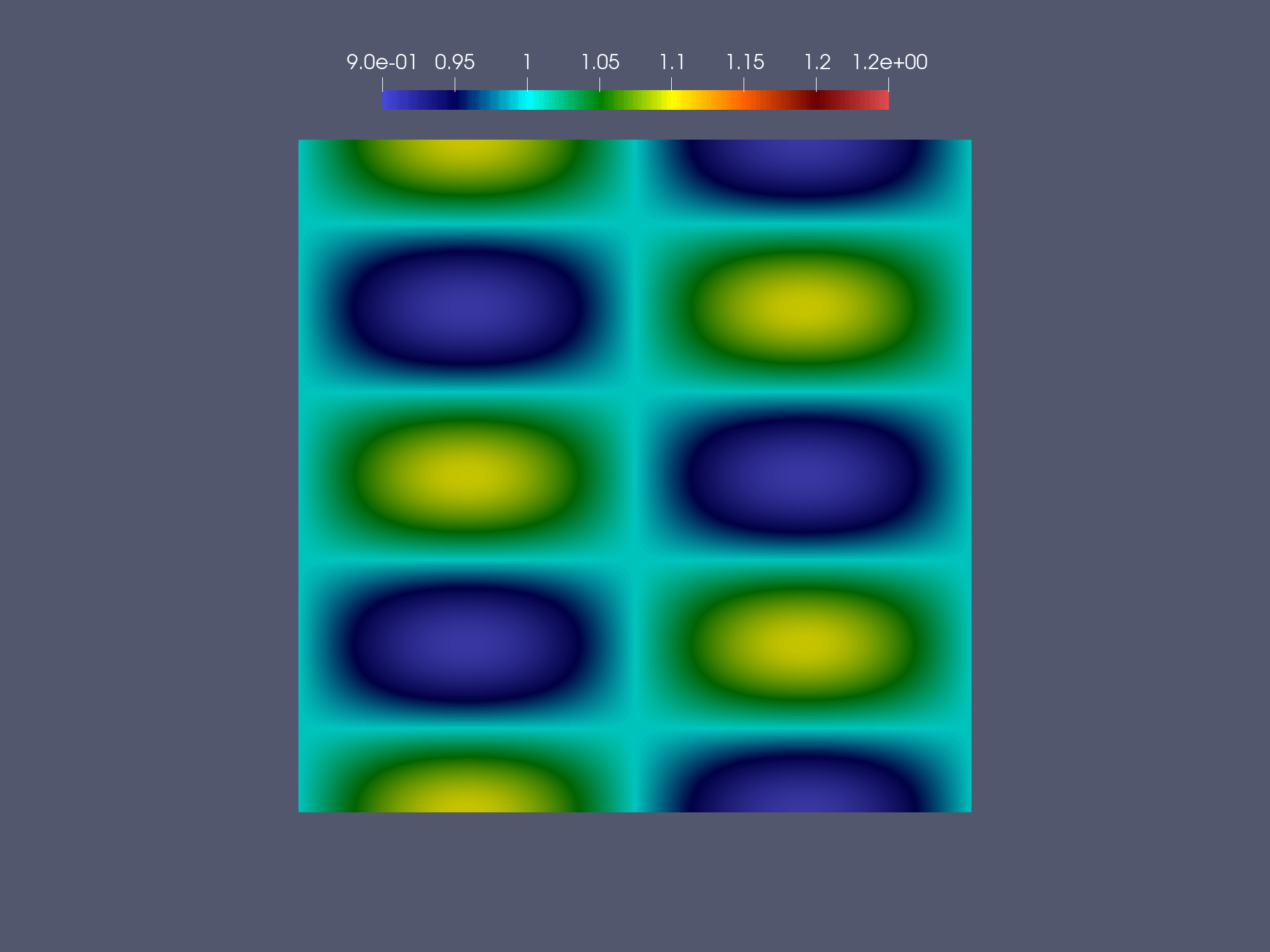} & \includegraphics[valign=c,trim={471bp 221bp 471bp 221bp},clip,width=96bp]{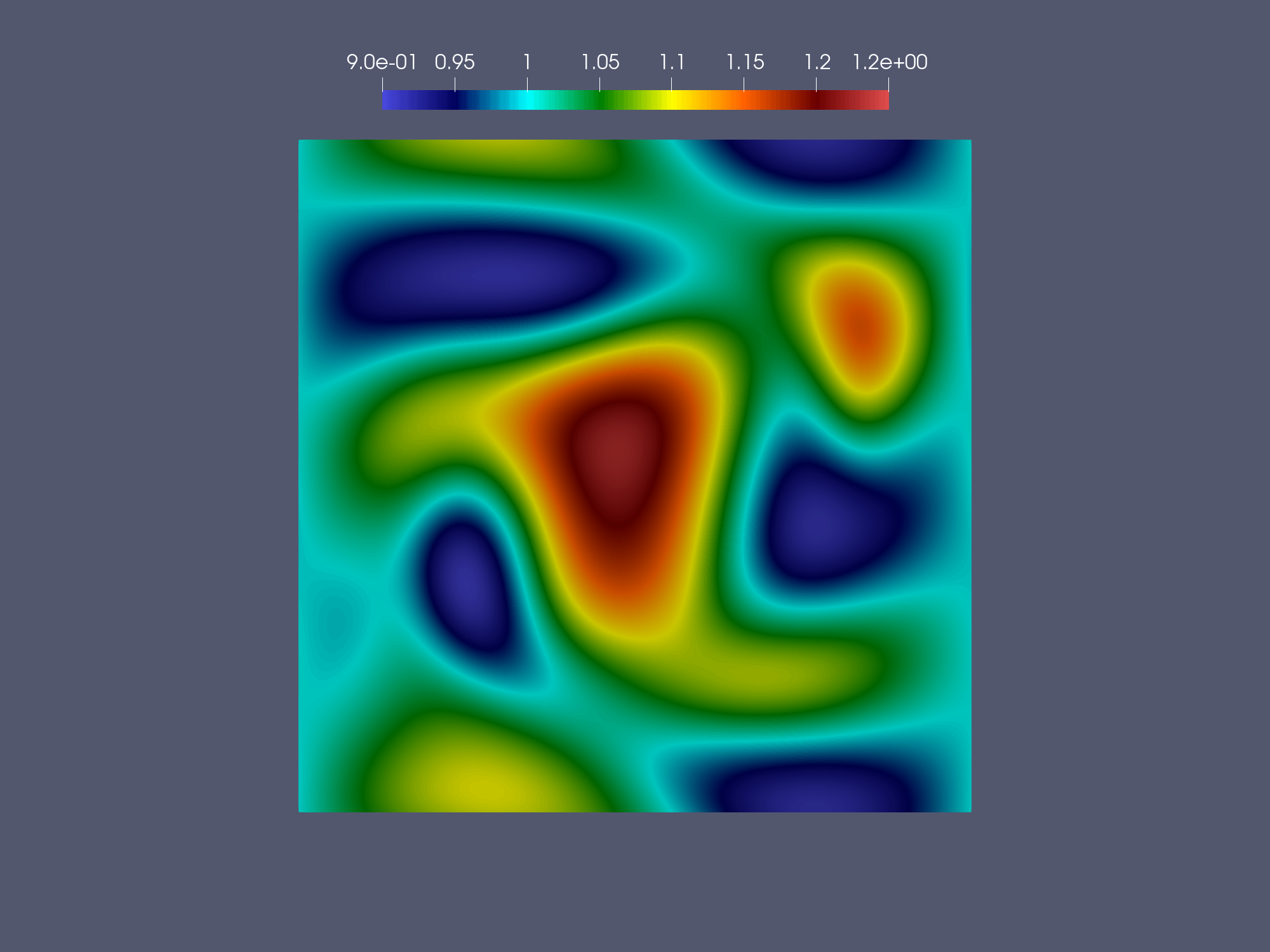} & \cbar{0.9}{1.2}{90bp}{0.9,1,1.1,1.2} \\ \noalign{\vskip 2pt}
    $\|\u\|$ & \includegraphics[valign=c,trim={471bp 221bp 471bp 221bp},clip,width=96bp]{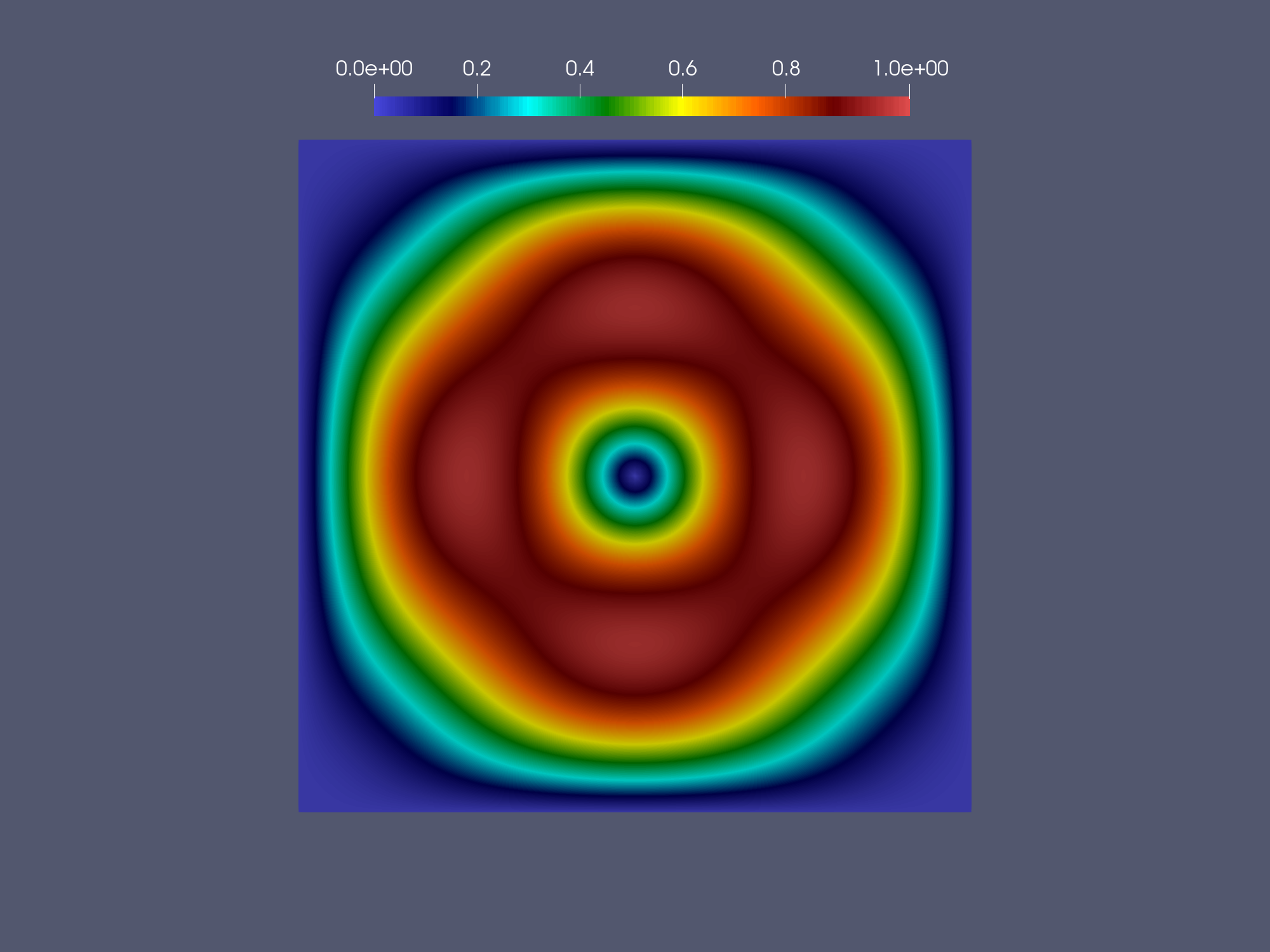} & \includegraphics[valign=c,trim={471bp 221bp 471bp 221bp},clip,width=96bp]{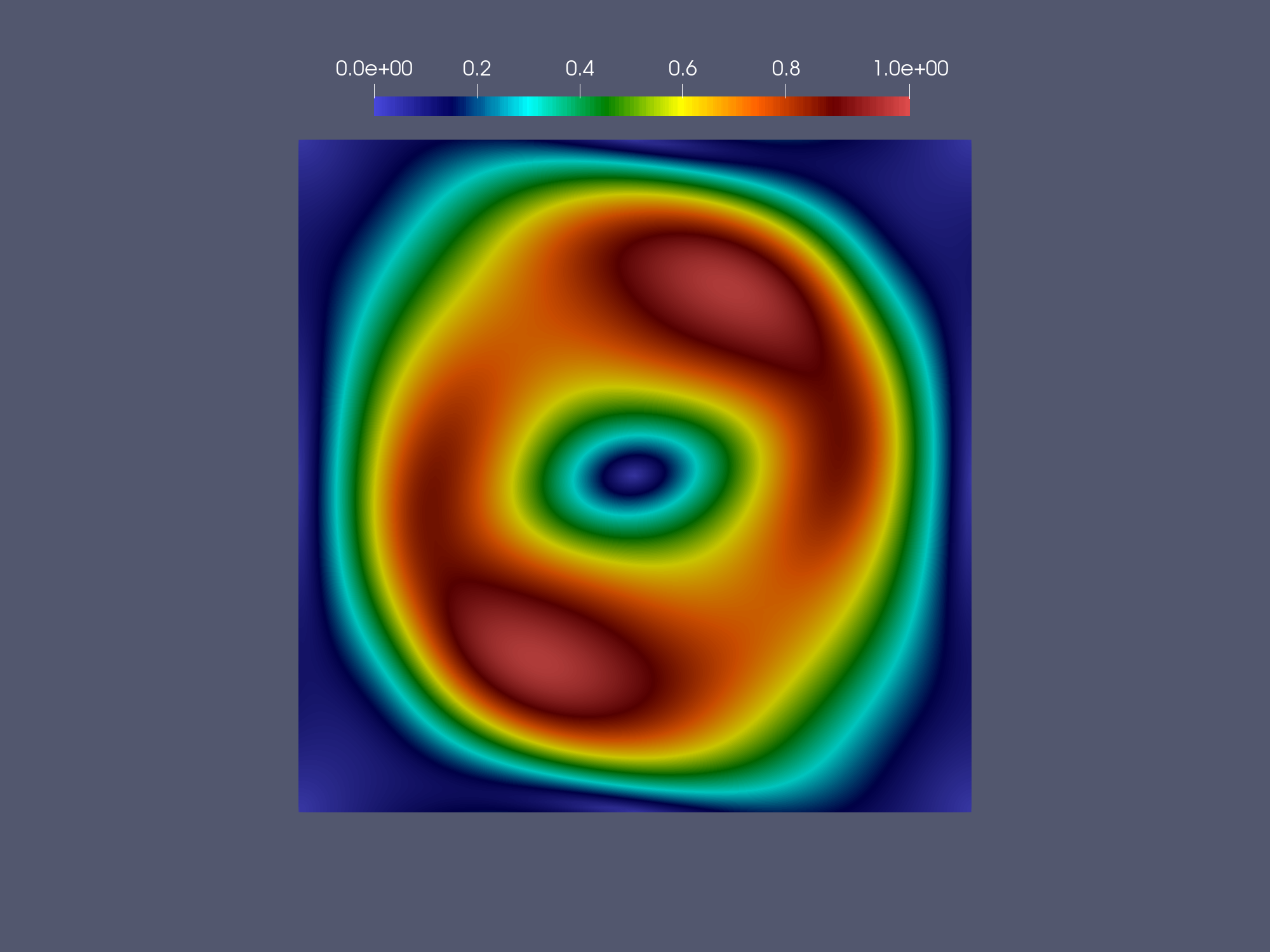} & \cbar{0}{1}{90bp}{0,0.5,1} \\ \noalign{\vskip 2pt}
\end{tabular}
\caption{Spatial convergence test: partial densities $\rho_1$ and $\rho_2$, temperature $\vtheta$, and velocity $\|\u\|$, at the initial and final times.}
\label{fig:conv_snapshots}

\end{figure}

Since no exact solution is available, we estimate errors by comparing the solutions on successive meshes $h_k$ and $h_{k+1}$, with the time step fixed at $\tau = 10^{-4}$.
Let $n_T=T/\tau$ denote the number of time steps.
For Scheme~I, we consider the errors
\begin{subequations}\label{eq:errA}
\begin{align}
    \text{err}(\vec{\rho},h_k)&\coloneqq \max_{0\le n\le n_T}\norm{\pvec{\rho}^n_{{h_k}}-\pvec{\rho}^n_{h_{k+1}}}_{L^2}, \label{eq:errA_rho}  \\
    \text{err}(\u,h_k)&\coloneqq\max_{0\le n\le n_T}\norm{\u^n_{{h_k}}-\u^n_{h_{k+1}}}_{L^2}, \label{eq:errA_u}  \\
    \text{err}(\vtheta,h_k)&\coloneqq\max_{0\le n\le n_T}\norm{\vtheta^n_{{h_k}}-\vtheta^n_{h_{k+1}}}_{L^2}, \label{eq:errA_theta}  \\
    \text{err}(\nabla\vec\mu,h_k)&\coloneqq\sqrt{\tau\sum\nolimits_{n=1}^{n_T}\norm{\pvec{\mu}^n_{h_k}-\pvec{\mu}^n_{h_{k+1}}}_{H^1}^2}, \label{eq:errA_mu}  \\
    \text{err}(\nabla\u,h_k)&\coloneqq\sqrt{\tau\sum\nolimits_{n=1}^{n_T}\norm{\u^n_{h_k}-\u^n_{h_{k+1}}}_{H^1}^2}, \label{eq:errA_gradu}  \\
    \text{err}(\nabla\vtheta,h_k)&\coloneqq\sqrt{\tau\sum\nolimits_{n=1}^{n_T}\norm{\vtheta^n_{h_k}-\vtheta^n_{h_{k+1}}}_{H^1}^2}, \label{eq:errA_gradtheta}  \\
    \text{err}(p,h_k)&\coloneqq\sqrt{\tau\sum\nolimits_{n=1}^{n_T}\norm{p_{h_k}^n-p^n_{h_{k+1}}}_{L^2}^2}. \label{eq:errA_p}
\end{align}
\end{subequations}
For Scheme~II, we replace $\u$ and $\vtheta$ by the continuous-in-time unknowns $\bm{m}$ and $\varrho s$ in the first three errors, and the sums over time steps by time integrals in the remainder:
\begin{subequations}\label{eq:errB}
\begin{align}
    \text{err}(\vec{\rho},h_k)&\coloneqq \max_{0\le n\le n_T}\norm{\pvec{\rho}^n_{{h_k}}-\pvec{\rho}^n_{h_{k+1}}}_{L^2}, \label{eq:errB_rho}  \\
    \text{err}(\bm{m},h_k)&\coloneqq\max_{0\le n\le n_T}\norm{\bm{m}^n_{{h_k}}-\bm{m}^n_{h_{k+1}}}_{L^2}, \label{eq:errB_m}  \\
    \text{err}(\varrho s,h_k)&\coloneqq\max_{0\le n\le n_T}\norm{(\varrho s)^n_{{h_k}}-(\varrho s)^n_{h_{k+1}}}_{L^2}, \label{eq:errB_s}  \\
    \text{err}(\nabla\vec\mu,h_k)&\coloneqq\sqrt{\int_0^T\norm{\pvec{\mu}_{h_k}-\pvec{\mu}_{h_{k+1}}}_{H^1}^2\,\dd t}, \label{eq:errB_mu}  \\
    \text{err}(\nabla\u,h_k)&\coloneqq\sqrt{\int_0^T\norm{\u_{h_k}-\u_{h_{k+1}}}_{H^1}^2\,\dd t}, \label{eq:errB_gradu}  \\
    \text{err}(\nabla\vtheta,h_k)&\coloneqq\sqrt{\int_0^T\norm{\vtheta_{h_k}-\vtheta_{h_{k+1}}}_{H^1}^2\,\dd t}, \label{eq:errB_gradtheta}  \\
    \text{err}(p,h_k)&\coloneqq\sqrt{\int_0^T\norm{p_{h_k}-p_{h_{k+1}}}_{L^2}^2\,\dd t}. \label{eq:errB_p}
\end{align}
\end{subequations}
We use meshes of size $h_k \approx 2^{-k}$, $k=1,\ldots,7$.
The errors and experimental orders of convergence (EOCs) are reported in \Cref{tab:conv_I} for Scheme~I and \Cref{tab:conv_II} for Scheme~II.
Both schemes converge at second order in $L^\infty(0,T;L^2(\Omega))$ for the partial densities $\vec\rho$, temperature $\vtheta$ and entropy density $\varrho s$ respectively, at second order in $L^2(0,T;L^2(\Omega))$ for the pressure $p$, and at first order in $L^2(0,T;H^1(\Omega))$ for $\vtheta$ and the chemical potentials $\vec\mu$.
The errors in the velocity $\u$ and modified momentum density $\bm{m}$ decay at higher (albeit less regular) rates, reflecting the higher polynomial degree of $\Xh$.

\def\phm{}
\begin{table}[htbp!]
\centering
\small
\caption{Spatial convergence test, Scheme~I (semi-implicit Euler): errors and experimental orders of convergence.}
\label{tab:conv_I}
\begin{subtable}{\textwidth}
\centering
\caption{Errors in $L^\infty(0,T;L^2(\Omega))$.}
\label{tab:conv_I_Linf}
\begin{tabular}{|c||c|c|c|c|c|c|}
		\hline
		$ k $ & $\text{err}(\vec\rho)$ & EOC & $\text{err}(\bm{u})$ & EOC & $\text{err}(\vtheta)$ & EOC  \\
		\hline
		1 & $2.92\cdot 10^{-1}$ & -- & $1.72\cdot 10^{-1}$ & -- & $4.08\cdot 10^{-2}$ & -- \\
		2 & $1.46\cdot 10^{-1}$ & $\phm0.99$ & $1.03\cdot 10^{-1}$ & $\phm0.75$ & $2.63\cdot 10^{-2}$ & $\phm0.64$ \\
		3 & $4.86\cdot 10^{-2}$ & $\phm1.59$ & $3.90\cdot 10^{-2}$ & $\phm1.40$ & $1.31\cdot 10^{-2}$ & $\phm1.00$ \\
		4 & $1.13\cdot 10^{-2}$ & $\phm2.11$ & $9.58\cdot 10^{-3}$ & $\phm2.03$ & $3.56\cdot 10^{-3}$ & $\phm1.89$ \\
		5 & $2.67\cdot 10^{-3}$ & $\phm2.08$ & $1.24\cdot 10^{-3}$ & $\phm2.95$ & $9.12\cdot 10^{-4}$ & $\phm1.96$ \\
		6 & $6.41\cdot 10^{-4}$ & $\phm2.06$ & $1.29\cdot 10^{-4}$ & $\phm3.27$ & $2.29\cdot 10^{-4}$ & $\phm1.99$ \\
		\hline
	\end{tabular}
\end{subtable}

\vspace{1em}
\begin{subtable}{\textwidth}
\centering
\caption{Errors in $L^2(0,T;H^1(\Omega))$ and $L^2(0,T;L^2(\Omega))$.}
\label{tab:conv_I_L2}
\begin{tabular}{|c||c|c|c|c|c|c|c|c|}
        \hline
		$ k $ & $\text{err}(\nabla\vec\mu)$ & EOC & $\text{err}(\nabla\bm{u})$ & EOC & $\text{err}(\nabla\vtheta)$ & EOC & $\text{err}(p)$ & EOC \\
		\hline
		1 &   $9.97\cdot 10^{-1}$ & -- & $8.35\cdot 10^{-1}$ & -- & $2.03\cdot 10^{-1}$ & --& $9.64\cdot 10^{-2}$ & --\\
		2 &   $7.34\cdot 10^{-1}$ & $\phm0.44$ & $5.62\cdot 10^{-1}$ & $\phm0.57$ &$1.44\cdot 10^{-1}$ & $\phm0.50$& $5.60\cdot 10^{-2}$ & $\phm0.78$\\
		3 &   $4.90\cdot 10^{-1}$ & $\phm0.58$ & $4.22\cdot 10^{-1}$ & $\phm0.41$ & $1.08\cdot 10^{-1}$ & $\phm0.42$& $1.49\cdot 10^{-2}$ & $\phm1.91$\\
		4 &  $2.49\cdot 10^{-1}$ & $\phm0.98$ & $2.27\cdot 10^{-1}$ & $\phm0.89$ & $5.61\cdot 10^{-2}$ & $\phm0.94$& $3.44\cdot 10^{-3}$ & $\phm2.12$\\
		5 &   $1.23\cdot 10^{-1}$ & $\phm1.02$ & $7.31\cdot 10^{-2}$ & $\phm1.64$ & $2.82\cdot 10^{-2}$ & $\phm0.99$& $8.47\cdot 10^{-4}$ & $\phm2.02$\\
		6 &   $6.12\cdot 10^{-2}$ & $\phm1.01$ & $1.31\cdot 10^{-2}$ & $\phm2.48$ & $1.41\cdot 10^{-2}$ & $\phm1.00$& $2.12\cdot 10^{-4}$ & $\phm2.00$\\
		\hline
	\end{tabular}
\end{subtable}
\end{table}

\begin{table}[htbp!]
\centering
\small
\caption{Spatial convergence test, Scheme~II (CPG): errors and experimental orders of convergence.}
\label{tab:conv_II}
\begin{subtable}{\textwidth}
\centering
\caption{Errors in $L^\infty(0,T;L^2(\Omega))$.}
\label{tab:conv_II_Linf}
\begin{tabular}{|c||c|c|c|c|c|c|}
		\hline
		$ k $ & $\text{err}(\vec\rho)$ & EOC & $\text{err}(\bm{m})$ & EOC & $\text{err}(\varrho s)$ & EOC  \\
		\hline
		1 & $6.15\cdot 10^{-1}$ & -- & $2.04\cdot 10^{-1}$ & -- & $2.28\cdot 10^{-1}$ & -- \\
		2 & $1.33\cdot 10^{-1}$ & $\phm2.20$ & $4.96\cdot 10^{-2}$ & $\phm2.04$ & $8.14\cdot 10^{-2}$ & $\phm1.49$ \\
		3 & $3.90\cdot 10^{-2}$ & $\phm1.78$ & $1.51\cdot 10^{-2}$ & $\phm1.72$ & $3.57\cdot 10^{-2}$ & $\phm1.19$ \\
		4 & $1.02\cdot 10^{-2}$ & $\phm1.93$ & $3.69\cdot 10^{-3}$ & $\phm2.03$ & $9.74\cdot 10^{-3}$ & $\phm1.87$ \\
		5 & $2.56\cdot 10^{-3}$ & $\phm2.00$ & $6.09\cdot 10^{-4}$ & $\phm2.60$ & $2.48\cdot 10^{-3}$ & $\phm1.97$ \\
		6 & $6.41\cdot 10^{-4}$ & $\phm2.00$ & $1.37\cdot 10^{-4}$ & $\phm2.16$ & $6.24\cdot 10^{-4}$ & $\phm1.99$ \\
		\hline
	\end{tabular}
\end{subtable}

\vspace{1em}
\begin{subtable}{\textwidth}
\centering
\caption{Errors in $L^2(0,T;L^2(\Omega))$.}
\label{tab:conv_II_L2}
\begin{tabular}{|c||c|c|c|c|c|c|c|c|}
		\hline
		$ k $ &  $\text{err}(\nabla\vec\mu)$ & EOC & $\text{err}(\nabla\bm{u})$ & EOC & $\text{err}(\nabla\vtheta)$ & EOC & $\text{err}(p)$ & EOC \\
		\hline
		1 &  $1.50\cdot 10^{0}$ & -- & $7.09\cdot 10^{-1}$ & -- & $2.34\cdot 10^{-1}$ & --& $1.75\cdot 10^{-1}$ & --\\
		2 &  $1.07\cdot 10^{0}$ & $\phm0.49$ & $2.49\cdot 10^{-1}$ & $\phm1.51$ & $1.60\cdot 10^{-1}$ & $\phm0.54$& $8.70\cdot 10^{-2}$ & $\phm1.01$\\
		3 &  $7.42\cdot 10^{-1}$ & $\phm0.53$ & $1.10\cdot 10^{-1}$ & $\phm1.18$ & $1.06\cdot 10^{-1}$ & $\phm0.60$& $2.89\cdot 10^{-2}$ & $\phm1.59$\\
		4 &  $4.00\cdot 10^{-1}$ & $\phm0.89$ & $5.84\cdot 10^{-2}$ & $\phm0.91$ & $5.53\cdot 10^{-2}$ & $\phm0.94$& $7.27\cdot 10^{-3}$ & $\phm1.99$\\
		5 &  $1.99\cdot 10^{-1}$ & $\phm1.01$ & $1.74\cdot 10^{-2}$ & $\phm1.75$ & $2.79\cdot 10^{-2}$ & $\phm0.99$& $1.81\cdot 10^{-3}$ & $\phm2.01$\\
		6 &  $9.92\cdot 10^{-2}$ & $\phm1.00$ & $2.84\cdot 10^{-3}$ & $\phm2.61$ & $1.40\cdot 10^{-2}$ & $\phm1.00$& $4.53\cdot 10^{-4}$ & $\phm2.00$\\
		\hline
	\end{tabular}
\end{subtable}
\end{table}

\subsubsection{Structure-preserving test}\label{sec:struc}
We consider now a mixture of $N=3$ components, based on the experiment in \cite{Brunk2026}, with $\bar M = 1$, $\bar\eta = 10^{-2}$ and specific volumes $V_1=V_2=0.35$ and $V_3=0.8$.
We take the final time $T=0.1$, the time step $\tau=10^{-4}$ and the maximal mesh size $h=2^{-7}$, with initial data
\begin{subequations}\label{eq:struc_ic}
\begin{align}
    \rho_1^0(x,y) &= 0.2 + 0.9\bigl(\chi_{A_y} + \chi_{A_x} - \chi_{B_1} - \chi_{B_2}\bigr)(x,y), \label{eq:struc_ic_rho1}  \\
    \rho_2^0(x,y) &= 0.2 + 0.9\chi_C(x,y), \label{eq:struc_ic_rho2}  \\
    \rho_3^0(x,y) &= V_3^{-1}\big(1-V_1\rho_1^0(x,y) - V_2\rho_2^0(x,y)\big), \label{eq:struc_ic_rho3}  \\
    \u^0(x,y) &= \bigl(-\sin(\pi x)^2\sin(2\pi y),\ \sin(2\pi x)\sin(\pi y)^2\bigr)^\top, \label{eq:struc_ic_u}  \\
    \vtheta^0(x,y) &= 1 + 0.1\sin(2\pi x)\cos(2\pi y). \label{eq:struc_ic_theta}
\end{align}
\end{subequations}
Here $\chi_A$ denotes the characteristic function of a set $A\subset\Omega$, and
\begin{subequations}\label{eq:struc_sets}
\begin{align}
    A_y &= \{(x,y)\in\Omega : y\in[0.1,0.2]\cup[0.8,0.9]\}, \label{eq:struc_sets_Ay}  \\
    A_x &= \{(x,y)\in\Omega : x\in[0.1,0.2]\cup[0.8,0.9]\}, \label{eq:struc_sets_Ax}  \\
    B_1 &= [0.1,0.2]\times\bigl([0.1,0.2]\cup[0.8,0.9]\bigr), \label{eq:struc_sets_B1}  \\
    B_2 &= [0.8,0.9]\times\bigl([0.1,0.2]\cup[0.8,0.9]\bigr), \label{eq:struc_sets_B2}  \\
    C &= \{(x,y)\in\Omega : (x-0.5)^2+(y-0.5)^2\leq0.25^2\}. \label{eq:struc_sets_C}
\end{align}
\end{subequations}
As in \Cref{sec:conv}, the initial entropy density for Scheme~II is computed from these data.

Snapshots of the partial densities, temperature and velocity are shown in \Cref{fig:rho,fig:velotemp}.
The two schemes produce qualitatively similar dynamics: the initial layers of $\rho_1$ and the disc of $\rho_2$ are advected by the rotating flow and smoothed by diffusion, with small differences between the schemes visible at later times, most noticeably in $\rho_3$.
\Cref{fig:meta,fig:masses} confirm the structure-preservation results.
The total energy of Scheme~I decreases monotonically, as guaranteed by \Cref{thm:schemeA_global}, while that of Scheme~II is conserved to within about $10^{-13}$, as guaranteed by \Cref{thm:schemeB_global}\footnote{
    The initial total energy and entropy of the two schemes therefore differ slightly, due to projection errors.
}.
For both schemes, the total entropy increases, the partial masses are conserved to within about $10^{-13}$, and the quasi-incompressibility constraint holds to within $10^{-14}$ in $L^\infty(\Omega)$, in agreement with \Cref{thm:schemeA_incom,thm:schemeB_incom}.

\begin{figure}[!htbp]
\centering
\scalebox{0.86}{%
\begin{tabular}{c@{\hspace{0.5em}}c@{\hspace{0.5em}}c@{\hspace{2pt}}c@{\hspace{2pt}}c@{\hspace{2pt}}c@{\hspace{2pt}}c@{\hspace{3pt}}l}
    & & \small $t = 0.01$ & \small $t = 0.02$ & \small $t = 0.04$ & \small $t = 0.06$ & \small $t = 0.1$ \\[3pt]
    \raisebox{-35.1bp}[0pt][0pt]{$\rho_1$} & \small I & \includegraphics[valign=c,trim={471bp 221bp 471bp 221bp},clip,width=68.25bp]{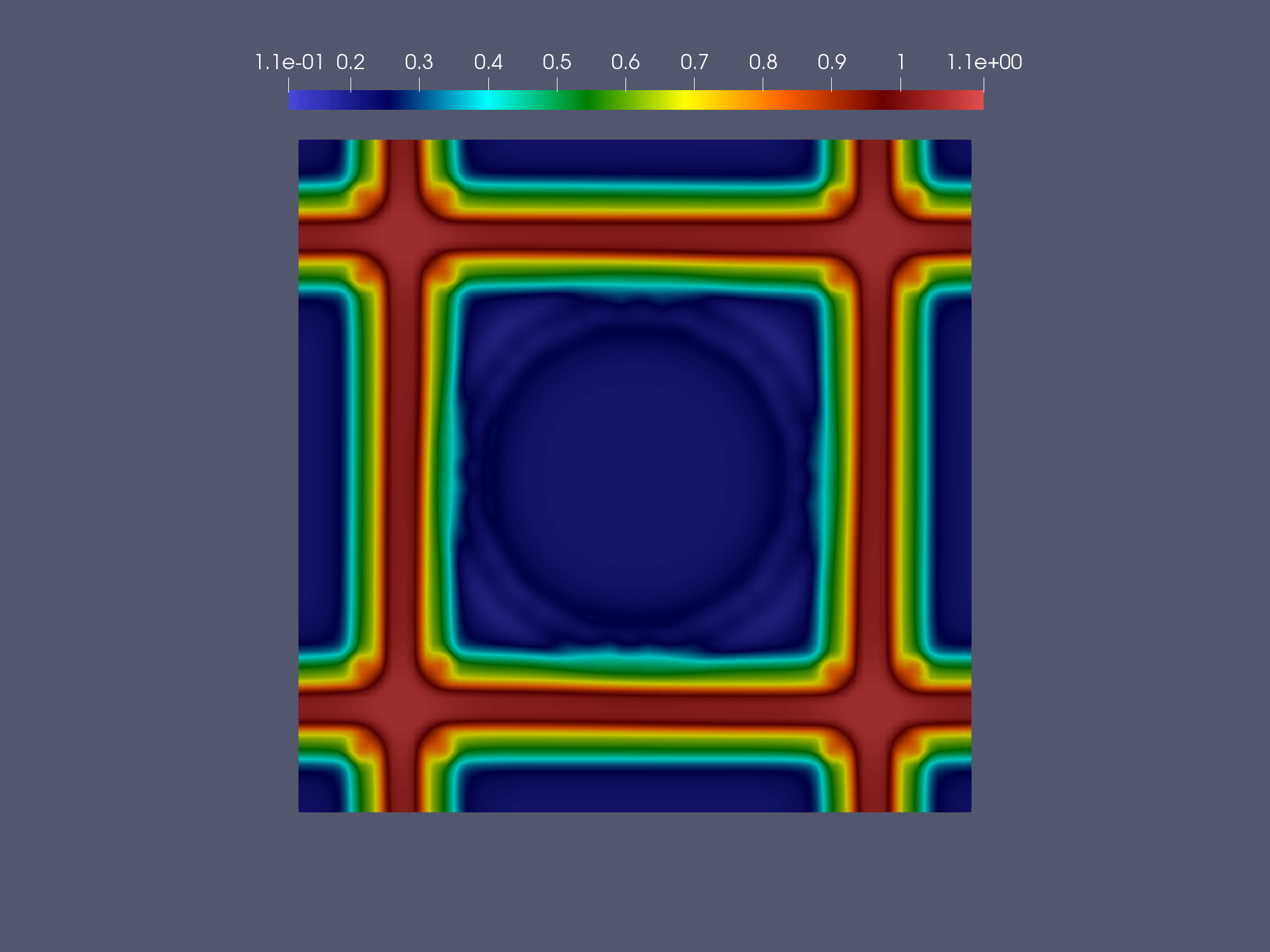} & \includegraphics[valign=c,trim={471bp 221bp 471bp 221bp},clip,width=68.25bp]{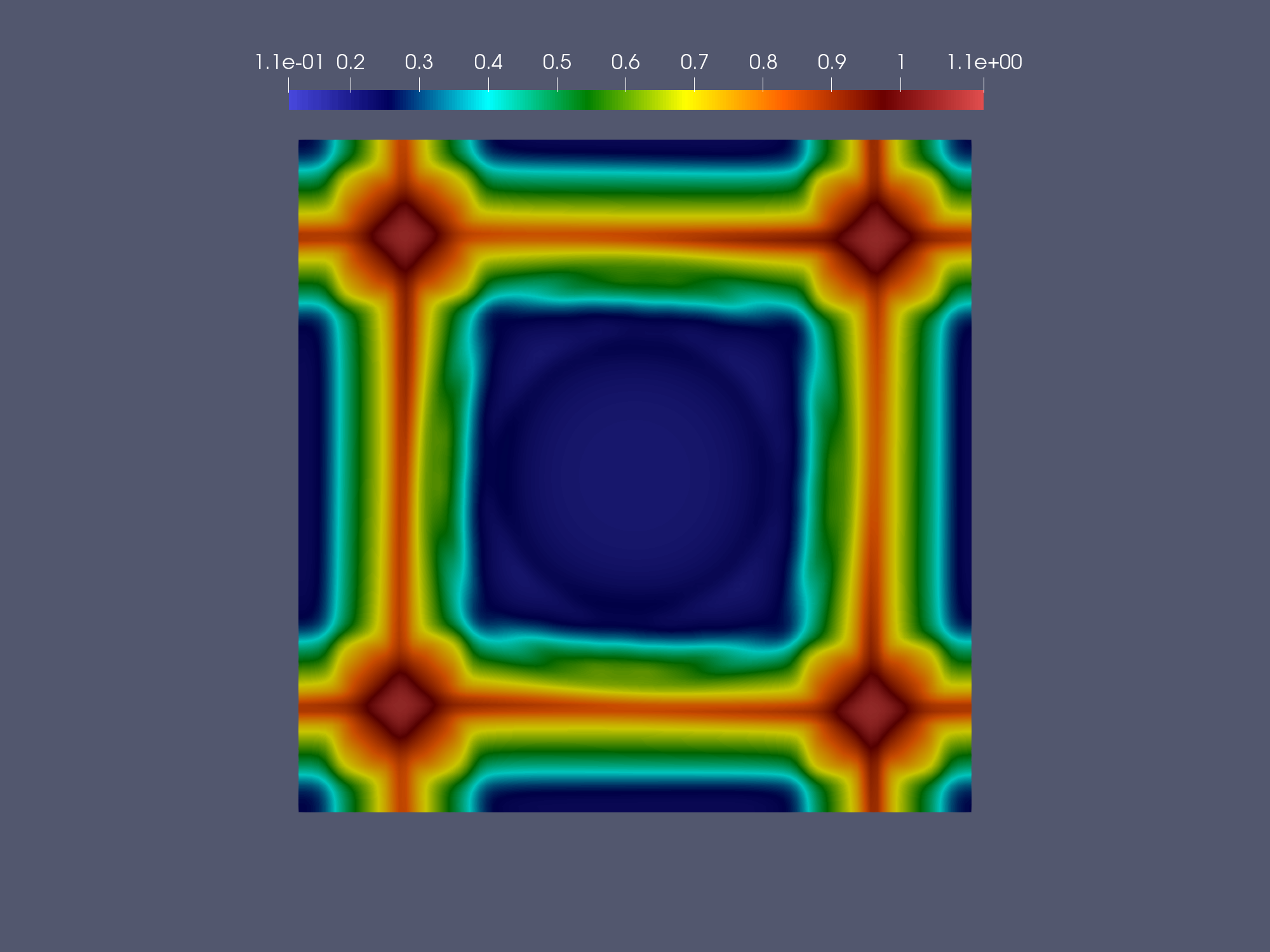} & \includegraphics[valign=c,trim={471bp 221bp 471bp 221bp},clip,width=68.25bp]{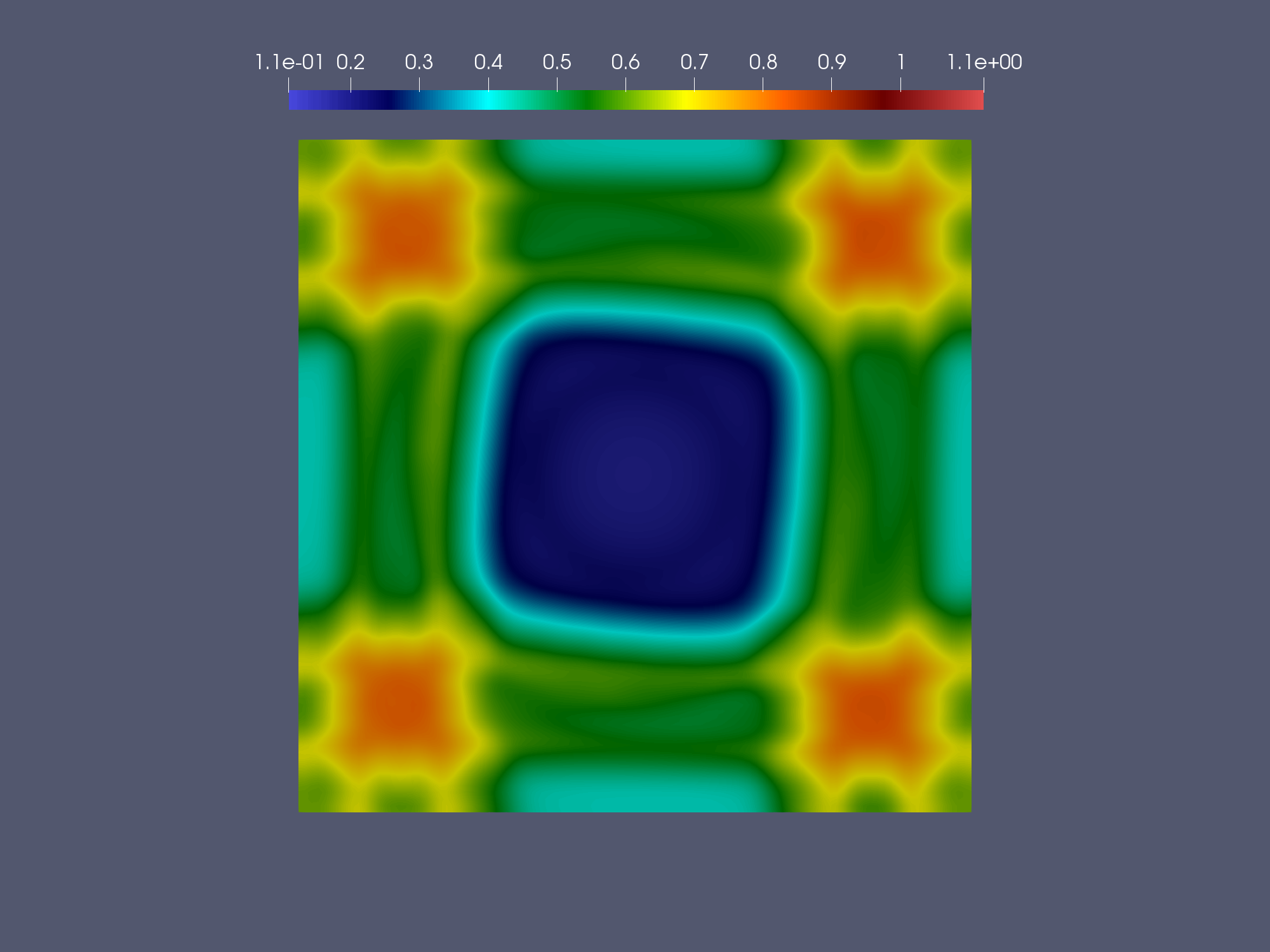} & \includegraphics[valign=c,trim={471bp 221bp 471bp 221bp},clip,width=68.25bp]{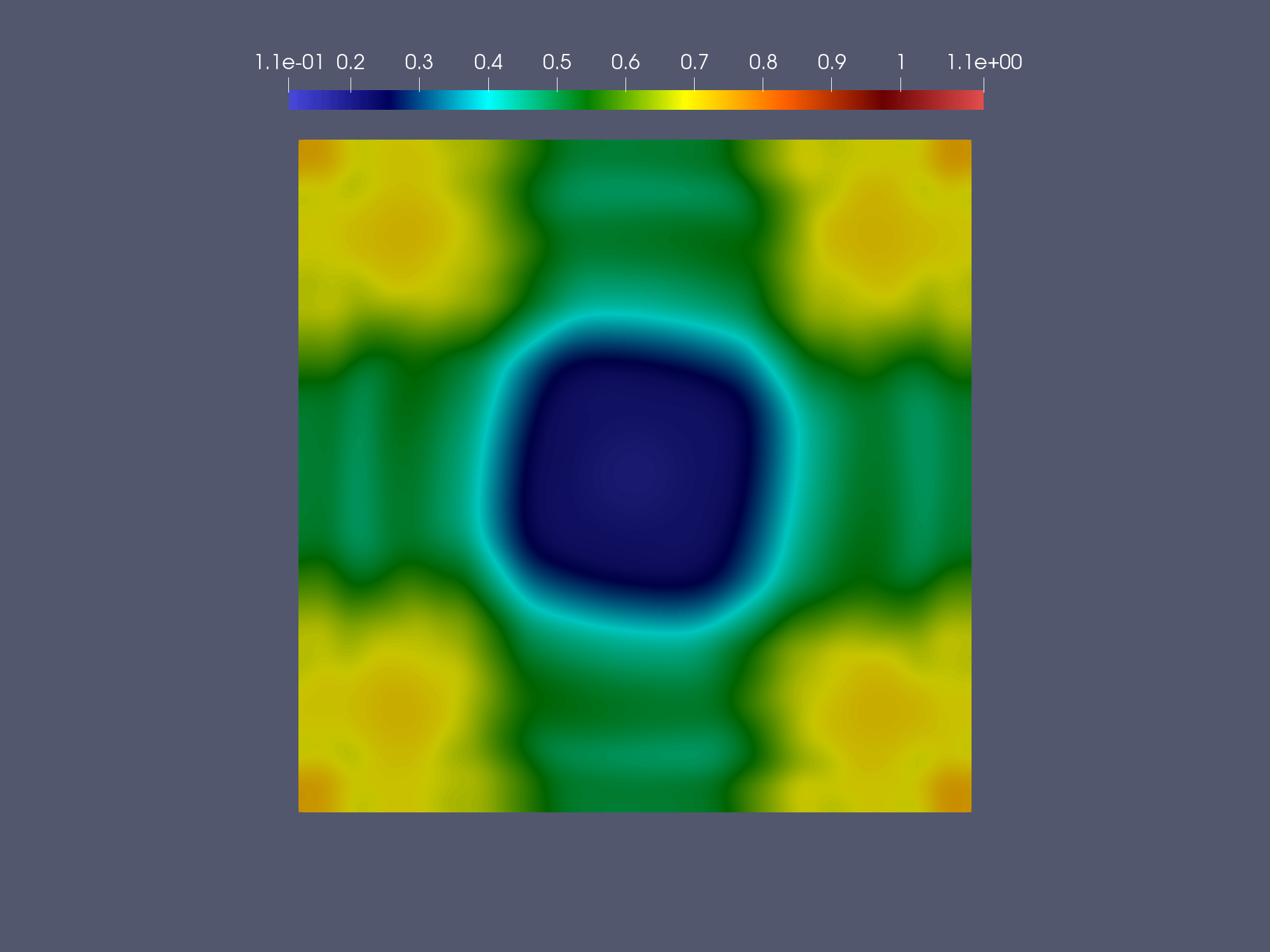} & \includegraphics[valign=c,trim={471bp 221bp 471bp 221bp},clip,width=68.25bp]{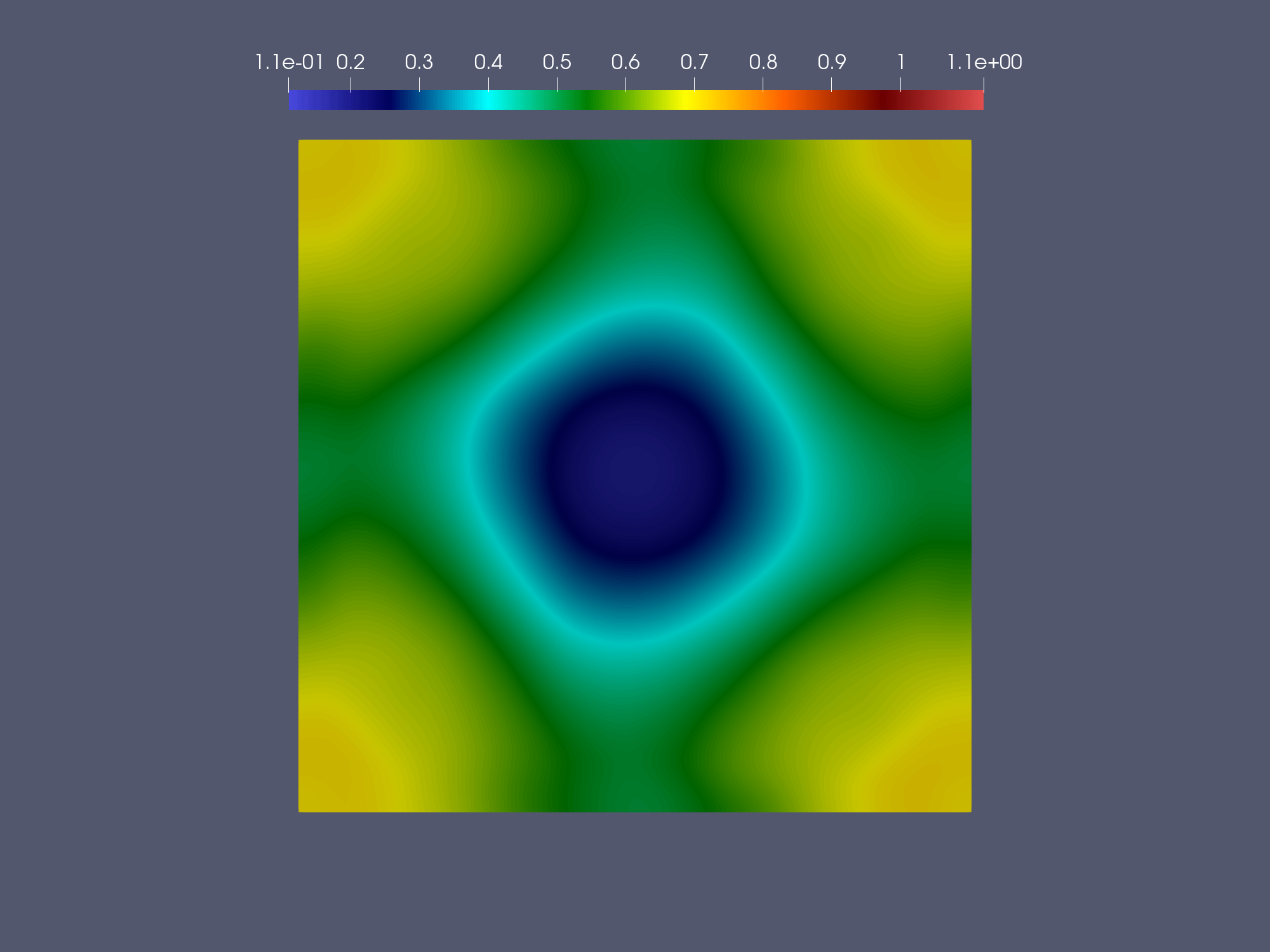} & \raisebox{-35.1bp}[0pt][0pt]{\cbar{0.11}{1.1}{134bp}{0.2,0.4,0.6,0.8,1}} \\ \noalign{\vskip 2pt}
     & \small II & \includegraphics[valign=c,trim={471bp 221bp 471bp 221bp},clip,width=68.25bp]{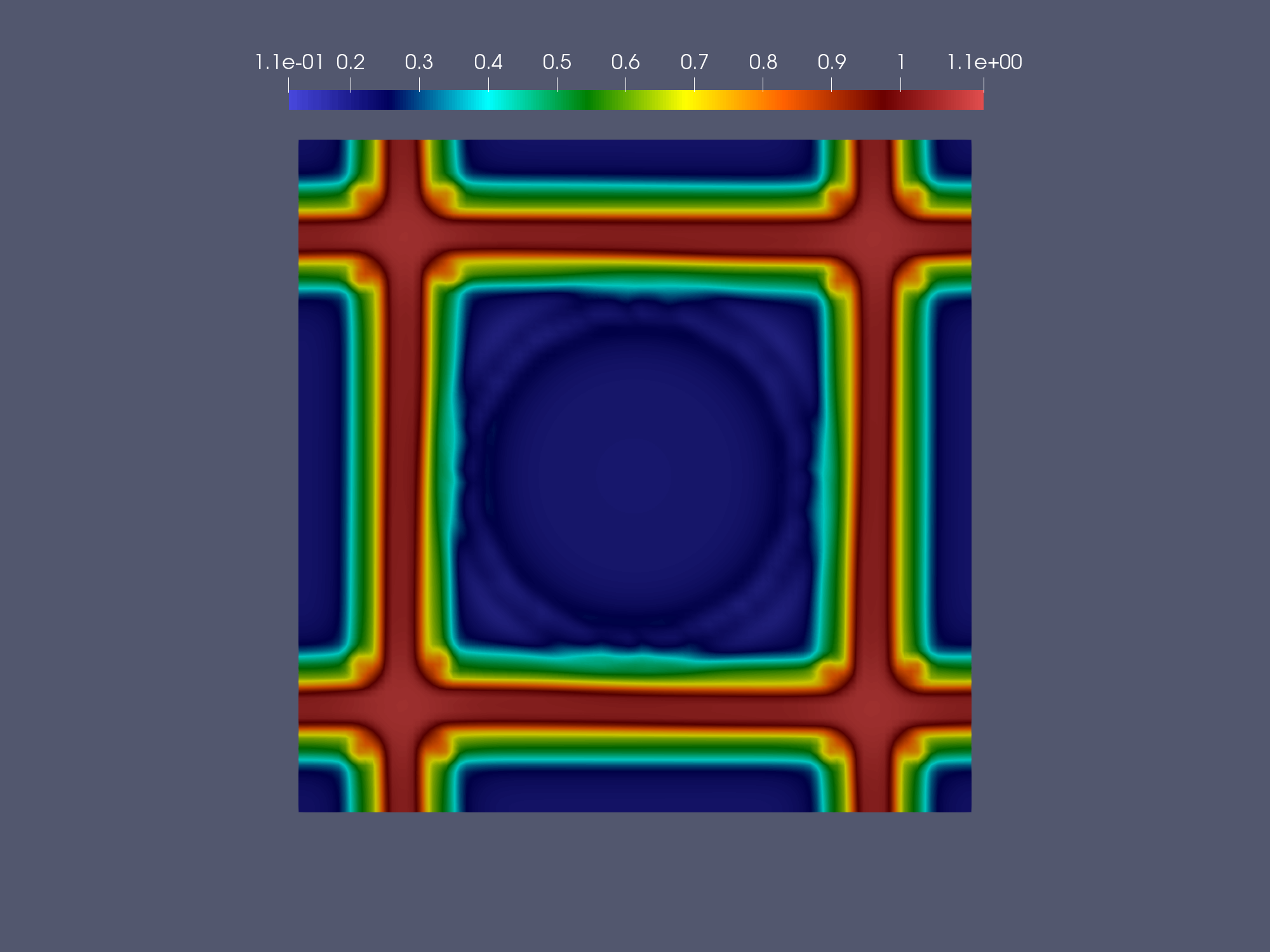} & \includegraphics[valign=c,trim={471bp 221bp 471bp 221bp},clip,width=68.25bp]{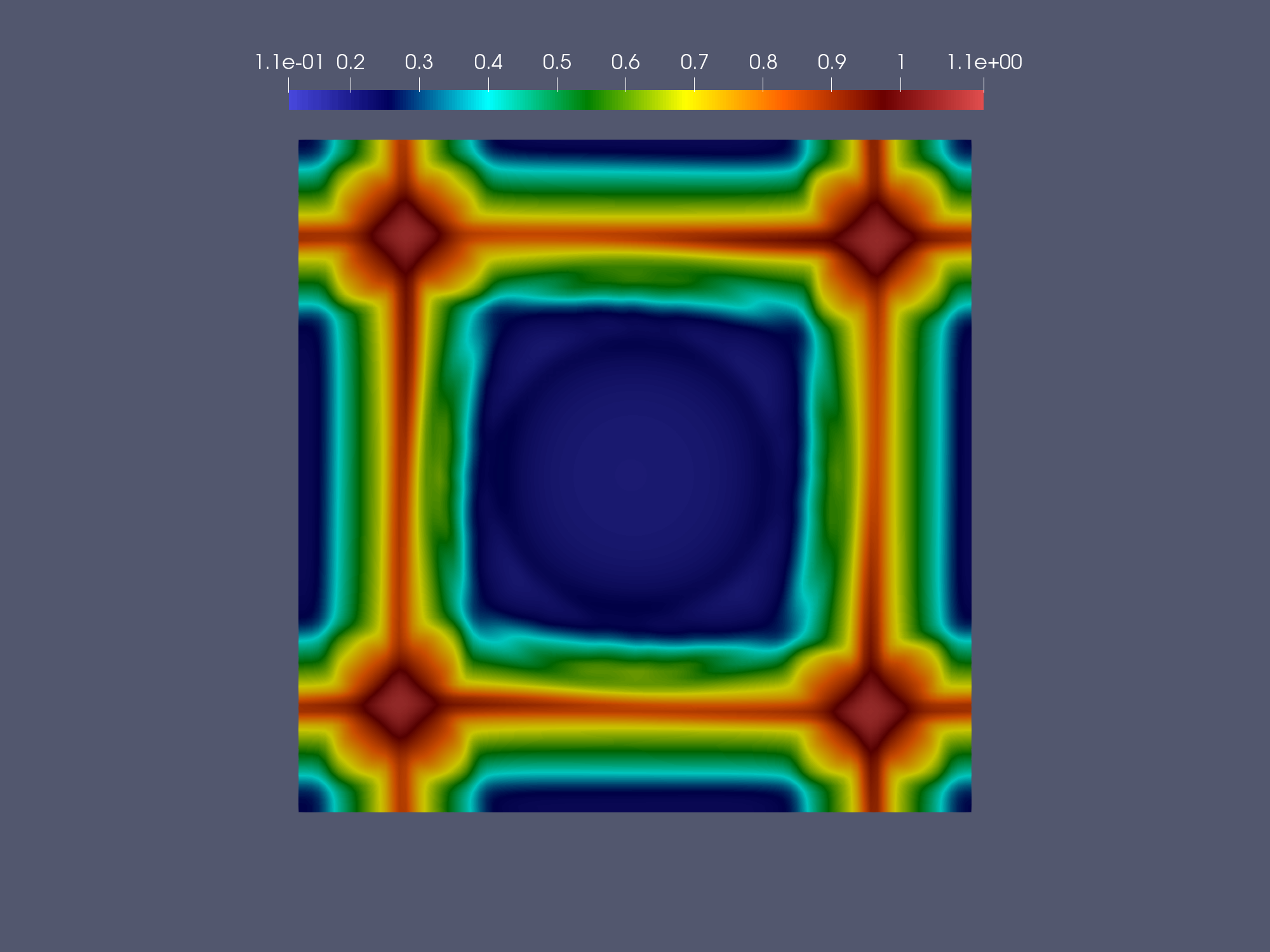} & \includegraphics[valign=c,trim={471bp 221bp 471bp 221bp},clip,width=68.25bp]{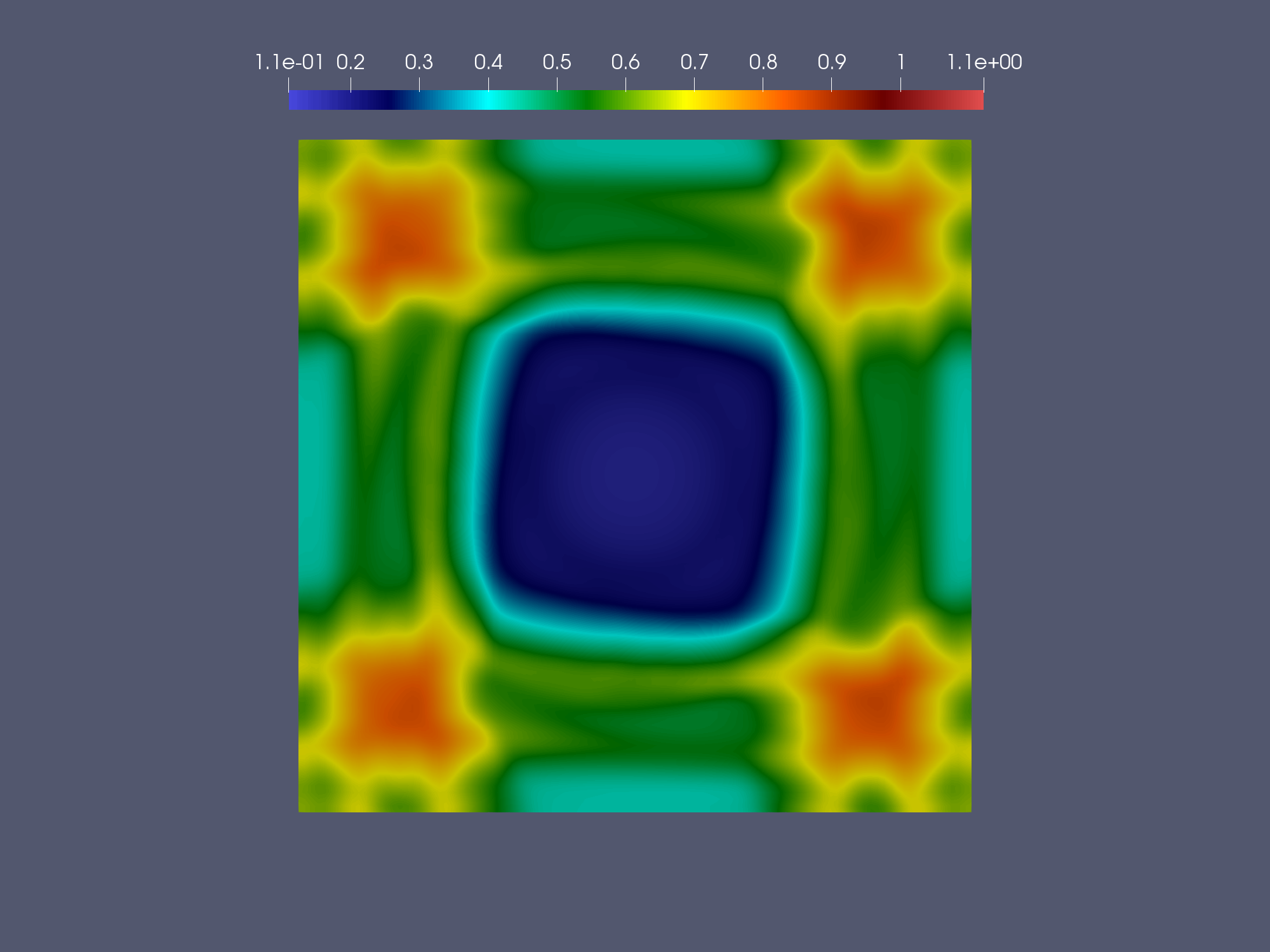} & \includegraphics[valign=c,trim={471bp 221bp 471bp 221bp},clip,width=68.25bp]{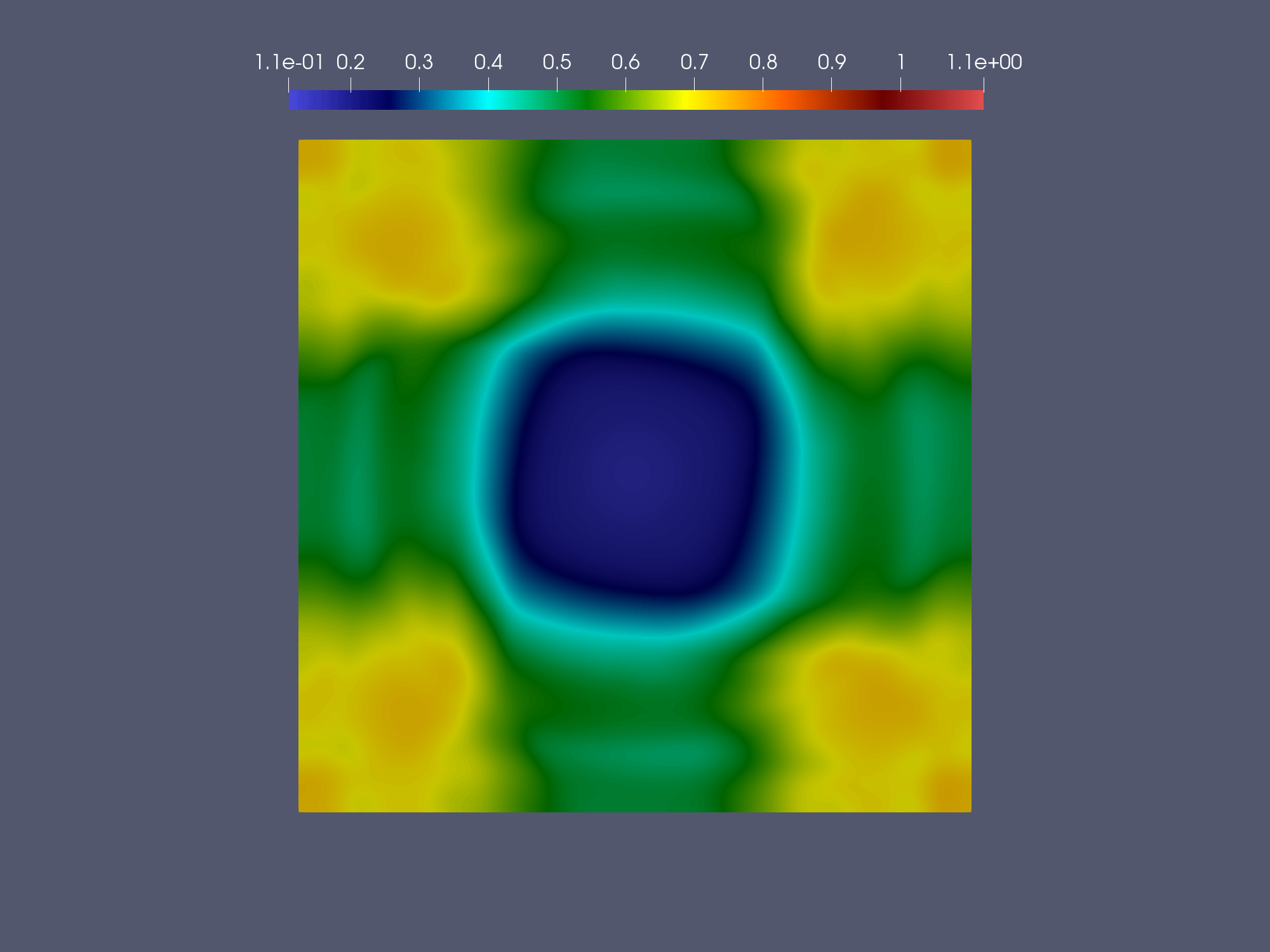} & \includegraphics[valign=c,trim={471bp 221bp 471bp 221bp},clip,width=68.25bp]{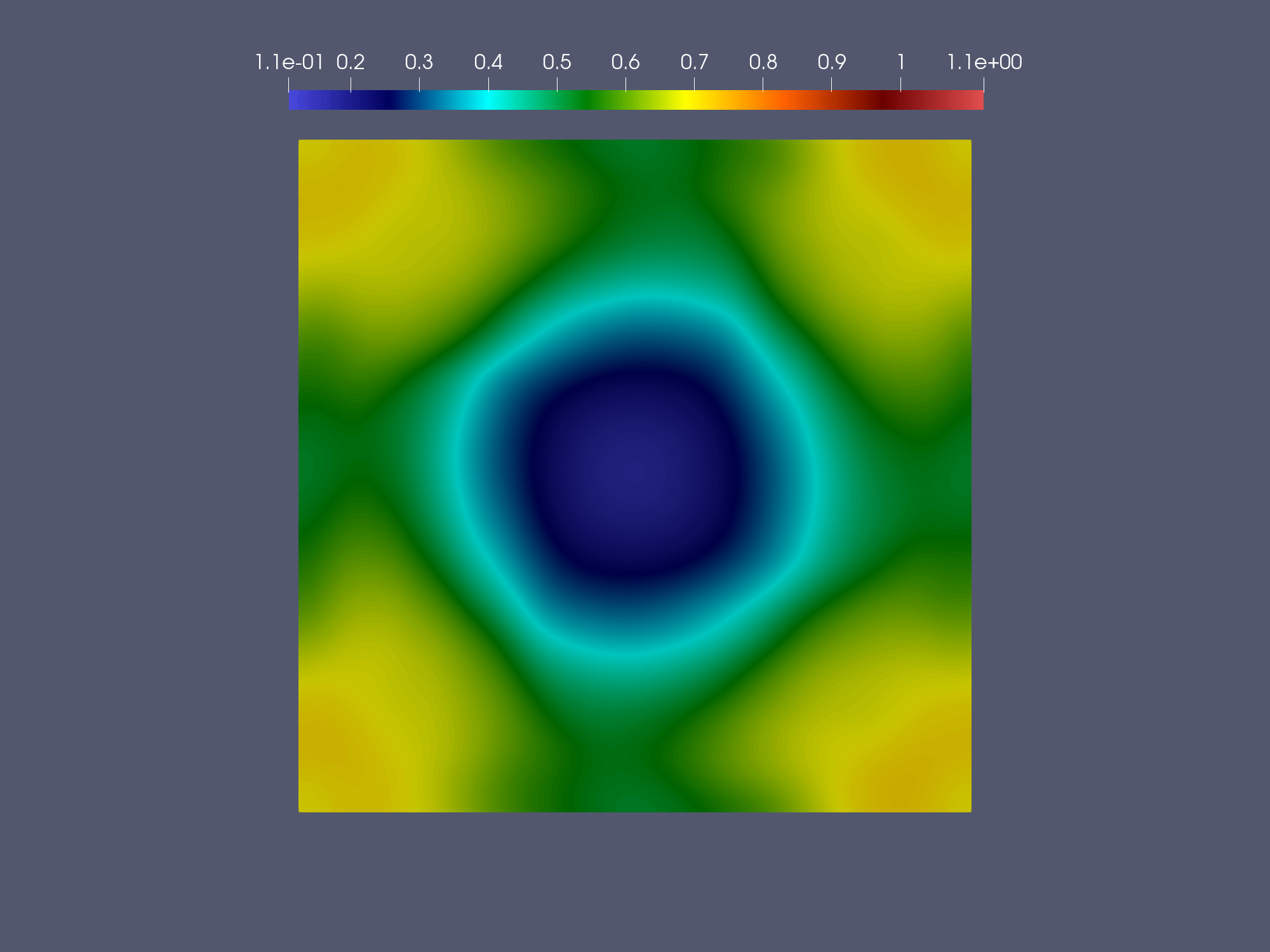} \\ \noalign{\vskip 6pt}
    \raisebox{-35.1bp}[0pt][0pt]{$\rho_2$} & \small I & \includegraphics[valign=c,trim={471bp 221bp 471bp 221bp},clip,width=68.25bp]{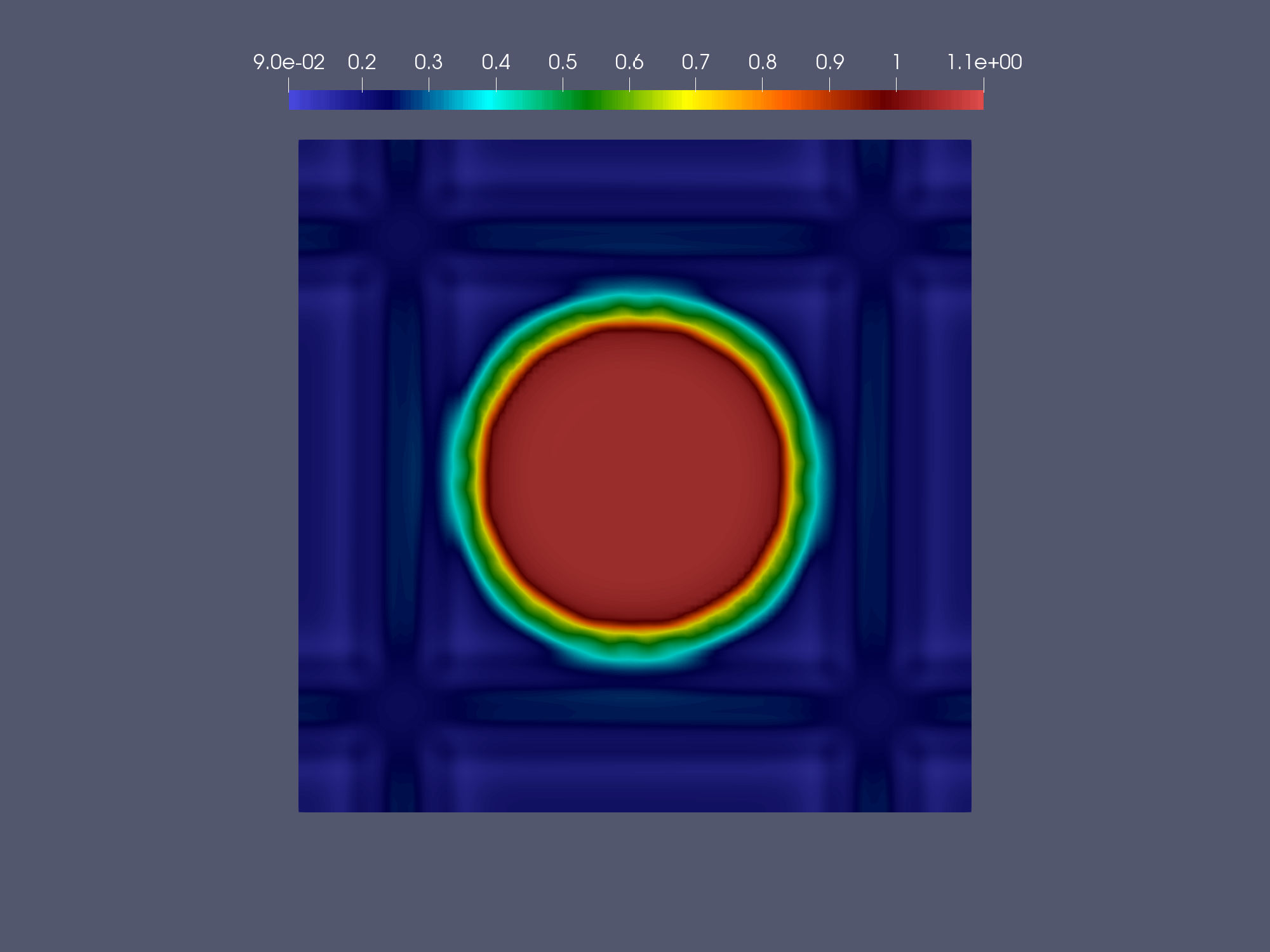} & \includegraphics[valign=c,trim={471bp 221bp 471bp 221bp},clip,width=68.25bp]{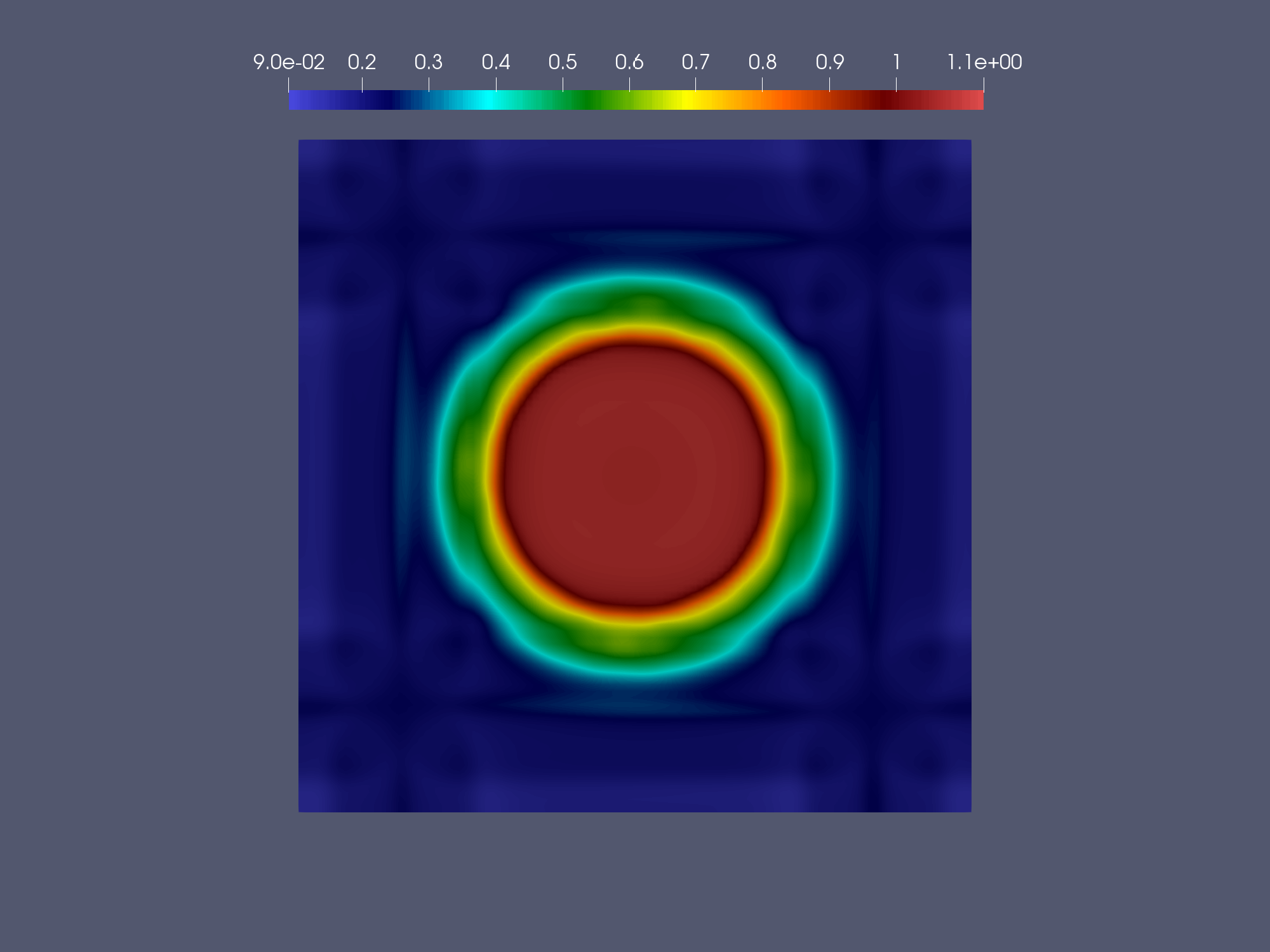} & \includegraphics[valign=c,trim={471bp 221bp 471bp 221bp},clip,width=68.25bp]{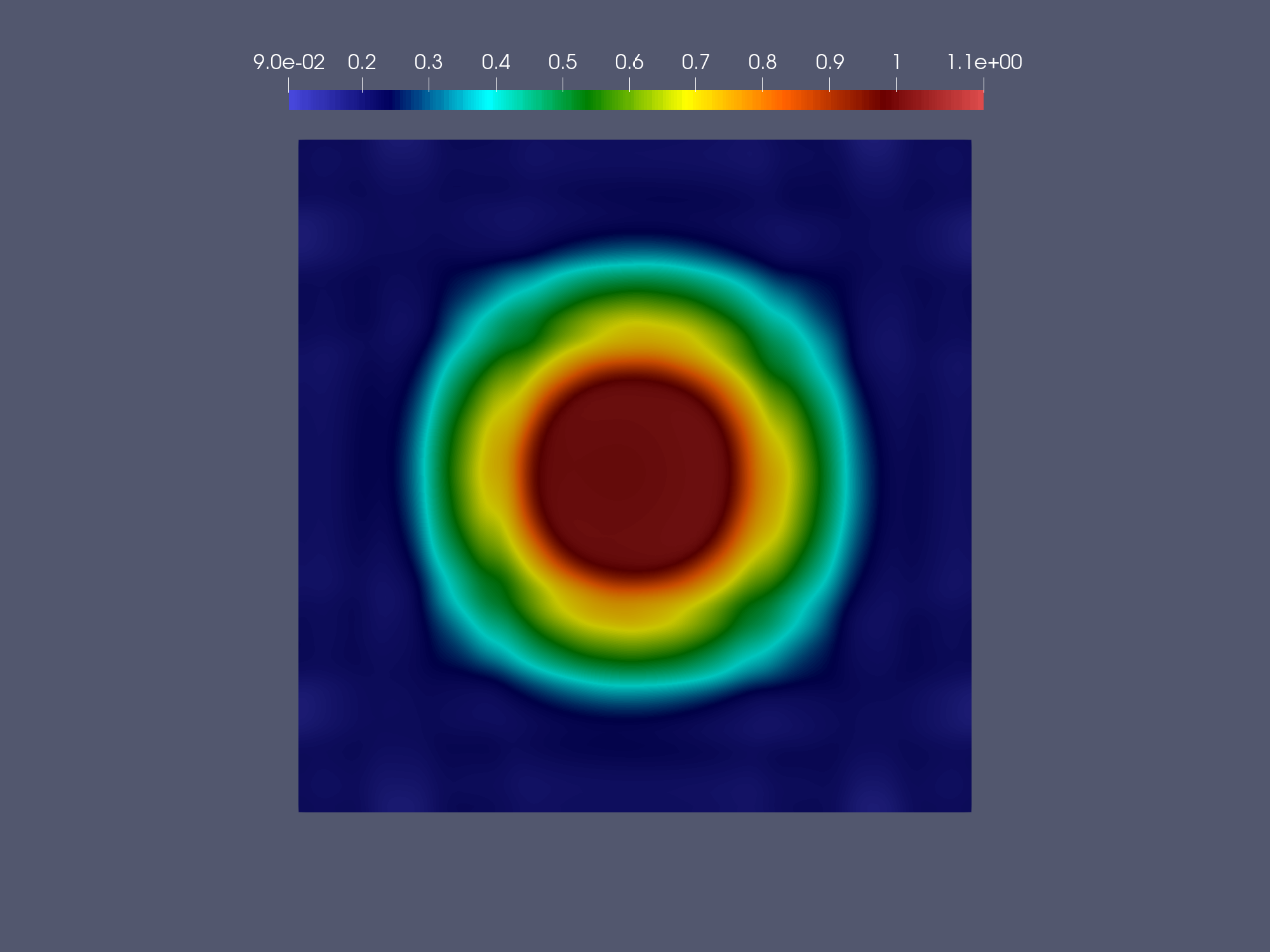} & \includegraphics[valign=c,trim={471bp 221bp 471bp 221bp},clip,width=68.25bp]{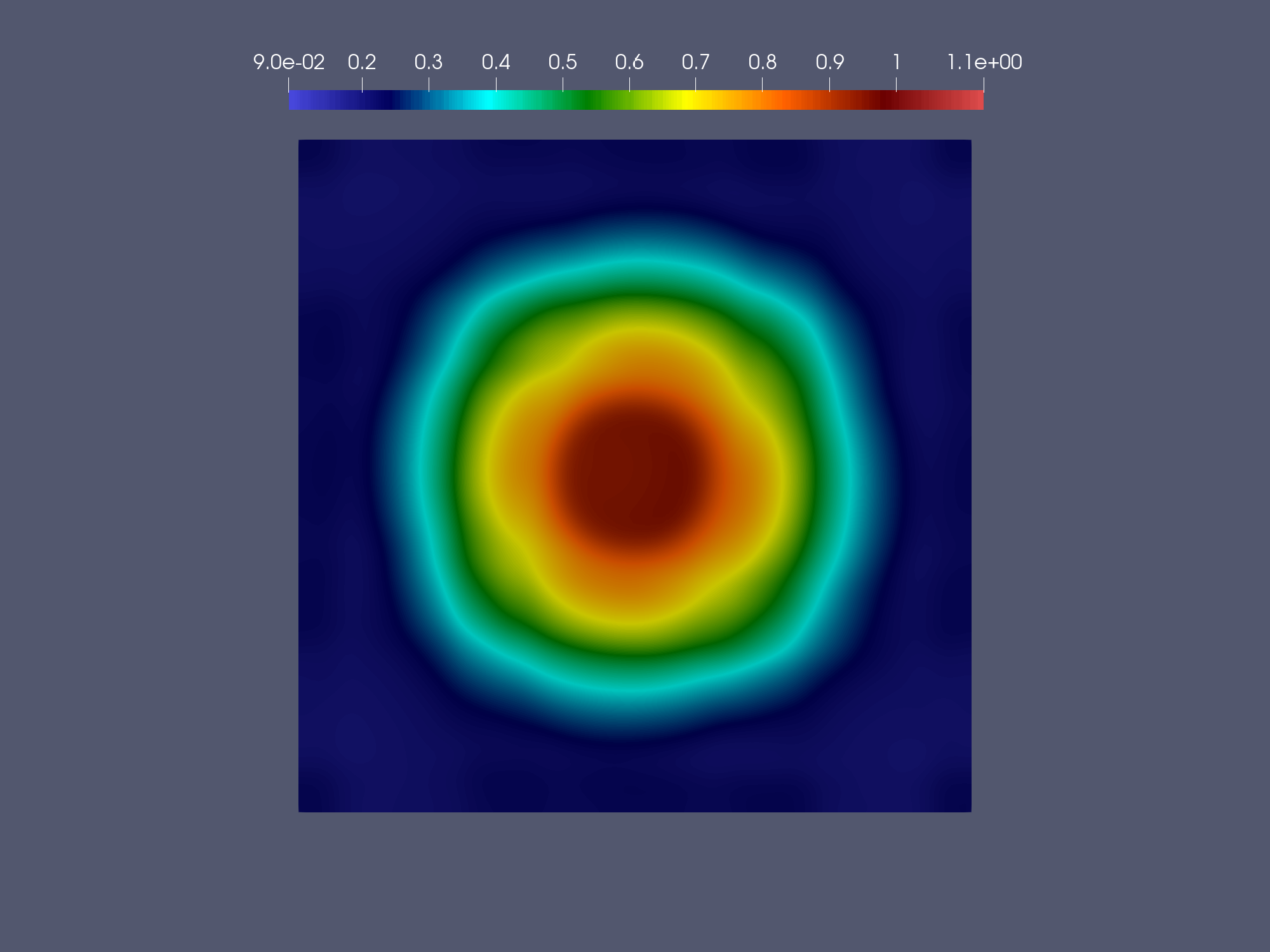} & \includegraphics[valign=c,trim={471bp 221bp 471bp 221bp},clip,width=68.25bp]{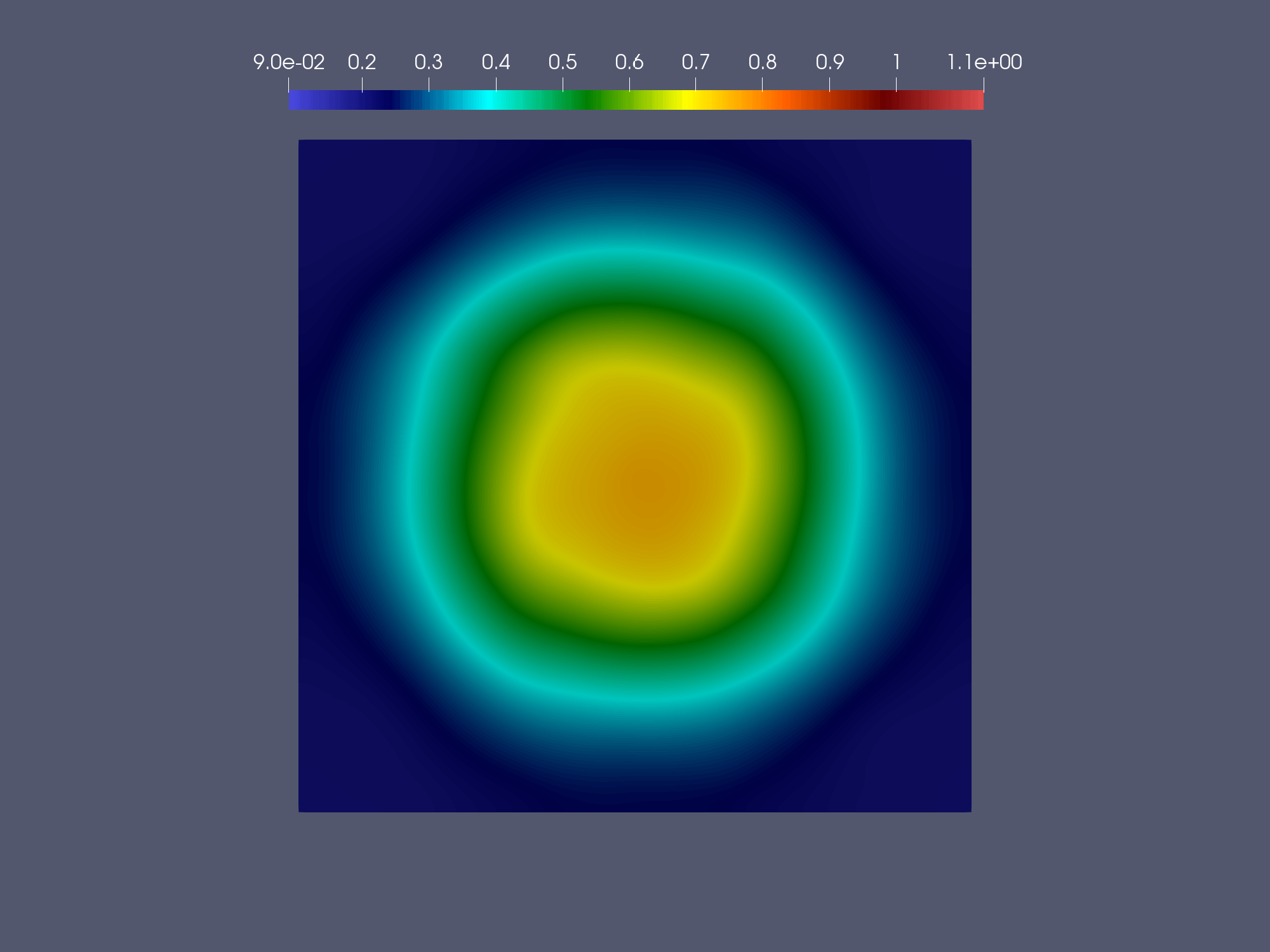} & \raisebox{-35.1bp}[0pt][0pt]{\cbar{0.09}{1.1}{134bp}{0.2,0.4,0.6,0.8,1}} \\ \noalign{\vskip 2pt}
     & \small II & \includegraphics[valign=c,trim={471bp 221bp 471bp 221bp},clip,width=68.25bp]{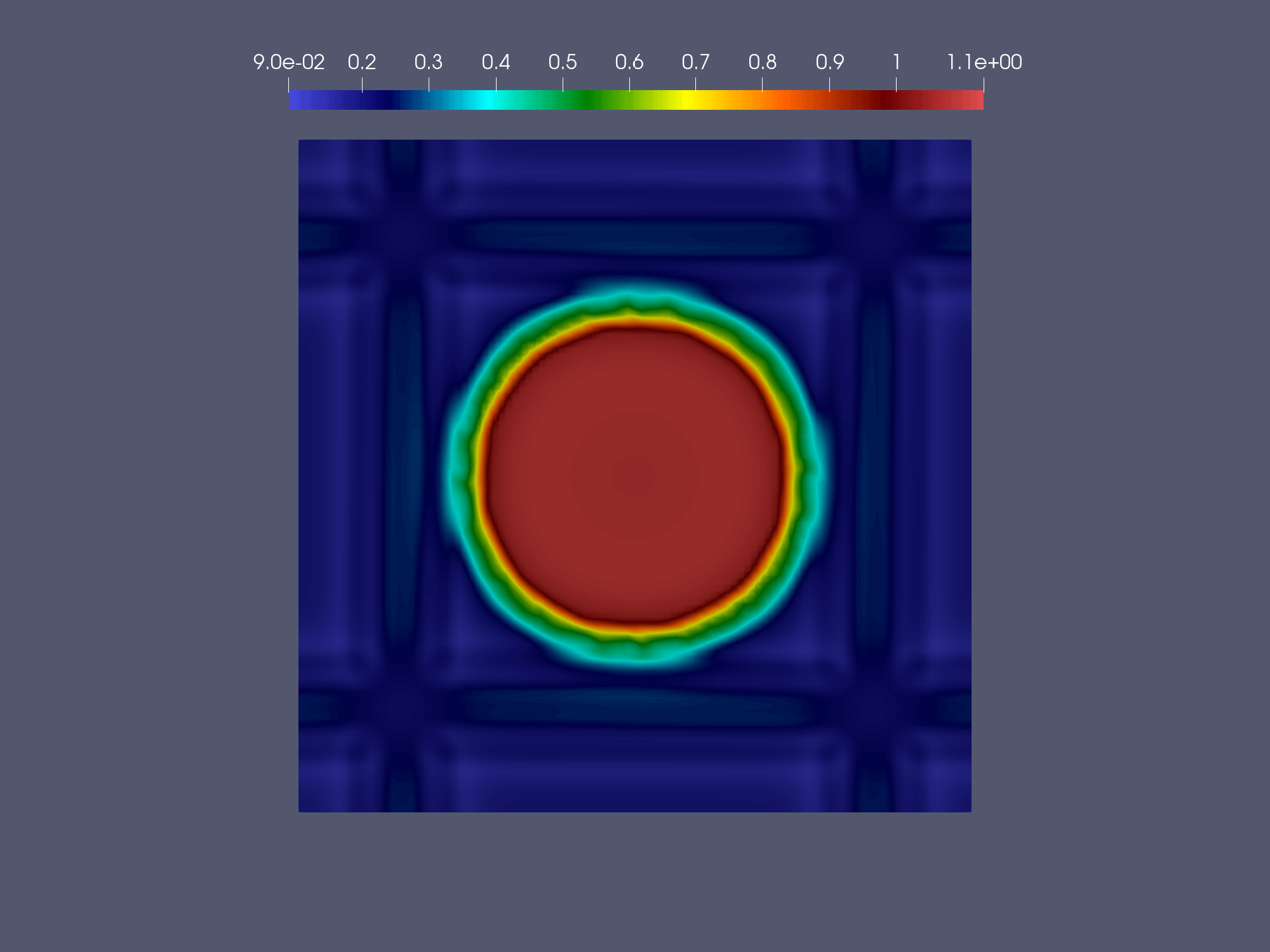} & \includegraphics[valign=c,trim={471bp 221bp 471bp 221bp},clip,width=68.25bp]{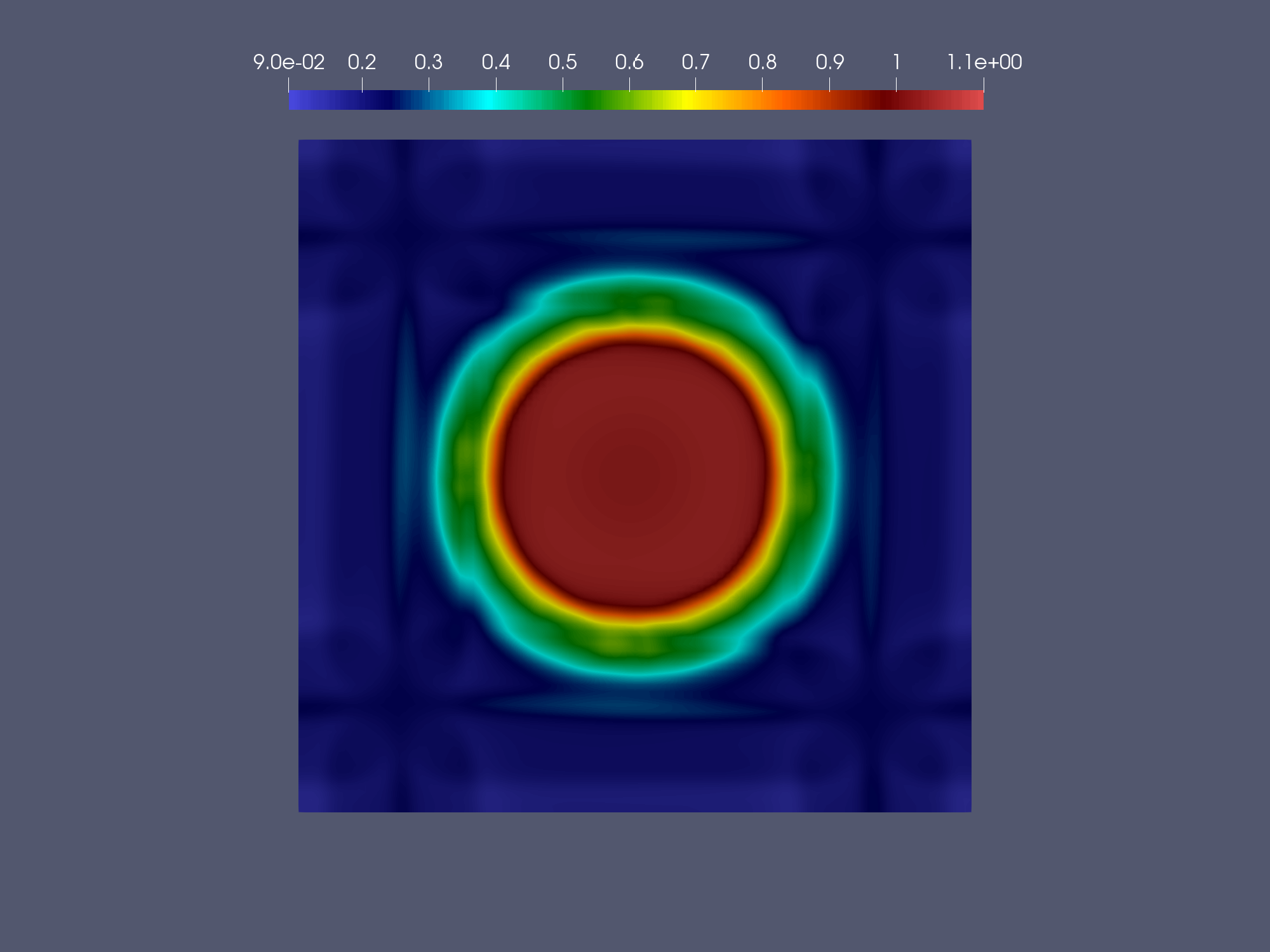} & \includegraphics[valign=c,trim={471bp 221bp 471bp 221bp},clip,width=68.25bp]{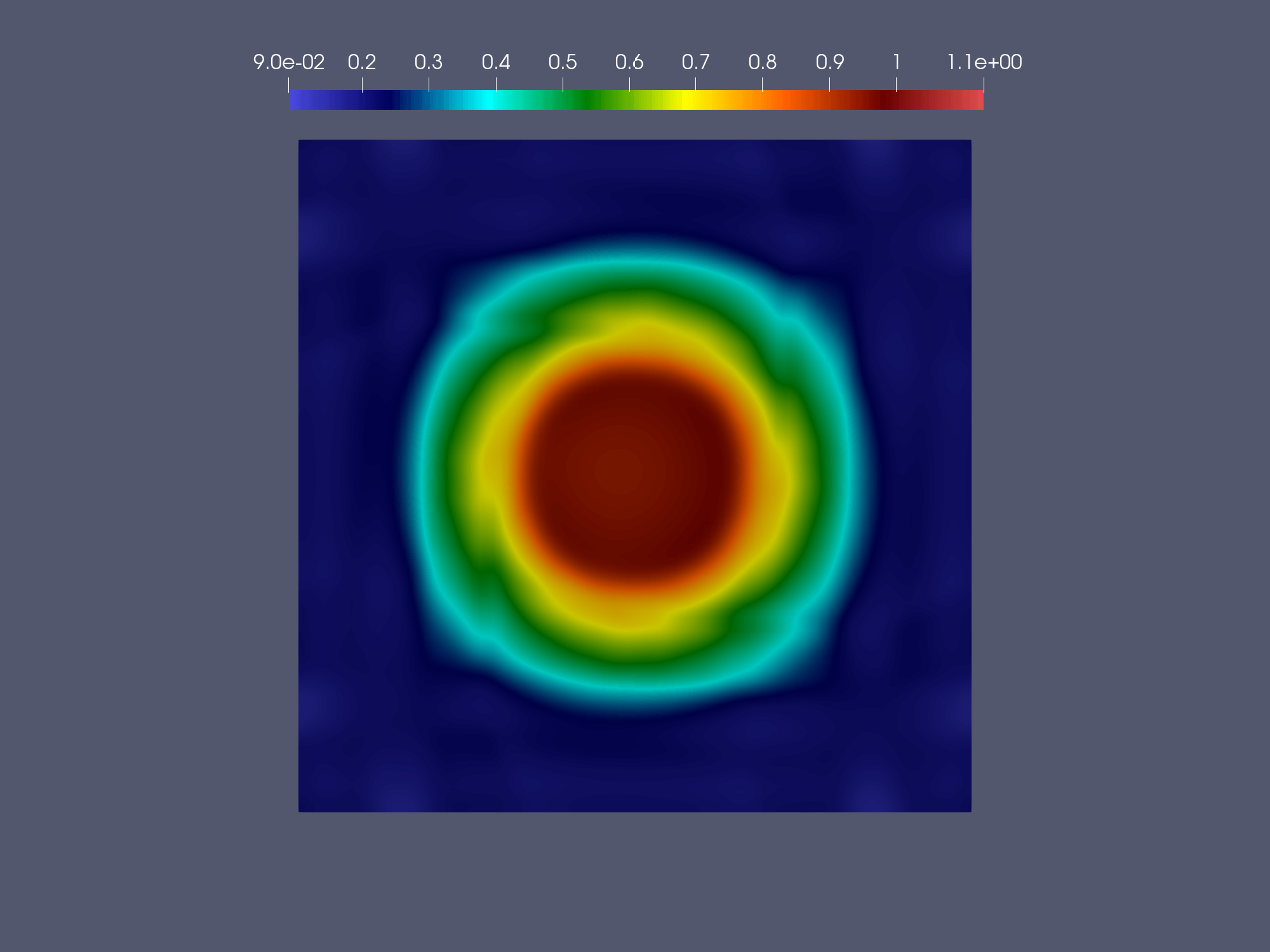} & \includegraphics[valign=c,trim={471bp 221bp 471bp 221bp},clip,width=68.25bp]{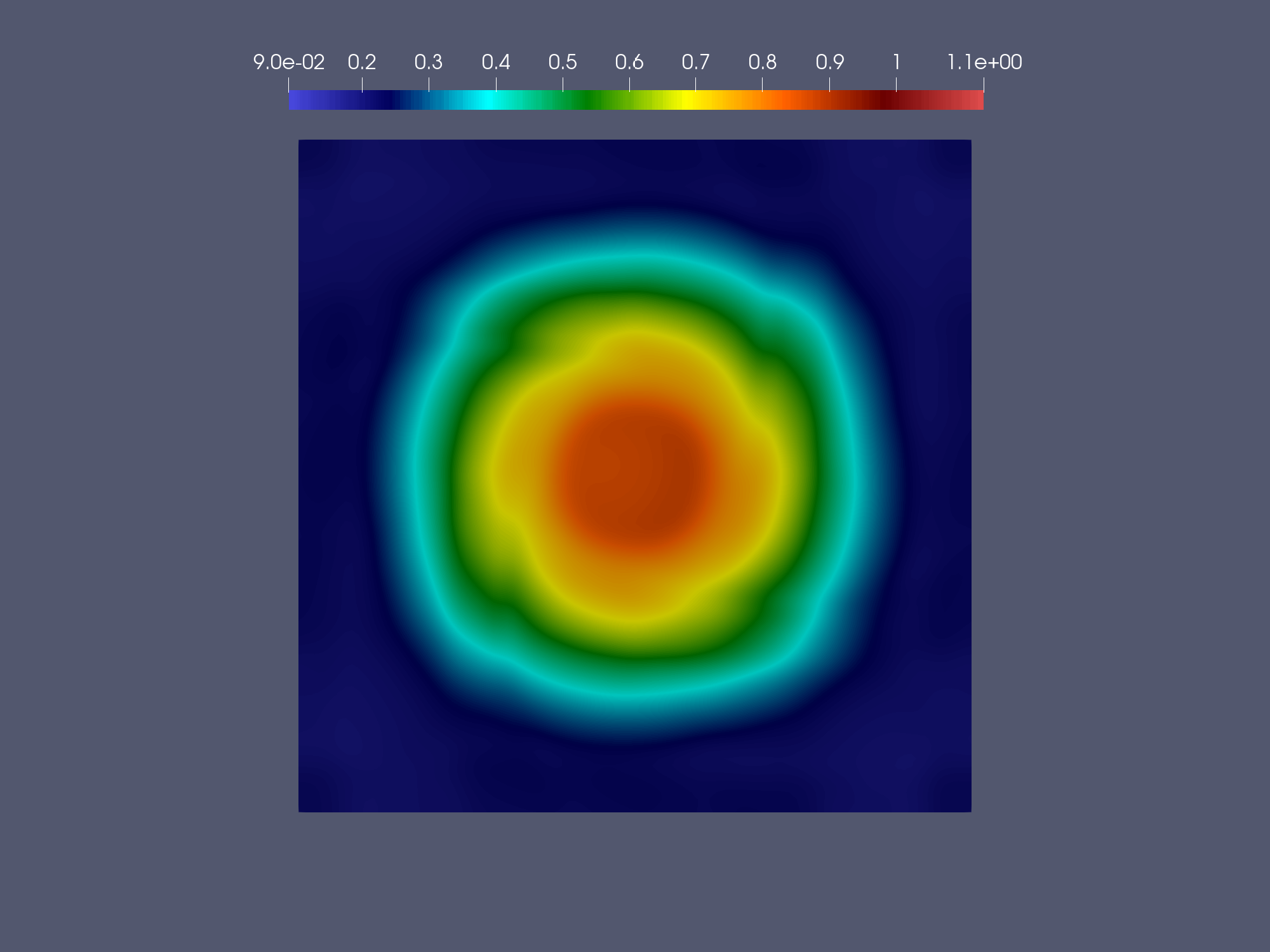} & \includegraphics[valign=c,trim={471bp 221bp 471bp 221bp},clip,width=68.25bp]{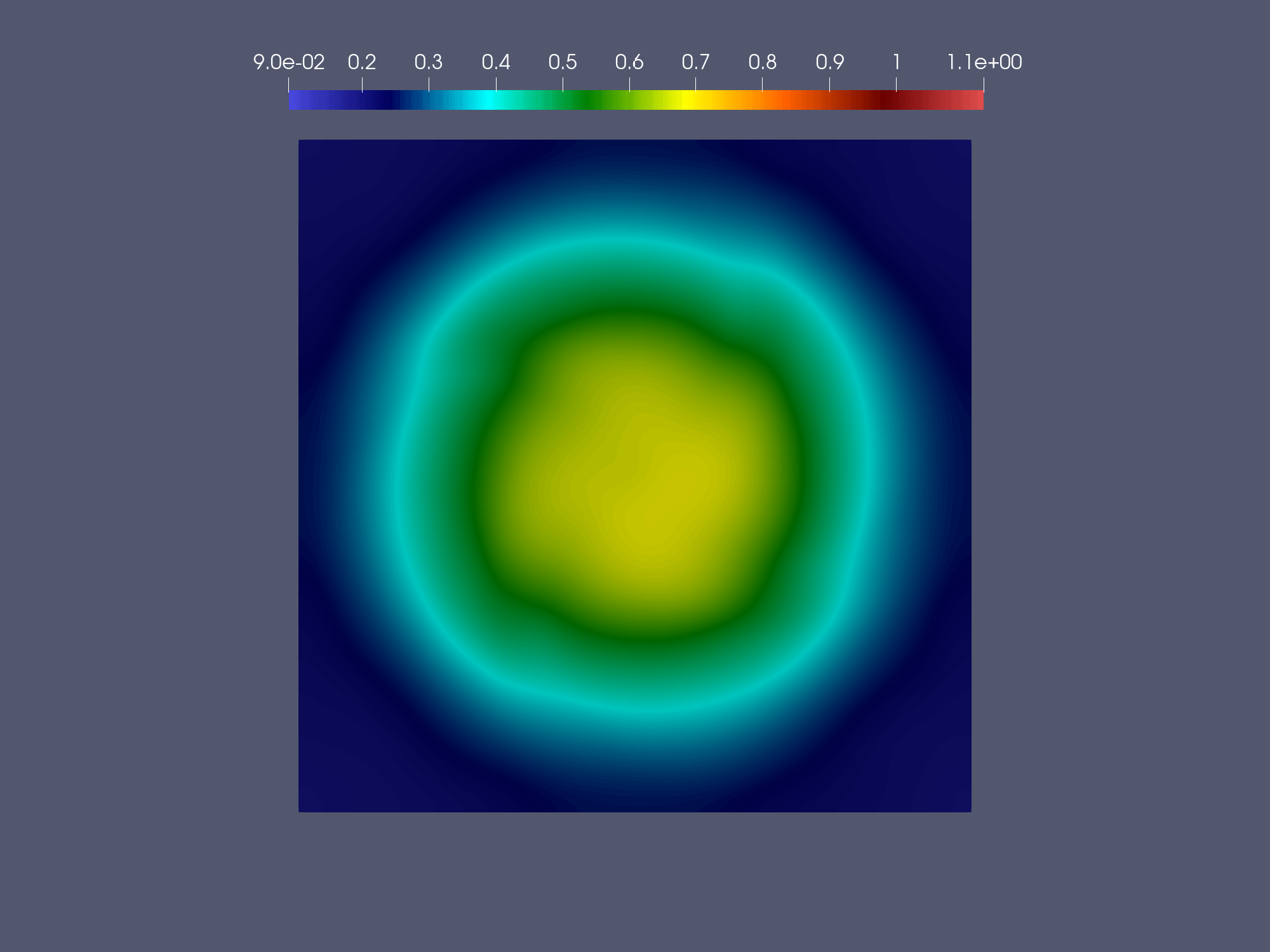} \\ \noalign{\vskip 6pt}
    \raisebox{-35.1bp}[0pt][0pt]{$\rho_3$} & \small I & \includegraphics[valign=c,trim={471bp 221bp 471bp 221bp},clip,width=68.25bp]{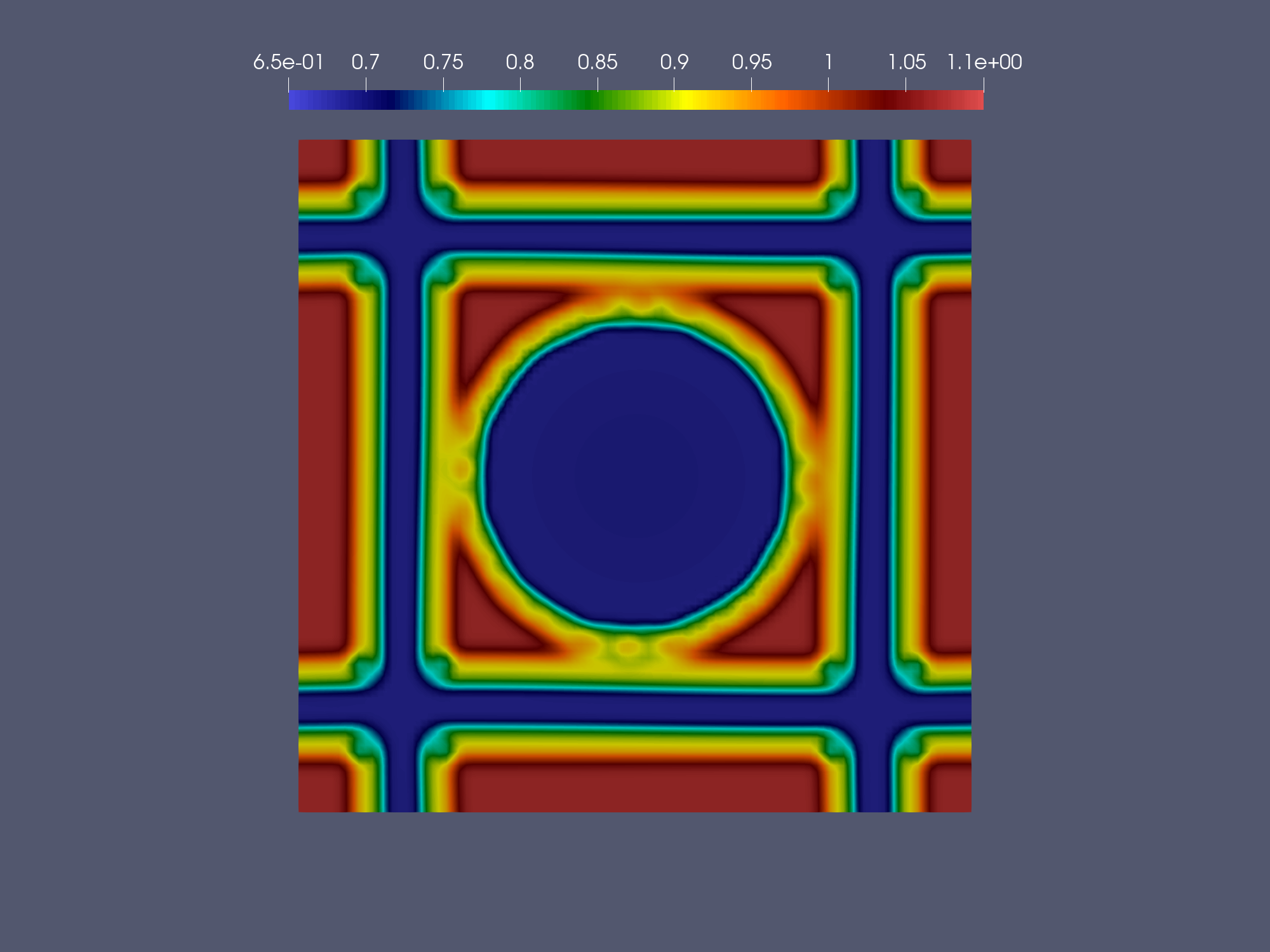} & \includegraphics[valign=c,trim={471bp 221bp 471bp 221bp},clip,width=68.25bp]{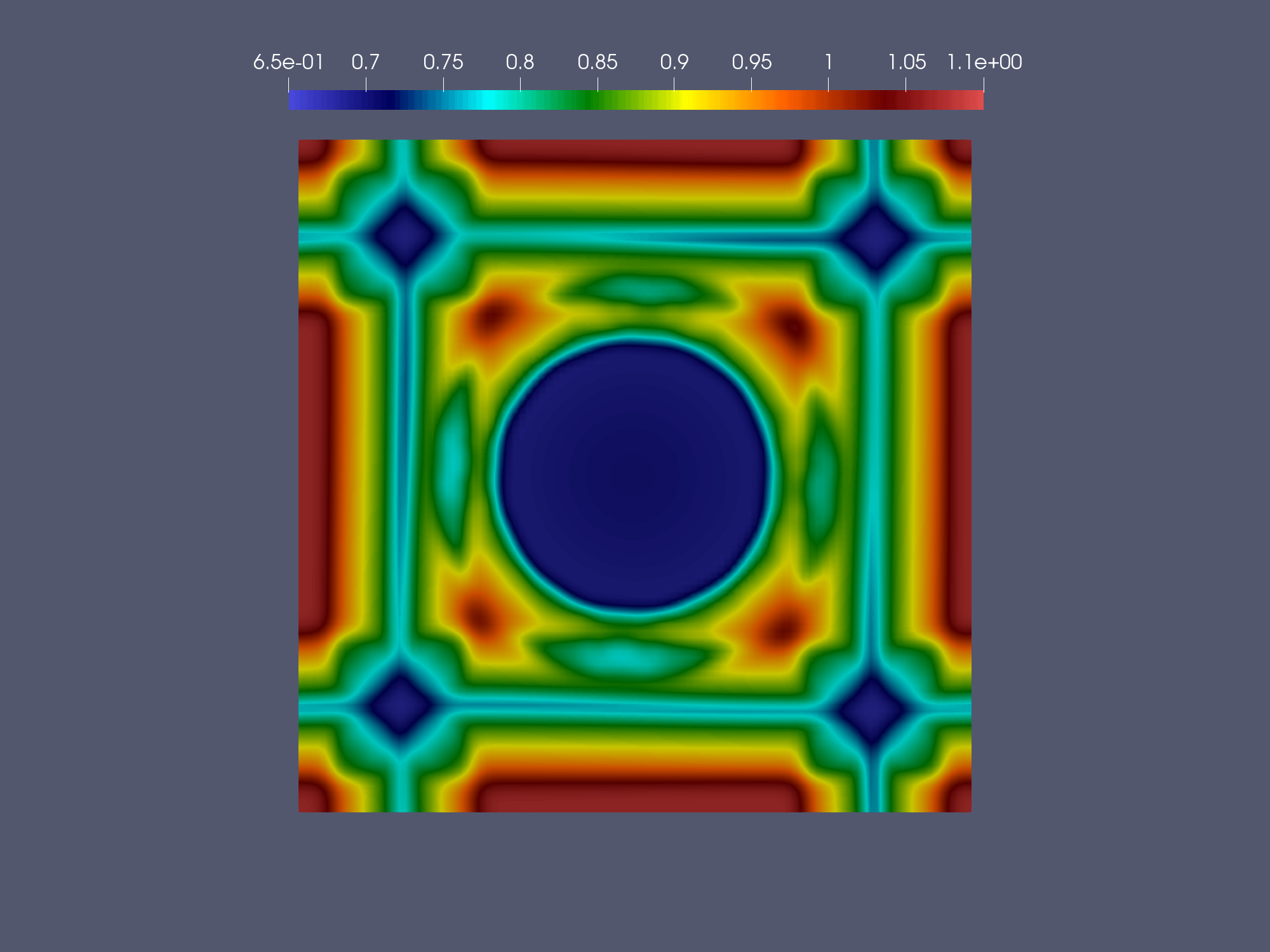} & \includegraphics[valign=c,trim={471bp 221bp 471bp 221bp},clip,width=68.25bp]{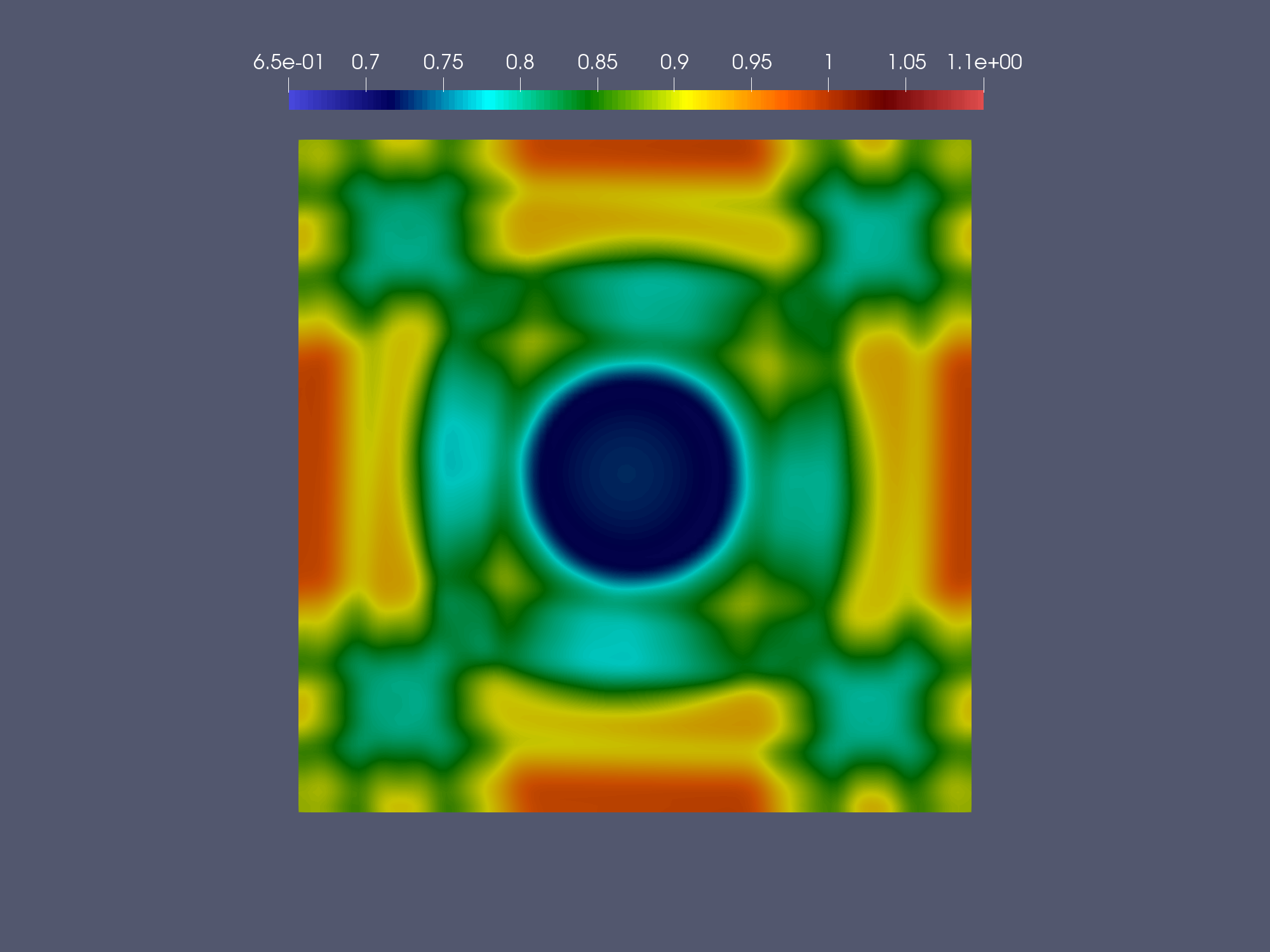} & \includegraphics[valign=c,trim={471bp 221bp 471bp 221bp},clip,width=68.25bp]{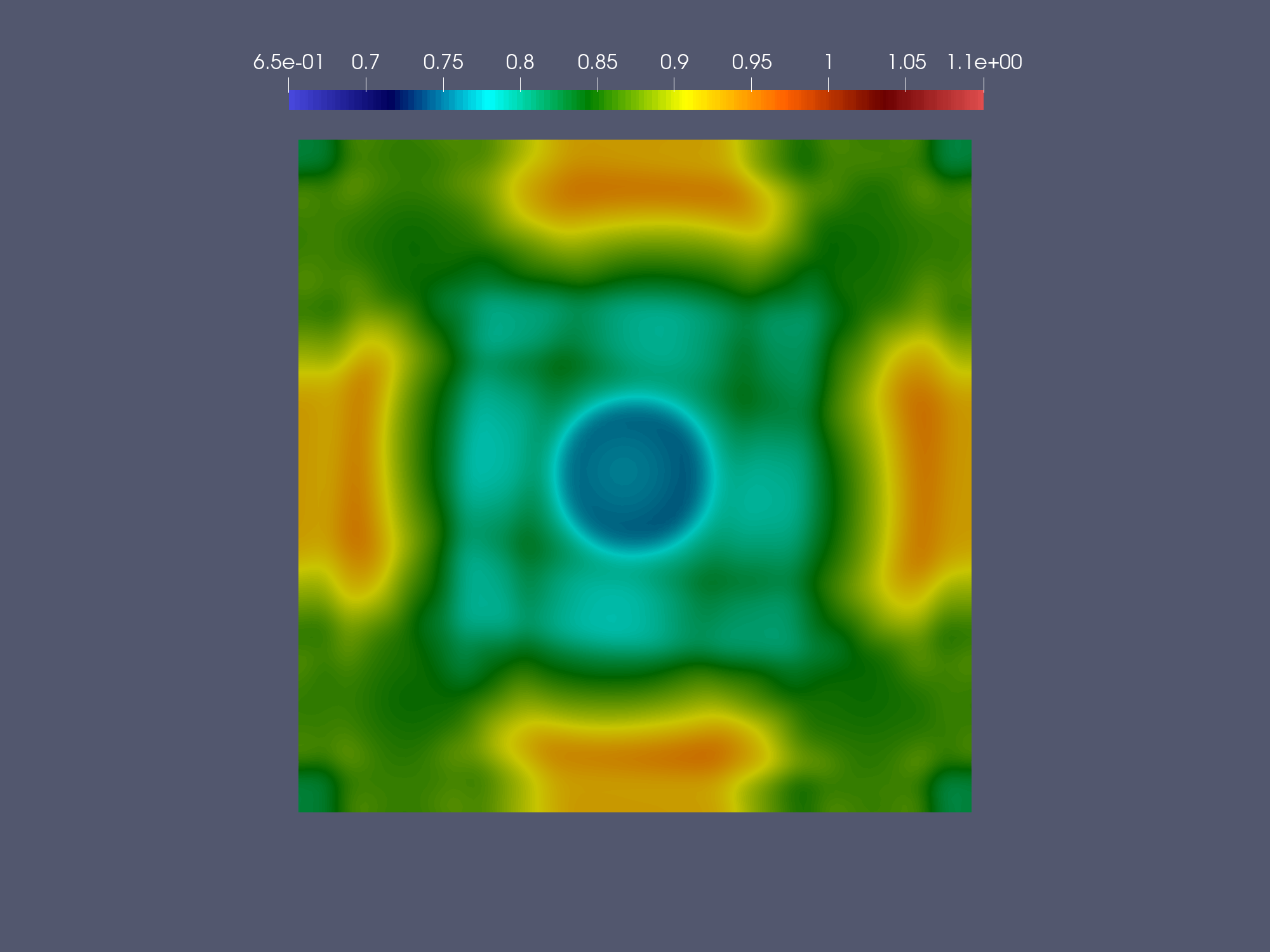} & \includegraphics[valign=c,trim={471bp 221bp 471bp 221bp},clip,width=68.25bp]{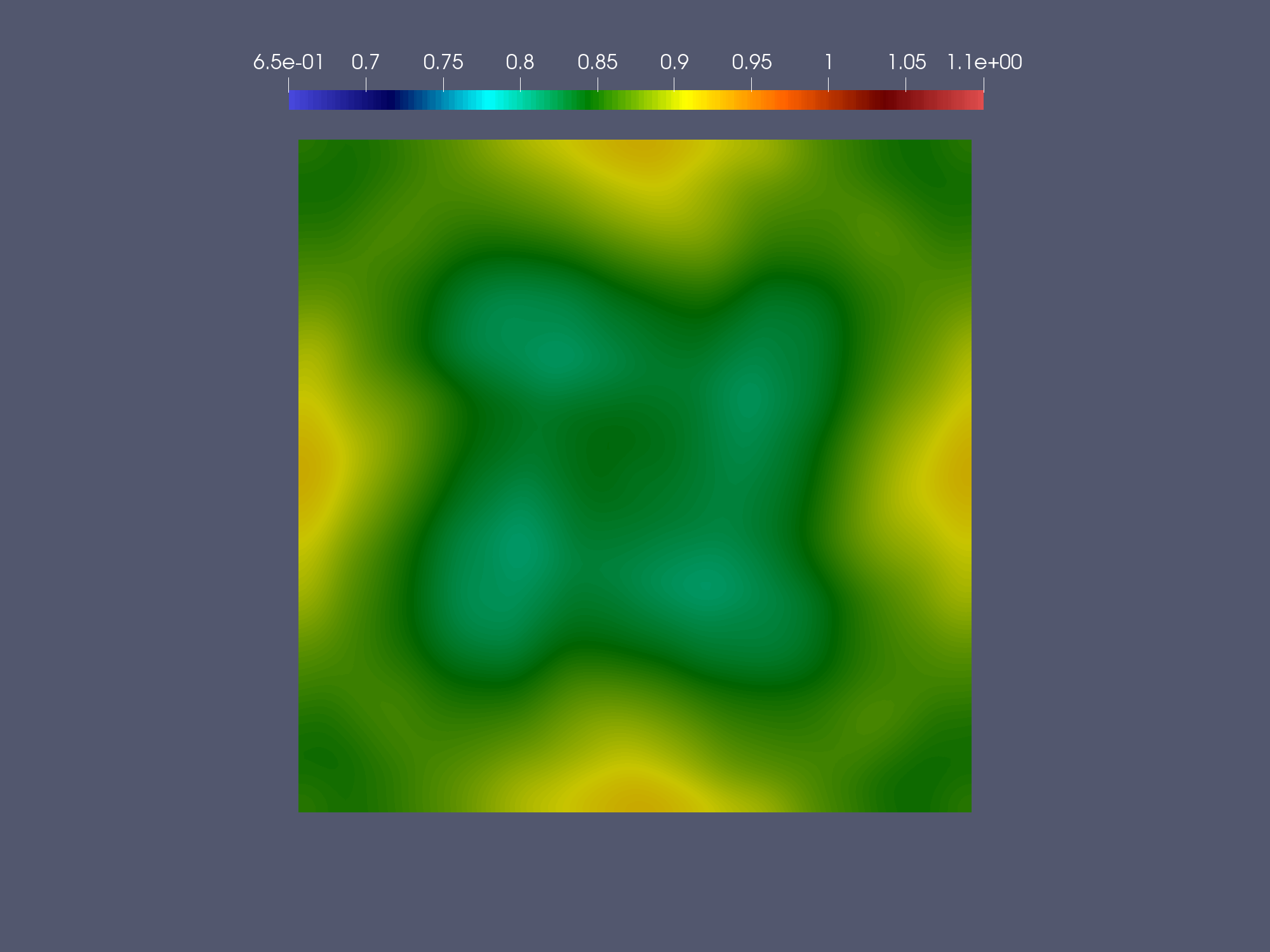} & \raisebox{-35.1bp}[0pt][0pt]{\cbar{0.65}{1.1}{134bp}{0.7,0.8,0.9,1,1.1}} \\ \noalign{\vskip 2pt}
     & \small II & \includegraphics[valign=c,trim={471bp 221bp 471bp 221bp},clip,width=68.25bp]{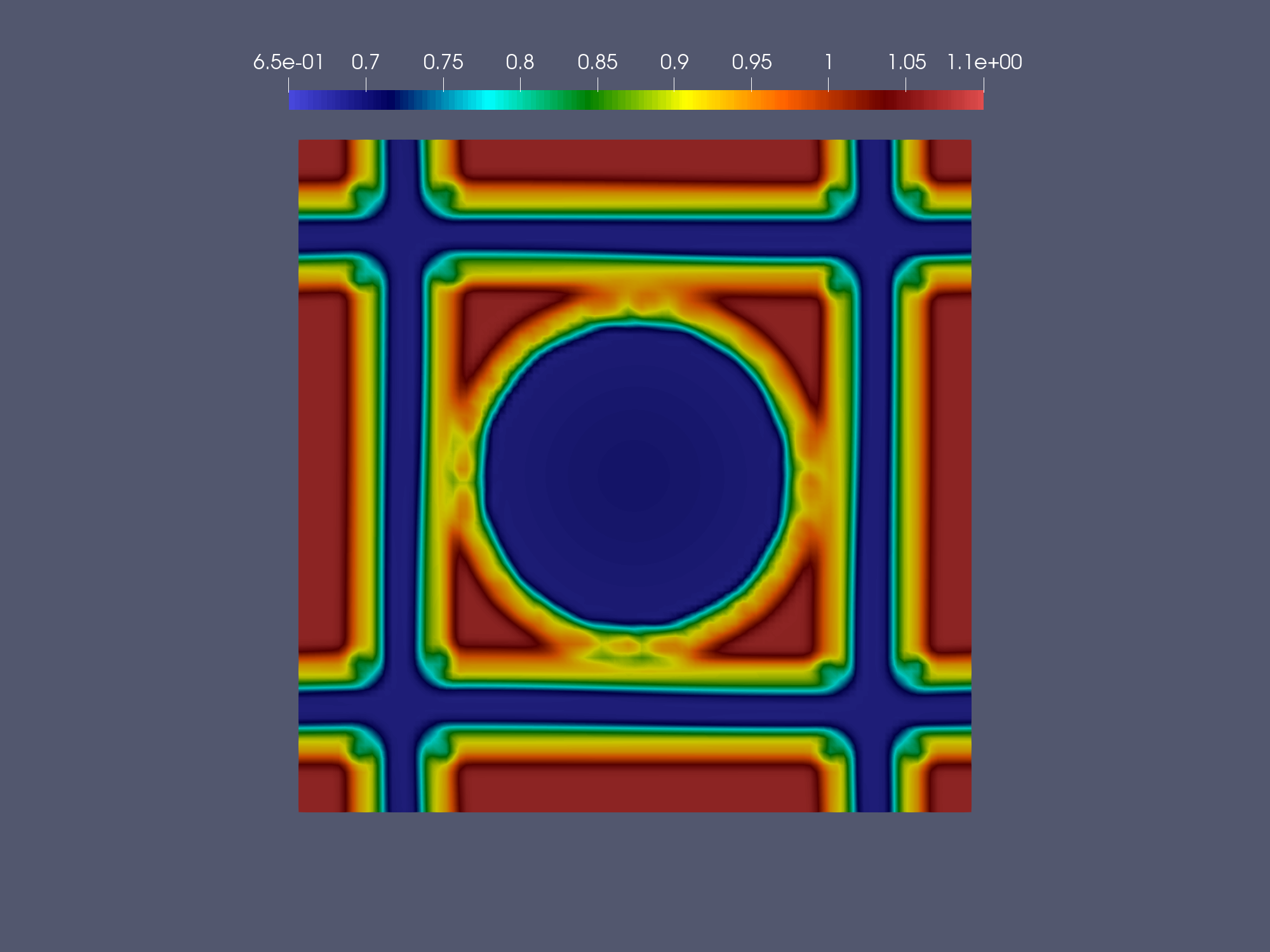} & \includegraphics[valign=c,trim={471bp 221bp 471bp 221bp},clip,width=68.25bp]{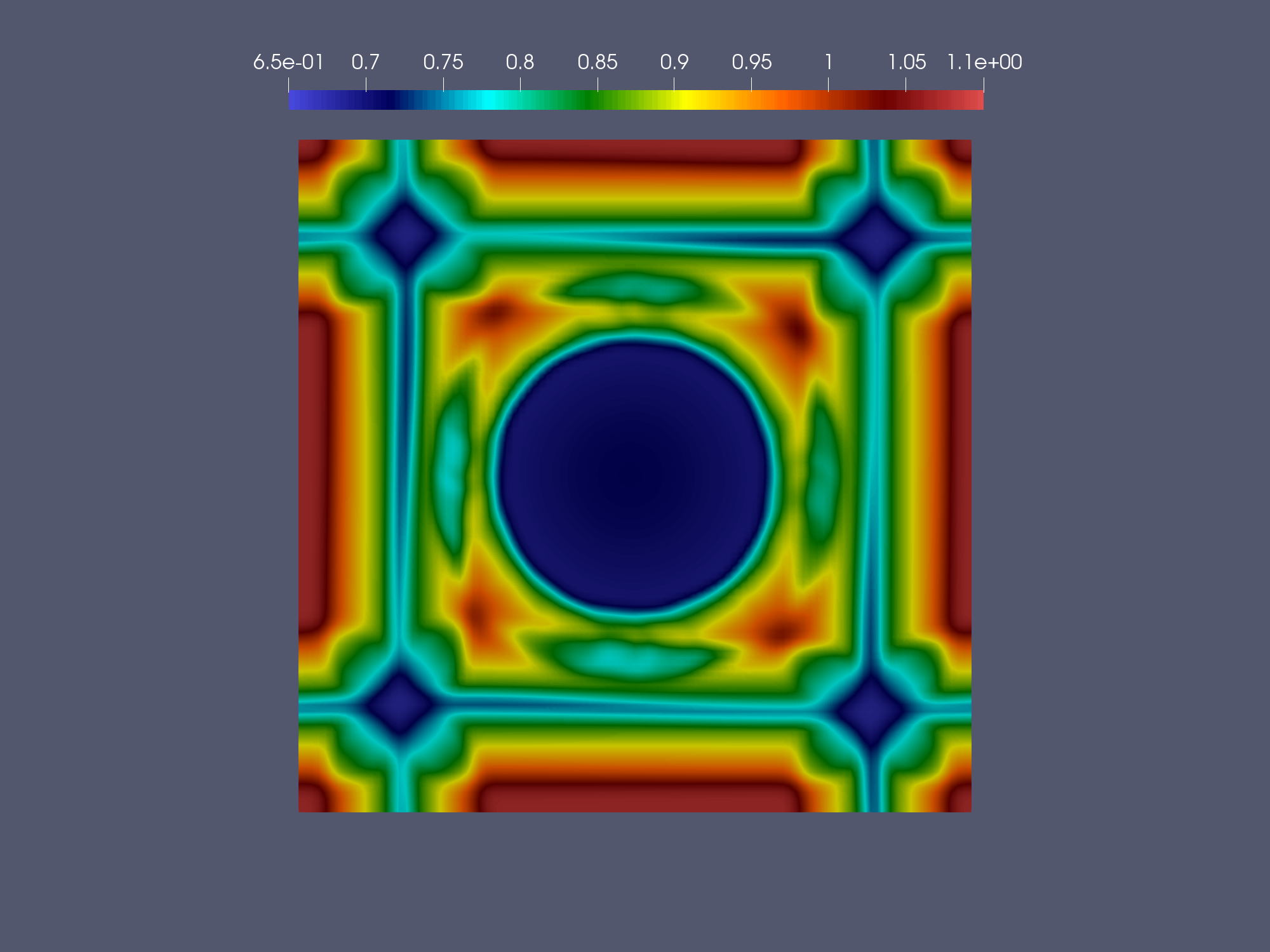} & \includegraphics[valign=c,trim={471bp 221bp 471bp 221bp},clip,width=68.25bp]{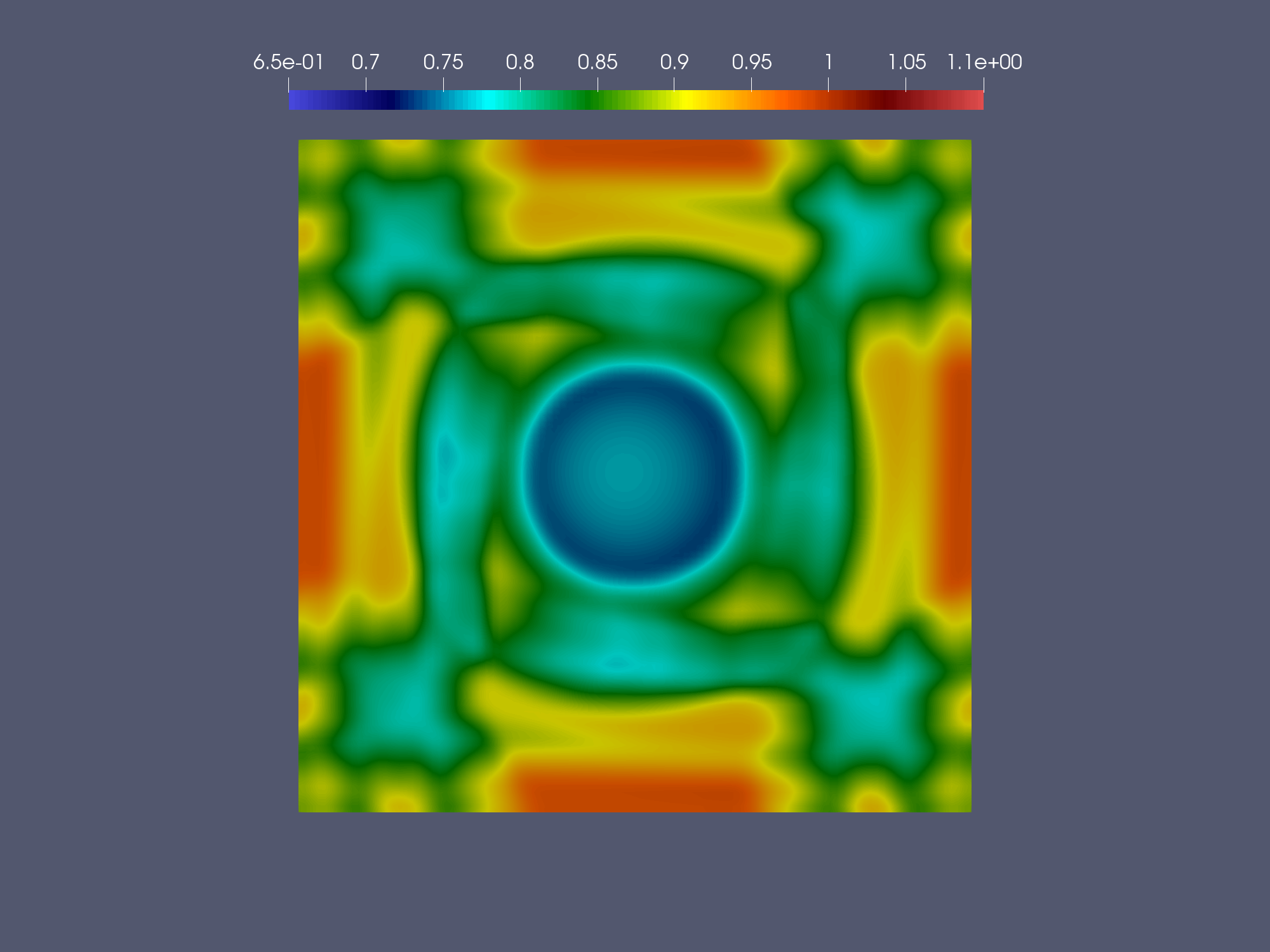} & \includegraphics[valign=c,trim={471bp 221bp 471bp 221bp},clip,width=68.25bp]{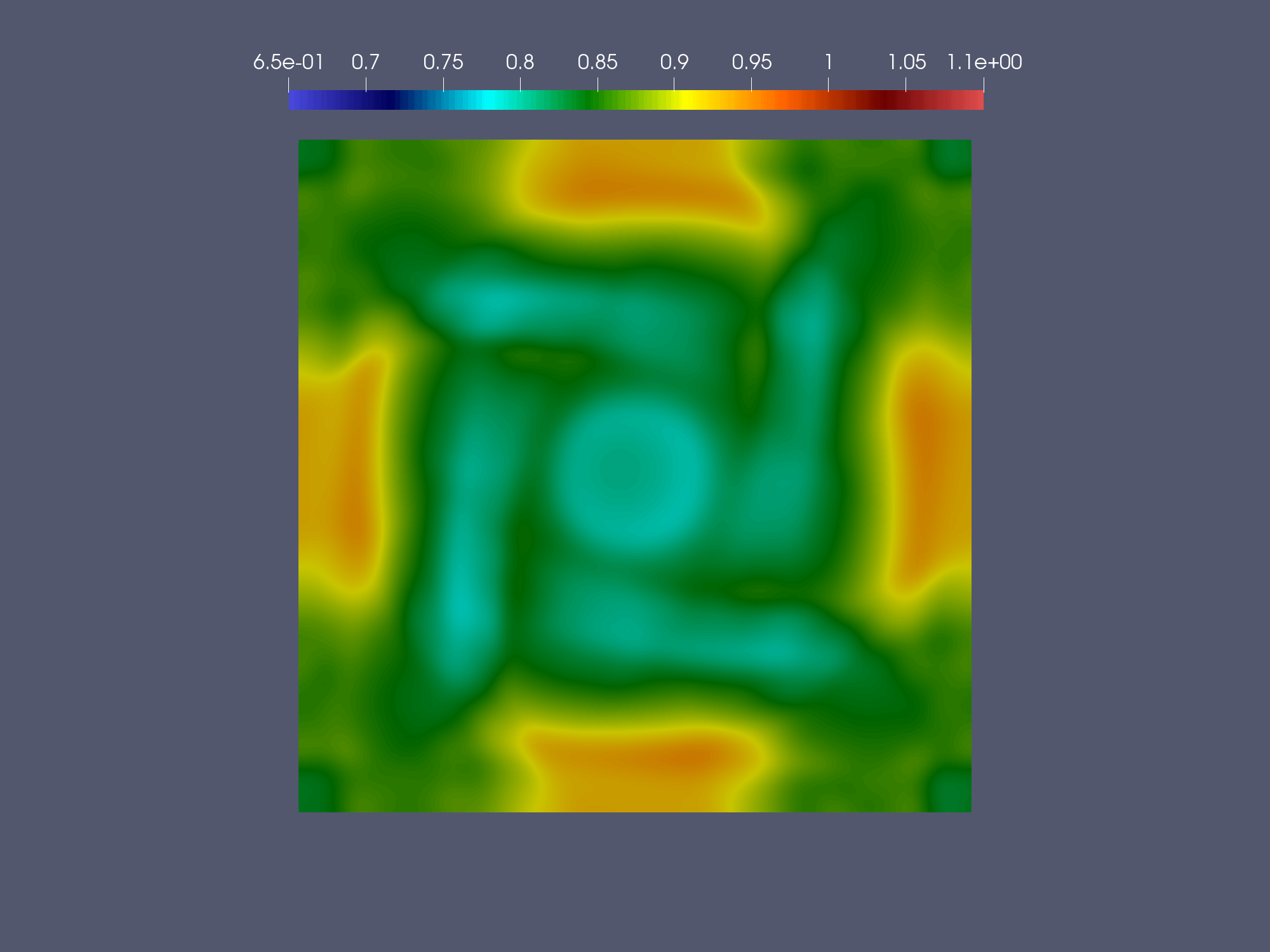} & \includegraphics[valign=c,trim={471bp 221bp 471bp 221bp},clip,width=68.25bp]{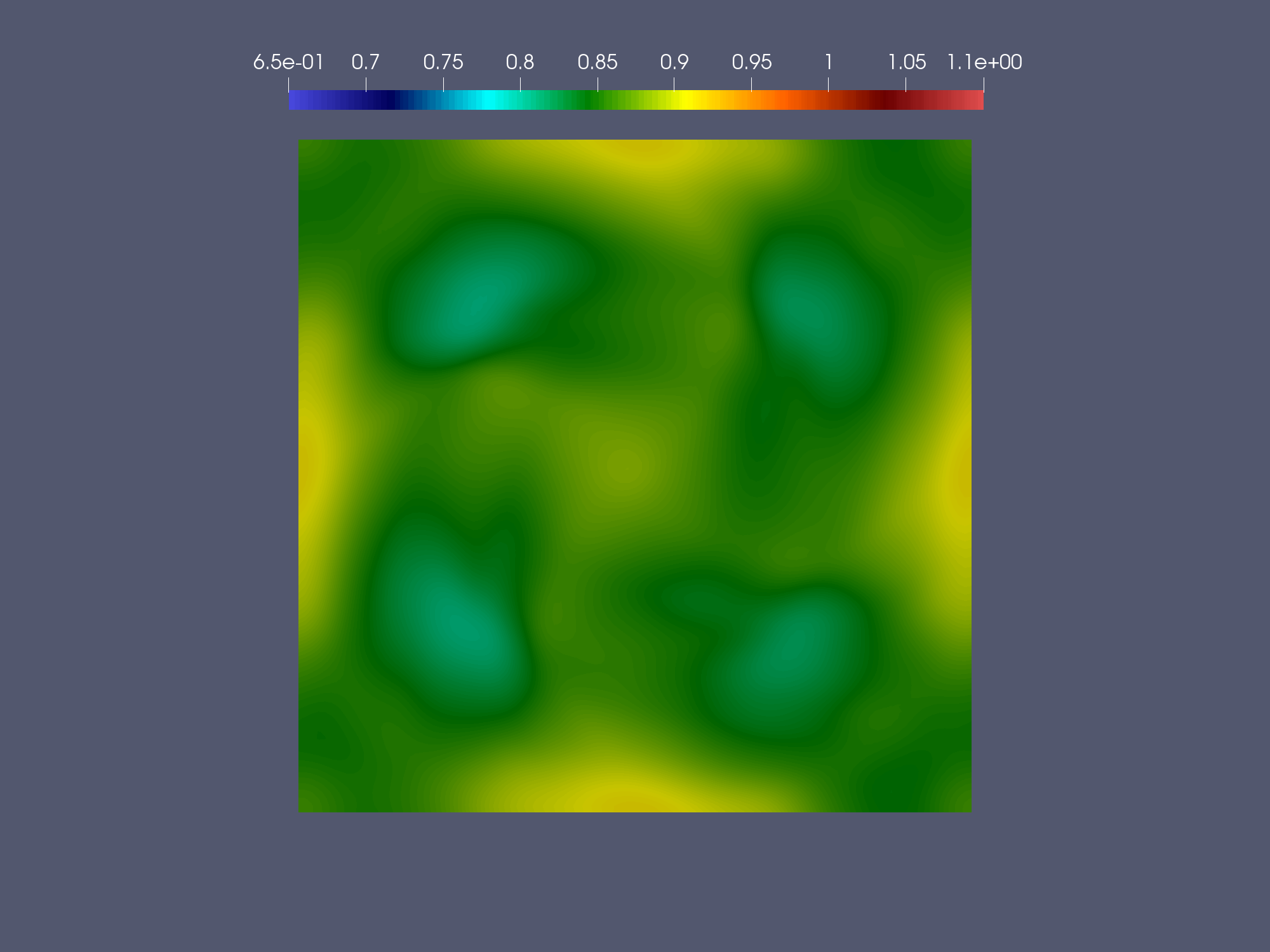} \\ \noalign{\vskip 2pt}
\end{tabular}%
}
\caption{Structure-preserving test: snapshots of the partial densities $\rho_1$, $\rho_2$, $\rho_3$ for Scheme~I (semi-implicit Euler) and Scheme~II (CPG).}
\label{fig:rho}
\end{figure}

\begin{figure}[!htbp]
\centering
\scalebox{0.86}{%
\begin{tabular}{c@{\hspace{0.5em}}c@{\hspace{0.5em}}c@{\hspace{2pt}}c@{\hspace{2pt}}c@{\hspace{2pt}}c@{\hspace{2pt}}c@{\hspace{3pt}}l}
    & & \small $t = 0.01$ & \small $t = 0.02$ & \small $t = 0.04$ & \small $t = 0.06$ & \small $t = 0.1$ \\[3pt]
    \raisebox{-35.1bp}[0pt][0pt]{$\vtheta$} & \small I & \includegraphics[valign=c,trim={471bp 221bp 471bp 221bp},clip,width=68.25bp]{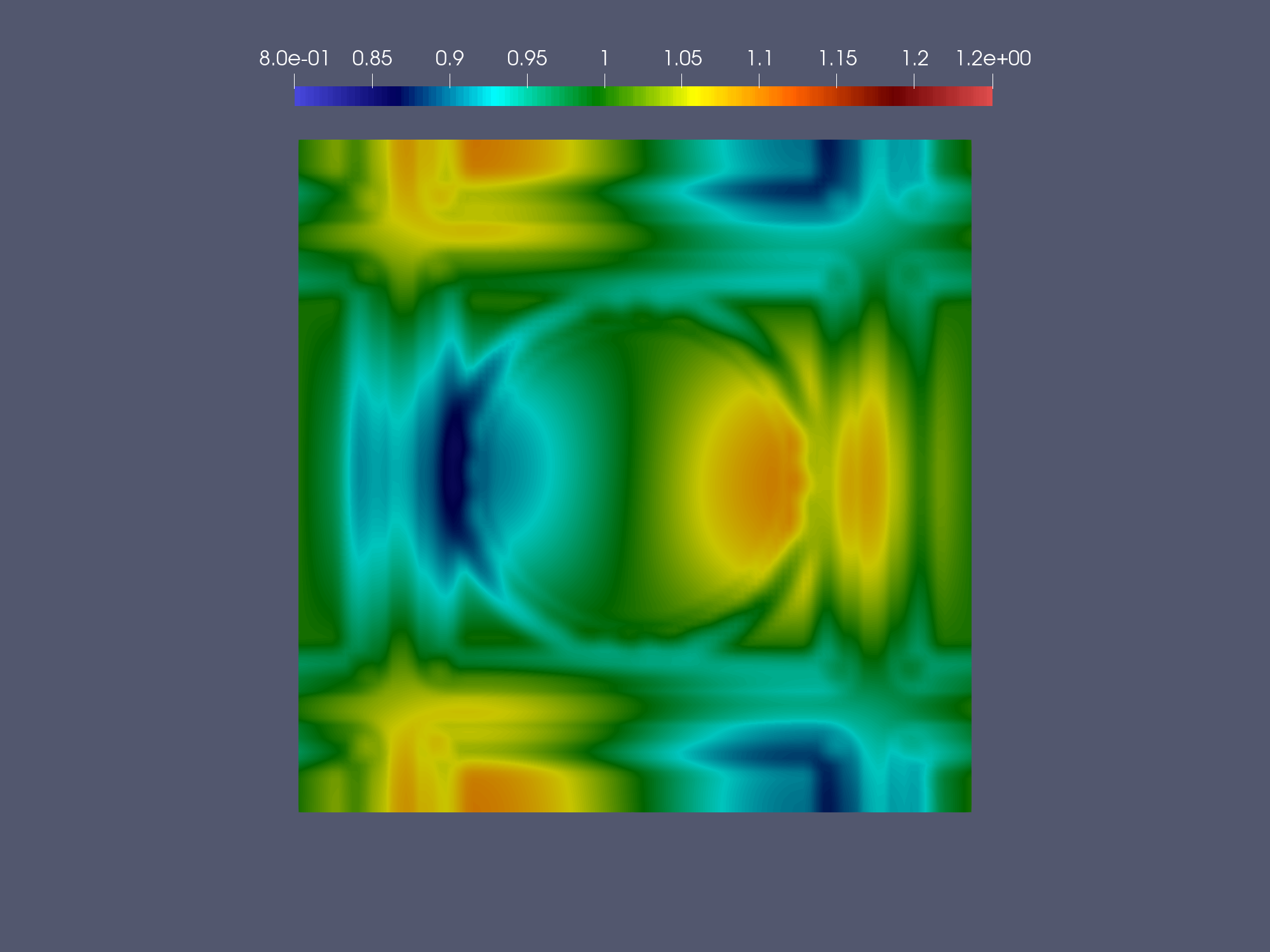} & \includegraphics[valign=c,trim={471bp 221bp 471bp 221bp},clip,width=68.25bp]{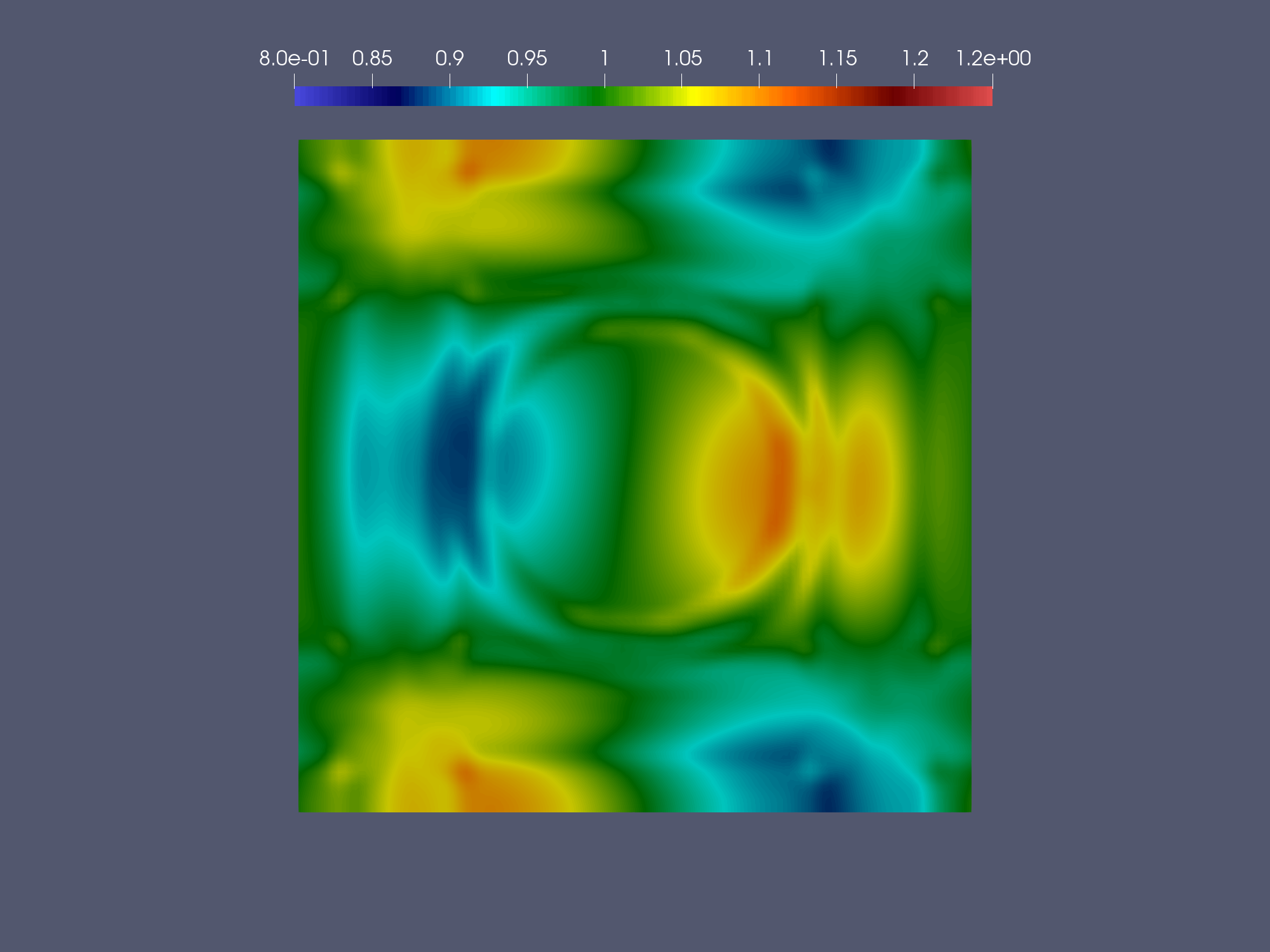} & \includegraphics[valign=c,trim={471bp 221bp 471bp 221bp},clip,width=68.25bp]{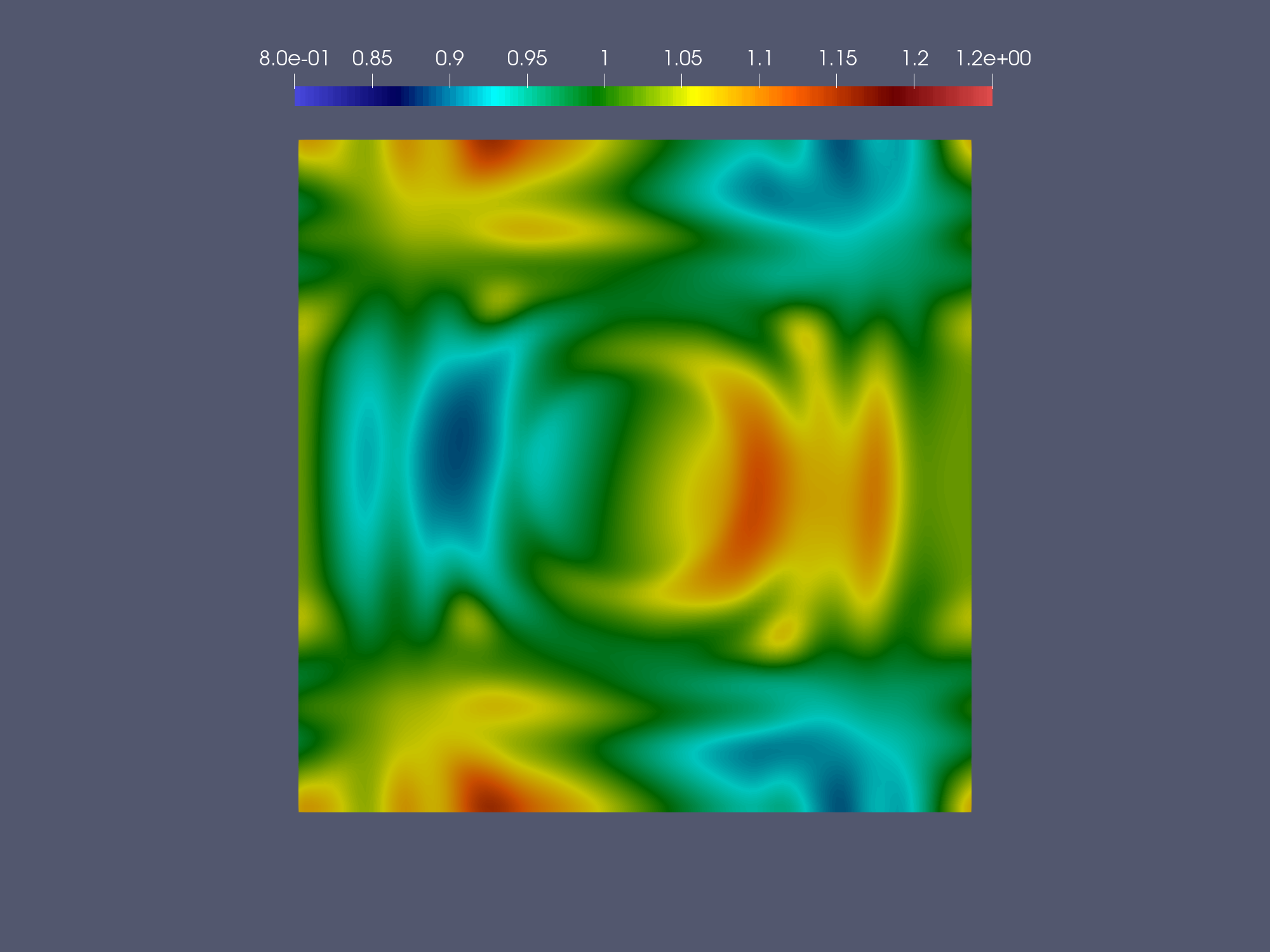} & \includegraphics[valign=c,trim={471bp 221bp 471bp 221bp},clip,width=68.25bp]{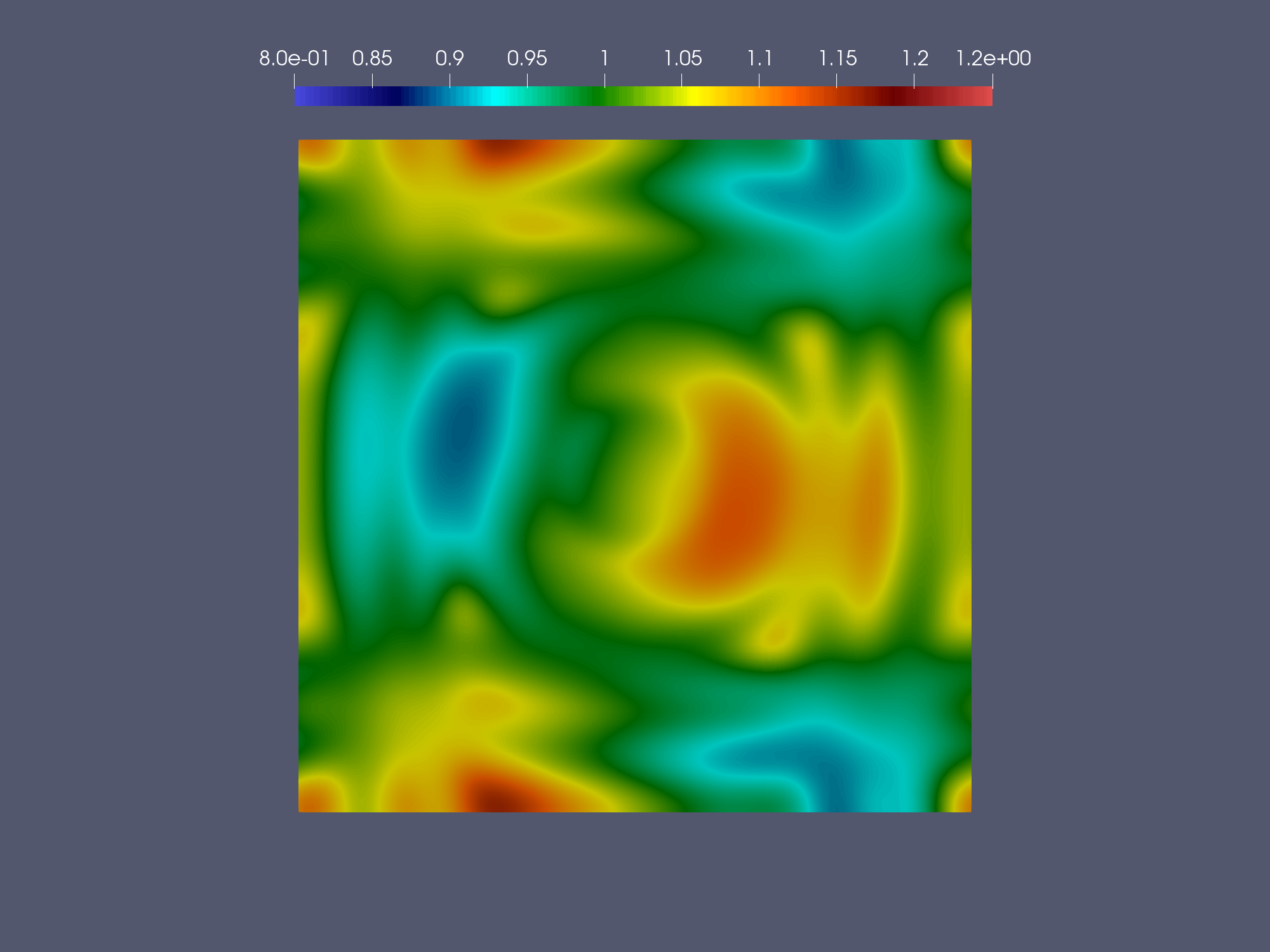} & \includegraphics[valign=c,trim={471bp 221bp 471bp 221bp},clip,width=68.25bp]{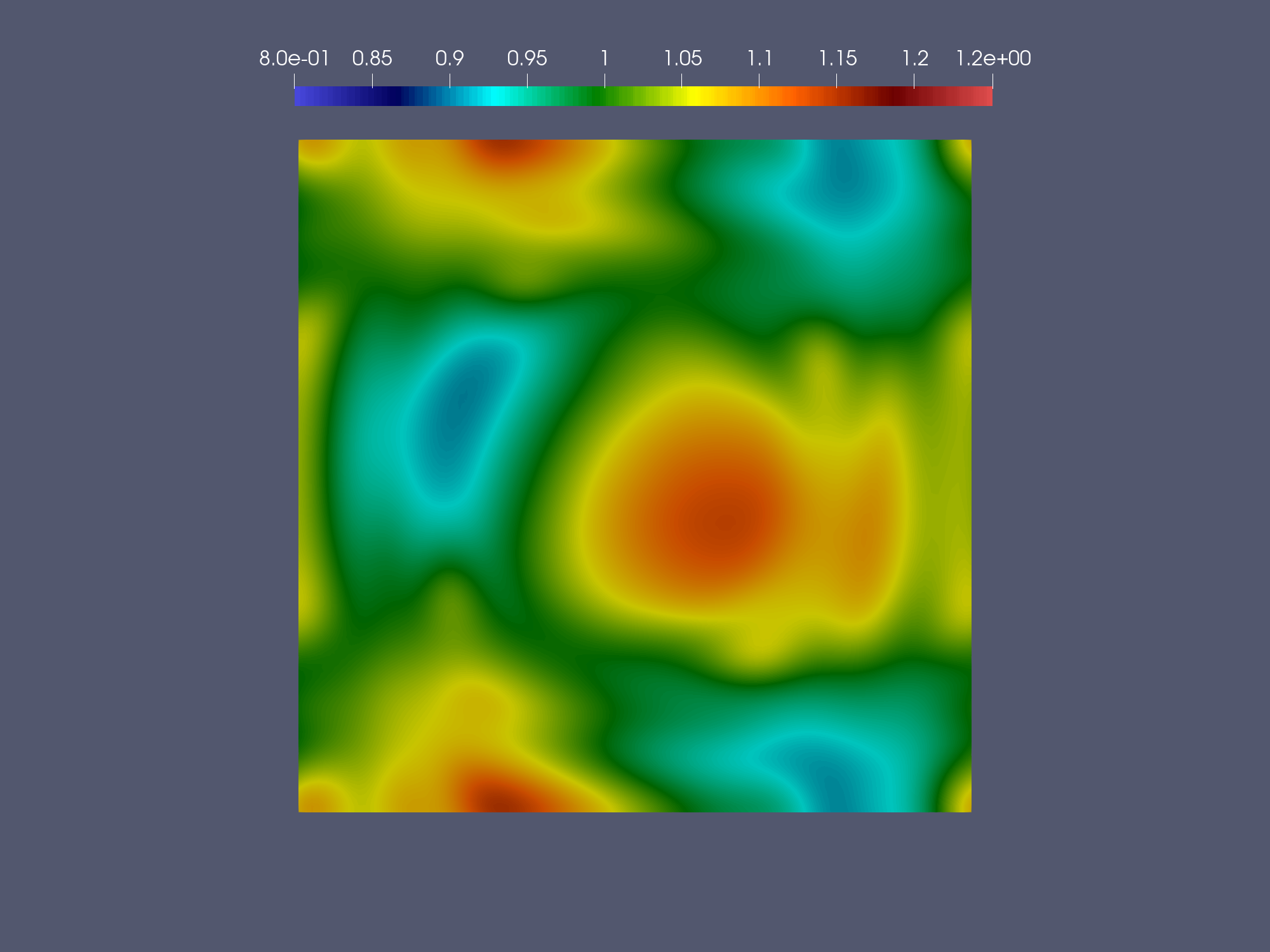} & \raisebox{-35.1bp}[0pt][0pt]{\cbar{0.8}{1.2}{134bp}{0.8,0.9,1,1.1,1.2}} \\ \noalign{\vskip 2pt}
     & \small II & \includegraphics[valign=c,trim={471bp 221bp 471bp 221bp},clip,width=68.25bp]{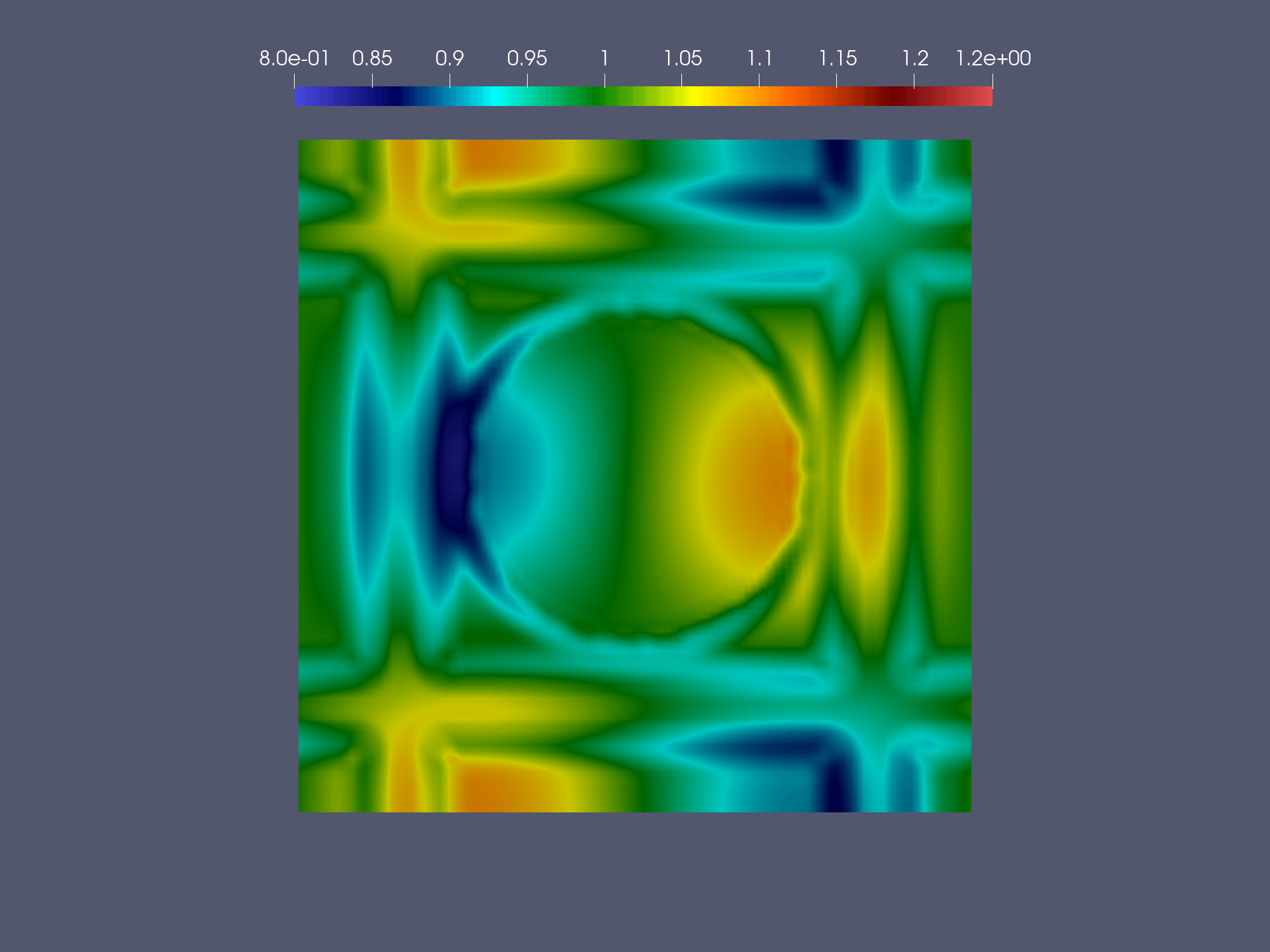} & \includegraphics[valign=c,trim={471bp 221bp 471bp 221bp},clip,width=68.25bp]{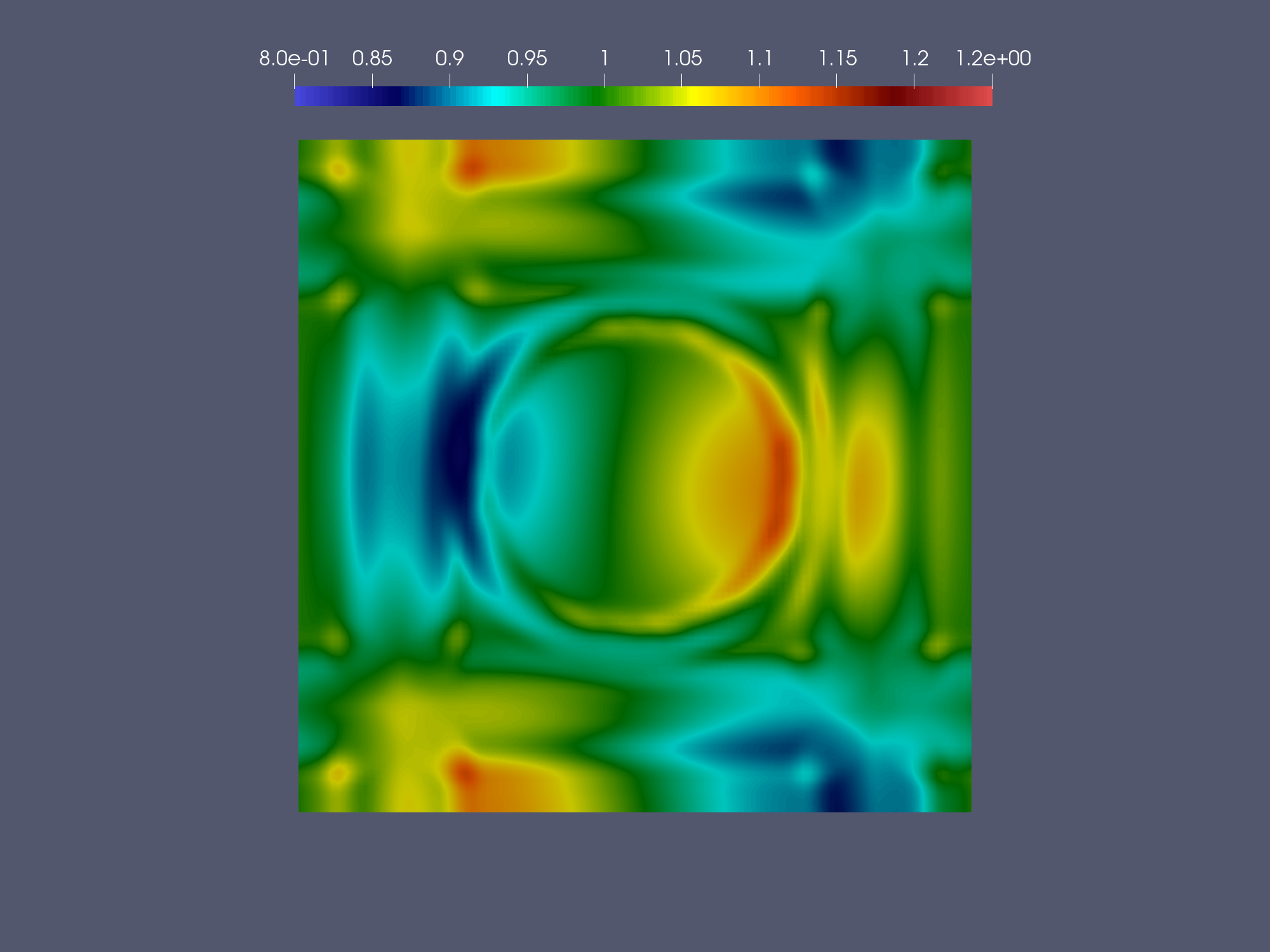} & \includegraphics[valign=c,trim={471bp 221bp 471bp 221bp},clip,width=68.25bp]{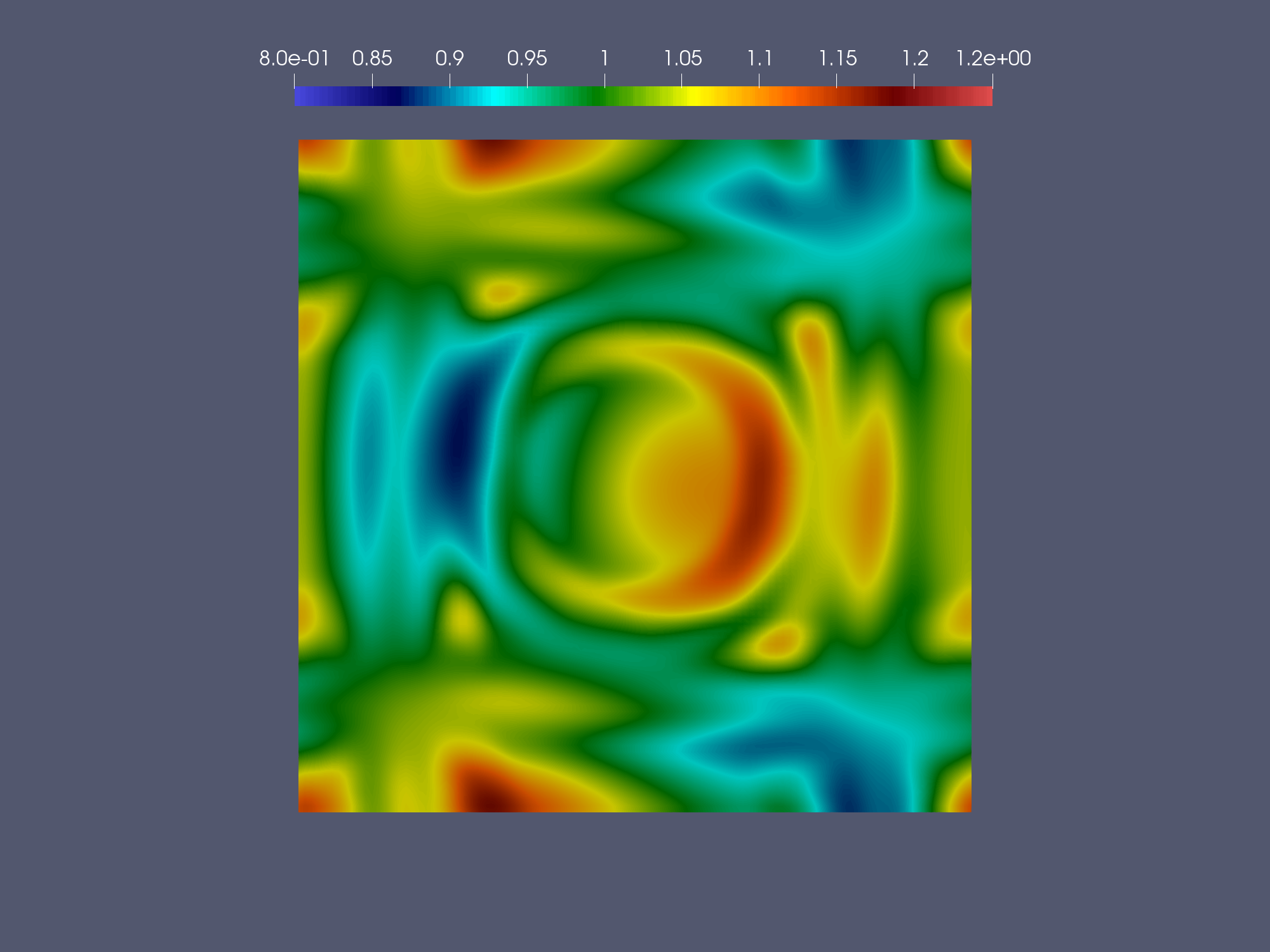} & \includegraphics[valign=c,trim={471bp 221bp 471bp 221bp},clip,width=68.25bp]{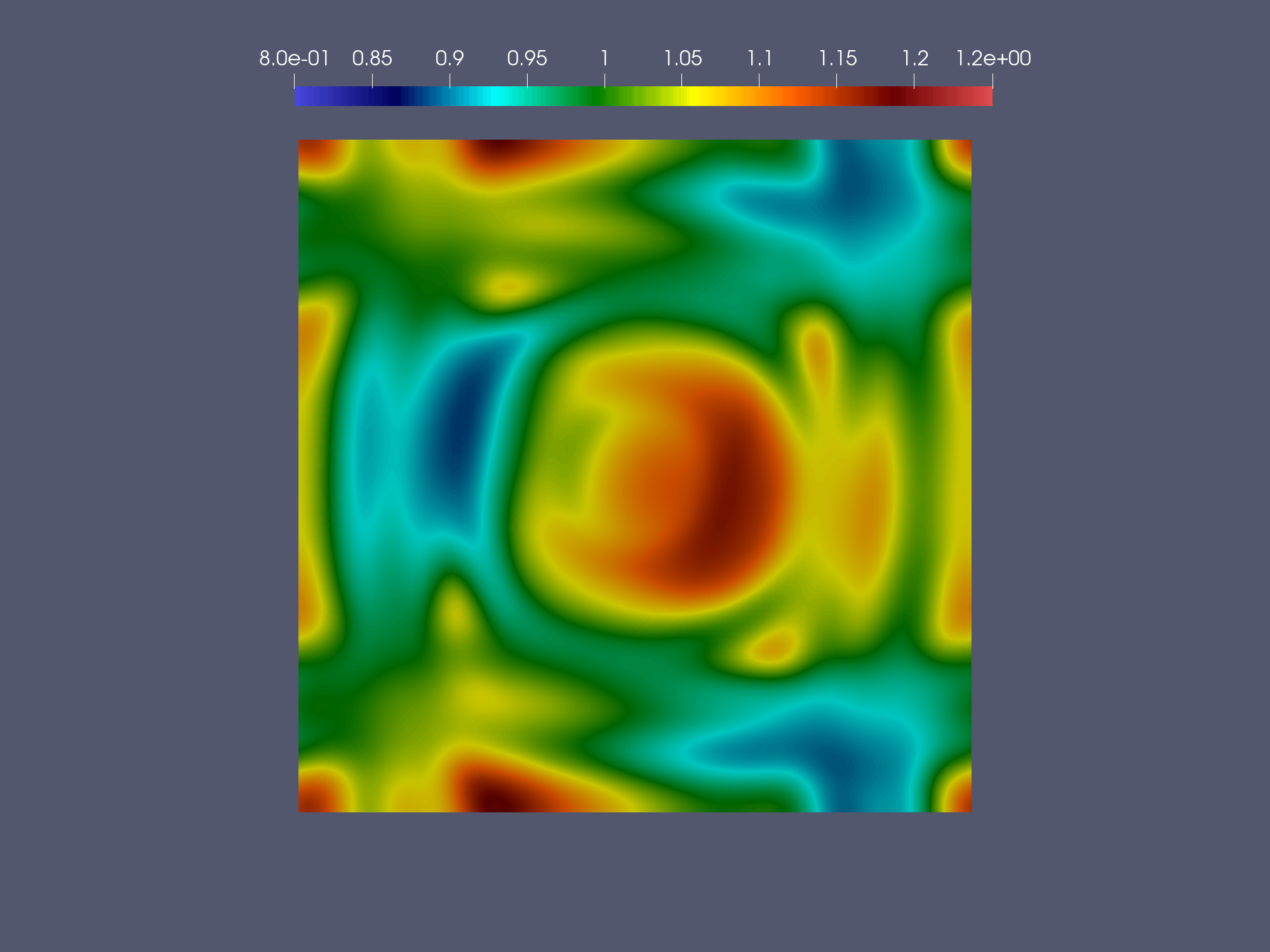} & \includegraphics[valign=c,trim={471bp 221bp 471bp 221bp},clip,width=68.25bp]{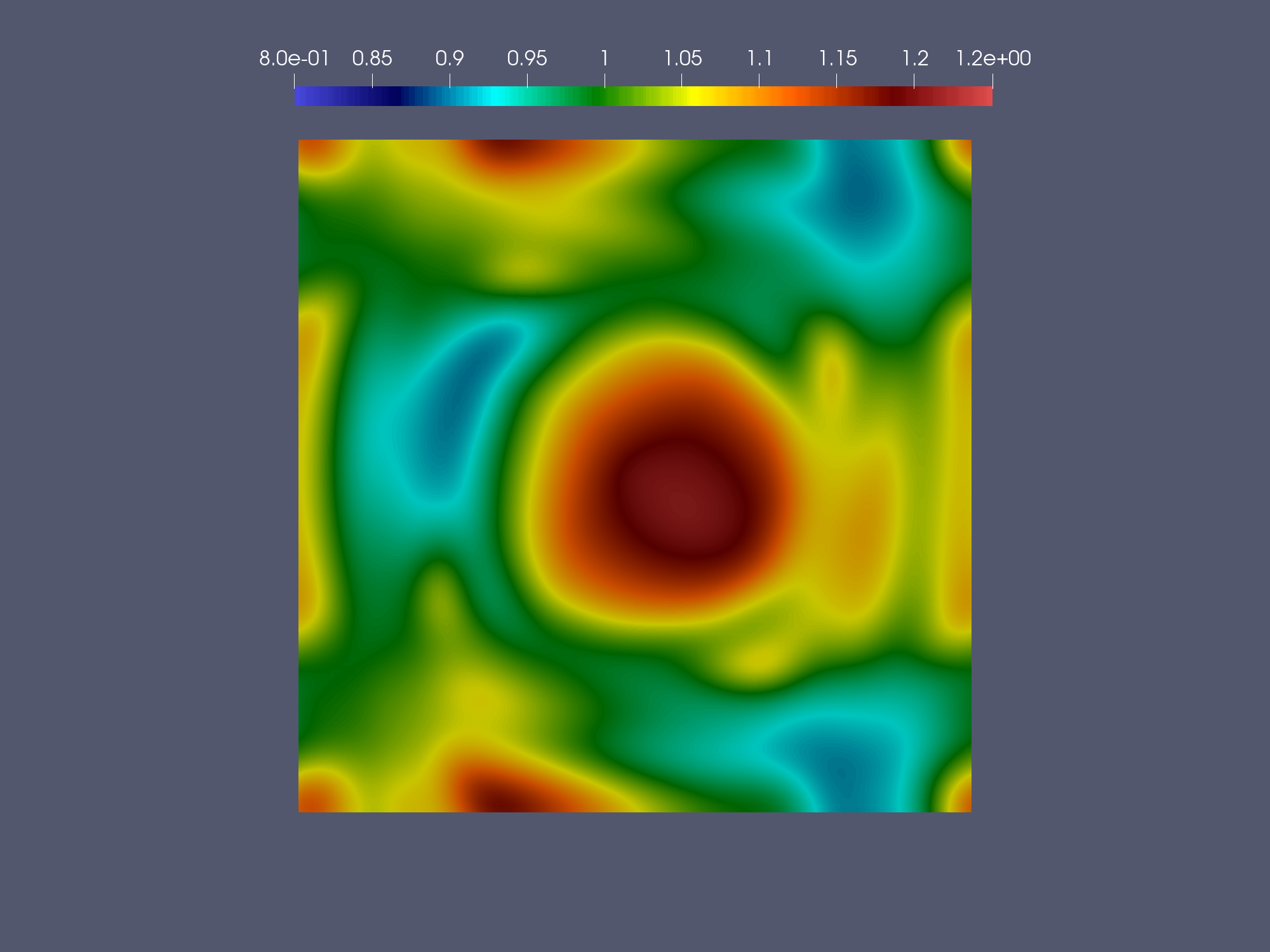} \\ \noalign{\vskip 6pt}
    \raisebox{-35.1bp}[0pt][0pt]{$\u$} & \small I & \includegraphics[valign=c,trim={471bp 221bp 471bp 221bp},clip,width=68.25bp]{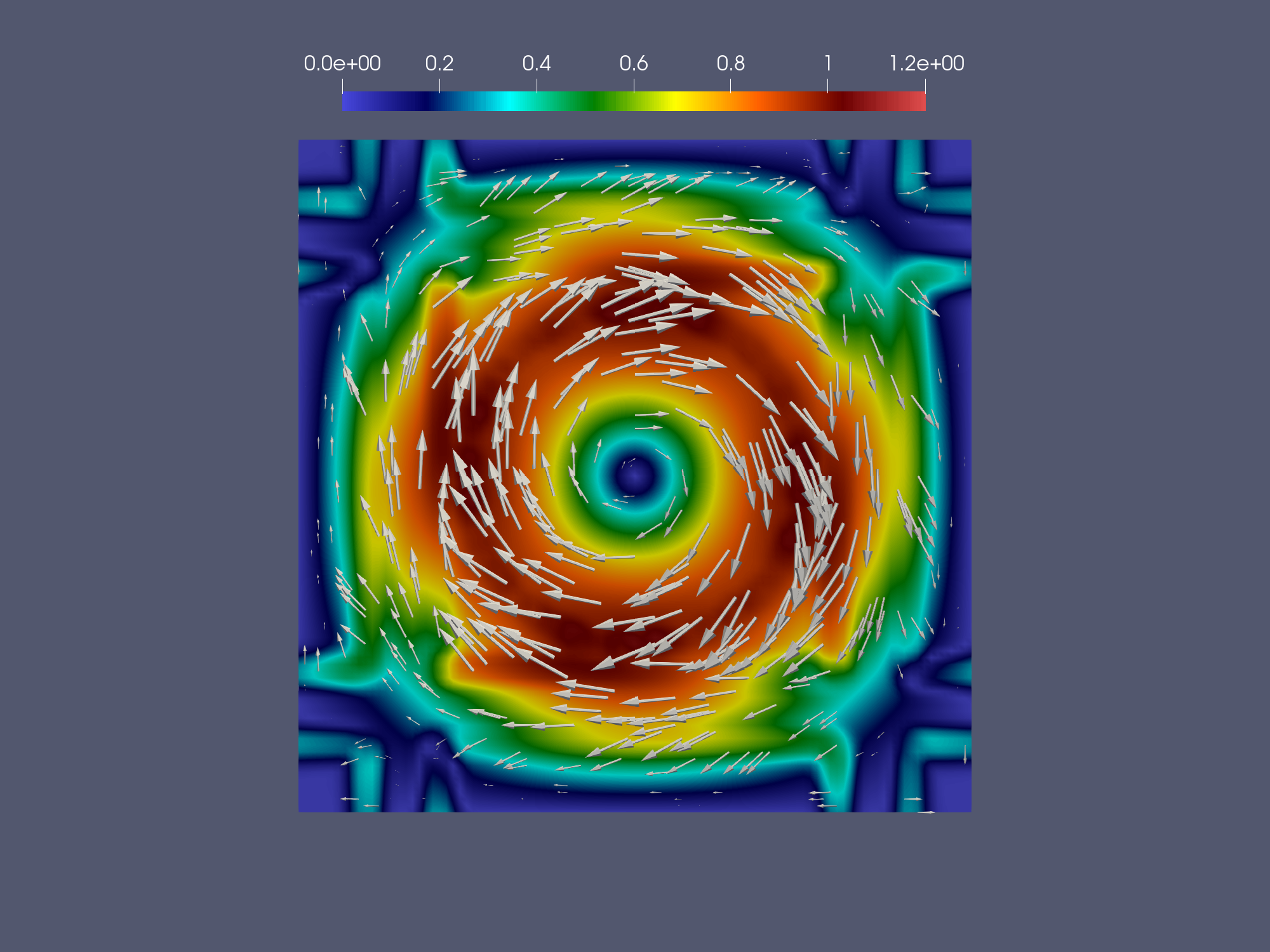} & \includegraphics[valign=c,trim={471bp 221bp 471bp 221bp},clip,width=68.25bp]{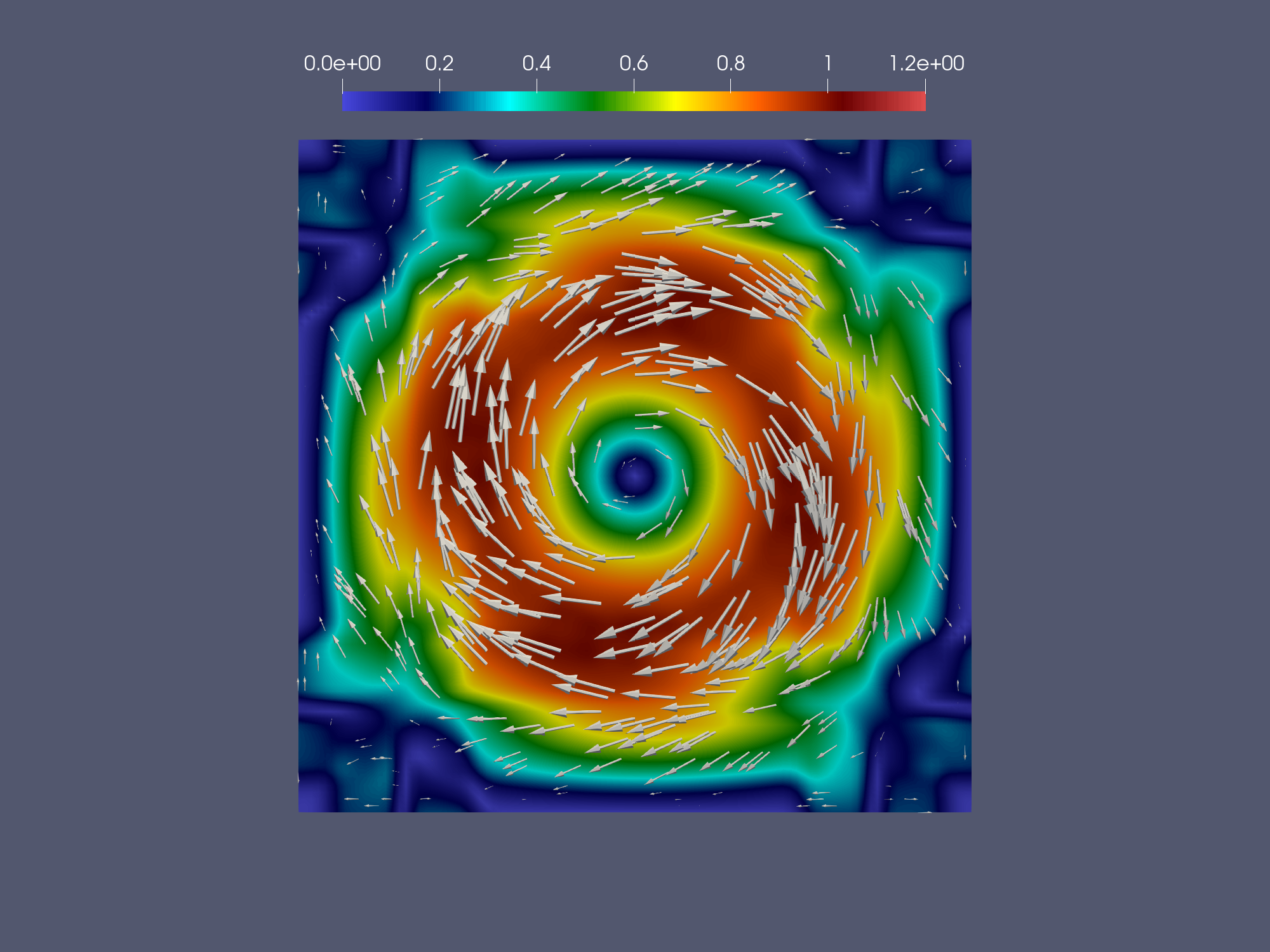} & \includegraphics[valign=c,trim={471bp 221bp 471bp 221bp},clip,width=68.25bp]{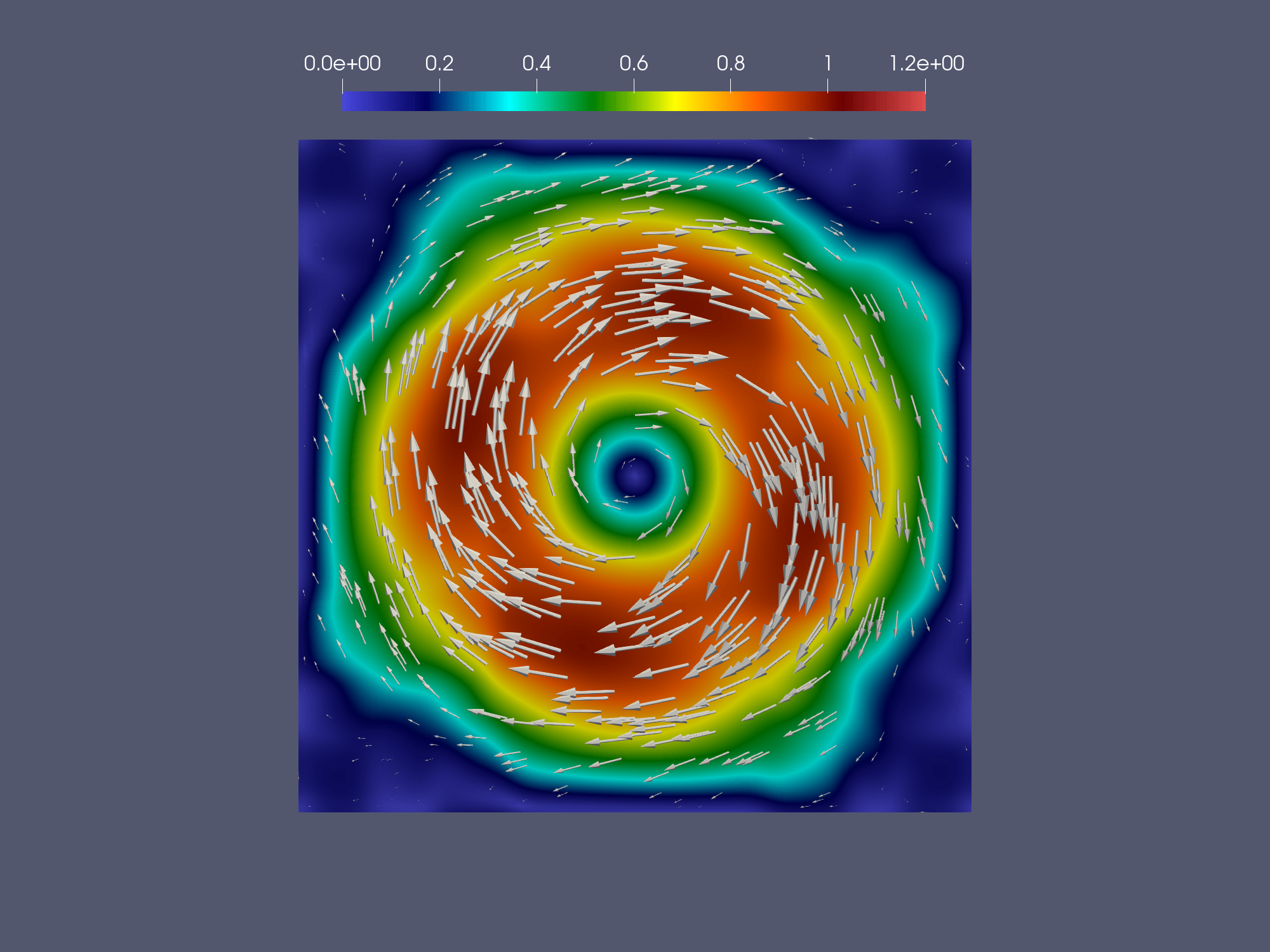} & \includegraphics[valign=c,trim={471bp 221bp 471bp 221bp},clip,width=68.25bp]{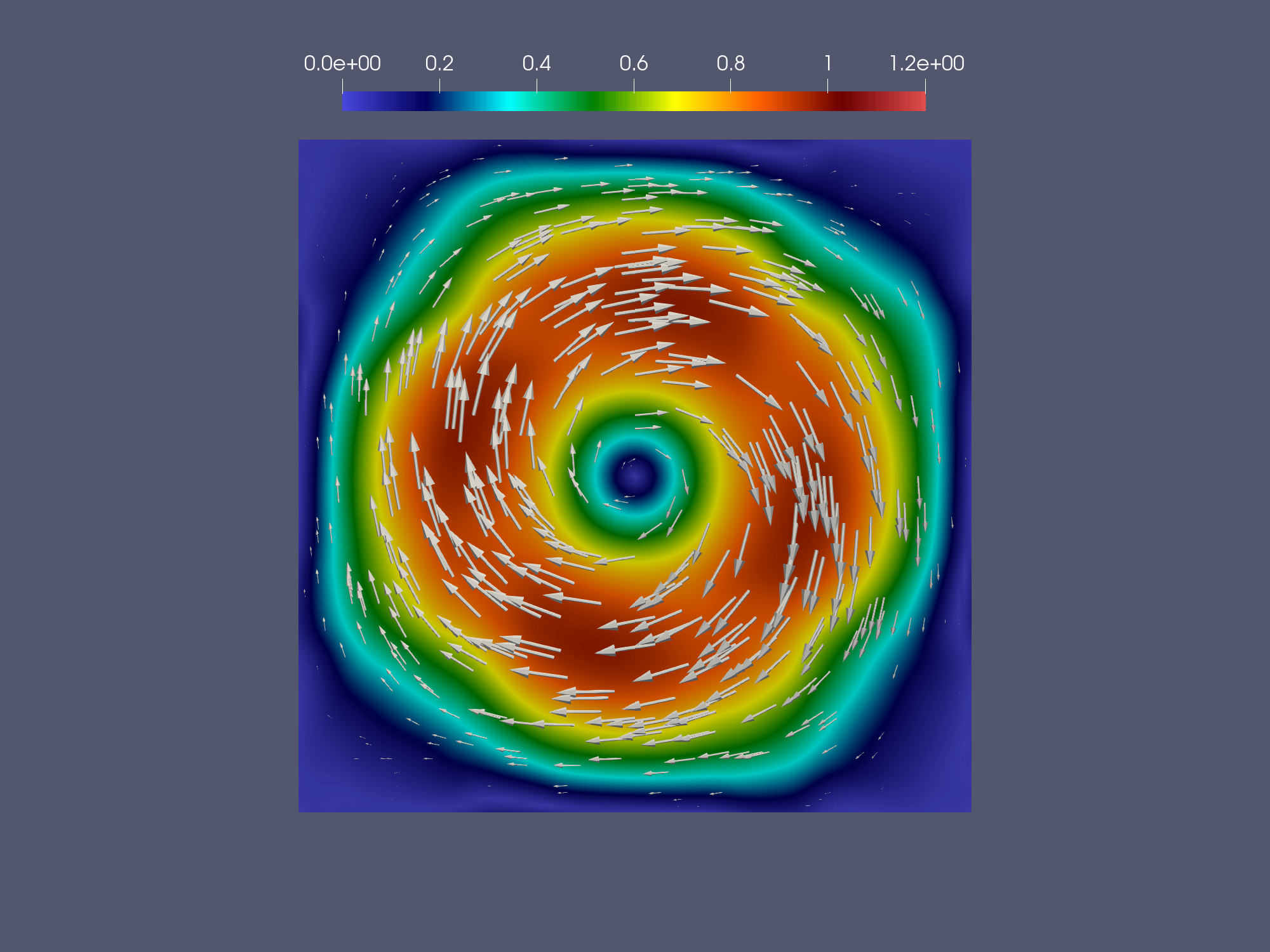} & \includegraphics[valign=c,trim={471bp 221bp 471bp 221bp},clip,width=68.25bp]{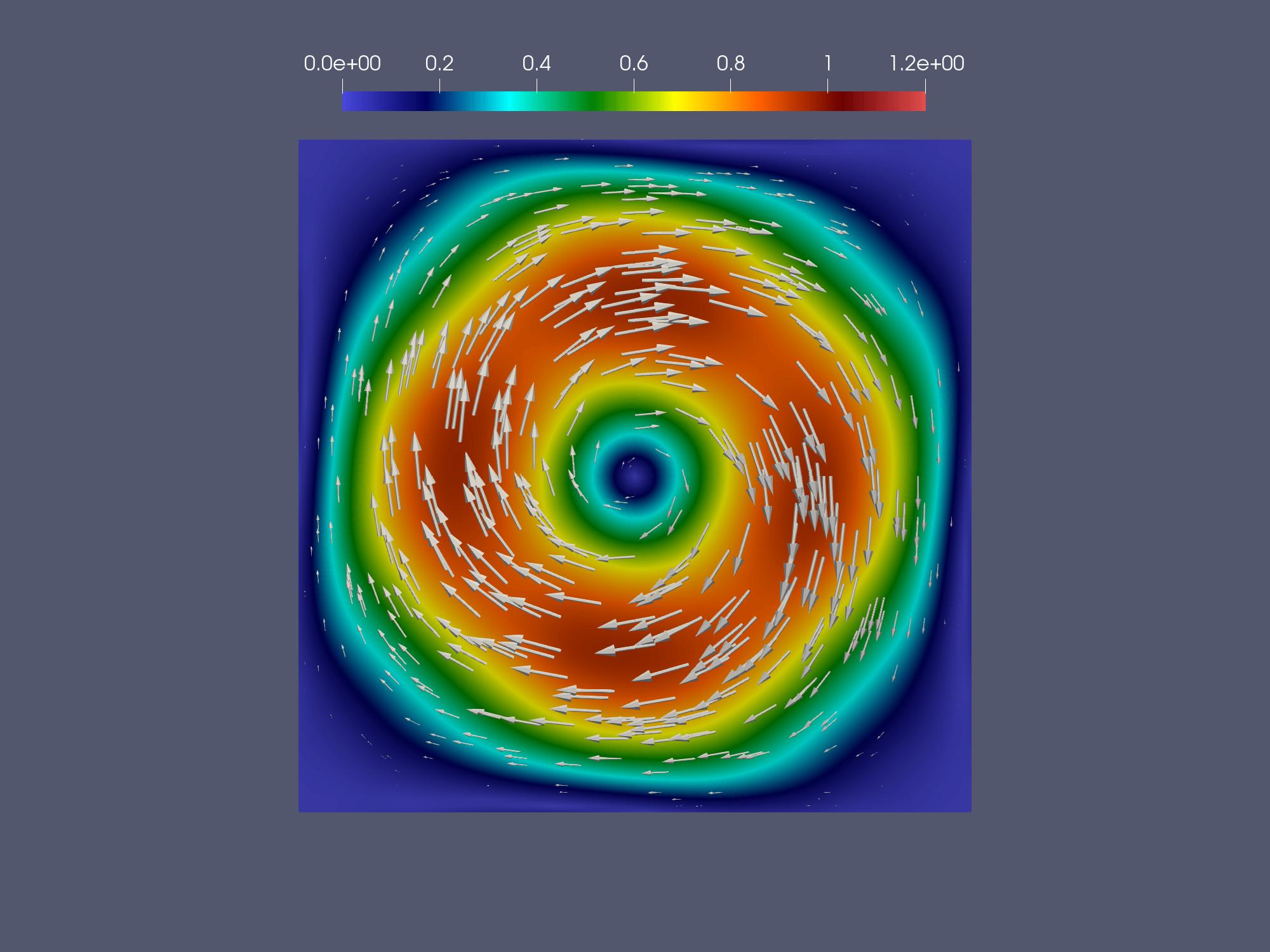} & \raisebox{-35.1bp}[0pt][0pt]{\cbar{0}{1.2}{134bp}{0,0.4,0.8,1.2}} \\ \noalign{\vskip 2pt}
     & \small II & \includegraphics[valign=c,trim={471bp 221bp 471bp 221bp},clip,width=68.25bp]{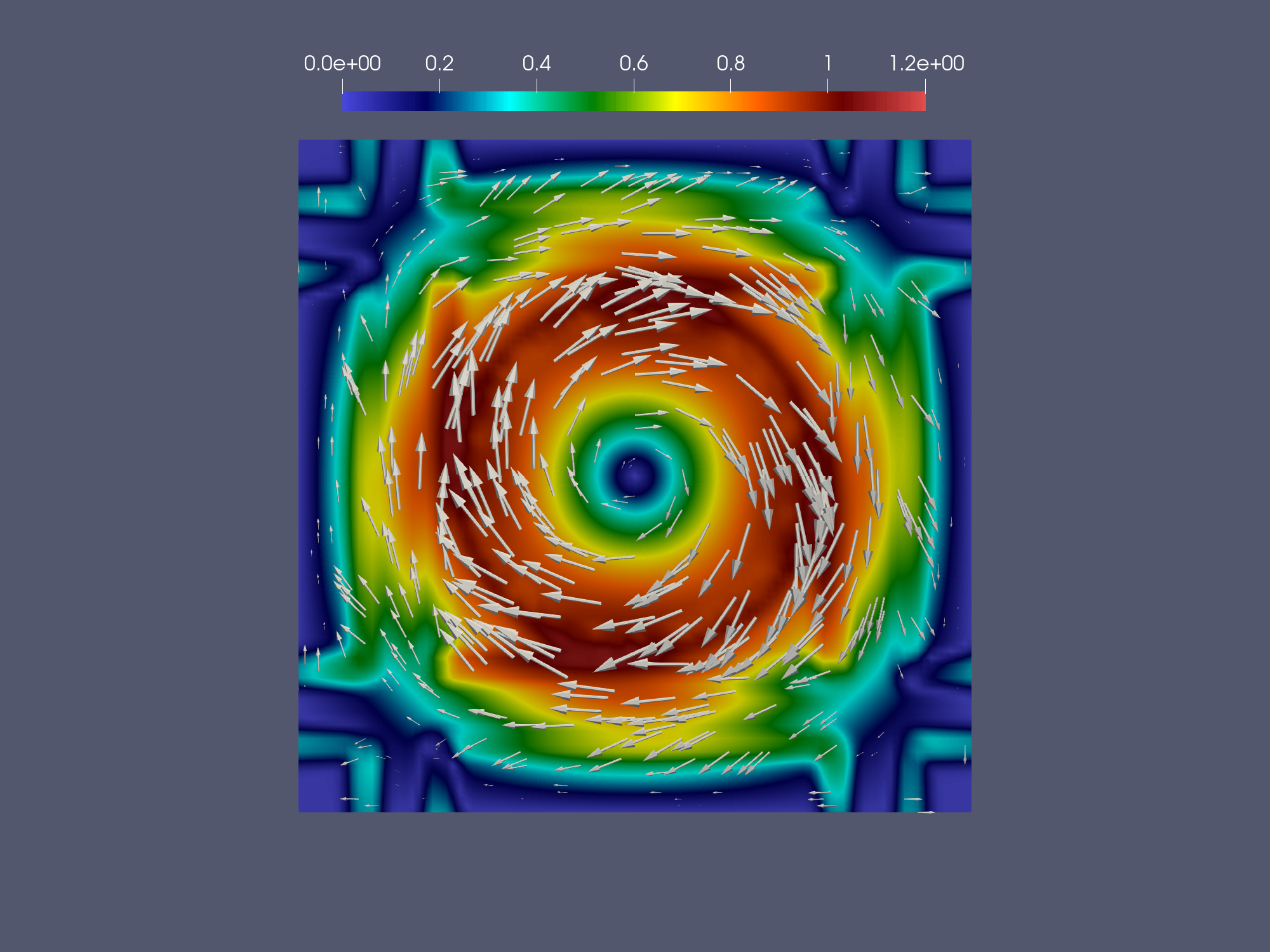} & \includegraphics[valign=c,trim={471bp 221bp 471bp 221bp},clip,width=68.25bp]{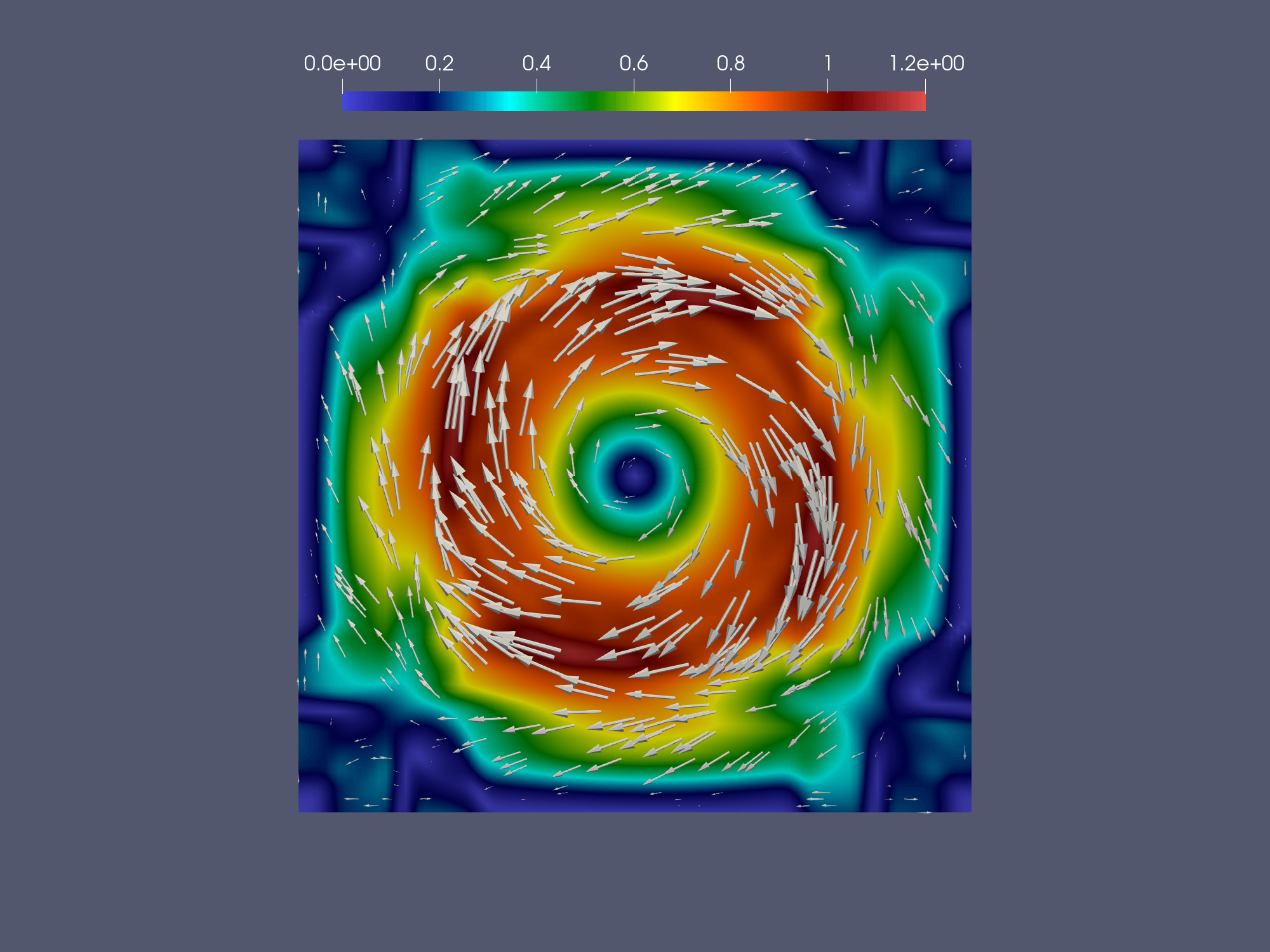} & \includegraphics[valign=c,trim={471bp 221bp 471bp 221bp},clip,width=68.25bp]{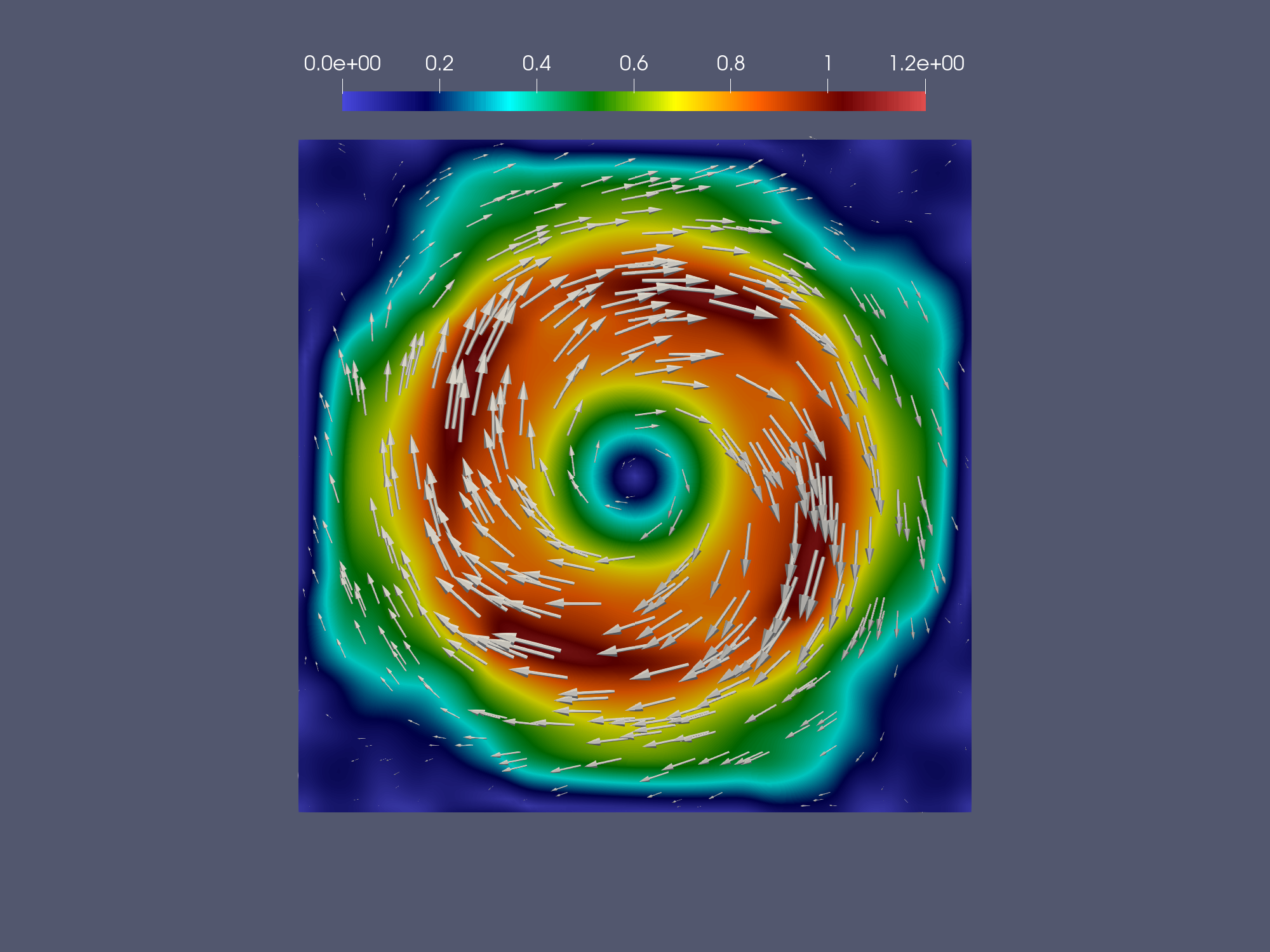} & \includegraphics[valign=c,trim={471bp 221bp 471bp 221bp},clip,width=68.25bp]{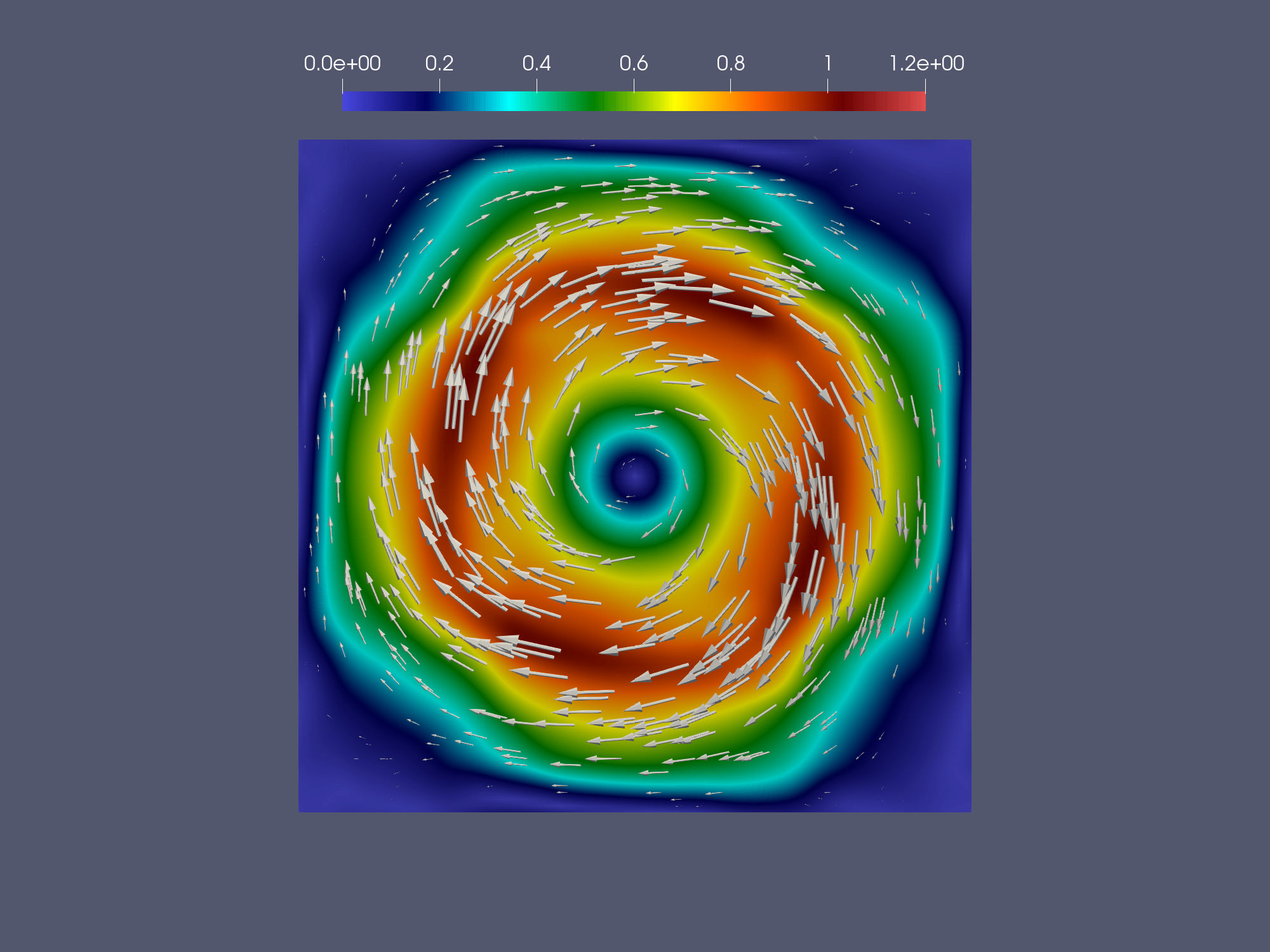} & \includegraphics[valign=c,trim={471bp 221bp 471bp 221bp},clip,width=68.25bp]{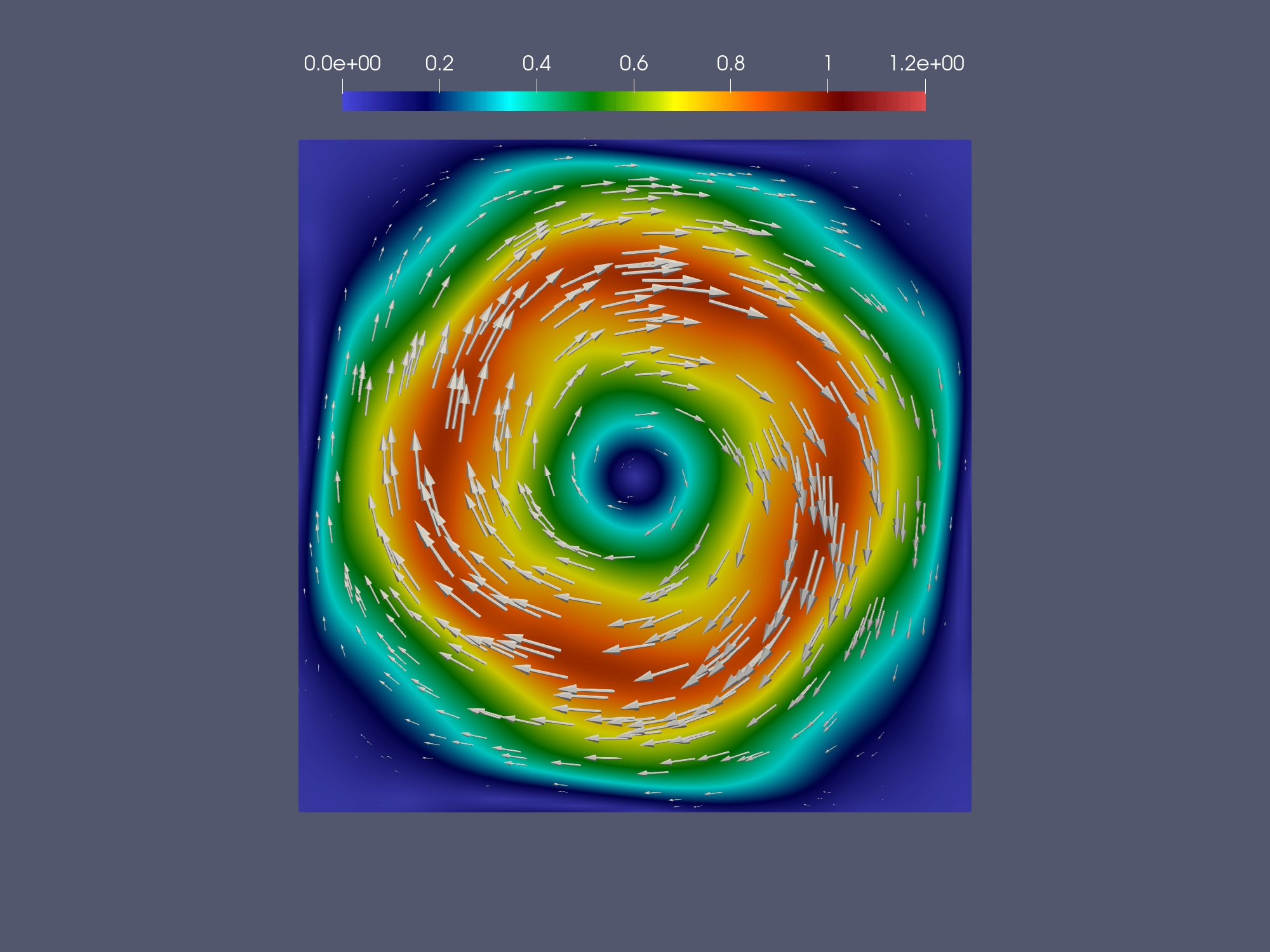} \\ \noalign{\vskip 2pt}
\end{tabular}%
}
\caption{Structure-preserving test: snapshots of the temperature $\vtheta$ and velocity $\u$ for Scheme~I (semi-implicit Euler) and Scheme~II (CPG).}
\label{fig:velotemp}

\end{figure}

\begin{figure}[!htbp]
\centering
\begin{tikzpicture}
\begin{groupplot}[
    group style={group size=2 by 2, horizontal sep=1.4cm, vertical sep=1.5cm},
    width=0.5\textwidth, height=0.33\textwidth,
    xmin=0, xmax=0.1, xlabel={$t$},
    xtick={0,0.02,0.04,0.06,0.08,0.1},
    scaled x ticks=false,
    xticklabel style={/pgf/number format/fixed, /pgf/number format/precision=2},
    tick label style={font=\scriptsize},
    every y tick scale label/.style={font=\scriptsize, at={(yticklabel cs:1)}, anchor=south west},
    label style={font=\small},
    title style={font=\small, yshift=-3pt},
]
\nextgroupplot[title={(a) Energy}, ymin=2.0815, ytick={2.082,2.084,...,2.092}, yticklabel style={/pgf/number format/fixed, /pgf/number format/fixed zerofill, /pgf/number format/precision=3}, legend to name=diaglegend, legend columns=2, legend style={draw=none, font=\small, /tikz/every even column/.append style={column sep=1.5em}}]
    \addplot[seabornred, line width=0.7pt, mark=*, mark size=1.9pt, mark repeat=100, mark phase=50, mark options={solid, fill=seabornred, draw=white, line width=0.3pt}] table[x=t, y=totalEnergyI] {fig_data/structure_test_diagnostics.dat};
    \addlegendentry{Scheme~I (semi-implicit Euler)}
    \addplot[seaborncyan, line width=0.7pt, mark=square*, mark size=1.8pt, mark repeat=100, mark phase=100, mark options={solid, fill=seaborncyan, draw=white, line width=0.3pt}] table[x=t, y=totalEnergyII] {fig_data/structure_test_diagnostics.dat};
    \addlegendentry{Scheme~II (CPG)}
\nextgroupplot[title={(b) Energy, Scheme~II}, ymin=0, ymax=1.04e-13]
    \addplot[seaborncyan, line width=0.7pt, mark=square*, mark size=1.8pt, mark repeat=100, mark phase=100, mark options={solid, fill=seaborncyan, draw=white, line width=0.3pt}] table[x=t, y=energyII] {fig_data/structure_test_diagnostics.dat};
\nextgroupplot[title={(c) Entropy}, ymin=1.48, yticklabel style={/pgf/number format/fixed, /pgf/number format/precision=2}]
    \addplot[seabornred, line width=0.7pt, mark=*, mark size=1.9pt, mark repeat=100, mark phase=50, mark options={solid, fill=seabornred, draw=white, line width=0.3pt}] table[x=t, y=entropyI] {fig_data/structure_test_diagnostics.dat};
    \addplot[seaborncyan, line width=0.7pt, mark=square*, mark size=1.8pt, mark repeat=100, mark phase=100, mark options={solid, fill=seaborncyan, draw=white, line width=0.3pt}] table[x=t, y=entropyII] {fig_data/structure_test_diagnostics.dat};
\nextgroupplot[title={(d) Constraint}, ymin=0, ymax=7e-15]
    \addplot[seabornred, line width=0.7pt, mark=*, mark size=1.9pt, mark repeat=100, mark phase=50, mark options={solid, fill=seabornred, draw=white, line width=0.3pt}] table[x=t, y=constraintI] {fig_data/structure_test_diagnostics.dat};
    \addplot[seaborncyan, line width=0.7pt, mark=square*, mark size=1.8pt, mark repeat=100, mark phase=100, mark options={solid, fill=seaborncyan, draw=white, line width=0.3pt}] table[x=t, y=constraintII] {fig_data/structure_test_diagnostics.dat};
\end{groupplot}
\end{tikzpicture}

\pgfplotslegendfromname{diaglegend}
\caption{Structure-preserving test: (a)~total energy, $\mathcal{E}(t)$; (b)~relative change $(\mathcal{E}(t) - \mathcal{E}(0))/\mathcal{E}(0)$ in total energy for Scheme~II only; (c)~total entropy, $\int_\Omega \varrho s$; (d)~error in the quasi-incompressibility constraint, $\|\sum_{i} V_i\rho_i - 1\|_{L^\infty(\Omega)}$.}
\label{fig:meta}
\end{figure}

\begin{figure}[!htbp]
\centering
\begin{tikzpicture}
\begin{groupplot}[
    group style={group size=2 by 1, horizontal sep=1.4cm, vertical sep=1.5cm},
    width=0.5\textwidth, height=0.33\textwidth,
    xmin=0, xmax=0.1, xlabel={$t$},
    xtick={0,0.02,0.04,0.06,0.08,0.1},
    scaled x ticks=false,
    xticklabel style={/pgf/number format/fixed, /pgf/number format/precision=2},
    tick label style={font=\scriptsize},
    every y tick scale label/.style={font=\scriptsize, at={(yticklabel cs:1)}, anchor=south west},
    label style={font=\small},
    title style={font=\small, yshift=-3pt},
    ymin=-1.4e-13, ymax=0.2e-13,
]
\nextgroupplot[title={(a) Scheme~I}, legend to name=masslegend, legend columns=3, legend style={draw=none, font=\small, /tikz/every even column/.append style={column sep=1.5em}}]
    \addplot[seabornpurple, line width=0.7pt, mark=*, mark size=1.9pt, mark repeat=100, mark phase=34, mark options={solid, fill=seabornpurple, draw=white, line width=0.3pt}] table[x=t, y=massI1] {fig_data/structure_test_diagnostics.dat};
    \addlegendentry{$\rho_1$}
    \addplot[seabornsand, line width=0.7pt, mark=square*, mark size=1.8pt, mark repeat=100, mark phase=67, mark options={solid, fill=seabornsand, draw=white, line width=0.3pt}] table[x=t, y=massI2] {fig_data/structure_test_diagnostics.dat};
    \addlegendentry{$\rho_2$}
    \addplot[seaborngreen, line width=0.7pt, mark=triangle*, mark size=2.4pt, mark repeat=100, mark phase=100, mark options={solid, fill=seaborngreen, draw=white, line width=0.3pt}] table[x=t, y=massI3] {fig_data/structure_test_diagnostics.dat};
    \addlegendentry{$\rho_3$}
\nextgroupplot[title={(b) Scheme~II}]
    \addplot[seabornpurple, line width=0.7pt, mark=*, mark size=1.9pt, mark repeat=100, mark phase=34, mark options={solid, fill=seabornpurple, draw=white, line width=0.3pt}] table[x=t, y=massII1] {fig_data/structure_test_diagnostics.dat};
    \addplot[seabornsand, line width=0.7pt, mark=square*, mark size=1.8pt, mark repeat=100, mark phase=67, mark options={solid, fill=seabornsand, draw=white, line width=0.3pt}] table[x=t, y=massII2] {fig_data/structure_test_diagnostics.dat};
    \addplot[seaborngreen, line width=0.7pt, mark=triangle*, mark size=2.4pt, mark repeat=100, mark phase=100, mark options={solid, fill=seaborngreen, draw=white, line width=0.3pt}] table[x=t, y=massII3] {fig_data/structure_test_diagnostics.dat};
\end{groupplot}
\end{tikzpicture}

\pgfplotslegendfromname{masslegend}
\caption{Structure-preserving test: change in each partial mass, $\int_\Omega \rho_i(t) - \int_\Omega \rho_i(0)$, for (a)~Scheme~I (semi-implicit Euler) and (b)~Scheme~II (CPG).}
\label{fig:masses}
\end{figure}

\subsection{Application example} \label{sec:app_ex}

We consider a mixture of $N=2$ species, namely benzene ($\rho_1$) and cyclohexane ($\rho_2$).
The domain $\Omega$ represents a microfluidic container.
This example is based on the benzene--cyclohexane simulation in \cite{AFMV25}, which we extend here to
incorporate time dependence and thermal effects.

To describe $\Omega$, we introduce dimensionless Cartesian spatial coordinates
$(\hat{x}, \hat{y}) \in \RR^2$, so that the physical spatial coordinates are given by
$(x, y) = L_{\textrm{ref}} (\hat{x}, \hat{y})$ where $L_{\textrm{ref}} = \qty{2}{\milli\metre}$ is
a reference length scale.
The domain is depicted in \Cref{fig:app_ex}.
The benzene inlet $\partial\Omega_1$ is located at the top left
($0 \leq \hat{x} \leq 1$ and $\hat{y} = 1$) and
the cyclohexane inlet $\partial\Omega_2$ is at the bottom left
($0 \leq \hat{x} \leq 1$ and $\hat{y} = -1$).
At the right is the outlet $\partial\Omega_3$
($\hat{x}=4$ and $-0.5 \leq \hat{y} \leq 0.5$).
The remaining parts of the boundary
$\partial\Omega_4 = \partial \Omega \setminus (\partial\Omega_1 \cup \partial\Omega_2 \cup \partial\Omega_3)$
are walls.

On the benzene inlet $\partial\Omega_1$ we prescribe a non-zero benzene mass flux and set the cyclohexane mass flux to zero;
the roles are reversed on the cyclohexane inlet $\partial\Omega_2$.
Specifically, given inflow fluxes $\bm{G}_{i,k}$, $i,k=1,2$, we impose on the total mass flux $\rho_i\u + \bm{J}_i$ of each species \eqref{eq:1.flux}
\begin{equation}\label{eq:bc_rhoi}
    \big(\rho_i\u + \bm{J}_i\big)\cdot\bm{n} = \bm{G}_{i,k}\cdot\bm{n}
    \quad \text{on } \partial\Omega_k, \qquad i,k=1,2,
\end{equation}
together with the consistent condition on the barycentric velocity
\begin{equation}\label{eq:bc_u}
    \u = \frac{1}{\varrho}\sum_{i=1}^2 \bm{G}_{i,k}
    \quad \text{on } \partial\Omega_k, \qquad k=1,2.
\end{equation}
The normal and tangential components of $\bm{G}_{i,k}$ are prescribed independently: only the normal component appears in \eqref{eq:bc_rhoi}, whereas both appear in \eqref{eq:bc_u}.
We take the parabolic inflow profiles
\begin{equation}\label{eq:inflow_profiles}
    \bm{G}_{1,1} = \frac{2v_{1,\textrm{ref}}}{V_1}\,\hat{x}(1 - \hat{x})\begin{bmatrix} 2 \\ -1 \end{bmatrix},
    \qquad
    \bm{G}_{2,2} = \frac{2v_{2,\textrm{ref}}}{V_2}\,\hat{x}(1 - \hat{x})\begin{bmatrix} 2 \\ 1 \end{bmatrix},
\end{equation}
and $\bm{G}_{1,2} = \bm{G}_{2,1} = \bm{0}$, where $v_{i,\textrm{ref}}$ is a characteristic inflow velocity of species~$i$, specified below.
On the outlet $\partial\Omega_3$ we impose a \emph{do-nothing} condition: zero normal diffusive fluxes $\bm{J}_i\cdot\bm{n} = 0$, $i=1,2$, and zero traction $\bsig\cdot\bm{n} = \bm{0}$.
On the walls $\partial\Omega_4$ we impose $\bm{J}_i\cdot\bm{n} = 0$, $i=1,2$, and no slip, $\u = \bm{0}$, so that the total mass fluxes $(\rho_i\u + \bm{J}_i)\cdot\bm{n}$ vanish.

For heat transport, we impose on the inlets a Robin-type condition on the total entropy flux $\varrho s\u + \bm{J}_s$ (\ref{eq:1.flux},~\ref{eq:entropy_flux_def}),
\begin{equation}\label{eq:bc_T}
    \vtheta \big(\varrho s\u + \bm{J}_s \big)\cdot\bm{n}
    = - \, k_{\text{inlet}} \frac{\vtheta_{\text{inlet}} - \vtheta}{l_{\text{inlet}}},
\end{equation}
where $k_{\text{inlet}}$ is an effective thermal conductivity of the inlet, $l_{\text{inlet}}$ an effective thickness, and $\vtheta_{\text{inlet}}$ the ambient temperature directly outside the inlet.
With the parameters chosen below, \eqref{eq:bc_T} effectively enforces $\vtheta \approx \vtheta_{\text{inlet}}$ on the inlets.
On the walls and the outlet we impose $\nabla\vtheta\cdot\bm{n} = 0$.

\subsubsection*{Thermodynamic description of benzene and cyclohexane.}
Let $m_i$ denote the molar mass of species $i$.
To express the free energy of mixing, we introduce the mole fraction of species $i$,
\begin{equation}\label{eq:density_mole_convert}
    x_i = \frac{\rho_i / m_i}{\sum_{j=1}^N \rho_j / m_j}, \qquad i = 1, \ldots, N,
\end{equation}
and its partial molar heat capacity $c_{p,i}$, which we assume to be constant.
To capture the non-ideal contribution to the free energy of mixing, we employ a Margules model \cite{PerryBook}, with two dimensionless constants $A_{12}, A_{21} \in \RR$.
Accordingly, we take
\begin{multline}\label{eq:app_free_energy}
    \varrho F = R\vtheta\frac{\rho_1}{m_1}\big(\log(x_1) + x_2^2(A_{12} + 2(A_{21}-A_{12})x_1)\big)  \\
    + R\vtheta\frac{\rho_2}{m_2}\big(\log(x_2) + x_1^2(A_{21} + 2(A_{12}-A_{21})x_2)\big)  \\
    + \left(c_{p,1}\frac{\rho_1}{m_1}+c_{p,2}\frac{\rho_2}{m_2}\right)(\vtheta-\vtheta\log(\vtheta)),
\end{multline}
which yields the chemical potentials, entropy and internal energy
\begin{subequations}\label{eq:app_thermo}
\begin{align}
    \mu_1 &= \frac{R\vtheta}{m_1}\big(\log(x_1) + x_2^2(A_{12} + 2(A_{21}-A_{12})x_1)\big) \label{eq:app_mu1}  \\
    &\qquad\qquad\qquad\qquad\qquad\qquad\qquad\qquad\quad + \frac{c_{p,1}}{m_1}(\vtheta-\vtheta\log(\vtheta)) + V_1p, \notag  \\
    \mu_2 &= \frac{R\vtheta}{m_2}\big(\log(x_2) + x_1^2(A_{21} + 2(A_{12}-A_{21})x_2)\big) \label{eq:app_mu2}  \\
    &\qquad\qquad\qquad\qquad\qquad\qquad\qquad\qquad\quad + \frac{c_{p,2}}{m_2}(\vtheta-\vtheta\log(\vtheta)) + V_2p, \notag  \\
    \varrho s &= -R\frac{\rho_1}{m_1}\big(\log(x_1) + x_2^2(A_{12} + 2(A_{21}-A_{12})x_1)\big) \label{eq:app_entropy}  \\
    &\qquad\qquad\qquad\quad - R\frac{\rho_2}{m_2}\big(\log(x_2) + x_1^2(A_{21} + 2(A_{12}-A_{21})x_2)\big)  \notag  \\
    &\qquad\qquad\qquad\qquad\qquad\qquad\qquad\qquad\quad + \left(c_{p,1}\frac{\rho_1}{m_1}+c_{p,2}\frac{\rho_2}{m_2}\right)\log(\vtheta), \notag  \\
    \varrho e &= \left(c_{p,1}\frac{\rho_1}{m_1}+c_{p,2}\frac{\rho_2}{m_2}\right)\vtheta. \label{eq:app_energy}
\end{align}
\end{subequations}
For the mobility $\M = (M_{ij})$ and heat conductivity $K$ we take
\begin{equation}\label{eq:mobility_heat_cond}
    \M = \frac{\mathscr{D}_{12} \rho_1 \rho_2 m_1 m_2}{R \varrho (m_1 x_1 + m_2 x_2)}
    \begin{pmatrix}
        1 & -1 \\ -1 & 1
    \end{pmatrix},
    \qquad K = (k_1 x_1 + k_2 x_2) \vtheta^2.
\end{equation}
Here $\mathscr{D}_{12}$ is the binary Maxwell--Stefan diffusivity of the mixture, and the formula for $\M$ follows from inverting the Maxwell--Stefan relations; $k_i$ denotes the thermal conductivity of pure species~$i$.
Note that $K$, as it appears in \eqref{eq:1.flux}, differs from the usual thermal conductivity by a factor of $\vtheta^2$, which accounts for the use of Fourier's law.
The material parameters are listed in \Cref{tab:app_params}.

\begin{table}[htbp!]
\centering
\small
\caption{Application example: material parameters of benzene and cyclohexane.}
\label{tab:app_params}
\begin{tabular}{|l|c|c|c|}
    \hline
    Quantity & Benzene ($i=1$) & Cyclohexane ($i=2$) & Unit \\
    \hline
    Molar mass $m_i$ & $0.078$ & $0.084$ & \unit[per-mode=symbol]{\kg\per\mol} \\
    Mass density $1/V_i$ & $876$ & $773$ & \unit[per-mode=symbol]{\kg\per\metre\cubed} \\
    Molar heat capacity $c_{p,i}$ & $135.69$ & $156.00$ & \unit[per-mode=symbol]{\joule\per\kelvin\per\mol} \\
    Thermal conductivity $k_i$ & $0.14$ & $0.11$ & \unit[per-mode=symbol]{\watt\per\kelvin\per\metre} \\
    \hline
    Margules constants $A_{12}$, $A_{21}$ & \multicolumn{2}{c|}{$0.4498$, $0.4952$} & -- \\
    Maxwell--Stefan diffusivity $\mathscr{D}_{12}$ & \multicolumn{2}{c|}{\num{2.1e-9}} & \unit[per-mode=symbol]{\metre\squared\per\second} \\
    Viscosities $\eta$, $\lambda$ & \multicolumn{2}{c|}{\num{6e-4}, \num{1e-7}} & \unit[per-mode=symbol]{\Pa\second} \\
    Gas constant $R$ & \multicolumn{2}{c|}{$8.314$} & \unit[per-mode=symbol]{\joule\per\kelvin\per\mol} \\
    Inlet thermal conductivity $k_{\text{inlet}}$ & \multicolumn{2}{c|}{$0.0125$} & \unit[per-mode=symbol]{\watt\per\kelvin\per\metre} \\
    Inlet thickness $l_{\text{inlet}}$ & \multicolumn{2}{c|}{$0.02$} & \unit[per-mode=symbol]{\milli\metre} \\
    \hline
\end{tabular}
\end{table}

The heat capacities of benzene and cyclohexane are taken from \cite{grolier1993heat} and \cite{trejo1991excess} respectively, and the thermal conductivities from \cite{tanaka1988thermal}.
The values of $k_{\text{inlet}}$ and $l_{\text{inlet}}$ are hypothetical, corresponding roughly to inlet boundaries that are ten times more thermally insulating than the mixture and one hundred times thinner than the reference length $L_{\textrm{ref}}$.
All other material parameters are taken from \cite{AFMV25}.
We take the ambient inlet temperature to be
\begin{equation}\label{eq:temp_bcs}
    \vtheta_{\text{inlet}} = \begin{cases*}
        1.1 \cdot \qty{298.15}{\kelvin}, &on $\partial\Omega_1$ (benzene inlet), \\
        1.0 \cdot \qty{298.15}{\kelvin}, &on $\partial\Omega_2$ (cyclohexane inlet),
    \end{cases*}
\end{equation}
and the inlet reference velocities to be
\begin{equation}\label{eq:ref_velocities}
    v_{1,\textrm{ref}} = \qty{0.4}{\micro\metre\per\second}, \qquad
    \frac{v_{1,\textrm{ref}}}{m_1 V_1} = \frac{v_{2,\textrm{ref}}}{m_2 V_2},
\end{equation}
where the choice of $v_{2,\textrm{ref}}$ symmetrises the molar fluxes at the inlets \cite{AFMV25}.
The initial partial densities correspond to an equimolar mixture, $x_1 = x_2 = 0.5$ in \eqref{eq:density_mole_convert}, and the initial temperature is $\vtheta^0 = \qty{298.15}{\kelvin}$.

\subsubsection*{Non-dimensionalisation.}
The equations are solved in non-dimensionalised form.
For this mixture, the Lagrange number, as discussed in \Cref{rem:Me}, is
\begin{equation}\label{eq:pressure_diffusion_number}
    \textrm{La} \coloneqq \frac{2R\vtheta_{\mathrm{ref}}L_{\mathrm{ref}}}{\eta\, v_{1,\textrm{ref}}}
    \left(\frac{1}{V_1 m_1}+\frac{1}{V_2 m_2}\right),
\end{equation}
with reference temperature $\vtheta_{\mathrm{ref}} = \qty{298.15}{\kelvin}$, so that $\textrm{La} \approx 10^{15}$.
Since the free energy \eqref{eq:app_free_energy} is homogeneous of degree one in $\vec\rho$, we remove the term \eqref{eq:thermo_forcing} from the momentum equation in both schemes.
Together with the open inlets and outlet (through which mass, energy and entropy enter and leave the domain), this means that the global mass, energy and entropy structures of \Cref{thm:schemeA_global,thm:schemeB_global} do not apply to this example as stated;
it serves instead to demonstrate the schemes in a physically realistic setting.

\subsubsection*{Simulation results.}
We simulate up to the final time $\qty{6.25}{\hour}$, although the solution is approximately stationary after roughly $\qty{3}{\hour}$, using both schemes with spatial polynomial degree $\ell=1$ in \eqref{eq:fe_spaces} and, for Scheme~II, $k=1$ in \eqref{eq:scheme_ii_pg}.
The two schemes give results in good agreement.
\Cref{fig:app_ex} shows the solution computed by Scheme~I at the final time.
The temperature field is distributed as expected from the inlet temperatures \eqref{eq:temp_bcs}, and the streamlines of the benzene velocity $\u + \bm{J}_1/\rho_1$ agree with the steady-state benzene streamlines reported in \cite{AFMV25}.

\begin{figure}[!htbp]
    \centering
    \includegraphics[width=\linewidth]{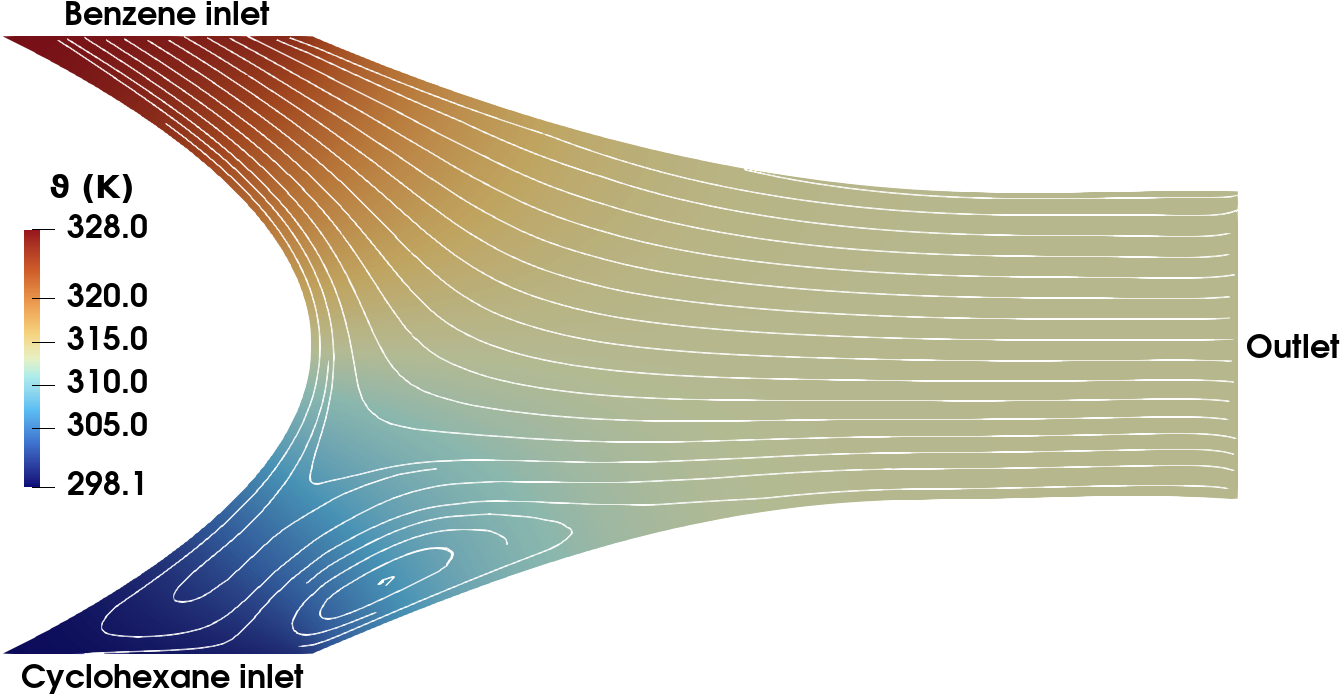}
    \caption{The microfluidic container $\Omega$ of \Cref{sec:app_ex}, with the solution computed by Scheme~I at the final time $\qty{6.25}{\hour}$. The domain is coloured by the temperature $\vtheta$, and streamlines of the benzene velocity $\u + \bm{J}_1/\rho_1$ are shown in white.}
    \label{fig:app_ex}
\end{figure}

\bibliographystyle{amsplain}
\bibliography{lit}

\end{document}